\documentclass[11pt]{amsart}
\usepackage[T1]{fontenc}
\usepackage{amsmath,amssymb,amsthm}
\usepackage[margin=1in]{geometry}
\usepackage[hidelinks]{hyperref}
\usepackage{xcolor}
\usepackage{xparse}
\makeatletter
\newtheorem{theorem}{Theorem}[section]
\newtheorem{proposition}[theorem]{Proposition}
\newtheorem{lemma}[theorem]{Lemma}
\newtheorem{corollary}[theorem]{Corollary}
\newtheorem{introtheorem}{Theorem}

\theoremstyle{remark}
\newtheorem{remark}[theorem]{Remark}
\title{Positivity of Relative Pluricanonical Direct Images via Fixed-Point Metrics}
\author{Jixiang Fu and Yunyang Xiao}
\date{}
\begin{document}
\begin{abstract}
We develop a direct analytic approach to positivity of relative pluricanonical direct images through a fixed-point Hermitian metric by direct differential curvature calculations. The curvature resolvent formula from fixed-point equation leads to two-sided estimates of entire Nakano curvature by determinant curvature. In this way, we identify generic rank of determinant curvature with birational variation dimension for a projective algebraic fibration between smooth connected projective varieties with maximal birational variation and general fiber of general type, assuming $mK_F$ is globally generated or the $m$-canonical map is birational for general fiber $F$. In the global generation case, Green--Griffiths Conjecture asks for the tautological bigness of the ordinary symmetric powers $\operatorname{Sym}^{\ell}f_*\omega_{X/Y}^{\otimes m}$, without passing to reflexive hulls, for $\ell\geq P_m(F)=h^0(F,mK_F)$ whenever $m\geq 1$. We prove it  for all $\ell\geq1$ at levels $m\geq 2$, also in the case of birational $m$-canonical map, but construct a counterexample at level $m=1$. Over curves, curvature and boundary estimates give a direct analytic proof of Viehweg's ampleness theorem requested by Fujino. For smooth projective families, we also establish numerical dimension and vanishing results related to the birational variation dimension.
\end{abstract}
\maketitle

\section{Introduction}\label{sec:introduction}

The positivity of direct images of relative pluricanonical sheaves reflects the variation of projective families and the degeneration of morphisms. Foundational results include Fujita and Kawamata's Hodge-theoretic treatment \cite{Fujita1978,Kawamata1982}, Viehweg's weak positivity and ampleness theorem \cite{Viehweg1983,Viehweg1983II,Viehweg1995}, and Koll\'ar's relationship between determinant bigness and variation \cite{Kollar1987}. We are motivated by two problems. For families with maximal birational variation and general fibers $F$ of general type, the Green--Griffiths conjecture predicts tautological bigness of symmetric powers of the relative pluricanonical direct images when $mK_F$ is globally generated \cite[Conjecture~VI.C.30]{GreenGriffiths2018}. Fujino asks for a direct analytic proof of Viehweg's ampleness theorem \cite[Problem~4.11]{FujinoSemipositivity2015}.

To derive bigness or ampleness from a single metric, we use two properties: strict positivity on the smooth locus and control near singular fibers. We use a metric for which the full Nakano curvature has quantitative upper and lower bounds in terms of determinant curvature, and the kernel of determinant curvature corresponds to directions of trivial variation. Similar construction of such metric has already been considered by Berman \cite{Berman2013} as a quantization of the K\"ahler--Ricci flow, motivated from Donaldson's iteration \cite{Donaldson2005}. Here our aim is to obtain geometric conclusions from direct differential calculations of the curvature of this metric.

 One of the advantages of the fixed-point metric is that it is smooth on a dense Zariski-open subset, which allows classical curvature computations. It also admits the semipositive singular extension needed for the global applications, which is ensured by comprison with the Narasimhan--Simha metric theory of Berndtsson--P\u{a}un \cite{BerndtssonPaun2008}, P\u{a}un--Takayama \cite{PaunTakayama2018} and Hacon--Popa--Schnell \cite{HaconPopaSchnell2018}. Differentiating the fixed-point equation derives a representation of the curvature by a resolvent formula of its local linearized operator. A lower bound for that operator yields the two-sided control of the full Nakano curvature by determinant curvature. Such lower bound also transfers the positivity of the linearized operator, which results from Berndtsson's seminal work \cite{Berndtsson2009} and its generalizations \cite{CaoGuenanciaPaun2021,Zou2021}, to the global Nakano curvature tensor. These curvature arguments establish positivity directly on the original direct image, without an auxiliary cyclic covering or a fiber product of the family.

\subsection{Fixed-point metrics and curvature}
All varieties are over $\mathbb C$. For a projective morphism $f:X\to Y$ between smooth varieties with connected fibers, write
\[
 E_m=f_*\omega_{X/Y}^{\otimes m},\qquad
 P_m(F)=h^0(F,mK_F).
\]
We denote by $\varphi_m$ the rational map induced by the complete linear system $|mK_F|$ and consider the following conditions on a general fiber $F$:
\[
 \text{(G)}\quad mK_F\text{ is globally generated};\qquad
 \text{(B)}\quad \varphi_m\text{ is birational onto its image}.
\]
Under (G), $\varphi_m$ is a morphism. Under (B), the linear system may have base points and fixed components. We write $\operatorname{Var}(f)$ for the birational variation dimension.

We define $G$ fiberwise as the unique fixed point of the Hilbert--Fubini--Study operator $T_y$ arising from the Narasimhan--Simha construction for $m\geq2$ and almost every $y\in Y$. This operator contracts the cone of positive definite Hermitian metrics on $E_{m,y}$ in the Thompson metric. The cone is a complete metric space \cite{Thompson1963}, so the contraction mapping theorem provides the required fixed point and defines $G$ almost everywhere on $E_m$. We prove in Section~\ref{sec:construction} that $G$ is locally $C^{0,2/m}$ on its defining locus and smooth on a dense Zariski-open subset $U_m^{\mathrm{reg}}\subset Y$. Appendix~\ref{app:refined-regularity} gives a refinement of regularity using fiberwise log canonical thresholds, and the metric construction is also available for effective klt pairs.

Choose a holomorphic Chern-normal frame for $G$ at $y_0$, so that $G_{y_0}=I$ and its first derivatives vanish, and define the \emph{frozen metric} by $H_{y_0}(y)=T_y(I)$. This metirc can be viewed as the first order estimate of $G$. We obtain a curvature resolvent formula to connect these two metric, which allows us to lift the semipositivity of the local frozon metric from classical methods to the strict positivity of fixed-point metric. It also leads to two-sided comparison between full Nakano curvature and determine curvature and identification of generic rank of determinant curvature with birational variation dimensions, as summarized in the following theorem.

\begin{introtheorem}[Curvature and variation]\label{thm:intro-metric}
Let $f:X\to Y_0$ be a smooth projective morphism of smooth complex varieties with connected fibers. Let $\Delta\geq0$ be a horizontal rational divisor such that $(X,\Delta)$ and every fiber pair are klt. Fix $m\geq2$ with $m\Delta$ integral, and assume that
\[
 E=f_*\mathcal O_X\bigl(m(K_{X/Y_0}+\Delta)\bigr)
\]
is locally free of positive rank $r$, with fiberwise base-change isomorphisms
\[
 E\otimes\mathbb C(y)\simeq
 H^0\!\left(X_y,m(K_{X_y}+\Delta_y)\right)
 \qquad(y\in Y_0).
\]
Let $G$ be the fixed-point metric and put $\alpha=(m-1)/m$.
\begin{enumerate}
\item On $U_m^{\mathrm{reg}}$, let $K$ be the curvature tensor of $G$, viewed as a Hermitian form on $T^{1,0}Y_0\otimes E$, and let $Q$ be the frozen curvature tensor in a $G$-Chern-normal frame. Then $Q$ is Nakano semipositive and
\[
 K=\bigl(\mathrm{id}\otimes(I-L)^{-1}\bigr)Q,
\]
where $L$ is the linearized Hilbert--Fubini--Study operator. Both $L$ and $(I-L)^{-1}$ are completely positive, and $\|L\|=\alpha$. For $\tau=\operatorname{Tr}_E K$, there is a locally uniform $\sigma>0$ such that
\[
 \sigma\tau\otimes I
 \leq K
 \leq\bigl[r-(r^2-1)\sigma\bigr]\tau\otimes I
\]
in the Nakano sense. In particular,
\[
 \ker K=(\ker\tau)\otimes E.
\]
If $\Delta=0$ and $Y_0$ is compact, a uniform constant $\sigma_*>0$ works throughout $U_m^{\mathrm{reg}}$.
\item Suppose additionally that $\Delta=0$, the general fiber is of general type, and (G) or (B) holds. On a nonempty Zariski-open subset $U_m^\circ\subseteq U_m^{\mathrm{reg}}$,
\[
 \operatorname{rank}\tau=\operatorname{Var}(f),\qquad
 \operatorname{rank}K=r\operatorname{Var}(f).
\]
Under (G), $\ker\tau=\ker\rho$, where $\rho$ is the Kodaira--Spencer map. Under (B), $\ker\tau=\ker db_m$ where $b_m$ is the birational classifying map of Section~\ref{sec:variation}. In both cases, the determinant form $i\operatorname{Tr}\Theta_{E,G}$ descends locally to a K\"ahler form on a submersion base of dimension $\operatorname{Var}(f)$.
\end{enumerate}
\end{introtheorem}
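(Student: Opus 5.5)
The plan is to differentiate the fixed-point equation $G_y=T_y(G_y)$ twice at $y_0$ in a $G$-Chern-normal frame, where $G_{y_0}=I$ and $\partial G(y_0)=0$. Recall the Hilbert--Fubini--Study operator: for a Hermitian metric $h$ on $E_y$,
\[
T_y(h)(s,t)=\int_{X_y}\frac{s\bar t}{\bigl(\sum_{j,k}h^{jk}u_j\bar u_k\bigr)^{(m-1)/m}},
\]
with $(u_j)$ a basis of $E_y$. Write $T(y,h)$ to display both dependences. The chain rule gives
\[
\partial_i\bar\partial_j G=\partial_i\bar\partial_j T(y,I)+L\bigl(\partial_i\bar\partial_j G\bigr),
\]
because all first-order cross terms vanish in the normal frame. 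Here $L=D_hT(y_0,I)$. The first term is $\partial\bar\partial H_{y_0}$, and $-\partial\bar\partial H_{y_0}$ is the frozen curvature $Q$. Hence $(I-L)K=Q$.

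Next compute $L$ explicitly. It is an averaging map $A\mapsto\alpha\int u\,\langle A u,u\rangle\,|u|^{-2}\,d\mu$ against a probability-type measure determined by $T(I)=I$. This form makes $L$ and every power $L^k$ completely positive, and the Neumann series $(I-L)^{-1}=\sum L^k$ then inherits complete positivity. For the norm, $L(I)=\alpha\,T(I)=\alpha I$, and the Russo--Dye theorem for positive maps gives $\|L\|=\alpha<1$. This also justifies the inversion.

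$Q$ is the curvature of a Narasimhan--Simha-type metric built from the frozen weight $|u|_I^{2/m}$. Its Nakano semipositivity follows from Berndtsson's theorem and its klt-pair versions, as cited in the introduction. Applying $\mathrm{id}\otimes(I-L)^{-1}$, which is completely positive, preserves Nakano semipositivity. This yields the formula and $K\ge0$.

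For the two-sided estimate, the main obstacle is a quantitative lower bound $L(A)\ge c\,\operatorname{Tr}(A)\,I$ for $A\ge0$, with $c>0$ locally uniform. I would derive it from strict positivity of the measure on the fibre. No section of $E_y$ vanishes identically, and compactness of the unit sphere together with the continuity proved in Section~\ref{sec:construction} makes $\int|\langle u,v\rangle|^2\cdots$ bounded below uniformly.

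With this bound, $K=Q+L(K)\ge L(K)\ge\sigma\,(\operatorname{Tr}K)\otimes I=\sigma\tau\otimes I$, after taking the trace over $E$ with the $Y$-directions as parameters; complete positivity lets the argument pass through the Nakano tensor. For the upper bound, apply the lower bound to $\tau\otimes I-K$, using that the eigenvalues in the $E$-factor sum to $\tau$ and that each is at least $\sigma\tau$. Any single eigenvalue is then at most $\tau-(r-1)\sigma\tau$. Converting this operator-wise bound into a Nakano bound costs a factor of $r$ via the standard Nakano-versus-Griffiths comparison, and the stated constant $r-(r^2-1)\sigma$ should follow. The kernel identity is immediate from the two-sided bound. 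When $\Delta=0$ and $Y_0$ is compact, uniformity of $\sigma_*$ would come from controlling the measures near the boundary via the $C^{0,2/m}$ regularity; I expect this to need care.

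For part (2), $\ker\tau=\ker K$ modulo $E$, and $\ker K=\ker Q$ modulo $E$ because $(I-L)^{-1}$ is injective and $K\ge Q$. So it suffices to identify $\ker Q$. By Berndtsson's equality case, $Q(v\otimes s)=0$ for all $s$ forces the horizontal lift of $v$ to act trivially on every $m$-canonical form.
\begin{itemize}
\item Under (G), the map $\varphi_m$ is an embedding-type morphism. Triviality on all pluricanonical forms means the family of images is infinitesimally constant along $v$, which gives $\rho(v)=0$.
\item Under (B), the same argument shows $v\in\ker db_m$.
\end{itemize}
Generic rank then equals $\operatorname{Var}(f)$ on a Zariski-open set where these ranks are constant. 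On a local submersion $Y\to S$ whose fibres are the integral leaves of $\ker\tau$, the form $i\operatorname{Tr}\Theta$ is closed, semipositive, and annihilated exactly by the fibre directions. It therefore descends to a Kähler form on $S$, of dimension $\operatorname{Var}(f)$.
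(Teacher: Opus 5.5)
Your derivation of $K=(\mathrm{id}\otimes(I-L)^{-1})Q$, of the complete positivity of $L$ and $R$, and of $\|L\|=\alpha$ is essentially the paper's. The Nakano pinching, however, does not follow from the estimate you propose.

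\textbf{Part (1).} The bound $L(A)\geq c\operatorname{Tr}(A)I$ for $A\geq0$ says only that $L-cD$ is a \emph{positive} map, where $D(S)=\operatorname{Tr}(S)I$. To get $(\mathrm{id}\otimes L)K\geq c\,\tau\otimes I$ from $K\geq0$ on $T^{1,0}Y_0\otimes E$, you need $\mathrm{id}_{M_d}\otimes(L-cD)\geq0$, i.e.\ complete positivity of $L-cD$. This does not follow from complete positivity of $L$ plus positivity of $L-cD$. The Choi matrix of $D$ is the identity, so complete positivity asks that the Choi matrix of $L$ be $\geq cI$, while positivity only tests it on product vectors.

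Your justification, that no nonzero section vanishes identically, controls exactly the rank-one values $\mathfrak q_y(uv^*)=\alpha\int|Fu|^2|Fv|^2|F|^{-4}\,d\nu$. So as written you get only Griffiths pinching when $\dim Y_0\geq2$. The missing step is Lemma~\ref{lem:strict-cp}: $\mathfrak q_y(M)>0$ for every nonzero $M\in M_r(\mathbb C)$. The paper proves it by polarizing $F(z)M^*F(z)^*\equiv0$ on a ball to $F(z)M^*F(w)^*\equiv0$ and using that the evaluation rows span $\mathbb C^r$.

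Two smaller corrections:
\begin{itemize}
\item Your factor-$r$ conversion applied to $K$ itself gives only $r-(r^2-r)\sigma$. The stated constant comes from applying $B\leq r(\operatorname{Tr}_EB)\otimes I$ to $B=K-\sigma\tau\otimes I$, whose partial trace is $(1-r\sigma)\tau$.
\item For $\sigma_*$ the difficulty is not a boundary but the points of $Y_0\setminus U_m^{\mathrm{reg}}$ where the base ideal jumps. The domination $|FM^*F^*|^2|F|^{-4+2/m}\leq\|M\|_{\mathrm{HS}}^2|F|^{2/m}$ makes $\mathfrak q_y$ continuous on all of $Y_0$ (Proposition~\ref{prop:compact-moment}), and compactness finishes.
\end{itemize}

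\textbf{Part (2).} The gaps here are larger.

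First, you only sketch $\ker\tau\subseteq\ker\rho$ (resp.\ $\ker\tau\subseteq\ker db_m$). That gives $\operatorname{rank}\tau\geq\operatorname{rank}\rho$ (resp.\ $\geq\operatorname{rank}db_m$), not equality. The converse is where the fixed-point construction is essential. Along a disc of trivial variation one transports a holomorphic frame so that all fibre operators $T_y$ coincide:
\begin{itemize}
\item in case (G), by the flow of a projectable holomorphic vector field (Lemma~\ref{lem:ks-nullity-fixed-metric});
\item in case (B), by lifting the $\operatorname{PGL}_r$-orbit to $\operatorname{GL}_r$ so the images equal a fixed $Z_0$, then showing the leftover factor $c_t$ is a constant rational function, holomorphic in $t$ (Theorem~\ref{thm:strictness-birational}).
\end{itemize}
Birational invariance and uniqueness of the fixed point then make $G$ constant, so $K_\xi=0$.

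Second, your case (G) argument fails. Global generation gives only a generically finite morphism $\varphi_m$, not an embedding. Infinitesimal constancy of the image family does not force $\rho(\xi)=0$. For example, take double planes $\pi:X_t\to\mathbb P^2$ branched along a moving smooth curve of degree $10$, with $m=2$. Then $|2K_{X_t}|=\pi^*|\mathcal O_{\mathbb P^2}(4)|$, so the $2$-canonical images are a fixed Veronese surface while $\rho\neq0$.

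What the equality case of Berndtsson's formula actually yields is $\eta_u=0$ and $\widehat u\wedge\Omega=0$. These give holomorphic lifts $h_u$ of $\xi$ on $\{u\neq0\}$ with $d\Phi(h_u)=0$. Global generation makes these sets cover the fibre, and generic finiteness glues the lifts into a holomorphic splitting (Proposition~\ref{prop:generated-nullity}).

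Third, $\ker\tau=\ker\rho$ does not by itself give $\operatorname{rank}\tau=\operatorname{Var}(f)$. The paper passes to a level $N=km$ with $\varphi_N$ birational and $NK_F$ globally generated. It proves $\ker\tau_m=\ker\rho=\ker\tau_N=\ker db_N$ and uses $\dim B_N=\operatorname{Var}(f)$ (Lemma~\ref{lem:quotient-birational-variation}). You also need that dimension identity in case (B), and you do not prove it.

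Finally, in case (B) with base loci, Berndtsson's smooth equality case is not available. The paper uses the Cao--Guenancia--P\u{a}un geodesic-curvature lower bound instead.
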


Roughly speaking, the resolvent $(I-L)^{-1}=\sum_{j\geq0}L^j$ expresses the effect of the fixed-point condition on curvature, which is essentially a analytic version of the iteration of Hilbert--Fubini--Study operator from the intrinsic geometry of the variation. Because the resolvent is completely positive, it preserves positivity on all of $T^{1,0}Y_0\otimes E$, not only on decomposable tensors, which thus transfers the Nakano positivity of curvatures.

 The evaluation quadratic form introduced in Section~\ref{sec:curvature} is positive definite on $\operatorname{End}(E_y)$. Its lower bound yields the coefficient $\sigma$ in Theorem~\ref{thm:intro-metric}. Thus a base direction with positive determinant curvature has positive curvature in every section direction. The two-sided estimate also determines the entire curvature kernel. The resolvent formula and determinant pinching are proved in Theorems~\ref{thm:curvature-resolvent} and \ref{thm:determinant-pinching}.

The two geometric hypotheses enter only when identifying these kernels. Under (G), equality in the direct-image curvature formula gives holomorphic lifts that glue across the ramification locus, while projectable holomorphic flows give the reverse implication. Under (B), vanishing curvature forces the infinitesimal deformation of the relative pluricanonical image to vanish modulo projective transformations. Birational invariance and uniqueness of the fixed-point metric give the converse. A sufficiently large multiple $N=km$ is used in case (G) to identify the common rank with birational variation. The curvature in the conclusion is still that of $E_m$. These arguments yield Corollary~\ref{cor:unified-variation-rank}.

Berman established contraction, convergence, and relative positivity for the earlier construction \cite[Proposition~4.12, Theorem~4.14, Corollary~4.16 and \S4E]{Berman2013}. Remark~\ref{rem:normalization} explains its relation to ours.

 Curvature descriptions of deformation are also central to Berndtsson's direct-image theory \cite{Berndtsson2009,Berndtsson2011} and Schumacher's work on canonically polarized families \cite{Schumacher2012,Schumacher2013Erratum}. For general fibers of general type, our two-sided Nakano-curvature comparison and variation statement do not require canonical polarization.
 
 The frozen semipositivity follows from the adjoint direct-image theorem after resolving the base ideal and identifying the complete section spaces \cite[Case Z, Theorem~1.1]{Zou2021}. In the globally generation case, it can be derived from \cite[Theorem~1.2]{Berndtsson2009}.

\subsection{Green--Griffiths bigness conjecture}
Let $f:X\to Y$ be a surjective projective morphism between smooth connected projective varieties, with connected fibers. Assume that its general fiber is of general type and that $f$ has maximal birational variation. In the 2018 version~2 of their paper, Green--Griffiths conjectured the tautological bigness of
\[
 \operatorname{Sym}^{\ell}E_m,
 \qquad \ell\geq P_m(F),
\]
for every $m\geq1$, assuming that $mK_F$ is globally generated for general fibers $F$ \cite[Conjecture~VI.C.30]{GreenGriffiths2018}. The conjecture uses ordinary symmetric powers, without passing to reflexive hulls.

To formulate our result for sheaves that need not be locally free, let
\[
 \mathbb P_q(\mathcal F)
 =\operatorname{Proj}_Y\operatorname{Sym}^{\bullet}\mathcal F
\]
be the projectivization in the quotient convention. For a coherent sheaf $\mathcal F$ of positive rank, its \emph{principal component} is the reduced closure of the projective bundle over a dense locally free locus. We call $\mathcal F$ \emph{tautologically big} if $\mathcal O(1)$ is big on this component. Throughout, $\operatorname{Sym}^{\ell}\mathcal F$ denotes the ordinary coherent symmetric power. These conventions are explained in Section~\ref{sec:bigness}.

\begin{introtheorem}[Tautological bigness]\label{thm:intro-bigness}
Let $f:X\to Y$ be a surjective projective morphism of smooth connected projective varieties, with connected positive-dimensional fibers and $\dim Y>0$. Assume that the general fiber $F$ is of general type and $\operatorname{Var}(f)=\dim Y$. If $m\geq2$, and (G) or (B) holds, then
\[
 \operatorname{Sym}^{\ell}E_m
 \quad\text{is tautologically big for every }\ell\geq1.
\]
\end{introtheorem}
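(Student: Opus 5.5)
The plan is to obtain bigness of $\mathcal O(1)$ on the principal component of $\mathbb P_q(\operatorname{Sym}^\ell E_m)$ from a singular Hermitian metric on the tautological bundle that is semipositive everywhere and strictly positive at a general point. A semipositive current on a projective variety that is a Kähler form near some point has positive volume; via a Boucksom-type criterion, or Demailly's holomorphic Morse inequalities on a resolution of the principal component, this gives bigness.

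\textbf{Step 1: the metric on the smooth locus.} Restrict to a Zariski-open $Y_0\subset Y$ over which $f$ is smooth, $E_m$ is locally free and base change holds. Over $U_m^\circ$, Theorem~\ref{thm:intro-metric}(2) together with $\operatorname{Var}(f)=\dim Y$ shows that $\tau$ has full rank. The pinching bound $K\geq\sigma\tau\otimes I$ then makes $(E_m,G)$ Nakano-positive, in particular Griffiths-positive, at every point of $U_m^\circ$.

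Next I would pass to $\operatorname{Sym}^\ell E_m$ with the induced metric $\operatorname{Sym}^\ell G$. Griffiths positivity is preserved by symmetric powers: for $v=\sum_j e_{i_1}\cdots e_{i_\ell}$, the curvature is the symmetrization of $K\otimes I\otimes\cdots\otimes I$, and the strict lower bound $\sigma\tau$ survives on decomposable directions after normalizing. So $\mathcal O(1)$ on $\mathbb P_q(\operatorname{Sym}^\ell E_m)|_{U_m^\circ}$, with the dual quotient metric, has curvature that is positive in base directions because $\tau$ is Kähler, and positive in fiber directions by the Fubini--Study part. It is therefore a Kähler form over $U_m^\circ$.

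\textbf{Step 2: extension across the bad locus.} I would compare $G$ with the Narasimhan--Simha type metric of Berndtsson--P\u{a}un and P\u{a}un--Takayama, or the Hacon--Popa--Schnell singular metric. Since the fixed point is obtained as a limit of iterates of $T_y$, it should satisfy a bound of the form $G\geq c\,h_{NS}$, up to a constant, locally near the boundary. This bound shows that $\log|\cdot|_{G^*}$ on the tautological bundle is locally bounded above near $Y\setminus Y_0$, so by the Riemann extension theorem for psh functions the curvature current of $\mathcal O(1)$ extends as a closed positive current on $\mathbb P_q(E_m)$.

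For $\operatorname{Sym}^\ell$, I would use the natural morphism from the principal component of $\mathbb P_q(\operatorname{Sym}^\ell E_m)$ to $\mathbb P_q(E_m)$, namely the Veronese over the locally free locus. Under it, $\mathcal O(1)$ pulls back from $\mathcal O(\ell)$ after normalizing the principal component. Working with ordinary symmetric powers rather than reflexive hulls causes no trouble here, since everything is checked on the principal component, which is birational to $\mathbb P_q(E_m)$ over $Y_0$ only when $\ell=1$. I therefore expect to need instead the map $\mathbb P_q(E_m)\dashrightarrow\mathbb P_q(\operatorname{Sym}^\ell E_m)$ given by $[v]\mapsto[v^\ell]$ only as a check on the positive locus, and to construct the metric on $\operatorname{Sym}^\ell$ directly from $\operatorname{Sym}^\ell G$, whose extension follows from the same upper bound.

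\textbf{Main obstacle.} The hard part is the boundary control: showing that the singular extension from Step 2 puts no mass that kills volume. Concretely, I need the extended current $T$ to be positive and to satisfy $\int_{\text{smooth part}}T^{n}>0$, where $n$ is the dimension of the principal component. Since $T$ is a genuine Kähler form on the nonempty open set over $U_m^\circ$, the non-pluripolar product is positive there. Boucksom's theorem, that a class containing a positive current with positive non-pluripolar volume is big, then finishes the argument.

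I would first attempt to verify the comparison $G\geq c\,h_{NS}$ by applying the extremal property of $h_{NS}$ to the fixed-point equation. This is where $m\geq2$ enters, through $\alpha<1$ and the $C^{0,2/m}$ regularity. The $m=1$ counterexample signals that this step genuinely fails without the fixed-point contraction.
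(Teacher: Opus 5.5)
Your Step 1 agrees with the paper. Building the tautological metric on $\operatorname{Sym}^\ell$ directly from the induced metric would also be a workable substitute for the paper's algebraic Lemma~\ref{lem:symmetric-bigness}, once the step below is repaired.

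\textbf{The gap.} It is in Step 2, over the locus where $E_m$ is not locally free. That locus has codimension at least two in $Y$, but its preimage in the principal component can be a divisor. Moreover, $\mathbb P_q(E_m)$ may be reducible and singular there.

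You assert that $G\geq c\,h_{NS}$ makes the tautological weights \eqref{eq:fixed-tautological-weight} bounded above, and then apply Riemann extension. Over the locally free locus this is correct: bounded-above psh dual norms of a frame of $E_m^*$ give $h_{NS}\geq c'h_0$. Over the nonfree locus it fails, because dual sections only see the reflexive hull. What is actually needed is a lower bound $\pi^*G\geq c\,h_0$ for a smooth metric $h_0$ on $V_m=\pi^*E_m/\mathrm{tors}$ over a smooth modification. That bound encodes the vanishing conditions of $E_m$ itself. Griffiths semipositivity together with strict interior positivity cannot supply it.

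The example in Remark~\ref{rem:bigness-boundaries} shows this. Restrict a Fubini--Study metric on $\mathcal O_{\mathbb P^2}(1)$ to $\mathcal I_p\otimes\mathcal O(1)$. The dual norms are psh and the curvature is strictly positive off $p$. Yet on $\operatorname{Bl}_p\mathbb P^2$ the weight of $\mathcal O(H-D)$ is $-\log|w_D|^2+O(1)$, which is unbounded above along $D$. Its positive extension lies in $c_1(\mathcal O(H-D))+[D]$, and $\mathcal O(H-D)$ is not big. Your argument, applied verbatim, would prove that it is.

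\textbf{How the paper closes it.} The missing input comes in three steps.
\begin{enumerate}
\item The Narasimhan--Simha metric has the minimal extension property on the ordinary sheaf $E_m$, with extensions lying in $E_m$ rather than its hull (Proposition~\ref{prop:ns-actual-extension}). The two-sided comparison \eqref{eq:ns-comparison} transfers this to $G$ as a bounded extension property with constant $r^{m-1}$.
\item Lemma~\ref{lem:extension-flattening} converts these $L^2$-bounded extensions into the pointwise bound $\pi^*G\geq c\,h_0$ on $V_m$. It does so through closed-graph continuity of $s\mapsto\pi^*s\bmod\mathrm{tors}$, and this is exactly where the codimension-two vanishing is retained.
\item Boucksom's inequality is applied on the smooth variety $\mathbb P_q(V_m)$. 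The result descends via the exact identity $q_\ell^*\mathcal O(1)=\mathcal O(1)$ of Lemma~\ref{lem:exact-tautological-pullback}, which has no exceptional correction. That lemma also gives $\pi^*\operatorname{Sym}^\ell E_m/\mathrm{tors}\simeq\operatorname{Sym}^\ell V_m$, which your direct route for symmetric powers would need.
\end{enumerate}

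\textbf{Two further corrections.} First, the obstacle you name is not one. The inequality $\operatorname{vol}(L)\geq\int(T_{\mathrm{ac}})^N$ holds for every closed positive current in $c_1(L)$ on a compact K\"ahler manifold, so boundary mass cannot reduce volume. The only issue is that the extended current be positive and lie in the correct class. Second, $m\geq2$ does not enter through the comparison with $h_{NS}$, which is an equality at $m=1$. It enters through $\sigma>0$ in the pinching, which requires the linearized operator $L$ to be nonzero. At $m=1$ one has $L=0$ and $\sigma=0$.
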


Under (G), this proves the conjectural conclusion for $m\geq2$ and replaces $\ell\geq P_m(F)$ by $\ell\geq1$. The alternative (B) includes linear systems with base points. The restriction $m\geq2$ cannot be dropped, even when the general fiber has ample and globally generated canonical bundle.

\label{subsec:intro-level-one}
\begin{introtheorem}[Level-one counterexample]\label{thm:intro-counterexample}
There is a flat surjective projective morphism $f:X\to\mathbb P^1$ with $X$ smooth connected projective, connected fibers, and reduced normal-crossing singular fibers. Its smooth general fiber $S$ is a surface with ample and globally generated $K_S$, and
\[
 \operatorname{Var}(f)=1,\qquad
 f_*\omega_{X/\mathbb P^1}\simeq\mathcal O_{\mathbb P^1}^{\oplus4}.
\]
Thus every positive symmetric power of this direct image fails to be tautologically big.
\end{introtheorem}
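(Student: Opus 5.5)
The plan is to build $X$ as an abelian (bidouble) cover of $T\times\mathbb P^1$, for a fixed surface $T$ (e.g.\ $T=\mathbb P^1\times\mathbb P^1$), branched along divisors of which exactly one moves in a pencil. The varying branch component produces the variation. The data are arranged so that every holomorphic $2$-form of the fiber lives in eigensheaves whose branch data do not move. Then $f_*\omega_{X/\mathbb P^1}$ is a direct sum of pullbacks of fixed vector spaces, hence trivial, and Theorem~\ref{thm:intro-counterexample} follows.

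Concretely, I would use a $(\mathbb Z/2)^2$-cover $\pi:S_t\to T$ with building data $D_1,D_2,D_3$ and line bundles $L_1,L_2,L_3$ satisfying $2L_i=D_j+D_k$. Here $D_1,D_2$ are fixed smooth curves and $D_3=B_t$ moves in a pencil $|B|$. By the standard formulas,
\[
 \pi_*\omega_{S_t}=\omega_T\oplus\bigoplus_i\omega_T(L_i).
\]
First, I choose the degrees so that $h^0(K_T)=0$, $h^0(K_T+L_3)=4$ and $h^0(K_T+L_1)=h^0(K_T+L_2)=0$. The moving divisor $B_t$ enters only in $L_1,L_2$ (through $2L_1=D_2+B_t$, $2L_2=D_1+B_t$), while $L_3$ with $2L_3=D_1+D_2$ is independent of $t$. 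In the total space over $\mathbb P^1$, the $L_3$-eigensheaf of $f_*\omega_{X/\mathbb P^1}$ is then canonically $H^0(T,K_T+L_3)\otimes\mathcal O_{\mathbb P^1}$. The identification of sections is by $\sigma\mapsto\sigma\cdot w_3$, where $w_3$ is the fixed eigenfunction with $w_3^2=d_1d_2$, and it involves no $t$-dependence. This gives $f_*\omega_{X/\mathbb P^1}\simeq\mathcal O^{\oplus4}$ on the smooth locus. Since the singular fibers will be reduced with normal crossings, $f_*\omega_{X/\mathbb P^1}$ is locally free and compatible with base change there, so the trivialization extends across them. Second, $K_{S_t}=\pi^*(K_T+L_3)$ holds up to the usual ramification bookkeeping, since $K_S=\pi^*(K_T+\tfrac12(D_1+D_2+D_3))$ when $L_1+L_2+L_3$ is suitably balanced. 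I therefore tune the degrees so that $\pi^*(K_T+L_3)$ is ample and globally generated. The eigen-sections coming from $H^0(K_T+L_3)$ must generate on the ramification locus; failing that, I would also allow one eigenspace with sections from $\omega_T$ vanishing only along branch curves, and recheck. Ampleness follows once $K_T+L_3$ is ample, since $\pi$ is finite.

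Third, $\operatorname{Var}(f)=1$: the moduli of $B_t\subset T$ vary in the pencil, and the cover determines the branch divisor up to $\operatorname{Aut}(T)$ and the finite Galois group. A generic pencil is not contained in an $\operatorname{Aut}(T)$-orbit, so $S_t$ are pairwise non-isomorphic for general $t$. Fourth, I take $X$ to be the normalization of the cover of the blow-up of $T\times\mathbb P^1$ along the base locus $D\cap B_t$ data, with a Lefschetz-type generic pencil. Tangencies of $B_t$ with $D_1\cup D_2$ are avoided by genericity, and nodal $B_t$ give $A_1$-type double points resolved by one more blow-up, yielding reduced normal-crossing fibers and smooth $X$. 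Finally, the trivial bundle $\mathcal O^{\oplus4}$ has $\operatorname{Sym}^\ell$ trivial, whose tautological bundle on $\mathbb P^1\times\mathbb P^{N}$ is pulled back from $\mathbb P^N$ and hence not big.

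The main obstacle is the numerical choice in step one. It must simultaneously give vanishing $h^0$ in the two moving eigensheaves, exactly $4$ sections in the fixed one, and global generation of $K_S$ on the ramification curves. I would first test $T=\mathbb P^1\times\mathbb P^1$ with bidegree data in small ranges. If $\mathbb P^1\times\mathbb P^1$ fails, I would use a $\mathbb Z/3$- or $(\mathbb Z/2)^3$-cover to gain an extra fixed eigensheaf carrying the sections.
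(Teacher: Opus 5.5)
There is a genuine gap. Your mechanism for making the canonical sections constant is incompatible with global generation of $K_{S_t}$, and no choice of degrees repairs this.

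\textbf{Global generation fails.} The summand $\omega_T(L_3)$ of $\pi_*\omega_{S_t}$ consists of eigenforms for the character with $2L_3=D_1+D_2$. That character is trivial on $\sigma_3$, and indeed your identification $\sigma\mapsto\sigma\cdot w_3$ produces $\sigma_3$-invariant forms. Near a general point of $R_3=\pi^{-1}(B_t)_{\mathrm{red}}$, take coordinates with $\sigma_3(x,y)=(-x,y)$. An invariant form $h\,dx\wedge dy$ has $h$ odd in $x$, so it vanishes on $R_3$.

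Globally, $\pi^*L_3=\mathcal O_{S_t}(R_1+R_2)$, so $K_{S_t}=\pi^*(K_T+L_3)+R_3$. Under your vanishing assumptions this gives $|K_{S_t}|=\pi^*|K_T+L_3|+R_3$. Your ``ramification bookkeeping'' is therefore a fixed component of $|K_{S_t}|$. It is nonempty, because $B_t$ is the only source of variation.

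Your fallbacks hit the same obstruction:
\begin{itemize}
\item the $\omega_T$-summand vanishes along every ramification curve;
\item in any abelian cover, $\omega_T(L_\chi)$ is independent of a branch component exactly when $\chi$ is trivial on its inertia group, which is exactly when those forms vanish along its ramification.
\end{itemize}
So some canonical sections must lie in an eigensheaf that does involve the moving branch data, yet be constant for a different reason.

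The paper gets this reason from a fixed abelian surface. It takes $S_t=(C_t\times H)/\langle\sigma\times\iota\rangle$ with $C_t$ moving in a pencil on $A$. The anti-invariant canonical forms of $C_t$ are restrictions of $H^0(\Omega^1_A)$. Relative adjunction and $H^1(A,L)=0$ make them a global frame over all of $B$. The conormal surjection $\Omega^1_A|_{C_t}\to K_{C_t}$ makes them generate $K_{C_t}$. Read as a bidouble cover of $\Gamma_t\times(H/\iota)\simeq\mathbb P^1\times\mathbb P^1$, half of the paper's sections lie in an eigensheaf whose branch lines move. They stay constant only because the intermediate curve $C_t/\langle(-1,1)\rangle\simeq E_b$ does not vary.

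\textbf{The singular fibers are also wrong as stated.} Your step four fails for a Lefschetz-type pencil.
\begin{itemize}
\item Such a pencil has nodal members. Tangencies with $D_1\cup D_2$ cannot be removed by genericity once the pencil cuts a moving series of degree at least two on some component (Riemann--Hurwitz).
\item At these points the total space is smooth, but the fiber acquires an isolated ordinary double point. Locally this is $t=z^2-xy$ for a node and $t=z_1^2-z_3^2-s^2$ for a tangency.
\item The Picard--Lefschetz monodromy on $H^2$ is then a reflection with eigenvalue $-1$. So no model over the same base has reduced normal-crossing fibers. Blowing up the point gives an exceptional component of multiplicity two, not a reduced fiber.
\item Members through points of $D_1\cap D_2$ produce $\tfrac12(1,1,1)$ quotient singularities of the total space, which you do not address.
\item The incidence divisor has class $B\boxtimes\mathcal O_{\mathbb P^1}(1)$. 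Hence $D_j\times\mathbb P^1+\mathcal B$ is not $2$-divisible, and the global bidouble cover does not exist without a base change.
\end{itemize}
The paper avoids these problems because its moving objects are curves $C_t\subset A$. Their nodes give $t-t_0=xy$ in the threefold $\mathcal C\times H$, which is already normal crossing. The $\tau$-fixed nodes are handled by blowing up before taking the quotient.
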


The construction combines a pencil on a fixed abelian surface with a diagonal quotient. The resulting surfaces vary, but the quotient selects a constant part of the canonical sections. The calculation over the complete base gives a trivial direct image, so the failure of bigness persists after the singular fibers are included. Since $P_1(S)=4$, the example also lies in the conjectural range $\ell\geq4$. Its Prym and product-quotient geometry has antecedents in Mumford \cite[\S7]{Mumford1974}, Ciliberto--Pardini--Tovena \cite[Example~3.5 and Theorem~6.1]{CilibertoPardiniTovena2000}, and Bauer--Catanese \cite[Proposition~2.3]{BauerCatanese2004}. The full construction is given in Theorem~\ref{thm:level-one-counterexample}. The example shows that canonical direct-image positivity need not reflect all birational variation.

For Theorem~\ref{thm:intro-bigness}, we use a metric that is strictly Nakano positive in the interior and admits a semipositive singular extension. Comparison with the Hermitian Narasimhan--Simha metric and the minimal extension property of Hacon--Popa--Schnell \cite{HaconPopaSchnell2018} give a locally uniform lower bound on
\[
 V_m=\pi^*E_m/\mathrm{tors}
\]
after a smooth projective modification $\pi:\widetilde Y\to Y$ for which $V_m$ is locally free. The dual plurisubharmonic norms then extend to $V_m$. The quotient metric is a metric on the exact pullback of the tautological line bundle on the principal component, without an exceptional-divisor correction. Its curvature is strictly positive over a nonempty Zariski-open subset, which gives bigness. This is the extension argument in Theorem~\ref{thm:actual-sheaf-extension} and the proof of Theorem~\ref{thm:prescribed-bigness}.

Taking a reflexive hull may remove codimension-two vanishing conditions and change tautological bigness, as Remark~\ref{rem:bigness-boundaries} illustrates. When the direct image is locally free, bigness conclusions in overlapping settings already follow from classical determinant bigness, Cao--P\u{a}un's transfer argument \cite[Theorem~3.4 and Lemmas~5.3--5.4]{CaoPaun2017}, and P\u{a}un--Takayama's metric criterion \cite{PaunTakayama2018}. The fixed-point proof gives, in addition to the bigness conclusion, a curvature tensor with explicit determinant bounds and geometric null directions, together with extension on the coherent direct image.

Uniform pluricanonical birationality \cite[Theorem~1.1]{HaconMcKernan2006} makes (B) available for every sufficiently large $m$, with a bound depending only on the fiber dimension. Under the hypotheses of Theorem~\ref{thm:intro-bigness}, this gives tautological bigness of every positive symmetric power for all such $m$ (Corollary~\ref{cor:all-large-levels}). If $f$ is smooth everywhere, then $\det E_m$ is big and every $\operatorname{Sym}^{\ell}E_m$ is Viehweg big (Corollary~\ref{cor:smooth-viehweg-bigness}).

\subsection{Viehweg's ampleness theorem}
Let $f:X\to C$ be a surjective morphism from a smooth connected projective variety to a smooth connected projective curve, with connected fibers. The metric extension applies directly to quotient bundles: their determinant currents are positive and have total mass equal to their degrees. This gives nefness of
\[
 f_*\omega_{X/C}^{\otimes j}\qquad(j\geq1)
\]
without an assumption on the singular fibers (Theorem~\ref{thm:curve-unweighted-nef}). The proof uses $G$ for $j\geq2$ and the canonical $L^2$ metric for $j=1$.

For ampleness, we must distinguish positive determinant degree from curvature mass where $G$ is smooth: the degree may include mass at singular fibers, where determinant pinching does not apply. To prove ampleness, we therefore use the following hypotheses, which give logarithmic boundary growth and exclude such point masses.

\begin{introtheorem}[Ampleness or flatness]\label{thm:intro-curves}
For the morphism $f:X\to C$ above, let $\Delta\geq0$ be a horizontal rational divisor such that $(X,\Delta)$ is klt and $(X,\Delta+X_c)$ is log canonical for every scheme fiber $X_c=f^*c$, with its multiplicities. Fix $m\geq2$ with $m\Delta$ integral, put $E_m=f_*\mathcal O_X(m(K_{X/C}+\Delta))$, and assume $E_m\ne0$. Then $E_m$ is nef, and exactly one of the following holds:
\begin{enumerate}
\item $\deg E_m>0$ and $E_m$ is ample;
\item $\deg E_m=0$ and its fixed-point metric $G$ extends smoothly as a Hermitian-flat metric on $E_m$.
\end{enumerate}
\end{introtheorem}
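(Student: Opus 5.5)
We will prove Theorem~\ref{thm:intro-curves} by combining the nefness argument from quotient bundles with the determinant pinching of Theorem~\ref{thm:intro-metric}(1), and by a boundary analysis near the finitely many singular fibers.

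\textbf{Step 1: nefness.} Over $C$ every torsion-free sheaf is locally free, so $E_m$ is a vector bundle. On the smooth locus $C_0\subset C$ of $f$ (shrunk so that base change holds), the fixed-point metric $G$ is defined. By comparison with the Narasimhan--Simha metric and the Hacon--Popa--Schnell minimal extension property, $G$ extends as a singular Hermitian metric with semipositive curvature in the sense of Griffiths. For any quotient bundle $E_m\to Q\to0$, the quotient metric on $\det Q$ then has a positive curvature current. Its total mass equals $\deg Q$, because the metric is bounded below near the punctures; this is the argument of Theorem~\ref{thm:curve-unweighted-nef}. Hence $\deg Q\ge0$ for every quotient, and $E_m$ is nef.

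\textbf{Step 2: excluding point masses.} The log canonical hypothesis on $(X,\Delta+X_c)$ should give logarithmic growth of $G$ near each $c\in C\setminus C_0$. Concretely, for a local coordinate $t$ centered at $c$ and a frame of $E_m$, I expect
\[
 \log\|s\|_G^2=O\bigl(\log(-\log|t|)\bigr).
\]
I would derive this by writing the $L^{2/m}$-type integrals defining $T_y$ on a log resolution. The lc condition makes the fiber integrals $\int_{X_t}|s|^{2/m}$ grow at most like a power of $-\log|t|$, with no power of $|t|$. The fixed-point equation then propagates this estimate to $G$ through the Thompson-metric contraction: a two-sided bound on the integral operator with constant $\alpha=(m-1)/m$ controls the fixed point. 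Such growth implies that the Lelong numbers of $\log\det G$ vanish, so the current $i\operatorname{Tr}\Theta_{E_m,G}$ puts no mass at points. Therefore
\[
 \deg E_m=\int_{C_0}\tau .
\]
\textbf{This is the main obstacle:} obtaining uniform logarithmic bounds from the lc hypothesis. I would try to reduce to semistable reduction, which is available after a base change of curves, and then use the known asymptotics of fiber integrals.

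\textbf{Step 3: the dichotomy.} Suppose first that $\deg E_m>0$. Then $\tau\not\equiv0$ on $C_0$. Since $U_m^{\mathrm{reg}}$ is Zariski open in a curve and $\tau$ is real-analytic there, $\tau>0$ off a finite set. Determinant pinching gives $K\ge\sigma\tau\otimes I$, so every quotient $Q$ gets curvature mass from the second fundamental form bound. For a quotient line bundle $Q$, we have
\[
 \deg Q\ \ge\ \int_{C_0}\sigma\tau>0,
\]
where $\sigma$ is locally uniform and the integral is positive over a compact set where $\tau>0$. For higher-rank quotients, apply the same bound to $\det Q$ using $\operatorname{Tr}_Q$. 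Then every quotient has positive degree, and Hartshorne's criterion on curves gives ampleness.

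Suppose instead that $\deg E_m=0$. Then $\int_{C_0}\tau=0$, so $\tau\equiv0$. The upper pinching bound $K\le[r-(r^2-1)\sigma]\tau\otimes I$ gives $K\equiv0$ on $U_m^{\mathrm{reg}}$, so $G$ is flat there. On $U_m^{\mathrm{reg}}$, $G$ is a flat metric and hence has unitary monodromy. The logarithmic growth from Step~2 forces the local monodromy around each puncture to be trivial, and forces the flat sections to extend holomorphically as sections of $E_m$. So $G$ extends smoothly across the punctures as a Hermitian-flat metric.

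Finally, the two cases are mutually exclusive because $\deg E_m\ge0$ and they are distinguished by its sign.
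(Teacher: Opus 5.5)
Your outline has the same architecture as the paper's proof: quotient metrics whose determinant currents have mass $\deg Q$, no boundary atoms, determinant pinching in positive degree, and removability of a flat metric in degree zero. However, it stops exactly at the step that carries the theorem. That step is the two-sided estimate $c_0I\le G(t)\le C_0(1+\log|t|^{-1})^{nm}I$ in a frame of $E_m$ near every $c\in C\setminus U$ (Proposition~\ref{prop:curve-boundary}), and it is the only place the lc hypothesis enters.

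For the upper bound, the missing mechanism is the valuative form of log canonicity. On a log resolution of $\Delta+X_c+D_s$, write $t=z_1^{b_1}\cdots z_k^{b_k}$ and $s=u\prod_i z_i^{\ell_i}(dz_1\wedge\cdots\wedge dz_{n+1})^{\otimes m}/(dt)^m$. Log canonicity of $(X,\Delta+X_c)$ then gives $1+\ell_i/m\ge b_i$ for every vertical divisor. Eliminating $z_k$ costs the factor $|z_k/(b_kt)|^2$. In the variables $x_i=-\log|z_i|$ the remaining integrand is at most $e^{-2\sum b_ix_i}=|t|^2$, which cancels $|t|^{-2}$ and leaves the volume of the simplex $\sum_ib_ix_i=\log|t|^{-1}$. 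Hence $\int_{X_t}|s|^{2/m}_\Delta\le C(1+\log|t|^{-1})^{n}$, and the balanced comparison $G(s,s)\le\mathcal N_m(s)$ transfers this directly to $G$.

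Semistable reduction does not let you skip this. After a ramified base change, the new direct image and the pullback of $E_m$ are a priori different lattices over the punctured disk, and their frames can differ by powers of the new parameter. Showing that they coincide is again an lc discrepancy computation. Without it, an estimate in the semistable frame says nothing about growth in a frame of $E_m$.

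The lower bound $G\ge c_0I$ is not supplied either. Your Step~2 does not address it, and your Step~1 invokes a Hacon--Popa--Schnell extension for $G$ that the paper establishes only for $\Delta=0$ (Theorem~\ref{thm:actual-sheaf-extension}). The paper proves the lower bound for pairs directly:
\begin{enumerate}
\item Since $\Delta$ is horizontal, lc forces every scheme fiber to be generically reduced, hence reduced, being Cohen--Macaulay.
\item So the image of $E_m\otimes\mathbb C(c)\hookrightarrow H^0(X_c,L_m|_{X_c})$ is separated by finitely many evaluations at smooth points of $X_c$ off $\Delta$.
\item Compactness of the unit sphere then gives $\int_{X_t}|s_v|^{2/m}_\Delta\ge c'|v|^{2/m}$.
\item The comparison $G(s,s)\ge r^{-(m-1)}\mathcal N_m(s)$ finishes.
\end{enumerate}

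This bound is indispensable in the degree-zero case, because logarithmic upper growth alone does not force trivial monodromy. On a trivial line bundle, $G=|t|^{2\lambda}$ with $0<\lambda<1$ is flat and bounded, yet its parallel section $t^{-\lambda}e$ has monodromy $e^{-2\pi i\lambda}$. In Lemma~\ref{lem:flat-removability}, it is precisely $G\ge cI$ that excludes exponents $\lambda_j\in(0,1)$ and makes the extended frame matrix invertible at the origin.

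A smaller point: your claim that $\tau$ is real-analytic is unproved but also unnecessary. The inequality $\int_U\tau>0$ already gives an open set on which $\tau>0$ and on which $\sigma$ has a positive minimum.
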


The boundary estimates identify the degree of $E_m$ with the determinant curvature mass on the regularity locus. If the degree is positive, integrating the pinching inequality gives a positive lower bound for every quotient slope. If it is zero, semipositivity forces the curvature to vanish, and the same boundary estimates give smooth extension of the flat metric. This proves Theorem~\ref{thm:curve-dichotomy} and identifies the metric in both alternatives.

Under the same pair hypotheses, $\deg E_k>0$ for some $k\geq1$ with $k\Delta$ integral implies that $E_{qk}$ is ample for every positive integer $q$ with $qk\geq2$ (Corollary~\ref{cor:curve-multiples}). For $\Delta=0$ and semistable $f$, this conclusion, together with nefness in general situations, gives Viehweg's ampleness theorem in the form stated by Fujino \cite[Theorem~4.10]{FujinoSemipositivity2015}. Corollary~\ref{cor:curve-fujino} thus provides the direct analytic proof requested in his Problem~4.11. Same-level ampleness and determinant estimates occur in Viehweg--Zuo \cite[Proposition~2.7]{ViehwegZuo2001}, and degree-zero flatness has predecessors in Cao--P\u{a}un \cite[Theorem~5.2]{CaoPaun2017} and Hacon--Popa--Schnell \cite[Theorem~26.1 and Corollary~27.2]{HaconPopaSchnell2018}. Our conclusions here all follow from the curvature and boundary behavior of a single metric.

\subsection{Numerical dimension and vanishing}
The curvature-rank formula also describes positivity when variation is not maximal. For a smooth projective family over a projective base, continuity of $G$ gives global determinant and tautological metrics with continuous local weights. Their numerical dimensions are determined by the curvature rank on $U_m^\circ$.

\begin{introtheorem}[Numerical dimension and vanishing]\label{thm:intro-vanishing}
Let $f:X\to Y$ be a smooth surjective projective morphism of smooth connected projective varieties, where $d=\dim Y\geq1$. Assume that its fibers are connected, positive-dimensional, and of general type. Fix $m\geq2$ such that $E=E_m$ has positive rank $r$ and (G) or (B) holds on a general fiber. Put $M=\det E$ and $v=\operatorname{Var}(f)$, and let $\sigma_*>0$ be the uniform pinching constant of Theorem~\ref{thm:intro-metric}, chosen as in \eqref{eq:global-full-moment}.
\begin{enumerate}
\item The line bundles $M$ and $\mathcal O_{\mathbb P_q(E)}(1)$ are nef, and
\[
 \nu(M)=v,\qquad
 \nu\!\left(\mathcal O_{\mathbb P_q(E)}(1)\right)=r-1+v.
\]
\item For integers $a\geq1$ and $b\geq0$ satisfying $b<a\sigma_*$, and every $P\in\operatorname{Pic}^0(Y)$,
\[
 H^q\!\left(Y,K_Y\otimes\operatorname{Sym}^aE
                  \otimes M^{-b}\otimes P\right)=0
 \qquad(q>d-v).
\]
\end{enumerate}
\end{introtheorem}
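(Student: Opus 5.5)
Since $f$ is smooth over the compact base $Y$, the whole base is $Y_0=Y$. Theorem~\ref{thm:intro-metric} then applies with $\Delta=0$, and it gives a uniform pinching constant $\sigma_*$ on $U_m^{\mathrm{reg}}$.

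\textbf{Global metrics.} By the construction in Section~\ref{sec:construction}, $G$ is a global metric on $E$ with continuous (indeed $C^{0,2/m}$) local weights. It is smooth on the Zariski-open set $U_m^{\mathrm{reg}}$, and there it is Nakano semipositive by the resolvent formula. I first show that $\det G$ and the induced quotient metric on $\mathcal O_{\mathbb P_q(E)}(1)$ have positive curvature currents on all of $Y$. These metrics are continuous and semipositive off a proper analytic set, and bounded plurisubharmonic weights extend across analytic sets, so this follows. Continuous psh weights give nefness, as a limit of smooth metrics with arbitrarily small negative part by Demailly regularization.

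\textbf{Numerical dimension.} For continuous-weight metrics, Monge--Amp\`ere products $(i\Theta)^k$ are well defined in the Bedford--Taylor sense. They put no mass on analytic sets and compute the intersection numbers $M^k\cdot A^{d-k}$ for an ample $A$. On $U_m^\circ$, part (2) of Theorem~\ref{thm:intro-metric} gives $\operatorname{rank}\tau=v$.
\begin{itemize}
\item Since $\tau$ is a semipositive form of rank $v$ on a dense open set, $\tau^{v}\wedge\omega_A^{d-v}>0$ there, so $M^v\cdot A^{d-v}>0$.
\item The Bedford--Taylor product $\tau^{v+1}$ vanishes on $U_m^{\mathrm{reg}}$ and puts no mass on the complement, so $M^{v+1}\cdot A^{d-v-1}=0$.
\end{itemize}
Hence $\nu(M)=v$. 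For $\mathcal O(1)$, the curvature of the quotient metric at a point $[\xi]$ splits as a fiberwise Fubini--Study part of rank $r-1$ plus the horizontal form $K(\cdot,\xi)$. The pinching $\sigma\tau\otimes I\le K$ and $\ker K=(\ker\tau)\otimes E$ show that the horizontal kernel is exactly $\ker\tau$. So the curvature rank is $r-1+v$ on the preimage of $U_m^\circ$, and the same Monge--Amp\`ere argument gives $\nu=r-1+v$.

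\textbf{Vanishing.} I write $\operatorname{Sym}^aE=\pi_*\mathcal O(a)$ on $\mathbb P=\mathbb P_q(E)$, so that
\[
K_Y\otimes\operatorname{Sym}^aE\otimes M^{-b}\otimes P\simeq \pi_*\bigl(K_{\mathbb P}\otimes\mathcal O(a+r)\otimes\pi^*(M^{-1-b}\otimes P)\bigr),
\]
using $K_{\mathbb P/Y}=\mathcal O(-r)\otimes\pi^*\det E$. Higher direct images vanish, so cohomology is preserved. I then need a metric on $L=\mathcal O(a+r)\otimes\pi^*M^{-1-b}$ that is semipositive with curvature of rank $\ge r-1+v$ generically. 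The horizontal curvature of $L$ at $[\xi]$ is $(a+r)K(\xi,\xi)/|\xi|^2-(1+b)\tau$. The lower pinching bound gives at least
\[
\bigl((a+r)\sigma_*-1-b\bigr)\tau.
\]
This is where the hypothesis $b<a\sigma_*$ enters, and I expect the bookkeeping to be delicate. The shift $r$ and the extra $-\tau$ coming from $K_{\mathbb P/Y}$ should combine with the upper bound $K\le[r-(r^2-1)\sigma_*]\tau$. Equivalently, one uses the trace of $\operatorname{Sym}$-curvature rather than $\det$, and this should absorb the $r\sigma_*-1\le0$ discrepancy. If that fails, I would instead twist by the determinant-normalized metric $G\cdot(\det G)^{-1/r}$ on $\mathcal O(1)$. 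The fiber direction stays strictly positive.

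Then I apply a Kawamata--Viehweg / Nadel type vanishing for continuous-weight nef line bundles with numerical dimension $n'$, for example Demailly's or Cao's vanishing $H^q(K\otimes L)=0$ for $q>\dim-\nu$. This uses trivial multiplier ideal, since the weights are continuous, and $P$ is absorbed as a flat unitary twist. Here $\dim\mathbb P-(r-1+v)=d-v$, which gives vanishing for $q>d-v$. The main obstacle is proving that $\nu(L)\ge r-1+v$ holds strictly under $b<a\sigma_*$ with singular-but-continuous metrics, which requires the Monge--Amp\`ere positivity computation above for the twisted metric.
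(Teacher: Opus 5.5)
Your part (1) is essentially the paper's argument: continuous psh weights extended across $Y\setminus U_m^{\mathrm{reg}}$, Bedford--Taylor products that charge no analytic set, and horizontal kernel $\ker\tau$ for the quotient curvature. The paper checks nefness on curves rather than by regularization, which is immaterial.

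Part (2), however, has a genuine gap, exactly where you flag it. Passing to $\mathbb P_q(E)$ forces the twist $K_{\mathbb P/Y}^{-1}=\mathcal O(r)\otimes\pi^*M^{-1}$. With the metric induced by $G$, its horizontal curvature at a quotient point is $rh-\tau$, where $h$ is the horizontal curvature of $\mathcal O(1)$. Summing $h$ over an orthonormal basis gives $\tau$, so $rh-\tau$ has fiberwise average zero. It is therefore indefinite unless $K=\frac{\tau}{r}\otimes I$. Pinching gives only $rh-\tau\geq(r\sigma_*-1)\tau$, and $1-r\sigma_*\geq1-\alpha=1/m$. Hence your metric on $L$ is controlled only for $b<(a+r)\sigma_*-1=a\sigma_*-(1-r\sigma_*)$, which is strictly smaller than the stated range. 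For $b=0$, $a=1$ you would need $\sigma_*>1/(r+1)$, and nothing provides that.

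Your proposed repairs do not help. The upper pinching bound controls curvature from above, not below. Rewriting $L$ with $G(\det G)^{-1/r}$ produces the same metric on $L$ and the same coefficient $(a+r)\sigma_*-1-b$. Nor is the issue only the choice of metric. Nefness of $L$ needs $(a+r)\deg Q\geq(1+b)\deg(E|_C)$ for every quotient line $Q$ of $E|_C$ over curves $C$. The pinching only gives $\deg Q\geq\sigma_*\deg(E|_C)$.

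The missing idea is that Theorem~\ref{thm:intro-metric} gives a \emph{Nakano} lower bound, so you need not go to $\mathbb P_q(E)$ at all. That detour is what one uses to turn Griffiths positivity into Nakano positivity, and it is paid for by the $\det E$ twist. The paper instead puts the induced metric on $V=\operatorname{Sym}^aE\otimes M^{-b}\otimes P$. Nakano lower bounds survive tensoring with identities and restriction to the parallel symmetric summand, so $i\Theta_V\geq_{\mathrm{Nak}}(a\sigma_*-b)\eta\otimes I_V$ on $U_m^{\mathrm{reg}}$. On $V$-valued $(d,q)$-forms this gives
\[
 \langle[i\Theta_V,\Lambda_\omega]u,u\rangle\geq(a\sigma_*-b)\sum_{|J|=q}\Bigl(\sum_{j\in J}\lambda_j\Bigr)|u_J|^2 .
\]
This is positive on $U_m^\circ$ for $q>d-v$, because exactly $d-v$ eigenvalues of $\eta$ vanish there. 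Since the metric is only continuous across $Y\setminus U_m^{\mathrm{reg}}$, you then need an $L^2$ argument such as Lemma~\ref{lem:bounded-metric-vanishing}. It has three steps:
\begin{enumerate}
\item approximate solutions on $U_m^{\mathrm{reg}}$ with errors tending to zero;
\item extension of $\bar\partial v_\delta=u-r_\delta$ across the analytic set, using uniform equivalence with a smooth metric;
\item closed range of $\bar\partial$ on compact $Y$.
\end{enumerate}
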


The first conclusion extends to the tautological line bundle of every $\operatorname{Sym}^{\ell}E$: its numerical dimension is $\operatorname{rank}(\operatorname{Sym}^{\ell}E)-1+v$. The vertical Fubini--Study curvature accounts for the projective-fiber dimension, and the horizontal curvature contributes exactly $v$ directions. Bedford--Taylor products of the continuous local potentials compute the ordinary intersection numbers and put no mass on the excluded analytic subset. This gives both bounds needed for the exact numerical dimension (Corollaries~\ref{cor:numerical-dimension} and \ref{cor:tautological-numerical-dimension}).

The vanishing proof begins with the induced Nakano estimate on symmetric powers, whose coefficient is $a\sigma_*-b>0$. Its commutator on forms of bidegree $(d,q)$ is positive definite almost everywhere for $q>d-v$. We construct approximate $L^2$ solutions on $U_m^{\mathrm{reg}}$, extend their equations across the complement, and apply the closed range theorem, see Lemma~\ref{lem:bounded-metric-vanishing}. Global continuity of the metric is essential to this argument.

Sections~\ref{sec:construction} and \ref{sec:curvature} establish the fixed-point construction, its regularity, and the curvature estimates. Section~\ref{sec:variation} relates the curvature kernels to variation. Section~\ref{sec:bigness} proves bigness and constructs the level-one counterexample, while Section~\ref{sec:curves} treats positivity over curves. Section~\ref{sec:numerical-vanishing} proves the numerical dimension and vanishing theorems. Appendix~\ref{app:refined-regularity} contains the proof of the refined regularity criterion.

\section{Fixed-point metrics and regularity}
\label{sec:construction}

\subsection{Section spaces and integral conventions}
Fix an integer $m\geq2$ and put $\alpha=(m-1)/m$ and $\beta=2/m$. Initially, $Z$ is a connected smooth projective variety of dimension $n$, $\Delta=\sum_j\delta_jD_j$ is an effective rational divisor, $(Z,\Delta)$ is klt, and $m\Delta$ is integral. Set
\[
 V=H^0\bigl(Z,m(K_Z+\Delta)\bigr),\qquad r=\dim V>0.
\]
The case $\Delta=0$ is included throughout. Write $\zeta=dz_1\wedge \cdots\wedge dz_n$ and $e_m=\zeta^{\otimes m}\prod_jd_j^{-m\delta_j}$, where $d_j$ is a local equation of $D_j$. For a basis $e=(s_1,\ldots,s_r)$ write $s_i=f_i e_m$ and $F=(f_1,\ldots,f_r)$. Our volume convention and boundary measure are
\[
 dV_n=\prod_{j=1}^n\frac i2 dz_j\wedge d\bar z_j,
 \qquad d\mu_\Delta=\prod_j|d_j|^{-2\delta_j}dV_n.
\]
These chartwise expressions define the densities used below. Hermitian forms are conjugate linear in the first argument, with squared norm $A(s,s)=c^*Ac$ for $s=ec$ and a positive definite matrix $A$.

Set
\begin{equation}\label{eq:hilb-fs}
 B_A=FA^{-1}F^*,\qquad \varphi_A=\frac1m\log B_A,
 \qquad T(A)=\int_Z F^*F B_A^{-\alpha}\,d\mu_\Delta.
\end{equation}
The function $\varphi_A$ is the local weight of the Fubini--Study metric on the rational line bundle $K_Z+\Delta$. A fiber vector is represented locally by the formal expression $\xi=t e_m^{1/m}$, with $\xi^{\otimes m}=t^m e_m$. A different formal root multiplies $t$ by an $m$-th root of unity and leaves its absolute norm unchanged. No global root line bundle is assumed. The squared norm at $x$ is
\[
 |\xi|_{\mathrm{FS}(A),x}^2
 =\inf\{A(s,s)^{1/m}:s(x)=\xi^{\otimes m}\}.
\]
At a base point the infimum is infinite for $\xi\ne0$. In the integral we extend $F^*F B_A^{-\alpha}$ by zero at $F=0$. Divisors have measure zero, so no pointwise value of their weight is needed.

For a compact set $\mathcal A$ of positive matrices,
\begin{equation}\label{eq:kernel-domination}
 \|F^*F B_A^{-\alpha}\|
 \leq C_{\mathcal A}|F|^{2/m}
 \leq C_{\mathcal A}\sum_i|f_i|^{2/m},\qquad A\in\mathcal A.
\end{equation}
The klt hypothesis makes the right side locally integrable against $d\mu_\Delta$. On a log resolution, the boundary density pulls back to a smooth factor times $\prod|w_E|^{2(k_E-b_E)}dV_n$, where $k_E$ is the Jacobian order, $b_E=\operatorname{ord}_E\Delta$, and $k_E-b_E>-1$ by klt. Since the coefficients $f_i$ are locally bounded, this proves integrability even for a boundary that is not SNC. A nonzero section is nonzero on an open subset, so $T(A)$ is positive definite.

Under a basis change $e'=eC$, we have $F'=FC$, $A'=C^*AC$, $B_{A'}'=B_A$ and $T'(A')=C^*T(A)C$. Replacing the canonical frame by $a\zeta$ and the boundary equations by $d'_j=u_jd_j$ changes $e_m$ to $\lambda^m e_m$, where locally $\lambda=a\prod u_j^{-\delta_j}$. Thus $F'=\lambda^{-m}F$ and $d\mu'_\Delta=|\lambda|^2d\mu_\Delta$. The integral is unchanged because
\begin{equation}\label{eq:frame-cancellation}
 |\lambda|^{-2m}|\lambda|^{2m\alpha}|\lambda|^2=1.
\end{equation}
Only the absolute value of a choice of the rational power is involved.

\subsection{The fixed-point metric and comparison estimates}
On the positive Hermitian cone define the Thompson distance and the order norm on Hermitian tangent matrices by
\[
 d_{\mathrm{Th}}(A,B)=\inf\{t\geq0:e^{-t}A\leq B\leq e^tA\},
 \qquad \|S\|_{A,\infty}=\inf\{c\geq0:-cA\leq S\leq cA\}.
\]

\begin{theorem}[Fixed point and convergence]\label{thm:fixed-point-construction}
The operator $T$ in \eqref{eq:hilb-fs} is order preserving, homogeneous of degree $\alpha$, and $\alpha$-contracting for $d_{\mathrm{Th}}$. It has a unique positive definite fixed point $G$. For any starting form $A_0>0$ and $A_{j+1}=T(A_j)$,
\begin{equation}\label{eq:metric-convergence}
 d_{\mathrm{Th}}(A_j,G)
 \leq\alpha^j d_{\mathrm{Th}}(A_0,G)
 \leq m\alpha^j d_{\mathrm{Th}}(A_0,A_1).
\end{equation}
If $d_{\mathrm{Th}}(A,B)\leq t$, then
\begin{equation}\label{eq:fs-convergence}
 |\varphi_A-\varphi_B|\leq t/m
\end{equation}
off the common base locus. The corresponding two extended-real weight inequalities hold everywhere.
\end{theorem}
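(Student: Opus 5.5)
The argument reads every claim off the definition $T(A)=\int_Z F^*F B_A^{-\alpha}\,d\mu_\Delta$ and then applies the contraction mapping theorem on the Thompson-complete cone.

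\emph{Order preservation and homogeneity.} If $A\leq B$ then $A^{-1}\geq B^{-1}$, so pointwise $B_A=FA^{-1}F^*\geq B_B$. Since $t\mapsto t^{-\alpha}$ is decreasing on $(0,\infty)$, we get $B_A^{-\alpha}\leq B_B^{-\alpha}$ wherever $F\neq0$. Because $F^*F\geq0$ is multiplied by a nonnegative scalar, $T(A)\leq T(B)$. Homogeneity is immediate: $B_{cA}=c^{-1}B_A$, hence $T(cA)=c^{\alpha}T(A)$.

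\emph{Contraction.} Suppose $e^{-t}A\leq B\leq e^tA$. Monotonicity and homogeneity give $e^{-\alpha t}T(A)\leq T(B)\leq e^{\alpha t}T(A)$, so $d_{\mathrm{Th}}(T(A),T(B))\leq\alpha\, d_{\mathrm{Th}}(A,B)$. Finiteness and positive definiteness of $T(A)$ come from the integrability bound \eqref{eq:kernel-domination} and the nonvanishing remark made earlier. Thompson's theorem makes the cone complete under $d_{\mathrm{Th}}$, so the Banach fixed point theorem yields a unique fixed point $G>0$. The iterates satisfy $d_{\mathrm{Th}}(A_j,G)\leq\alpha^jd_{\mathrm{Th}}(A_0,G)$. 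The standard a priori estimate gives
\[
d_{\mathrm{Th}}(A_0,G)\leq d_{\mathrm{Th}}(A_0,A_1)+d_{\mathrm{Th}}(A_1,G)\leq d_{\mathrm{Th}}(A_0,A_1)+\alpha\, d_{\mathrm{Th}}(A_0,G),
\]
so $d_{\mathrm{Th}}(A_0,G)\leq(1-\alpha)^{-1}d_{\mathrm{Th}}(A_0,A_1)=m\,d_{\mathrm{Th}}(A_0,A_1)$, which is \eqref{eq:metric-convergence}.

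\emph{Weight comparison.} From $e^{-t}A\leq B\leq e^tA$ we get $e^{-t}B_A\leq B_B\leq e^tB_A$ pointwise. Off the common base locus both quantities are positive, and taking $\frac1m\log$ gives $|\varphi_A-\varphi_B|\leq t/m$. On the base locus both $B$'s vanish, so both weights equal $-\infty$. The one-sided inequalities $\varphi_A-t/m\leq\varphi_B\leq\varphi_A+t/m$ then hold in the extended reals, with the convention $-\infty+c=-\infty$. Note that the base locus of $B_A$ is $\{F=0\}$ for every $A$, so it is indeed common to all $A$.

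\emph{Main obstacle.} The only delicate point is the bookkeeping of well-definedness: $T$ must map the open cone into itself with finite values, uniformly for $A$ ranging over compact sets. This is already settled by the klt integrability remark and by \eqref{eq:kernel-domination}. Note also that contraction needs only the order bounds, not continuity of $T$, so no further analytic input is required.
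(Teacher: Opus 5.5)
Your proposal is correct and follows essentially the same route as the paper: monotonicity and degree-$\alpha$ homogeneity give $\alpha$-contraction, Thompson completeness plus the contraction mapping theorem gives the unique fixed point, and operator inversion gives $e^{-t}B_A\leq B_B\leq e^tB_A$ and hence the weight bounds. The only cosmetic differences are that you obtain $d_{\mathrm{Th}}(A_0,G)\leq m\,d_{\mathrm{Th}}(A_0,A_1)$ from a single triangle inequality instead of summing the successive distances, and that you cite Thompson's completeness result instead of sketching it as the paper does.
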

\begin{proof}
If $A\leq B$, inversion gives $B_A\geq B_B$, so $T(A)\leq T(B)$. Also $T(cA)=c^\alpha T(A)$ for $c>0$. Order preservation and homogeneity imply contraction in $d_{\mathrm{Th}}$. To see that the cone is complete for this distance, observe that a Thompson-Cauchy sequence lies, from some index onward, between two fixed positive multiples of a positive matrix. On that order interval it is Cauchy in matrix norm, and its positive definite limit is also its Thompson limit. This is the finite-dimensional case of \cite[Lemma~3]{Thompson1963}. The contraction theorem therefore gives a unique fixed point. Summing the distances between successive iterates gives $d_{\mathrm{Th}}(A_0,G)\leq(1-\alpha)^{-1}d_{\mathrm{Th}}(A_0,A_1)$, which proves \eqref{eq:metric-convergence}. Finally inversion gives $e^{-t}B_A\leq B_B\leq e^tB_A$ and hence \eqref{eq:fs-convergence}.
\end{proof}

Define the scalar pluricanonical gauge and its extremal function by
\begin{equation}\label{eq:ns-gauge}
 \ell(s)=\left(\int_Z|f_s|^{2/m}d\mu_\Delta\right)^{m/2},
 \quad \mathcal N_m(s)=\ell(s)^2,
 \quad q(x)=\sup_{\ell(s)\leq1}|f_s(x)|.
\end{equation}
The gauge is positive, homogeneous and continuous on the finite-dimensional space $V$, so its unit ball is compact. It need not be convex. The Hermitian Narasimhan--Simha form used here is
\begin{equation}\label{eq:ns-hermitian}
 G_{\mathrm{NS}}(s,t)=\int_Z\overline{f_s}f_t q^{2/m-2}d\mu_\Delta.
\end{equation}
This pairing is induced by the extremal metric, as in \cite[\S28]{HaconPopaSchnell2018}. It is distinct from $\mathcal N_m$.

\begin{proposition}\label{prop:ns-comparison}
For every $s\in V$,
\begin{align}
 G(s,s)&\leq\mathcal N_m(s)\leq r^{m-1}G(s,s),
       \label{eq:balanced-comparison}\\
 r^{-(m-1)}G_{\mathrm{NS}}&\leq G\leq G_{\mathrm{NS}}.
       \label{eq:ns-comparison}
\end{align}
Starting with $G_0=G_{\mathrm{NS}}$ and putting $G_{j+1}=T(G_j)$ gives
\begin{align}
 r^{-(m-1)^2(1-\alpha^j)}G_0&\leq G_j\leq G_0,
       \label{eq:iterate-comparison}\\
 d_{\mathrm{Th}}(G_j,G)&\leq(m-1)\alpha^j\log r.
       \label{eq:ns-rate}
\end{align}
These constants apply to the weighted and unweighted constructions.
\end{proposition}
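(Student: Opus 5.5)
The plan is to derive every inequality from the fixed-point identity $G=T(G)$, combined with two elementary facts: an extremal characterization of $B_A$, and Hölder's inequality. For a section $s=ec$ write $f_s=Fc$. Cauchy--Schwarz gives the pointwise extremal formula
\[
 B_A(x)=\sup\{|f_s(x)|^2:A(s,s)\leq1\},
 \qquad\text{so}\qquad |f_s|^2\leq A(s,s)B_A .
\]
The fixed-point equation reads $G(s,s)=\int_Z|f_s|^2B_G^{-\alpha}\,d\mu_\Delta$. Taking the trace of $G^{-1}T(G)=I$ gives the normalization
\[
 \int_Z B_G^{1-\alpha}\,d\mu_\Delta=\int_Z B_G^{1/m}\,d\mu_\Delta=r .
\]
This normalization is the only place where $r$ enters.

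\textbf{Step 1: the comparison \eqref{eq:balanced-comparison}.} For the lower bound, split $|f_s|^2B_G^{-\alpha}=|f_s|^{2/m}\bigl(|f_s|^2B_G^{-1}\bigr)^{\alpha}$ and apply the extremal bound. This gives
\[
 G(s,s)\leq G(s,s)^{\alpha}\,\ell(s)^{2/m},
\]
hence $G(s,s)\leq\mathcal N_m(s)$. For the upper bound, write $|f_s|^{2/m}=\bigl(|f_s|^2B_G^{-\alpha}\bigr)^{1/m}B_G^{\alpha/m}$ and use Hölder with exponents $m$ and $m/(m-1)$. This gives
\[
 \ell(s)^{2/m}\leq G(s,s)^{1/m}\Bigl(\int_Z B_G^{1/m}\,d\mu_\Delta\Bigr)^{\alpha}=G(s,s)^{1/m}r^{\alpha}.
\]
Raising to the power $m$ yields $\mathcal N_m\leq r^{m-1}G$.

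\textbf{Step 2: the comparison \eqref{eq:ns-comparison}.} Step 1 and the extremal formula translate into the pointwise pinching
\[
 q^2\leq B_G\leq r^{m-1}q^2 ,
\]
since unit balls of $\mathcal N_m$ and $G$ are nested up to the factor $r^{m-1}$. Since $q^{2/m-2}=q^{-2\alpha}$, inserting this pinching into $G=\int F^*FB_G^{-\alpha}$ gives
\[
 r^{-(m-1)\alpha}G_{\mathrm{NS}}\leq G\leq G_{\mathrm{NS}} .
\]
This is slightly stronger than \eqref{eq:ns-comparison}, and \eqref{eq:ns-rate} then follows from \eqref{eq:metric-convergence}. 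Integrability of $q^{-2\alpha}|f|^2$ against $d\mu_\Delta$ follows from $|f_s|^2q^{-2\alpha}\leq\ell(s)^{2\alpha}|f_s|^{2/m}$ and the klt integrability noted after \eqref{eq:kernel-domination}.

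\textbf{Step 3: the iterates \eqref{eq:iterate-comparison}.} For the upper bound I first show $G_1\leq G_0$. The definition of $q$ gives $|f_s|^2\leq\mathcal N_m(s)q^2$, so $G_{\mathrm{NS}}\leq\mathcal N_m$, and hence $B_{G_{\mathrm{NS}}}\geq q^2$. Therefore
\[
 T(G_{\mathrm{NS}})\leq\int_Z F^*Fq^{-2\alpha}\,d\mu_\Delta=G_{\mathrm{NS}} .
\]
Order preservation from Theorem~\ref{thm:fixed-point-construction} then gives $G_{j+1}\leq G_j\leq G_0$ by induction.

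For the lower bound, $G_0\geq G$ gives $B_{G_0}\leq B_G\leq r^{m-1}q^2$, so $G_1\geq r^{-(m-1)\alpha}G_0$. Now suppose $G_j\geq r^{-e_j}G_0$. Homogeneity and monotonicity give
\[
 G_{j+1}\geq r^{-\alpha e_j}G_1\geq r^{-(\alpha e_j+(m-1)\alpha)}G_0 .
\]
The recursion $e_{j+1}=\alpha e_j+(m-1)\alpha$ with $e_0=0$ solves to $e_j=(m-1)^2(1-\alpha^j)$, because $(m-1)\alpha/(1-\alpha)=(m-1)^2$.

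All steps only use the integral structure $F$, $d\mu_\Delta$, so they apply verbatim in the weighted and unweighted cases. The main obstacle I expect is the direction of the pointwise comparison between $B_{G_{\mathrm{NS}}}$ and $q^2$. Monotonicity of $T$ alone points the wrong way here, so one must go through $G_{\mathrm{NS}}\leq\mathcal N_m$ directly. I also need to check that base loci, where $F=0$ and $q=0$, are handled by the zero-extension convention.
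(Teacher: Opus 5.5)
Your argument is correct. It differs from the paper's mainly in how the Narasimhan--Simha comparison is obtained.

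For \eqref{eq:balanced-comparison} the two proofs agree in substance. The paper works in a $G$-orthonormal basis with the projections $P_x$ and the mass-$r$ measure $\nu=(FF^*)^{1/m}d\mu_\Delta$, and applies $h\le h^{1/m}$ and Jensen to $h=v^*P_xv$. Those are exactly your pointwise bound $|f_s|^2\le G(s,s)B_G$ and your H\"older step with $\int B_G^{1/m}d\mu_\Delta=r$.

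The paper then proves $G_{\mathrm{NS}}\le\mathcal N_m\le r^{m-1}G_{\mathrm{NS}}$ from the gauge alone. It uses a determinant-maximizing basis of the gauge unit ball to get $\int q^{2/m}d\mu_\Delta\le r$, and dualizes this to $r^{-(m-1)}B_{G_0}\le q^2\le B_{G_0}$ in order to control $T(G_0)$. It obtains $G\le G_{\mathrm{NS}}$ only as the limit of the decreasing iterates. The lower half of \eqref{eq:ns-comparison} comes from chaining $G_0\le\mathcal N_m\le r^{m-1}G$.

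You instead dualize \eqref{eq:balanced-comparison} into $q^2\le B_G\le r^{m-1}q^2$ and insert it into $G=T(G)$. This gives both halves of \eqref{eq:ns-comparison} at once, with the better constant $r^{-(m-1)^2/m}$. It also lets you bound $B_{G_0}\le B_G\le r^{m-1}q^2$ in Step~3 without the determinant-maximizing basis. That lemma becomes redundant, since $\mathcal N_m\le r^{m-1}G\le r^{m-1}G_{\mathrm{NS}}$ and $\int q^{2/m}d\mu_\Delta\le\int B_G^{1/m}d\mu_\Delta=r$ both follow from your bounds.

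The paper's route has the merit that its gauge comparison does not use the fixed point at all. Since the proposition presupposes $G$, nothing is lost in yours.

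With your sharper constant, the induction in Step~3 is even optional. For $j\ge1$, order preservation gives $G_j=T^j(G_0)\ge T^j(G)=G\ge r^{-(m-1)^2/m}G_0$. Since $(m-1)^2(1-\alpha^j)\ge(m-1)^2/m$, this already implies \eqref{eq:iterate-comparison}.
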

\begin{proof}
Put $p=2/m\leq1$. Choose a basis $u_1,\ldots,u_r$ in the compact gauge unit ball maximizing the absolute value of its determinant. The maximum is nonzero. Replacing one vector by any element of the ball shows that all its coordinates in this basis have modulus at most one. Consequently $q^p\leq\sum_j|f_{u_j}|^p$ and
\[
 M:=\int_Zq^p d\mu_\Delta\leq r.
\]
The bound $|f_s|\leq\ell(s)q$ and H\"older's inequality give
\begin{equation}\label{eq:ns-gauge-comparison}
 G_0(s,s)\leq\ell(s)^2\leq M^{m-1}G_0(s,s)
 \leq r^{m-1}G_0(s,s).
\end{equation}
For the second inequality, apply H\"older with exponents $2/p$ and $2/(2-p)$ to
\[
 |f_s|^p=(|f_s|^2q^{p-2})^{p/2}q^{p(2-p)/2}.
\]
These estimates also prove that \eqref{eq:ns-hermitian} is finite and positive definite.

In a $G$-orthonormal basis put
\begin{equation}\label{eq:evaluation-moment}
 P_x=\frac{F(x)^*F(x)}{F(x)F(x)^*},\qquad
 d\nu=(FF^*)^{1/m}d\mu_\Delta.
\end{equation}
The fixed-point equation gives $\int P_xd\nu=I$, and taking traces gives $\nu(Z)=r$. For a unit vector $v$, let $h(x)=v^*P_xv$. Then $0\leq h\leq1$, $\int h\,d\nu=1$, and
\[
 1\leq\int h^{1/m}d\nu
 \leq r\left(\frac1r\int h\,d\nu\right)^{1/m}=r^\alpha.
\]
The integral is $\int|Fv|^{2/m}d\mu_\Delta$. Homogeneity proves \eqref{eq:balanced-comparison}. Combining its upper bound with $G_0\leq\mathcal N_m$ gives the lower bound in \eqref{eq:ns-comparison}.

Dualizing \eqref{eq:ns-gauge-comparison} at evaluation gives $r^{-(m-1)}B_{G_0}\leq q^2\leq B_{G_0}$. Therefore
\[
 r^{-(m-1)^2/m}G_0\leq T(G_0)\leq G_0.
\]
Order preservation makes $G_j$ decreasing, so its fixed-point limit is at most $G_0$. Iterating the lower bound using homogeneity proves \eqref{eq:iterate-comparison}, since $\sum_{k=0}^{j-1}\alpha^k=m(1-\alpha^j)$. Now \eqref{eq:ns-comparison} gives $d_{\mathrm{Th}}(G_0,G)\leq(m-1)\log r$, and contraction gives \eqref{eq:ns-rate}. 
\end{proof}

\begin{remark}[Normalization and the case of level one]\label{rem:normalization}
Berman's construction includes balanced pluricanonical iteration at levels with base loci \cite[Proposition~4.12, Theorem~4.14 and \S4E]{Berman2013}. Proposition~4.12 gives the contraction estimate used in the proof of his Theorem~4.20. In the common unweighted setting, replacing $\varphi_A$ by $m^{-1}\log(B_A/r)$ replaces $T$ by $T_{\mathrm B}=r^\alpha T$. Thus its fixed matrix is $G_{\mathrm B}=r^{m-1}G$, and $(B_{G_{\mathrm B}}/r)^{1/m}dV_n=r^{-1}B_G^{1/m}dV_n$ has mass one. The constant rescaling leaves curvature unchanged.

For $m=1$ and $\Delta=0$, the integral operator is the ordinary canonical $L^2$ pairing and is independent of its input. Its derivative is zero. For $m>1$, its nonzero derivative enters the curvature resolvent formula of Section~\ref{sec:curvature}. The contraction factor $\alpha$ depends only on $m$, so it does not measure geometric variation. In rank one $G(s,s)=\mathcal N_m(s)$. In a product family $T$ and $G$ are constant in the product frame for every $m$. The convergence estimates above apply to metrics and weights, without implying convergence of curvature tensors.
\end{remark}

\subsection{Birational change of variables}
\begin{proposition}\label{prop:birational-invariance}
The construction is independent of all the frames used above and commutes with isomorphisms and with base change wherever the complete section spaces commute with base change. Birational smooth projective varieties have isometric fixed-point metrics under the canonical identification of their complete pluricanonical spaces. More generally, a connected normal projective effective klt pair $(Z,\Delta)$ with $m(K_Z+\Delta)$ Cartier and a nonzero space of global sections has the same intrinsic construction, and crepant birational identifications preserving the rational log-pluricanonical forms identify its gauge, every Hilbert input, and its fixed-point metric.
\end{proposition}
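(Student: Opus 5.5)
The approach is to show that the operator $T$ in \eqref{eq:hilb-fs} is intrinsic, i.e.\ expressible through the section space $V$ and a canonical measure-valued pairing alone. Everything in the statement then follows from the uniqueness part of Theorem~\ref{thm:fixed-point-construction}. The key observation is that for $s,t\in V$ and $A>0$ the integrand $\overline{f_s}f_t B_A^{-\alpha}d\mu_\Delta$ is an intrinsic density on $Z$. To see this, note that $B_A=|F A^{-1}F^*|$ is the pointwise squared Fubini--Study norm of evaluation, transforming under $e_m\mapsto\lambda^m e_m$ by $|\lambda|^{-2m}$. Then \eqref{eq:frame-cancellation} shows that the density is invariant under changes of canonical frame and of boundary equations. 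The basis-change rule $T'(C^*AC)=C^*T(A)C$ shows that $T$ is a well-defined self-map of the cone of Hermitian forms on $V$. Its unique fixed point is therefore frame independent, which gives the first claim.

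Isomorphisms and base change are handled next. An isomorphism $\psi:(Z,\Delta)\to(Z',\Delta')$ pulls back $V'$ isomorphically to $V$ and pulls back intrinsic densities to intrinsic densities. It therefore intertwines $T'$ and $T$, and uniqueness transports $G'$ to $G$. For families, the fiberwise operator $T_y$ is defined on $E\otimes\mathbb C(y)$. Wherever the section spaces commute with base change, the fiber of the pulled-back family has the same $V$ and the same integrand, so the fixed points agree.

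The main step is the birational and crepant statement. Let $\mu:Z''\to Z$ and $\mu':Z''\to Z'$ be a common resolution. Write $K_{Z''}+\Delta''=\mu^*(K_Z+\Delta)$, where $\Delta''$ is the crepant pullback. Its coefficients may be negative, but they are $<1$ by klt. Pulling back rational log-pluricanonical forms identifies $V$ with the corresponding space on $Z''$. The change of variables on the dense open set where $\mu$ is an isomorphism identifies the densities $|f_s|^{2/m}d\mu_\Delta$ and $\overline{f_s}f_tB_A^{-\alpha}d\mu_\Delta$ with their $Z''$ counterparts. This holds because the Jacobian factor $|\det d\mu|^2$ is exactly the $|\lambda|^2$ factor of \eqref{eq:frame-cancellation} with $\lambda$ the Jacobian times boundary factors, and the exceptional sets have measure zero. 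Integrability on both sides is given by the log-resolution argument after \eqref{eq:kernel-domination}.

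Consequently the gauge $\ell$, the extremal function $q$ (as a density ratio), every Hilbert input $T(A)$ and $G_{\mathrm{NS}}$ coincide, and uniqueness of the fixed point identifies the $G$'s. For birational smooth projective varieties with $\Delta=0$, one additional fact is needed. Pluricanonical sections extend across indeterminacy because $K_{Z''}=\mu^*K_Z+(\text{effective exceptional})$, so $H^0(mK)$ is a birational invariant. Equality of the integrals holds on the common open set, since the additional exceptional part does not affect measure.

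I expect the main obstacle to be the normal-pair case. There $K_Z+\Delta$ is only $\mathbb Q$-Cartier with $m(K_Z+\Delta)$ Cartier, so "intrinsic density" must be defined on the smooth locus and the fractional powers $e_m^{1/m}$ have to be handled via absolute values only. I would define all integrals on $Z_{\mathrm{reg}}\setminus\operatorname{Supp}\Delta$, whose complement has measure zero, and verify finiteness on a log resolution using discrepancies $>-1$.
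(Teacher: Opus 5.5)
Your proposal is correct. The frame-independence, isomorphism, base-change and smooth birational steps are essentially the paper's argument. These steps are the covariance rule $T'(C^*AC)=C^*T(A)C$, the cancellation \eqref{eq:frame-cancellation}, the Jacobian factor $|J|^2$ on the isomorphism locus, and descent of pluricanonical sections through a common resolution using normality.

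You differ from the paper in the normal-pair case.

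The paper writes $K_{\widehat Z}+B-P=\pi^*(K_Z+\Delta)$ with $B,P\geq0$ of disjoint support. It then uses $\pi_*\mathcal O_{\widehat Z}(mP)=\mathcal O_Z$ and the projection formula to identify $V$ with $H^0(\widehat Z,m(K_{\widehat Z}+B))$: pull back, then multiply by the canonical section of $mP$. Finally it checks that the resulting factor $|a|^{2/m}$, which appears in both the Hilbert kernel and the scalar density, reproduces the density of the signed divisor $B-P$. This keeps everything on an effective klt smooth model, so the set-up of Section~\ref{sec:construction} (which assumes $\Delta\geq0$) applies verbatim. The exceptional factor becomes a fixed factor of the evaluation row. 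That is exactly the form used later, where the smoothness argument of Theorem~\ref{thm:controlled-regularity} absorbs it into the orders $a_E$.

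You instead keep the signed crepant boundary $\Delta''$, or define the integrals directly on $Z_{\mathrm{reg}}\setminus\operatorname{Supp}\Delta$. This is more intrinsic and avoids the $B/P$ splitting. The cost is that you tacitly extend the Section~\ref{sec:construction} construction and its integrability argument to sub-klt boundaries. That extension is harmless: the frame $e_m$ then has zeros along the negative components, the coefficients $f_i$ remain holomorphic, and the exponents still satisfy $k_E-b_E>-1$.

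You should also justify explicitly the identification of $V$ with $H^0(Z'',m(K_{Z''}+\Delta''))$. This is immediate: $m(K_{Z''}+\Delta'')=\mu^*\bigl(m(K_Z+\Delta)\bigr)$ is the pullback of a Cartier divisor, so the projection formula together with $\mu_*\mathcal O_{Z''}=\mathcal O_Z$ gives it at once.
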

\begin{proof}
The covariance formulas and \eqref{eq:frame-cancellation} give frame invariance. Under the stated base-change hypothesis, the fiber integrals agree, and uniqueness identifies their fixed points. For a smooth birational morphism $\pi:\widehat Z\to Z$, the pullback of $\zeta$ is $J(w)dw$. The coefficient row of the pulled-back $m$-canonical forms is $\widehat F=J^m(F\circ\pi)$, so
\begin{equation}\label{eq:birational-jacobian}
 \widehat F^*\widehat F(\widehat FA^{-1}\widehat F^*)^{-\alpha}dV_w
 =|J|^2(F^*FB_A^{-\alpha})\circ\pi\,dV_w.
\end{equation}
Here $2m(1-\alpha)=2$. Pluricanonical sections pull back and descend through such a morphism: a section upstairs defines a rational section downstairs without poles off a codimension-two set, and normality extends it. Passing through a smooth common resolution identifies the complete pluricanonical spaces. Change of variables on the isomorphism locus, together with integrability, then identifies every operator $T(A)$ and the scalar gauges.

For the normal pair choose a log resolution and write
\[
 K_{\widehat Z}+B-P=\pi^*(K_Z+\Delta),
 \qquad B,P\geq0,
\]
with disjoint supports. The divisor $B$ has coefficients less than one, $P$ is exceptional, and $mB,mP$ are integral. Since a rational function whose poles are confined to exceptional divisors is regular on the normal variety downstairs, $\pi_*\mathcal O_{\widehat Z}(mP)=\mathcal O_Z$. The projection formula identifies the full section spaces:
\begin{equation}\label{eq:normal-section-identification}
 H^0\bigl(Z,m(K_Z+\Delta)\bigr)
 \simeq H^0\bigl(\widehat Z,m(K_{\widehat Z}+B)\bigr).
\end{equation}
The map is pullback followed by multiplication by the canonical section of $mP$. To check its effect on the integrals, if $b$ and $a$ are local equations of $mB$ and $mP$, respectively, put $e_B=(dw)^m/b$ and $e_{B-P}=a e_B$. A pulled-back section $h e_{B-P}$ maps to $ha e_B$. Thus the row is multiplied by $a$, with divisor $mP$. A Hilbert kernel gains $|a|^{2-2\alpha}=|a|^{2/m}$, and the scalar density gains the same factor. Against the effective boundary weight $B$ this is exactly the density for the signed divisor $B-P$. This recovers the change-of-variables formula for $\pi^*(K_Z+\Delta)$. All integrals on the effective klt model are finite. A common resolution proves independence of the chosen model. The exceptional fixed factor in \eqref{eq:normal-section-identification} is part of the evaluation row.
\end{proof}

\subsection{Relative case and continuity of the fixed-point metric}
Let $f:X\to Y$ be a smooth projective morphism of smooth complex varieties with connected fibers. In the unweighted case, \cite[Corollary~0.3]{Siu2002} gives constancy of the plurigenera. The sheaf $\mathcal O_X(mK_{X/Y})$ is flat over the reduced base, and $f$ is proper. Grauert's theorem then gives local freeness and
\begin{equation}\label{eq:complete-base-change}
 (E_m)_y=H^0(X_y,mK_{X_y}),\qquad E_m=f_*\mathcal O_X(mK_{X/Y})
\end{equation}
\cite[\S7, Satz~5]{Grauert1960}, with the correction \cite{Grauert1963Correction}. We assume its rank is positive.

For a weighted family put
\[
 E_m=f_*\mathcal O_X(m(K_{X/Y}+\Delta)),
\]
where $\Delta\geq0$ is horizontal, $m\Delta$ is integral, and $(X,\Delta)$ is klt. We restrict to a Zariski-open subset on which every fiber pair is klt, $E_m$ is locally free, and the maps $(E_m)_y\to H^0(X_y,m(K_{X_y}+\Delta_y))$ are isomorphisms. A relative log resolution and generic smoothness give the first condition on a dense Zariski-open subset. Generic flatness and cohomology and base change give the remaining conditions after further shrinking. The regularity locus constructed below also requires all resolved boundary strata to be smooth over the base.

\begin{lemma}\label{lem:holder-continuity}
For $\Delta=0$, the metric $G$ is locally $C^{0,2/m}$ on all of $Y$. The corresponding Fubini--Study weight has the same regularity off the relative base locus.
\end{lemma}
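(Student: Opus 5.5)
Working in a local trivialization, we show that the integral operator $T_y$ depends on $y$ in a H\"older way, uniformly in $A$ on compact sets of positive matrices, and then pass this through the fixed-point construction of Theorem~\ref{thm:fixed-point-construction}. Over a small polydisc $W\ni y_0$ we trivialize $f$ smoothly (Ehresmann), $X|_W\simeq W\times Z$. By \eqref{eq:complete-base-change} we pick a holomorphic frame $s_1,\dots,s_r$ of $E_m$ over $W$. In local coordinates $(y,z)$ on $X$, with $\zeta=dz_1\wedge\cdots\wedge dz_n$ restricted to fibers, the coefficient row $F(y,z)$ is holomorphic in $(y,z)$. The fiber measure $dV_n$ pulled back to $Z$ has a density that is smooth in $y$. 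Hence $T_y(A)$ is the integral over a fixed manifold $Z$ of the kernel
\[
 K_y(A)=F^*F(FA^{-1}F^*)^{-\alpha}
\]
against smooth densities. The first step is the pointwise estimate
\[
 \|K_y(A)-K_{y'}(A)\|\le C\,|F(y)-F(y')|^{2/m}\le C'|y-y'|^{2/m},
\]
uniformly for $A$ in a compact set $\mathcal A$. The kernel is homogeneous of degree $2/m$ in $F$ and smooth off $F=0$, and a function $\Phi$ homogeneous of degree $p=2/m\le1$ and continuous satisfies $|\Phi(F)-\Phi(F')|\le C|F-F'|^{p}$. To see this, split into the cases $|F-F'|\ge\frac12\max(|F|,|F'|)$, where we use the bound \eqref{eq:kernel-domination}, and the opposite case, where we use the mean value theorem on the segment, which stays away from zero, together with the homogeneity bound $|d\Phi|\le C|F|^{p-1}\le C|F-F'|^{p-1}$. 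Since $F$ is locally Lipschitz in $y$, integrating gives
\[
 \|T_y(A)-T_{y'}(A)\|\le C|y-y'|^{2/m}.
\]
Over the compact $Z$ we integrate chartwise with a partition of unity, and the $y$-dependence of the density only contributes a Lipschitz term.

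Next we convert this to the fixed point. Proposition~\ref{prop:ns-comparison} bounds $G_y$ above and below by $G_{\mathrm{NS},y}$ up to the factor $r^{m-1}$, and $G_{\mathrm{NS},y}\le \mathcal N_m$. We also need the continuity of $y\mapsto\ell_y$, which follows from the same H\"older argument applied to $|f_s|^{2/m}$. Together these show that $G_y$, for $y\in W$, stays in a fixed compact set $\mathcal A$ of positive matrices. On $\mathcal A$, the matrix norm bound turns into a Thompson bound:
\[
 d_{\mathrm{Th}}(T_y(A),T_{y'}(A))\le C|y-y'|^{2/m}.
\]
Then
\[
 d_{\mathrm{Th}}(G_y,G_{y'})\le d_{\mathrm{Th}}(T_y G_y,T_y G_{y'})+d_{\mathrm{Th}}(T_yG_{y'},T_{y'}G_{y'})\le \alpha\, d_{\mathrm{Th}}(G_y,G_{y'})+C|y-y'|^{2/m}.
\]
This gives $d_{\mathrm{Th}}(G_y,G_{y'})\le mC|y-y'|^{2/m}$, and hence $C^{0,2/m}$ regularity of the matrix $G$ in the holomorphic frame.

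For the Fubini--Study weight, write
\[
 \varphi_{G_y}(y,z)=\tfrac1m\log F G_y^{-1}F^*.
\]
Inequality \eqref{eq:fs-convergence} gives
\[
 |\varphi_{G_y}-\varphi_{G_{y'}}|\le C|y-y'|^{2/m}
\]
at a fixed point, uniformly off the base locus. On a compact set off the relative base locus, $FG^{-1}F^*$ is bounded below, so $\log$ is smooth there and the dependence on $(y,z)$ through $F$ is smooth. Combining the two pieces gives the stated regularity.

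The main obstacle is the uniform H\"older estimate of the kernel near the zero set of $F$, that is, near the relative base locus. The degree-$2/m$ homogeneity is precisely what gives the exponent, so that step must be done carefully, uniformly in $A\in\mathcal A$, and while keeping the integrable domination from \eqref{eq:kernel-domination} with $\Delta=0$.
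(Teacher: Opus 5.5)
Your proof is correct and follows essentially the paper's route: the degree-$2/m$ homogeneity of the kernel, together with the near-zero/segment case split, gives $\|T_y(A)-T_{y'}(A)\|\le C|y-y'|^{2/m}$ uniformly on compact matrix sets. The $\alpha$-contraction then turns this into the Thompson estimate for $G$, and the weight is handled off the base locus. The one difference is the a priori compactness of $\{G_y\}$ near $y_0$: you take it from the Narasimhan--Simha/gauge comparison plus continuity of $\ell_y$, whereas the paper gets it more cheaply by first applying the contraction inequality with the fixed input $A_0=G_{y_0}$, which gives continuity and compactness at once.
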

\begin{proof}
For $A$ in a compact positive-matrix set put $H_A(v)=v^*v(vA^{-1}v^*)^{-\alpha}$, with $H_A(0)=0$. This kernel is homogeneous of degree $\beta=2/m\leq1$. Uniform bounds on the unit sphere give
\[
 \|H_A(v)\|\leq C|v|^\beta,\qquad
 \|D_vH_A(v)\|\leq C|v|^{\beta-1}\quad(v\ne0).
\]
If $\delta=|v-w|$ and $\min(|v|,|w|)\leq2\delta$, the first bound gives $\|H_A(v)-H_A(w)\|\leq C\delta^\beta$. Otherwise the segment stays away from zero and the second bound gives $C\delta\min(|v|,|w|)^{\beta-1}\leq C\delta^\beta$. Cover a fiber by finitely many relative charts. After shrinking the base, properness and smooth cutoffs express the fiber integrals over fixed bounded domains. The holomorphic evaluation rows and the cutoffs have bounded derivatives there, so
\begin{equation}\label{eq:operator-holder}
 \|T_y(A)-T_z(A)\|\leq C|y-z|^\beta
\end{equation}
uniformly on compact positive-matrix sets. First fix $A_0=G_{y_0}$. Contraction and the triangle inequality give
\[
 (1-\alpha)d_{\mathrm{Th}}(G_y,A_0)
 \leq d_{\mathrm{Th}}(T_y(A_0),T_{y_0}(A_0)).
\]
This first proves continuity and places $G_y$ in a compact subset of the positive cone near $y_0$. Repeating the argument with $A_0=G_z$ and using \eqref{eq:operator-holder} proves the stated exponent, since matrix norm and Thompson distance are locally equivalent. Inversion and the logarithm of the positive function $FG^{-1}F^*$ preserve this regularity off the base locus.
\end{proof}

\subsection{Smoothness on the regularity locus}
Fix a smooth connected Zariski-open subset $Y_0\subseteq Y$ on which $f$ is smooth, $E_m$ is locally free of positive rank, and the fiberwise base-change maps are isomorphisms. In the weighted setting also require that every fiber pair is klt. In the unweighted smooth-family setting one may take $Y_0=Y$. Let $L_m=\mathcal O_X(m(K_{X/Y}+\Delta))$ and define the relative evaluation ideal on $X_{Y_0}$ by
\[
 \operatorname{Im}(f^*E_m\longrightarrow L_m)=\mathfrak b_mL_m.
\]
The base-change identifications show that its restriction to $X_y$ is the base ideal of $|m(K_{X_y}+\Delta_y)|$. The evaluation map is surjective on $X_y$ exactly when $m(K_{X_y}+\Delta_y)$ is globally generated, or equivalently when this complete linear system is base-point-free. This fiberwise condition is distinct from generic global generation of $E_m$ by sections on the base. Positive rank and the same identifications ensure that $\mathfrak b_m$ is not identically zero on any fiber.

Choose a simultaneous log resolution $\pi:\widetilde X\to X_{Y_0}$ principalizing $\mathfrak b_m$ and resolving the boundary, with SNC divisor $D$ containing the evaluation divisor, the pulled-back boundary and every exceptional divisor \cite[Theorem~1.0.1]{Wlodarczyk2004}. Put $p=f\pi$. Let $\mathcal S$ consist of $\widetilde X$ and the irreducible components of all nonempty intersections of components of $D$. These are finitely many smooth closed subvarieties. Define
\begin{equation}\label{eq:regularity-open}
 U_m^{\mathrm{reg}}=Y_0\setminus\left(
 \bigcup_{\substack{S\in\mathcal S\\p(S)\ne Y_0}}p(S)
 \ \cup\!
 \bigcup_{\substack{S\in\mathcal S\\p(S)=Y_0}}
       p\bigl(\operatorname{Crit}(p|_S)\bigr)\right).
\end{equation}
We call $U_m^{\mathrm{reg}}$ the \emph{regularity locus} associated with the chosen resolution. This choice remains fixed when smaller subsets are used to study variation.

Every restriction $p|_S$ is proper. Its critical locus is closed, so all excluded images in \eqref{eq:regularity-open} are closed. For a dominating stratum, its generic fiber is smooth in characteristic zero, and its critical locus does not dominate $Y_0$. Consequently $U_m^{\mathrm{reg}}$ is a dense Zariski-open subset of $Y_0$. Over it, $p$ and every divisor intersection are submersions. The SNC equations therefore extend to relative coordinates $(y,w)$ in which $p(y,w)=y$ and the components of $D$ are coordinate divisors $w_E=0$.

Each $\pi_y:\widetilde X_y\to X_y$ is birational. Indeed, $\pi_y$ is surjective with connected fibers, as follows from the same properties of the proper birational map $\pi$. Since $X_y$ is connected, $\widetilde X_y$ is connected. It is smooth by construction and hence irreducible. The exceptional divisors meet it in divisors or not at all, so the isomorphism locus of $\pi$ meets it densely. Thus fiberwise change of variables applies to $\pi_y$.

\begin{theorem}[Smoothness of the fixed-point metric]\label{thm:controlled-regularity}
In either family setting above, $(y,A)\mapsto T_y(A)$ and $G_y$ are smooth on $U_m^{\mathrm{reg}}$. At a fixed point the inverse of $I-D_AT_y$ has order norm at most $m$. After factoring the evaluation divisor on the principalization, the Fubini--Study weight has a smooth residual part. It is smooth on $X_{U_m^{\mathrm{reg}}}$ off the relative base locus. In the unweighted case, if the relative evaluation map is surjective, these assertions hold on the whole smooth base. In the weighted case the assertion should be restricted to $U_m^{\mathrm{reg}}$.
\end{theorem}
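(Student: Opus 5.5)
The plan is to write the fiber integral defining $T_y(A)$ over $U_m^{\mathrm{reg}}$ on the principalization $\widetilde X$. We then show that the integrand has the form (smooth in $(y,A,w)$) $\times$ (a fixed monomial density in $w$ with integrable exponents). Smoothness of $T$ follows by differentiating under the integral. Smoothness of $G$ then follows from the implicit function theorem, applied to $A-T_y(A)=0$ using the order-norm bound on $(I-D_AT_y)^{-1}$.

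\textbf{Step 1 (monomial factorization).} Fiberwise change of variables through the birational $\pi_y$ is justified by Proposition~\ref{prop:birational-invariance} and formula \eqref{eq:birational-jacobian}. In the relative coordinates $(y,w)$ on $\widetilde X$ the pulled-back evaluation row factors as
\[
\widehat F(y,w)=w^{\mathbf a}\,\widetilde F(y,w),
\]
where $w^{\mathbf a}$ is the monomial generator of the principalized $\mathfrak b_m$ and $\widetilde F$ is holomorphic and nowhere zero. Nonvanishing of $\widetilde F$ is exactly what principalization gives. Because $U_m^{\mathrm{reg}}$ excludes the images of non-dominating strata and of critical loci, the exponents $\mathbf a$ are constant in $y$. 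The Jacobian and boundary weights likewise contribute a monomial $\prod|w_E|^{2(k_E-b_E)}$ times a smooth positive factor. Homogeneity of degree $\beta=2/m$ of $H_A(v)=v^*v(vA^{-1}v^*)^{-\alpha}$ then gives the integrand
\[
|w^{\mathbf a}|^{2/m}\prod_E|w_E|^{2(k_E-b_E)}\;H_A(\widetilde F(y,w))\,\kappa(y,w)\,dV_w .
\]
Here $H_A(\widetilde F)$ is real-analytic in $(y,A,w)$, since $\widetilde FA^{-1}\widetilde F^*>0$. Total exponents exceed $-1$ by klt, as in the integrability argument after \eqref{eq:kernel-domination}.

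\textbf{Step 2 (differentiation).} Cover the fibers by finitely many relative charts and use a smooth partition of unity; properness makes this possible uniformly near $y_0$. All $(y,A)$-derivatives of the smooth factor are bounded on compact sets, while the monomial weight is a fixed $L^1$ function in $w$. Dominated convergence therefore gives $T\in C^\infty$ in $(y,A)$.

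\textbf{Step 3 (invertibility).} Homogeneity and order preservation say $D_AT_y$ at $G$ is a positive map with $D_AT(G)[G]=\alpha G$. Order preservation makes $D_AT$ monotone, so $-cG\le S\le cG$ implies $\|D_AT[S]\|_{G,\infty}\le\alpha c$. Hence the Neumann series converges in order norm and gives $\|(I-D_AT)^{-1}\|\le(1-\alpha)^{-1}=m$. The implicit function theorem then yields smooth $y\mapsto G_y$, consistent with the continuous fixed point of Lemma~\ref{lem:holder-continuity}. The residual Fubini--Study weight $\frac1m\log(\widetilde FG^{-1}\widetilde F^*)$ is then smooth, and the full weight is smooth off the relative base locus, where $w^{\mathbf a}\neq0$.

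\textbf{Remaining cases and main obstacle.} In the unweighted surjective case, $\mathfrak b_m=\mathcal O$ and $\Delta=0$, so no resolution is needed. The integrand is then smooth on the whole smooth base and Steps 2–3 apply there. The main obstacle I expect is Step 1: ensuring that the normal-crossing structure, and hence the constant exponents and the relative coordinates, persists uniformly in $y$. This is precisely why $U_m^{\mathrm{reg}}$ removes critical images of the strata. Near a point $y_0$ in the excluded set, exponents can jump, and only the Hölder estimate survives.
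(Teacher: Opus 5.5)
Your proposal is correct and follows essentially the same route as the paper: factor the pulled-back density on the principalization as a $y$-independent monomial $\prod_E|w_E|^{2\gamma_E}$, with $\gamma_E=a_E/m+k_E-b_E>-1$, times a smooth amplitude in the residual row; differentiate under the integral over finitely many relative charts; then use $D_AT_y(A)[A]=\alpha A$ and positivity to get $\|(I-D_AT_y)^{-1}\|_{A,\infty}\le m$ by a Neumann series and apply the implicit function theorem. The paper only makes explicit two points you leave implicit: the uniform lower bound on $UA^{-1}U^*$ over compact chart and parameter sets, which is what justifies your ``bounded derivatives on compact sets'', and, in the unweighted case, that surjectivity of the relative evaluation map persists on a neighborhood of a point.
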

\begin{proof}
In a relative resolution chart over $U_m^{\mathrm{reg}}$, write
\[
 F\circ\pi=v\prod_Ew_E^{a_E}U,\qquad
 J=v_J\prod_Ew_E^{k_E},\qquad
 \pi^*\Delta=\sum_Eb_EE,
\]
where $v,v_J$ are units and the holomorphic row $U$ has no common zero. The pulled-back matrix density is
\begin{equation}\label{eq:resolved-kernel}
 \prod_E|w_E|^{2\gamma_E}
 U^*U(UA^{-1}U^*)^{-\alpha}\rho(y,w)\,dV_w,
 \qquad \gamma_E=\frac{a_E}{m}+k_E-b_E,
\end{equation}
where $\rho$ is smooth and positive. The numerator contributes $2a_E$, the denominator $-2\alpha a_E$, and the Jacobian $2k_E$. Thus the remaining evaluation exponent is $2a_E/m$, and
\begin{equation}\label{eq:weighted-exponent}
 \gamma_E+1=\frac{a_E}{m}+A_{X,\Delta}(E)>0.
\end{equation}
These exponents therefore give an integrable density. For $\Delta=0$ there are no negative exponents.

Choose nested relative charts covering a fiber, shrink the base, and use properness to make the smaller charts cover the entire tube over a relatively compact base neighborhood. A subordinate smooth partition with a base cutoff expresses $T_y(A)$ as a finite sum of
\begin{equation}\label{eq:localized-integral}
 \int_W\prod_E|w_E|^{2\gamma_E}b(y,w,A)\,dV_w
\end{equation}
on fixed bounded polydiscs, with smoothly extended compactly supported amplitudes. On their supports, for $\lambda_-I\leq A\leq\lambda_+I$,
\begin{equation}\label{eq:residual-lower-bound}
 0<\frac{\min|U|^2}{\lambda_+}
 \leq Q_A:=UA^{-1}U^*
 \leq\frac{\max|U|^2}{\lambda_-}.
\end{equation}
The residual amplitude is smooth near the compact chart and parameter supports, and its denominator is bounded away from zero by \eqref{eq:residual-lower-bound}. Hence, for every pair of real multi-indices $I,J$, its mixed derivatives satisfy $\|D_y^I D_A^J b(y,w,A)\|\leq C_{I,J}$. The monomial in \eqref{eq:localized-integral} is independent of $y$ and $A$ and integrable by \eqref{eq:weighted-exponent}. Thus each differentiated integrand is dominated by $C_{I,J}\prod_E|w_E|^{2\gamma_E}$, and dominated differentiation proves joint smoothness.

Differentiating with respect to the matrix gives
\begin{equation}\label{eq:matrix-derivative}
 D_AT_y(A)[S]=\alpha\int_{X_y}F^*F B_A^{-\alpha-1}
 (FA^{-1}SA^{-1}F^*)\,d\mu_\Delta.
\end{equation}
It is a positive real-linear map on Hermitian matrices. At a fixed point $A$ homogeneity gives $D_AT_y(A)[A]=\alpha A$, hence its order norm is at most $\alpha$. Therefore
\begin{equation}\label{eq:implicit-inverse}
 (I-D_AT_y(A))^{-1}=\sum_{j\geq0}(D_AT_y(A))^j,
 \qquad\|(I-D_AT_y(A))^{-1}\|_{A,\infty}\leq m.
\end{equation}
The real implicit-function theorem applied to $A-T_y(A)$ proves smoothness of the unique fixed point. Finally
\[
 \varphi_{G_y}\circ\pi
 =\frac1m\log\left|v\prod_Ew_E^{a_E}\right|^2
   +\frac1m\log(UG_y^{-1}U^*)
\]
has the asserted smooth residual part. In the unweighted case, global generation of $mK_{X_y}$ persists near $y$: the base-change isomorphisms identify the fiber evaluation maps, and the proper image of the evaluation cokernel's support is closed. On that neighborhood the row $F$ has no common zero, so $B_A$ is uniformly bounded below on compact chart supports and compact positive-matrix sets. The same differentiation and implicit-function argument apply wherever the relative evaluation map is surjective.
\end{proof}

The normal effective-klt construction uses the effective resolution model of Proposition~\ref{prop:birational-invariance}. For a family, the same argument applies on any open subset admitting a simultaneous resolution, fiberwise base-change isomorphisms, and smooth relative divisor strata. Its exceptional fixed factor is included in the integers $a_E$ in \eqref{eq:resolved-kernel}. The resulting smoothness on the regularity locus justifies the weighted curvature calculation.

\subsection{Refined regularity}
In this subsection $\Delta=0$ and $f:X\to Y$ is smooth and projective with connected fibers. Assume that $E_m$ is locally free of positive rank and that the fiberwise base-change maps in \eqref{eq:complete-base-change} are isomorphisms. Fix $y_0\in Y$. Let $\mathfrak b_{m,y_0}$ be the base ideal of the complete linear system $|mK_{X_{y_0}}|$, and denote its minimum log canonical threshold by
\begin{equation}\label{eq:central-threshold}
 c_{y_0}=\inf_{x\in X_{y_0}}
           \operatorname{lct}_x(\mathfrak b_{m,y_0}).
\end{equation}
Our convention is that $|F_{y_0}|^{-2c}$ is locally integrable for $c<c_{y_0}$. The value is positive, since the ideal is nonzero and the fiber is smooth and compact. We set $c_{y_0}=+\infty$ when $|mK_{X_{y_0}}|$ is base-point-free.

\begin{theorem}[Threshold regularity]\label{thm:threshold-regularity}
Let $k\geq0$ be an integer and $0<\gamma<1$. If
\begin{equation}\label{eq:threshold-regularity-range}
 k+\gamma<\frac2m+2c_{y_0},
\end{equation}
then $(y,A)\mapsto T_y(A)$ is jointly $C^{k,\gamma}$ near $(y_0,G_{y_0})$, and $G$ is $C^{k,\gamma}$ near $y_0$. Its Fubini--Study weight has the same regularity off the relative base locus.
\end{theorem}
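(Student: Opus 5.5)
The plan is to follow the proof of Theorem~\ref{thm:controlled-regularity} and replace the monomial domination there by a threshold integrability estimate. The implicit-function step then carries over unchanged.

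\textbf{Kernel estimates.} Write the fiber integrals of $T_y(A)$ in relative charts $(y,w)$ near $X_{y_0}$, as in Lemma~\ref{lem:holder-continuity}, with smooth cutoffs on fixed bounded domains. The kernel is $H_A(F(y,w))$, where $H_A(v)=v^*v(vA^{-1}v^*)^{-\alpha}$ is real-analytic off $v=0$ and homogeneous of degree $\beta=2/m$. On compact positive-matrix sets this gives
\[
 \|D_v^{a}D_A^{b}H_A(v)\|\leq C|v|^{\beta-|a|}.
\]
By the chain rule, since $F$ is holomorphic in $(y,w)$ with bounded derivatives, every mixed derivative of total $y$-order $j$ of the amplitude is bounded by $C|F|^{\beta-j}$ wherever $F\neq0$.

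\textbf{Integrability and H\"older remainder.} The key input is the semicontinuity of log canonical thresholds in families. Fix $c$ with $k+\gamma-\beta<2c<2c_{y_0}$. Then $|F(y,\cdot)|^{-2c}$ is integrable on the chart supports, uniformly for $y$ near $y_0$. The reason is that $c<c_{y_0}$, and the uniform lower semicontinuity of $\operatorname{lct}$ (Demailly--Koll\'ar) applies to the holomorphic family of ideals generated by the rows $F(y,\cdot)$. This uses the base-change isomorphisms, which make $F(y_0,\cdot)$ generate $\mathfrak b_{m,y_0}$. Hence for $j\leq k$ the derivatives $|F|^{\beta-j}$ are uniformly integrable, because $j-\beta<k+\gamma-\beta<2c$ when $j\leq k$. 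Dominated differentiation therefore gives $C^k$ of $(y,A)\mapsto T_y(A)$.

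For the H\"older quotient of the $k$-th derivatives I would argue as in the proof of Lemma~\ref{lem:holder-continuity}, splitting by the size of $|F|$ relative to $\delta=|y-z|$:
\begin{itemize}
\item where $\min(|F(y)|,|F(z)|)\lesssim\delta$, bound each term by $|F|^{\beta-k}$;
\item elsewhere, use the mean value theorem with the $(k+1)$-st derivative bound $\delta|F|^{\beta-k-1}$.
\end{itemize}
Both contributions are dominated by $\delta^\gamma|F|^{\beta-k-\gamma}$ on the respective sets, since $|F|^{-(1-\gamma)}\delta^{1-\gamma}\leq1$ in the second region. The domination $|F(z)|\le|F(y)|+C\delta$ lets one compare values at $y$ and $z$. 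The integrability of $|F|^{-(k+\gamma-\beta)}$ then yields $\|D^kT_y-D^kT_z\|\leq C\delta^\gamma$. The exponent is strictly below $2c$, which is where the strict inequality \eqref{eq:threshold-regularity-range} is used. Joint H\"older regularity in $A$ is easier, since $H_A$ is smooth in $A$ with the same homogeneity bounds.

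\textbf{Fixed point and weight.} Since $k\ge1$ whenever derivatives are needed, $D_AT$ exists. By homogeneity at the fixed point, $D_AT_y(G_y)[G_y]=\alpha G_y$, and the resolvent bound \eqref{eq:implicit-inverse} applies. The $C^{k,\gamma}$ implicit-function theorem then gives $G\in C^{k,\gamma}$ near $y_0$. The case $k=0$ follows from the contraction argument of Lemma~\ref{lem:holder-continuity} with the improved operator estimate. The weight $\frac1m\log(FG^{-1}F^*)$ is a smooth function of $G$ and of the holomorphic $F$ off the base locus, so it inherits the same regularity there.

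The main obstacle is the uniform integrability of $|F(y,\cdot)|^{-2c}$ for $y$ near $y_0$, that is, the semicontinuity of the threshold in the parameter with local uniformity of the integrals. I would prove it via the openness form of Demailly--Koll\'ar's theorem, applied on the total space of the family to the ideal generated by the rows $F$, and covering the compact fiber by finitely many charts.
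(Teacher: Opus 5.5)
Your proposal follows essentially the paper's route: homogeneity bounds on the kernel, Demailly--Koll\'ar uniform integrability of $|F_y|^{-2c}$ for $c<c_{y_0}$ and $y$ near $y_0$, a near/far splitting for H\"older differences of order-$k$ derivatives (mirroring Lemma~\ref{lem:homogeneous-kernel}), and then the implicit-function theorem for $k\geq1$ and contraction for $k=0$. The one step to tighten is your ``dominated differentiation'': the majorant $|F(y,w)|^{\beta-j}$ depends on $y$ and in general admits no $y$-independent integrable bound. The paper avoids this by differentiating the smooth regularized operators $T_\epsilon$ built from $H_{A,\epsilon}$, proving uniform $C^{k,\gamma'}$ bounds with $\gamma<\gamma'$, and passing to $T$ by compactness; your direct version would instead need the uniform $L^p$ bounds ($p>1$), Vitali's theorem, and the fact that $\{F=0\}$ is a null analytic set.
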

The proof is given in Appendix~\ref{app:refined-regularity}.

\begin{corollary}\label{cor:threshold-C2}
Under the hypotheses of Theorem~\ref{thm:threshold-regularity}, if $c_{y_0}>1-1/m$, then $G$ is $C^{2,\gamma}$ near $y_0$ for some $0<\gamma<1$.
\end{corollary}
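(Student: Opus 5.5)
This is a direct specialization of Theorem~\ref{thm:threshold-regularity} with $k=2$. We need an exponent $\gamma\in(0,1)$ such that
\[
 2+\gamma<\frac2m+2c_{y_0}.
\]
Such a $\gamma$ exists exactly when $\frac2m+2c_{y_0}>2$, that is, when $c_{y_0}>1-\frac1m$. This is the hypothesis.

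\emph{Step 1 (choice of exponent).} If $c_{y_0}=+\infty$, the base-point-free case, any $\gamma\in(0,1)$ works. Otherwise put $\varepsilon=\frac2m+2c_{y_0}-2>0$ and take
\[
 \gamma=\min\{\varepsilon/2,\ 1/2\}.
\]
Then $0<\gamma<1$, and $k+\gamma=2+\gamma<2+\varepsilon=\frac2m+2c_{y_0}$, so \eqref{eq:threshold-regularity-range} holds with $k=2$.

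\emph{Step 2 (application).} With this choice, Theorem~\ref{thm:threshold-regularity} says that $(y,A)\mapsto T_y(A)$ is jointly $C^{2,\gamma}$ near $(y_0,G_{y_0})$, and that $G$ is $C^{2,\gamma}$ near $y_0$. This is the assertion.

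There is no real obstacle beyond checking that the strict inequality leaves room for a positive H\"older exponent below $1$, and that the convention $c_{y_0}=+\infty$ is covered. All the analytic work is contained in the threshold regularity theorem proved in Appendix~\ref{app:refined-regularity}.
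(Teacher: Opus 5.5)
Your proposal is correct and matches the paper's proof: both apply Theorem~\ref{thm:threshold-regularity} with $k=2$ and an exponent $0<\gamma<\min\{1,2(c_{y_0}-1+1/m)\}$, which is exactly what your choice $\gamma=\min\{\varepsilon/2,1/2\}$ provides. Your separate treatment of the convention $c_{y_0}=+\infty$ is a harmless extra check.
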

\begin{proof}
Choose $0<\gamma<\min\{1,2(c_{y_0}-1+1/m)\}$ and take $k=2$ in \eqref{eq:threshold-regularity-range}.
\end{proof}

We should mention that the threshold is only a sufficient condition: a persistent base divisor in a product family can give finite $c_{y_0}$ even though $G$ is constant. Smoothness on $U_m^{\mathrm{reg}}$ follows independently by differentiating the smooth amplitude in \eqref{eq:localized-integral}.

\section{Curvature resolvent and determinant pinching}\label{sec:curvature}

Let $U\subseteq U_m^{\mathrm{reg}}$ be a analtyic open subset, let $E=E_m$ have rank $r$, and put $d=\dim U$. We work in either the unweighted or the effective-klt weighted setting of Theorem~\ref{thm:controlled-regularity}, with the fiberwise base-change isomorphisms. Differentiating the fixed-point equation will express the curvature of $G$ as a completely positive resolvent applied to the curvature of an auxiliary metric.

\subsection{Curvature conventions and the frozen metric}
For a squared-norm matrix $A$, our Chern connection and curvature are
\begin{equation}\label{eq:chern-convention}
 \theta=A^{-1}\partial A,\qquad \Theta=\bar\partial\theta,
 \qquad
 \Theta_{a\bar b}=A^{-1}A_{\bar b}A^{-1}A_a-A^{-1}A_{a\bar b},
\end{equation}
where the last coefficients are written against $dy_a\wedge d\bar y_b$. In particular $A=e^{-\varphi}$ gives $\Theta_{a\bar b}=\varphi_{a\bar b}$. At a point with $A=I$, the Nakano form evaluated on $\sum_a\partial_{y_a}\otimes u_a$ is
\begin{equation}\label{eq:nakano-block}
 \sum_{a,b}A(\Theta_{a\bar b}u_a,u_b)
 =\sum_{a,b}u_a^*\Theta_{b\bar a}u_b.
\end{equation}
We denote this curvature tensor by $K=[\Theta_{b\bar a}]_{a,b=1}^d$, viewed as a Hermitian form on $T_y^{1,0}Y\otimes E_y$. Its matrix uses reversed base indices and the usual row and column order for bundle indices.

Fix a point $y_0$ and a holomorphic frame on a neighborhood, with $G(y_0)=I$. Holding that frame and the matrix $I$ fixed, define the \emph{frozen metric}
\[
 H_{y_0}(y)=T_y(I).
\]
The construction uses the full section space $E_y$ on each fiber.

\begin{proposition}\label{prop:frozen-nakano}
The metric $H_{y_0}$ is smooth and Nakano semipositive on a neighborhood of $y_0$ in $U$. This assertion holds for the effective-klt weighted family as well as for $\Delta=0$.
\end{proposition}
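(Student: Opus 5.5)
The plan is to recognize $H_{y_0}$ as an $L^2$ (Bergman-type) metric on the direct image of an adjoint bundle carrying a psh weight. Then I would invoke Berndtsson's Nakano positivity theorem for direct images, in the singular-weight form of \cite[Case Z, Theorem~1.1]{Zou2021}, or more concretely by a regularization limit.

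\textbf{Adjoint form.} In the fixed holomorphic frame $e=(s_1,\ldots,s_r)$ near $y_0$, the matrix $H_{y_0}(y)=T_y(I)$ is the Gram matrix of the pairing
\[
(s,t)\mapsto\int_{X_y}\overline{f_s}f_t\,(FF^*)^{-\alpha}\,d\mu_\Delta .
\]
I would rewrite this intrinsically. Split
\[
m(K_{X/Y}+\Delta)=K_{X/Y}+\mathcal L,\qquad \mathcal L=(m-1)(K_{X/Y}+\Delta)+\Delta .
\]
Equip $\mathcal L$ with the singular weight
\[
\psi=(m-1)\varphi_I+\sum_j\delta_j\log|d_j|^2 ,\qquad \varphi_I=\tfrac1m\log(FF^*),\quad FF^*=\textstyle\sum_i|f_i|^2 .
\]
By the frame computation \eqref{eq:frame-cancellation}, the pairing is exactly the fiberwise $L^2$ inner product $\int_{X_y}c_n\,u\wedge\bar v\,e^{-\psi}$ of $K_{X_y}+\mathcal L$-valued forms. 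Since the frame $e$ is holomorphic in $y$ and the matrix $I$ is frozen, $F$ is holomorphic on the total space $X_U$. Hence $\log(FF^*)$ is psh on $X_U$, and $\psi$ is a psh weight there, because the divisor terms are psh as well. The integrability \eqref{eq:kernel-domination} together with klt shows that every element of $H^0(X_y,m(K_{X_y}+\Delta_y))$ has finite norm. So the Hilbert spaces in Berndtsson's theorem are the full fibers $E_y$, and by the base-change hypothesis they form the holomorphic bundle $E$.

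\textbf{Smoothness and Nakano semipositivity.} Smoothness of $H_{y_0}$ on $U$ is immediate from Theorem~\ref{thm:controlled-regularity}, since $(y,A)\mapsto T_y(A)$ is smooth on $U_m^{\mathrm{reg}}$ and we simply fix $A=I$. For semipositivity, the cleanest route is to quote the adjoint direct-image theorem of \cite{Zou2021} after principalizing $\mathfrak b_m$. I would rather make the argument self-contained on the relative resolution of Theorem~\ref{thm:controlled-regularity}. There the weight becomes
\[
\psi\circ\pi=\textstyle\sum_E c_E\log|w_E|^2+\alpha\log(UU^*)+\text{smooth},
\]
with a smooth psh residual $\alpha\log(UU^*)$. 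I would replace each $\log|w_E|^2$ by $\log(|w_E|^2+\varepsilon)$, using a partition of unity and a global smooth Hermitian metric on $\mathcal O(E)$, i.e.\ the standard Demailly-type regularization of the divisor weight. For the negative coefficients coming from $\Delta$ and the Jacobian, which are accounted for by $\gamma_E>-1$ in \eqref{eq:weighted-exponent}, I would keep the klt divisor part as a fixed $\mathbb Q$-divisor absorbed into the adjoint form. Each regularized weight is smooth, with curvature bounded below by $-C\varepsilon$-small errors on compact sets. Berndtsson's theorem \cite[Theorem~1.2]{Berndtsson2009}, in its Nakano form, then gives Nakano curvature $\geq-o(1)$ for the regularized $L^2$ metrics $H^\varepsilon$.

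\textbf{Main obstacle: the limit.} The hard step is passing $\varepsilon\to0$ at the level of curvature, not merely of metrics. I would use the localized integrals \eqref{eq:localized-integral}. The regularized amplitudes and their $y$-derivatives up to order two are dominated uniformly in $\varepsilon$ by $C\prod_E|w_E|^{2\gamma_E}$, which is integrable by \eqref{eq:weighted-exponent}, exactly as in the proof of Theorem~\ref{thm:controlled-regularity}. Dominated convergence then gives $H^\varepsilon\to H_{y_0}$ in $C^2_{\mathrm{loc}}$, hence convergence of the curvature coefficients \eqref{eq:chern-convention}. A $C^0$ limit of Hermitian forms on $T^{1,0}U\otimes E$ that are $\geq-o(1)$ is $\geq0$, which gives Nakano semipositivity of $H_{y_0}$. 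The same argument applies verbatim for $\Delta=0$ (no negative exponents) and for the effective-klt weighted family.
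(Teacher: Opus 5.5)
There is a genuine gap in your self-contained semipositivity argument. The identification of $H_{y_0}$ with the adjoint $L^2$ metric for $\psi=\alpha\log(FF^*)+\sum_j\delta_j\log|d_j|^2$, the smoothness argument, and the $C^2$ convergence $H^\varepsilon\to H_{y_0}$ by domination with $C\prod_E|w_E|^{2\gamma_E}$ are all fine.

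\textbf{Where the argument fails.} The step you treat as routine is the claim that the regularized weights have curvature bounded below by errors tending to zero, so that Berndtsson's theorem gives Nakano curvature $\geq-o(1)$. With $\log(|s_E|^2_{h_E}+\varepsilon)$ one only gets
\[
 i\partial\bar\partial\psi_\varepsilon\geq
 \alpha\,i\partial\bar\partial\log(UU^*)
 +\sum_E c_E\,\frac{\varepsilon}{|s_E|^2_{h_E}+\varepsilon}\,i\Theta_{h_E}.
\]
The last term is of size $O(1)$ on a $\sqrt\varepsilon$-neighbourhood of every $E$ with $c_E>0$.

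No other choice of smooth approximants avoids this. Smooth weights on $\pi^*\mathcal L$ with curvature $\geq-\delta\,\omega$ for every $\delta>0$ would make $\pi_y^*\bigl((m-1)K_{X_y}+m\Delta_y\bigr)$ nef on each compact fiber. That fails for a non-minimal fiber: a $(-1)$-curve $C$ on a fiber surface has $K_{X_y}\cdot C=-1$, lies in the base locus of $|mK_{X_y}|$, and produces divisors with $c_E>0$.

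Berndtsson's Theorem~1.2 needs a semipositive weight and contains no quantitative ``Nakano curvature $\geq-o(1)$'' statement. In his curvature formula the negative part is paired with the full representative $\widehat u=(g\,dz+dt\wedge\beta)\otimes e_L$, so the vertical negativity is weighted by $|\beta|^2$. You would therefore need uniform-in-$\varepsilon$ control of the horizontal components $\beta$ near the divisors. That estimate is the real content of singular-weight Nakano theorems, and you do not supply it. The limit you flag as the main obstacle is actually the easy part.

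\textbf{What the paper does instead.} It does not regularize. Over a base polydisc $V$:
\begin{enumerate}
\item It splits $c_E=\alpha a_E+b_E$ as $\lfloor c_E\rfloor+\{c_E\}$ and puts $C=\sum_E\lfloor c_E\rfloor E$, $L^\sharp=\pi^*\mathcal L-C$. The residual weight then has relative SNC poles with residues in $[0,1)$ and trivial multiplier ideal.
\item It checks, via $a_E+k_E-\lfloor\alpha a_E+b_E\rfloor\geq0$ (a consequence of $a_E/m+A_{X,\Delta}(E)>0$), that $J(f_i\circ\pi)/s_C$ is holomorphic with the same $L^2$ density.
\item It identifies $p_*\mathcal O_{\widetilde X}(K_{\widetilde X/V}+L^\sharp)$ and every fiber with the complete section spaces through the multiplier transformation formula.
\item It applies Zou's Case~Z theorem, getting the required Stein open subset by deleting a hyperplane section.
\end{enumerate}

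Your fallback ``quote Zou after principalizing'' is this route, but it needs exactly that integer-part reduction. As written, your $\psi\circ\pi$ has coefficients $c_E$ that can exceed $1$, which is outside Zou's hypotheses.

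Note also that every $c_E\geq0$. The negative exponents $\gamma_E$ come from the Jacobian and the boundary in the density, not from the weight, so there is no negative part of the weight to ``absorb''.
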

\begin{proof}
Work over a bounded coordinate polydisc $V\Subset U$. Write $L_m=m(K_{X/V}+\Delta)$ and $L=L_m-K_{X/V}=(m-1)K_{X/V}+m\Delta$. The frozen metric on $L$ has weight
\begin{equation}\label{eq:frozen-weight}
 \psi=\alpha\log(FF^*)+\sum_j\delta_j\log|d_j|^2.
\end{equation}
Indeed $L=\alpha L_m+\Delta$ as rational line bundles. The first term is the weight of an evaluation quotient of a trivial Hermitian bundle, and the second is the effective divisor metric. Both have semipositive curvature currents. Its adjoint $L^2$ pairing is exactly $T_y(I)$.

To obtain a residual metric with logarithmic coefficients in $[0,1)$, use the simultaneous principalization from Theorem~\ref{thm:controlled-regularity}. Let $a_E,b_E,k_E$ be the orders of evaluation, boundary and Jacobian, respectively. Then
\[
 \psi\circ\pi=\sum_Ec_E\log|w_E|^2+q,
 \quad c_E=\alpha a_E+b_E,
 \quad q=\alpha\log(UU^*)+\text{pluriharmonic unit terms}.
\]
Set
\begin{equation}\label{eq:divisor-absorption}
 C=\sum_E\lfloor c_E\rfloor E,\qquad
 L^\sharp=\pi^*L-C,\qquad
 \psi^\sharp=\sum_E\{c_E\}\log|w_E|^2+q.
\end{equation}
The fractional residues lie in $[0,1)$, and
\[
 i\Theta(L^\sharp)=\sum_E\{c_E\}\,i\partial\bar\partial
 \log|w_E|^2+\alpha i\partial\bar\partial\log(UU^*)\geq0.
\]
Here $\mathcal I(e^{-\psi})$ consists of the holomorphic germs $h$ for which $|h|^2e^{-\psi}$ is locally integrable. The one-variable integral $\int_0^\eta t^{2k-2c+1}dt$ is finite precisely when $k>c-1$. Thus
\begin{equation}\label{eq:multiplier-absorption}
 \mathcal I(e^{-\psi\circ\pi})=\mathcal O_{\widetilde X}(-C),
 \qquad \mathcal I(e^{-\psi^\sharp})=\mathcal O_{\widetilde X}.
\end{equation}
The fixed relative SNC coordinates give the same calculation on every fiber, so each residual multiplier ideal is trivial.

To identify the full adjoint section spaces and their $L^2$ pairing, observe that on $X$,
\[
 |f_i|^2(FF^*)^{-\alpha}\prod_j|d_j|^{-2\delta_j}
 \leq |F|^{2/m}\prod_j|d_j|^{-2\delta_j}
\]
is locally integrable by the klt hypothesis, including after multiplication by a smooth base volume form. Every local section of $E$ is a holomorphic linear combination of the frame sections. Hence, as sheaves on $V$,
\begin{equation}\label{eq:multiplier-no-loss}
 f_*\bigl(\mathcal O_X(L_m)\otimes\mathcal I(e^{-\psi})\bigr)=E|_V.
\end{equation}
The multiplier transformation formula \cite[Proposition~5.8 and Remark~5.9]{Demailly2012} reads in this convention
\[
 \mathcal I(e^{-\psi})
 =\pi_*\bigl(\mathcal O(K_{\widetilde X/X})
                   \otimes\mathcal I(e^{-\psi\circ\pi})\bigr).
\]
Demailly uses $e^{-2\varphi}$, with $\varphi=\psi/2$ here. Projection formula and \eqref{eq:multiplier-absorption}-- \eqref{eq:multiplier-no-loss} give
\begin{equation}\label{eq:resolved-direct-image}
 p_*\mathcal O_{\widetilde X}(K_{\widetilde X/V}+L^\sharp)=E|_V.
\end{equation}
Apply the same multiplier formula to each birational morphism $\pi_y$ and repeat the integrability estimate for every element of $H^0(X_y,L_m|_{X_y})$. This also identifies the full fiber spaces:
\begin{equation}\label{eq:complete-adjoint-space}
 H^0(\widetilde X_y,K_{\widetilde X_y}+L^\sharp|_{\widetilde X_y})
 =H^0(X_y,m(K_{X_y}+\Delta_y)).
\end{equation}

Holomorphy of the weighted adjoint pullback follows directly from the divisor orders. An adjoint pullback has coefficient $J(f_i\circ\pi)$ in a mixed canonical and pulled-back $L$ frame. After division by the equation $s_C$ of $C$, its order along $E$ is at least
\begin{equation}\label{eq:weighted-no-loss-order}
 a_E+k_E-\lfloor\alpha a_E+b_E\rfloor\geq0,
\end{equation}
because
\[
 \alpha a_E+b_E
 =a_E+k_E+1-\left(\frac{a_E}{m}+A_{X,\Delta}(E)\right)
 <a_E+k_E+1.
\]
The corresponding pairing satisfies
\begin{equation}\label{eq:frozen-isometry}
 \left|\frac{J(f_i\circ\pi)}{s_C}\right|^2e^{-\psi^\sharp}
 =|J|^2|f_i\circ\pi|^2e^{-\psi\circ\pi}.
\end{equation}
The Jacobian gives the change-of-variables factor, so polarization identifies all entries of $H_{y_0}$ with the $L^2$ metric on \eqref{eq:resolved-direct-image}. Smoothness follows from \eqref{eq:resolved-kernel}--\eqref{eq:localized-integral}.

Zou's theorem \cite[Case Z, Theorem~1.1 and Example~1.3(3)]{Zou2021} gives Nakano semipositivity for the complete adjoint $L^2$ metric of a proper K\"ahler submersion over a bounded pseudoconvex base, provided the line metric is semipositive, has a smooth residual weight and relative SNC logarithmic poles with fixed residues in $[0,1)$, and satisfies the hypothesis of Case Z concerning a Stein open subset. The reference is to arXiv v2. Its Theorems~2.7--2.8 give the simultaneous base-direction estimate and its Nakano interpretation. In this application, $p$ is projective and smooth, the divisors are relative SNC, \eqref{eq:divisor-absorption} has the required residues and curvature, and \eqref{eq:complete-adjoint-space} is the complete fiber space. To obtain the auxiliary Stein open subset, embed the projective family in $\mathbb P^N\times V$ and remove a hyperplane containing no fiber component after shrinking $V$. The remaining closed subspace of $\mathbb C^N\times V$ is Stein. The removed divisor also supplies the required negligible analytic set. Zou's theorem therefore proves Nakano semipositivity on the entire tangent--fiber tensor product. When $\Delta=0$ and the relative evaluation map is surjective, the smooth theorem applies directly \cite[Theorem~1.2]{Berndtsson2009}.
\end{proof}

\subsection{The curvature resolvent formula}
A complex-linear map on matrices is \emph{completely positive} if its entrywise action preserves positive block matrices of every size. The following matrix estimate will apply to the linearization of the fixed-point equation.

\begin{lemma}[Pinching for completely positive resolvents]
\label{lem:cp-block-pinching}
Let $H,V$ be finite-dimensional Hermitian spaces, with $r=\dim V\geq1$, and let $L:\operatorname{End}(V)\to\operatorname{End}(V)$ be complex-linear and completely positive, with $L(I)=\alpha I$ and $0<\alpha<1$. Then
\[
 \|L\|=\alpha,\qquad R=(I-L)^{-1}=\sum_{j\geq0}L^j,
 \qquad R(I)=(1-\alpha)^{-1}I,
\]
where the norm is induced by the operator norm on complex matrices, and $R$ and $R-I$ are completely positive. For $Q\geq0$ on $H\otimes V$, put
\[
 K=(\mathrm{id}_{\operatorname{End}(H)}\otimes R)Q,
 \qquad \tau=\operatorname{Tr}_V K.
\]
Then $K\geq0$. If, in addition, $L-\varepsilon D$ is completely positive for some $\varepsilon>0$, where $D(S)=\operatorname{Tr}(S)I$, then, with $\sigma=\varepsilon/(1-\alpha+r\varepsilon)$,
\begin{equation}\label{eq:abstract-block-pinching}
 0<\sigma\leq\frac\alpha r,\qquad
 \sigma\tau\otimes I\leq K
 \leq\bigl[r-(r^2-1)\sigma\bigr]\tau\otimes I,
\end{equation}
and
\[
 \ker K=(\ker\tau)\otimes V,\qquad
 \operatorname{rank}K=r\operatorname{rank}\tau.
\]
The inequalities hold as Hermitian forms on the entire space $H\otimes V$. Under the separate trace-scaling assumption $\operatorname{Tr}L(S)=\alpha\operatorname{Tr}S$ for every $S$, one also has
\[
 \operatorname{Tr}_V Q=(1-\alpha)\tau.
\]
\end{lemma}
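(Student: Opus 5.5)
The plan is to work throughout with the fixed-point identity $K=Q+(\mathrm{id}\otimes L)K$, obtained by applying $I-L$ to the definition of $K$, together with complete positivity. All maps of the form $\mathrm{id}_{\operatorname{End}(H)}\otimes\Phi$ with $\Phi$ completely positive preserve positivity on $H\otimes V$; this is just the definition of complete positivity applied to block matrices indexed by a basis of $H$.

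\textbf{Step 1: norm and resolvent.} For $\|L\|=\alpha$ I will invoke the Russo--Dye theorem for positive unital-type maps, $\|L\|=\|L(I)\|$. Since $L(I)=\alpha I$, this gives $\|L\|=\alpha<1$. The Neumann series then converges, and $R(I)=\sum_j\alpha^jI=(1-\alpha)^{-1}I$. Each $L^j$ is completely positive, so $R=I+\sum_{j\geq1}L^j$ and $R-I=\sum_{j\geq1}L^j$ are completely positive. Consequently $K=(\mathrm{id}\otimes R)Q\geq0$.

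\textbf{Step 2: lower bound.} Write $L=\varepsilon D+L'$ with $L'$ completely positive. Then $L'(I)=(\alpha-r\varepsilon)I$, and this is $\geq0$ because $\alpha I\geq\varepsilon D(I)=r\varepsilon I$ by positivity of $L'$; hence $r\varepsilon\leq\alpha$. Note $(\mathrm{id}\otimes D)K=\operatorname{Tr}_VK\otimes I=\tau\otimes I$. The fixed-point identity becomes
\[
 K=Q+\varepsilon\,\tau\otimes I+(\mathrm{id}\otimes L')K ,
\]
so that
\[
 K=(\mathrm{id}\otimes R')(Q+\varepsilon\tau\otimes I),\qquad R'=(I-L')^{-1}.
\]
By Step 1 applied to $L'$, $R'$ is completely positive and $R'(I)=(1-\alpha+r\varepsilon)^{-1}I$. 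Since $Q\geq0$, this gives
\[
 K\geq\frac{\varepsilon}{1-\alpha+r\varepsilon}\,\tau\otimes I=\sigma\,\tau\otimes I .
\]
The inequality $\sigma\leq\alpha/r$ reduces to $r\varepsilon(1-\alpha)\leq\alpha(1-\alpha)$, which is $r\varepsilon\leq\alpha$.

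\textbf{Step 3: upper bound.} This is the step where the constant must come out exactly. Put $P=K-\sigma\tau\otimes I\geq0$; then $\operatorname{Tr}_VP=(1-r\sigma)\tau$. I will use the standard operator inequality
\[
 P\leq r\,(\operatorname{Tr}_VP)\otimes I\qquad\text{for }P\geq0\text{ on }H\otimes V .
\]
To prove it, take a unitary basis $\{U_k\}_{k=1}^{r^2}$ of $\operatorname{End}(V)$ that is orthogonal for the trace pairing (Weyl operators). The twirl identity reads
\[
 \frac1{r^2}\sum_k(1\otimes U_k)P(1\otimes U_k)^*=\frac1r(\operatorname{Tr}_VP)\otimes I .
\]
The term $k$ with $U_k=I$ is $P$ itself, and all other terms are positive, so $P$ is at most the full sum, which equals $r(\operatorname{Tr}_VP)\otimes I$. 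Hence
\[
 K\leq\bigl[\sigma+r(1-r\sigma)\bigr]\tau\otimes I=\bigl[r-(r^2-1)\sigma\bigr]\tau\otimes I .
\]
Equality cases of entangled vectors show the factor $r$ cannot be improved, which explains the form of the constant.

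\textbf{Step 4: kernel, rank and trace scaling.} Both coefficients in the two-sided bound are positive, since $\sigma>0$ and $r-(r^2-1)\sigma\geq r-(r^2-1)/r>0$ (using $\sigma\leq\alpha/r<1/r$). For positive forms, two-sided comparison gives equality of kernels, so $\ker K=\ker(\tau\otimes I)=(\ker\tau)\otimes V$, and then $\operatorname{rank}K=r\operatorname{rank}\tau$. Finally, under $\operatorname{Tr}\circ L=\alpha\operatorname{Tr}$, partial trace commutes with $\mathrm{id}\otimes L$ up to the factor $\alpha$: $\operatorname{Tr}_V(\mathrm{id}\otimes L)K=\alpha\tau$. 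Taking $\operatorname{Tr}_V$ of the fixed-point identity then yields $\operatorname{Tr}_VQ=(1-\alpha)\tau$.
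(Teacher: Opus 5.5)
Your argument is correct and follows the same route as the paper. Like the paper, you use the fixed-point identity $K=Q+(\mathrm{id}\otimes L)K$, split $L=\varepsilon D+L'$ so that the resolvent of $L'$ gives $K=(\mathrm{id}\otimes R')Q+\sigma\tau\otimes I$, and get the upper constant from $P\leq r(\operatorname{Tr}_VP)\otimes I$. The only differences are local. You obtain $\|L\|=\alpha$ from the Russo--Dye corollary $\|\Phi\|=\|\Phi(I)\|$ instead of the paper's $2\times2$ block-matrix argument. You also prove the partial-trace inequality by twirling over a unitary orthogonal basis containing $I$, where the paper uses a Schmidt decomposition and Cauchy--Schwarz on rank-one $P$.
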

\begin{proof}
For any completely positive map $\Phi$ with $\Phi(I)=cI$, positivity gives $c\geq0$. Apply $\mathrm{id}_{M_2}\otimes\Phi$ to the positive block matrix
\[
 \begin{pmatrix}\|S\|I&S\\S^*&\|S\|I\end{pmatrix}.
\]
The resulting positive block matrix has diagonal entries $c\|S\|I$, so $\|\Phi(S)\|\leq c\|S\|$. Equality on $I$ gives $\|\Phi\|=c$. In particular $\|L\|=\alpha<1$. The Neumann series defining $R$ and $R-I$ converge in norm and are completely positive, since composition, sums and norm limits preserve positive block matrices of every size. Their scalar action on $I$ gives the displayed formula for $R(I)$. Consequently $K\geq0$ and $\tau\geq0$.

Now put $N=L-\varepsilon D$. It is completely positive and $N(I)=(\alpha-r\varepsilon)I$, so $0\leq b:=\alpha-r\varepsilon<1$. The preceding argument gives $\|N\|=b$. Hence $R_N=\sum_{j\geq0}N^j$ is completely positive and $R_N(I)=(1-\alpha+r\varepsilon)^{-1}I$. Also $r\varepsilon\leq\alpha$, which, since $1-\alpha>0$, is equivalent to $\sigma\leq\alpha/r$.

The equation $K=Q+(\mathrm{id}\otimes L)K$ becomes
\[
 (\mathrm{id}\otimes(I-N))K=Q+\varepsilon\tau\otimes I.
\]
Applying $\mathrm{id}\otimes R_N$ and then taking the partial trace gives
\begin{equation}\label{eq:pinching-decomposition}
 K=B+\sigma\tau\otimes I,
 \qquad B=(\mathrm{id}\otimes R_N)Q\geq0,
 \qquad\operatorname{Tr}_V B=(1-r\sigma)\tau.
\end{equation}
Every positive semidefinite endomorphism $B$ of $H\otimes V$ satisfies
\begin{equation}\label{eq:partial-trace-bound}
 B\leq r(\operatorname{Tr}_V B)\otimes I.
\end{equation}
To prove this, it suffices to take $B=ww^*$. A Schmidt decomposition gives $w=\sum_{j=1}^s a_je_j\otimes v_j$ with $s\leq r$, orthonormal vectors in each factor and $a_j\geq0$. For $z=\sum_{j=1}^s e_j\otimes z_j+z^\perp$, Cauchy--Schwarz yields
\[
 |\langle z,w\rangle|^2
 \leq s\sum_{j=1}^s a_j^2|\langle z_j,v_j\rangle|^2
 \leq r\sum_{j=1}^s a_j^2\|z_j\|^2.
\]
The last sum is the quadratic form of $r(\operatorname{Tr}_V ww^*)\otimes I$. Summing rank-one decompositions proves \eqref{eq:partial-trace-bound}. Applied to \eqref{eq:pinching-decomposition}, this gives the upper coefficient $r(1-r\sigma)+\sigma=r-(r^2-1)\sigma$. The lower bound follows from $B\geq0$. Both coefficients are positive, so the two bounds give the asserted kernel and rank.

Finally, under the trace-scaling assumption, taking the partial trace of $K=Q+(\mathrm{id}\otimes L)K$ gives $\tau=\operatorname{Tr}_V Q+\alpha\tau$. This assumption is used only for this last identity.
\end{proof}

At $y_0$, choose a Chern normal holomorphic frame for $G$, so $G(y_0)=I$ and $G_a(y_0)=0$. Define the linearized operator $L$, its resolvent $R$, and the frozen curvature tensor $Q$ by
\begin{equation}\label{eq:L-and-R}
 L=D_AT_{y_0}|_I,\qquad R=(I-L)^{-1},\qquad
 Q=[\Theta^{H_{y_0}}_{b\bar a}(y_0)].
\end{equation}
The real derivative on Hermitian matrices has a unique complex-linear extension to all of $M_r(\mathbb C)$, denoted by the same letter.

\begin{lemma}\label{lem:cp-resolvent}
With the notation \eqref{eq:evaluation-moment} on $X_{y_0}$,
\begin{equation}\label{eq:moment-operator}
 L(S)=\alpha\int P_xSP_x\,d\nu,
 \quad\int P_xd\nu=I,\quad\nu(X_{y_0})=r.
\end{equation}
The maps $L$, $R$, and $R-I$ are completely positive, and
\begin{equation}\label{eq:operator-normalizations}
 L(I)=\alpha I,\quad \operatorname{Tr}L(S)=\alpha\operatorname{Tr}S,
 \quad\|L\|=\alpha,\quad R=\sum_{j\geq0}L^j,\quad R(I)=mI.
\end{equation}
Here the map norm is induced by the operator norm on complex matrices.
\end{lemma}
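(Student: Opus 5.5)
We start from the matrix derivative \eqref{eq:matrix-derivative}, evaluated at $A=I$ in the Chern-normal frame where $G_{y_0}=I$ is the fixed point. Then $B_I=FF^*$, and the integrand becomes
\[
\alpha F^*F(FF^*)^{-\alpha-1}FSF^*.
\]
We write $FSF^*=\operatorname{Tr}(F^*F S)$, which is a scalar. We also use $F^*F=(FF^*)P_x$. With these, the integrand is
\[
\alpha (FF^*)^{1-\alpha-1}(FF^*)P_xSP_x.
\]
To check this, note that $P_xSP_x=F^*(FSF^*)F/(FF^*)^2$. Hence $F^*F\,(FSF^*)=(FF^*)^2P_xSP_x$. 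Multiplying by $(FF^*)^{-\alpha-1}$ gives $(FF^*)^{1-\alpha}P_xSP_x=(FF^*)^{1/m}P_xSP_x$. Against $d\mu_\Delta$ this is exactly $P_xSP_x\,d\nu$, which gives \eqref{eq:moment-operator}. The identities $\int P_x\,d\nu=I$ and $\nu(X_{y_0})=r$ come from the fixed-point equation $T(I)=I$, read in the same way with $S$ absent. This was already noted in \eqref{eq:evaluation-moment}, since $F^*F(FF^*)^{-\alpha}=P_x(FF^*)^{1/m}$. The complex-linear extension of $L$ is given by the same formula, because $S\mapsto P_xSP_x$ is complex linear.

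Complete positivity follows directly. Each map $S\mapsto P_xSP_x=V_x^*SV_x$ with $V_x=P_x$ is a Kraus-type congruence. Applying $\mathrm{id}_{M_k}\otimes(\cdot)$ to a positive block matrix $[S_{ij}]$ gives $(I_k\otimes P_x)[S_{ij}](I_k\otimes P_x)\geq0$. Integrating against the positive measure $\alpha\,d\nu$ preserves positivity; the integral converges because $\|P_x\|\le 1$ and $\nu$ is finite. For the normalizations, $P_x$ is a rank-one orthogonal projection, so $P_x^2=P_x$. This gives $L(I)=\alpha\int P_x\,d\nu=\alpha I$. Since $\operatorname{Tr}(P_xSP_x)=\operatorname{Tr}(P_xS)$, we get $\operatorname{Tr}L(S)=\alpha\operatorname{Tr}\bigl(\int P_x\,d\nu\,S\bigr)=\alpha\operatorname{Tr}S$.

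The remaining assertions come from Lemma~\ref{lem:cp-block-pinching}, whose first part needs only complete positivity and $L(I)=\alpha I$ with $0<\alpha<1$. That lemma gives $\|L\|=\alpha$, the convergent Neumann series $R=\sum_j L^j$, complete positivity of $R$ and $R-I$, and $R(I)=(1-\alpha)^{-1}I=mI$.

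The main point needing care is the algebraic rewriting of the integrand together with its justification at base points. There $F=0$, $P_x$ is undefined, and the integrand has been extended by zero. This is harmless: the base locus is a proper analytic subset and has measure zero, and \eqref{eq:kernel-domination} gives the domination needed to differentiate under the integral sign. That differentiation is already justified on $U_m^{\mathrm{reg}}$ by Theorem~\ref{thm:controlled-regularity}.
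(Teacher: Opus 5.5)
Your proof is correct and follows the paper's argument essentially step for step. You rewrite \eqref{eq:matrix-derivative} at $A=I$ through the rank-one projection $P_x$; your intermediate factor $(FF^*)^{1-\alpha-1}(FF^*)$ is just $(FF^*)^{1/m}$, as your own check confirms. You then obtain complete positivity from the congruences $(I_k\otimes P_x)[S_{ij}](I_k\otimes P_x)\geq0$ integrated against $\nu$, the normalizations from $P_x^2=P_x$, $\int P_x\,d\nu=I$ and trace cyclicity, and the norm and resolvent assertions from the first part of Lemma~\ref{lem:cp-block-pinching}, exactly as the paper does.
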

\begin{proof}
Formula \eqref{eq:matrix-derivative} at $I$ gives $\alpha\int F^*F(FSF^*)(FF^*)^{-\alpha-1}d\mu_\Delta$. Since $P$ has rank one, this is \eqref{eq:moment-operator}. For every positive block matrix $S=[S_{ab}]$ of size $q$,
\[
 [L(S_{ab})]=\alpha\int(I_q\otimes P_x)S(I_q\otimes P_x)d\nu\geq0.
\]
Thus $L$ is completely positive. Since $P^2=P$, $L(I)=\alpha I$, and cyclicity of trace gives the trace identity. The norm and resolvent assertions now follow from the first part of Lemma~\ref{lem:cp-block-pinching}, with $(1-\alpha)^{-1}=m$.
\end{proof}

\begin{theorem}[Curvature resolvent formula]\label{thm:curvature-resolvent}
The metric $G$ is Nakano semipositive on $U$. In the Chern-normal convention above,
\begin{equation}\label{eq:curvature-resolvent}
 K=(\mathrm{id}_{M_d}\otimes R)Q,\qquad
 K=Q+(\mathrm{id}_{M_d}\otimes L)K,\qquad Q\geq0.
\end{equation}
In particular, if $\tau=\operatorname{Tr}_E K$, then
\begin{equation}\label{eq:frozen-trace}
 \operatorname{Tr}_E Q=(1-\alpha)\tau=\frac1m\tau.
\end{equation}
\end{theorem}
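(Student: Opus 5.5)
The plan is to differentiate the fixed-point identity $G_y=T_y(G_y)$ twice in the base variables at $y_0$, in the Chern-normal frame. By Theorem~\ref{thm:controlled-regularity}, $(y,A)\mapsto T_y(A)$ is jointly smooth on $U$, so the chain rule applies.

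\textbf{First derivatives.} Differentiating once gives
\[
G_a=\partial_{y_a}T_y(A)|_{A=G}+D_AT_y(G)[G_a].
\]
At $y_0$ we have $G=I$ and $G_a=0$. Hence $\partial_{y_a}T_y(I)|_{y_0}=0$, so $H_{y_0}(y)=T_y(I)$ is also Chern-normal at $y_0$, with $H_{y_0}(y_0)=I$. This means the curvatures of both metrics reduce to $-A_{a\bar b}$ there. By \eqref{eq:chern-convention}, $\Theta^G_{a\bar b}=-G_{a\bar b}$ and $\Theta^{H}_{a\bar b}=-\partial_a\bar\partial_b T_y(I)$ at $y_0$.

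\textbf{Second derivatives.} Next I apply $\bar\partial_b$ to the identity above and evaluate at $y_0$. The mixed terms involve $D_y D_A T[G_{\bar b}]$ and $D_A^2T[G_a,G_{\bar b}]$, and each contains a factor $G_a$ or $G_{\bar b}$, which vanishes at $y_0$. What remains is
\[
G_{a\bar b}=\partial_a\bar\partial_bT_y(I)+L(G_{a\bar b}).
\]
Reversing signs gives $\Theta^G_{a\bar b}=\Theta^H_{a\bar b}+L(\Theta^G_{a\bar b})$. Here I use the complex-linear extension of $L$, since $G_{a\bar b}$ need not be Hermitian; the extension is compatible because the relation is obtained from real derivatives on the Hermitian parts $\Theta_{a\bar b}+\Theta_{b\bar a}^*$. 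Arranging the blocks as $K=[\Theta_{b\bar a}]$ and $Q$ likewise yields $K=Q+(\mathrm{id}_{M_d}\otimes L)K$. Since $\|L\|=\alpha<1$ by Lemma~\ref{lem:cp-resolvent}, the operator $\mathrm{id}\otimes L$ on $M_d\otimes M_r$ also has norm less than one, because complete positivity controls the norm of all amplifications. The equation therefore inverts to $K=(\mathrm{id}\otimes R)Q$.

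\textbf{Positivity and trace.} Proposition~\ref{prop:frozen-nakano} gives $Q\geq0$ in the Nakano sense on $T^{1,0}\otimes E$, and the Nakano quadratic form \eqref{eq:nakano-block} is exactly the block matrix $Q$. Lemma~\ref{lem:cp-block-pinching} then gives $K\geq0$, which is Nakano semipositivity of $G$ at $y_0$; since $y_0$ is arbitrary, this holds on all of $U$. Finally, the trace scaling $\operatorname{Tr}L(S)=\alpha\operatorname{Tr}S$ from Lemma~\ref{lem:cp-resolvent} and the last assertion of Lemma~\ref{lem:cp-block-pinching} give \eqref{eq:frozen-trace}.

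\textbf{Main obstacle.} The delicate step is the bookkeeping in the second derivative. I must check that every term other than $\partial\bar\partial T$ and $L(G_{a\bar b})$ really carries a first derivative of $G$, including the $y$–$A$ cross derivative $\bar\partial_b D_AT_y(G)[G_a]$. I must also match the index and conjugation conventions so that the $Q$ obtained here is the Nakano tensor of $H_{y_0}$ in the sense of \eqref{eq:nakano-block}. The two metrics share the same holomorphic frame, so the identification should be direct.
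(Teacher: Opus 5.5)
Your proposal is correct and follows essentially the same route as the paper. You differentiate $G_y=T_y(G_y)$ twice in a $G$-Chern-normal frame, and use $G_a(y_0)=0$ to kill $T_a$ and every cross term in the second derivative, so that $H_{y_0}$ is Chern-normal as well. This gives $\Theta^G_{a\bar b}=(I-L)^{-1}\Theta^{H_{y_0}}_{a\bar b}$ entrywise, and you conclude with complete positivity of $R$ and the trace scaling of $L$.

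One small correction: your parenthetical justification of the complex-linear extension is garbled. At $y_0$ one has $\Theta_{b\bar a}^*=\Theta_{a\bar b}$, so $\Theta_{a\bar b}+\Theta_{b\bar a}^*$ is not a Hermitian part. The simpler reason is that the chain rule along the complex vector fields $\partial_{y_a},\partial_{\bar y_b}$ automatically uses the complex-linear extension of the real differential $D_AT$. By \eqref{eq:matrix-derivative}, that differential is in any case visibly complex-linear in $S$.
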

\begin{proof}
Write $A(y)=G_y$ in the chosen holomorphic frame and differentiate $A=T_y(A)$. Theorem~\ref{thm:controlled-regularity} justifies these derivatives. Partial $y$-derivatives of $T$ hold the matrix argument fixed. The chain rule yields
\[
 (I-D_AT)A_a=T_a
\]
and
\begin{align*}
 (I-D_AT)A_{a\bar b}
 ={}&T_{a\bar b}+(D_AT_a)[A_{\bar b}]
     +(D_AT_{\bar b})[A_a]
     +D_A^2T[A_a,A_{\bar b}].
\end{align*}
At $y_0$, $A_a=A_{\bar b}=0$, so $T_a=T_{\bar b}=0$ and the last three terms vanish. The frozen metric $H_{y_0}$ is therefore also Chern-normal there. By \eqref{eq:chern-convention},
\[
 \Theta^G_{a\bar b}=-A_{a\bar b}
 =(I-L)^{-1}(-T_{a\bar b})
 =(I-L)^{-1}\Theta^{H_{y_0}}_{a\bar b}.
\]
Reversing the base indices as in \eqref{eq:nakano-block} proves the first two identities in \eqref{eq:curvature-resolvent}. Proposition~\ref{prop:frozen-nakano} gives $Q\geq0$ on the entire tangent--fiber tensor product, and Lemma~\ref{lem:cp-resolvent} preserves this positivity under $\mathrm{id}_{M_d}\otimes R$. The trace assertion of Lemma~\ref{lem:cp-block-pinching}, using \eqref{eq:operator-normalizations}, gives \eqref{eq:frozen-trace}.
\end{proof}

\subsection{Nondegeneracy of the evaluation quadratic form}
To obtain a completely positive lower bound for $L$, define the \emph{evaluation quadratic form} on all of $M_r(\mathbb C)$:
\begin{equation}\label{eq:full-moment}
 \mathfrak q_y(M)=\alpha\int_{X_y}
       |\operatorname{Tr}(M^*P_x)|^2d\nu,
 \qquad
 \varepsilon_y=\min_{\|M\|_{\mathrm{HS}}=1}\mathfrak q_y(M),
\end{equation}
where the matrices are written in a $G_y$-orthonormal frame and $\|M\|_{\mathrm{HS}}^2=\operatorname{Tr}(M^*M)$. Unitary conjugation shows that $\varepsilon_y$ is intrinsic.

\begin{lemma}\label{lem:strict-cp}
For every $y\in U$, $\varepsilon_y>0$, and
\begin{equation}\label{eq:cp-minorization}
 L-\varepsilon_y D\text{ is completely positive},
 \qquad D(S)=\operatorname{Tr}(S)I.
\end{equation}
The function $\varepsilon_y$ is continuous and positive on $U$. In particular $L$ and $R$ send every nonzero positive semidefinite endomorphism to a positive definite one.
\end{lemma}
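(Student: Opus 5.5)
The proof splits into three parts: positivity of $\varepsilon_y$, the completely positive minorization \eqref{eq:cp-minorization}, and the consequences for $L$ and $R$.

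\emph{Positivity of $\varepsilon_y$.} Work in a $G_y$-orthonormal frame. Write $F(x)$ for the evaluation row and $u_x=F(x)^*/|F(x)|$, so that $P_x=u_xu_x^*$ and $\operatorname{Tr}(M^*P_x)=u_x^*M^*u_x$. If $\mathfrak q_y(M)=0$, then $\overline{F(x)}M^*F(x)^{\mathsf T}$ vanishes $\nu$-almost everywhere. Since $d\nu$ has a positive density off a proper analytic subset and $F$ is real-analytic, the vanishing holds identically. This reads $\sum_{i,j}\overline{f_i}\,\overline{M_{ji}}\,f_j\equiv0$ in local coordinates.

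Polarize by treating $f$ and $\bar f$ as independent: the function $\sum_{i,j}\overline{M_{ji}}\,\overline{f_i(x)}f_j(x')$ is holomorphic in $x'$, antiholomorphic in $x$, and vanishes on the diagonal. It therefore vanishes identically near the diagonal, and hence everywhere by connectedness. Linear independence of the $f_i$ then forces $M=0$. So $\mathfrak q_y$ is positive definite on the finite-dimensional space $M_r(\mathbb C)$, and compactness of the Hilbert--Schmidt sphere gives $\varepsilon_y>0$.

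\emph{The minorization \eqref{eq:cp-minorization}.} I will use the Choi criterion: a map $\Phi$ is completely positive iff its Choi matrix $C_\Phi=\sum_{i,j}E_{ij}\otimes\Phi(E_{ij})$ is positive semidefinite. By \eqref{eq:moment-operator},
\[
 C_L=\alpha\int (\bar u_x\otimes u_x)(\bar u_x\otimes u_x)^*\,d\nu,
\]
while $C_D=I\otimes I$. For a vector $w\in\mathbb C^r\otimes\mathbb C^r$, identify $w$ with a matrix $W$. Then $(\bar u\otimes u)^*w=u^{\mathsf T}Wu$, up to the harmless identification of $\mathbb C^r$ with its conjugate (take $W\leftrightarrow M^*$ after a transpose). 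Hence
\[
 w^*C_Lw=\mathfrak q_y(M)\geq\varepsilon_y\|M\|_{\mathrm{HS}}^2=\varepsilon_y\|w\|^2=\varepsilon_y\,w^*C_Dw .
\]
This gives $C_{L-\varepsilon_yD}\geq0$. The index bookkeeping in this identification is the only delicate step: I need to check that the quadratic form on Choi vectors is exactly $\mathfrak q_y$ and not its transpose-conjugate variant. Since $\mathfrak q_y(M)$ and $\mathfrak q_y(M^{\mathsf T})$ are both positive definite, the argument is robust either way.

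\emph{Continuity and consequences.} Continuity of $y\mapsto\varepsilon_y$ follows from Theorem~\ref{thm:controlled-regularity}. Smooth dependence of $G_y$ and dominated convergence make the coefficients of $\mathfrak q_y$ in a smoothly varying $G_y$-orthonormal frame continuous in $y$, and the minimum of a continuous family of quadratic forms over the compact sphere is continuous; unitary invariance removes any dependence on the frame. Finally, for $S\geq0$ with $S\neq0$ we have
\[
 L(S)\geq\varepsilon_y\operatorname{Tr}(S)\,I>0.
\]
Since $R-I$ is completely positive by Lemma~\ref{lem:cp-resolvent}, $R(S)\geq S+L(S)>0$, so both $L$ and $R$ send nonzero positive semidefinite endomorphisms to positive definite ones.
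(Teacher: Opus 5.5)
Your proof is correct and follows the paper's route: polarization of $FM^*F^*\equiv0$ gives $\varepsilon_y>0$ (the paper via Taylor coefficients $a_\gamma M^*a_\eta^*=0$, you via holomorphy in $(\bar x,x')$). Your Choi-matrix identity $w^*C_Lw=\mathfrak q_y(\overline W)$ is the paper's rank-one computation $\sum_{a,b}u_a^*L(v_av_b^*)u_b=\mathfrak q_y(\sum_a u_av_a^*)$, just routed through Choi's theorem.

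One small slip: complete positivity of $R-I$ only yields $R(S)\geq S$, so justify $R(S)\geq S+L(S)$ by $R-I-L=\sum_{j\geq2}L^j\geq0$ instead. Also, your $\overline{F}M^*F^{\mathsf T}$ should read $FM^*F^*$, which changes nothing in the argument.
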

\begin{proof}
If $\mathfrak q_y(M)=0$, choose a coordinate ball disjoint from the base locus and the boundary. Its density is positive, so continuity gives $F(z)M^*F(z)^*=0$ on that ball. Expand $F(z)=\sum_\gamma a_\gamma z^\gamma$ at an interior point. Mixed holomorphic and antiholomorphic derivatives give $a_\gamma M^*a_\eta^*=0$ for every $\gamma,\eta$. Thus $F(z)M^*F(w)^*=0$ for independent $z,w$ in a smaller ball. The evaluation rows on that ball span $\mathbb C^r$: a vector annihilating them gives a global section vanishing on an open subset of the connected fiber, hence the zero section. It follows that $M=0$. Compactness of the Hilbert--Schmidt unit sphere proves $\varepsilon_y>0$.

To prove complete positivity, decompose an arbitrary positive block matrix into rank-one terms and consider $S_{ab}=v_av_b^*$. For test vectors $u_a$ put $M=\sum_a u_av_a^*$. Direct calculation gives
\begin{align*}
 \sum_{a,b}u_a^*L(v_av_b^*)u_b
 &=\alpha\int\left|\sum_a u_a^*P_xv_a\right|^2d\nu
 =\mathfrak q_y(M),\\
 \sum_{a,b}u_a^*D(v_av_b^*)u_b
 &=\sum_{a,b}(v_b^*v_a)(u_a^*u_b)=\|M\|_{\mathrm{HS}}^2.
\end{align*}
The difference is nonnegative. This holds for every block size and proves \eqref{eq:cp-minorization}. In particular $L(S)\geq\varepsilon_y\operatorname{Tr}(S)I$ for $S\geq0$. The same improvement follows for $R$ from $R=I+L+L^2+\cdots$.

On $U$, the projection $P$ is unchanged when $F$ is replaced by its nowhere-zero residual row. The cancellation in \eqref{eq:resolved-kernel} therefore writes the integrand defining $\mathfrak q_y$ as a fixed integrable monomial times a smooth function of $y$ and $M$. It follows that $\mathfrak q_y$ varies continuously, uniformly on the matrix unit sphere. Its minimum $\varepsilon_y$ is consequently continuous.
\end{proof}

\begin{proposition}
\label{prop:compact-moment}
Suppose $\Delta=0$ and $f:X\to Y$ is smooth projective with connected fibers, $E_m$ has positive rank and satisfies the fiberwise base-change isomorphisms, and $Y$ is compact. The evaluation quadratic form \eqref{eq:full-moment} is defined and continuous on the endomorphism bundle over all of $Y$, including points where the evaluation ideal changes. In particular $\varepsilon_y$ extends continuously to $Y$, and
\begin{equation}\label{eq:compact-moment-constant}
 \varepsilon_*:=\min_{y\in Y}\varepsilon_y>0.
\end{equation}
With this constant, $\mathfrak q_y(M)\geq\varepsilon_*\|M\|_{\mathrm{HS},G_y}^2$ for all $y$ and $M$.
\end{proposition}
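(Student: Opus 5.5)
The plan is to rewrite $\mathfrak q_y$ so that it no longer involves the projection $P_x$ or the fixed point, and then argue by compactness. In a $G_y$-orthonormal frame, with $d\nu=(FF^*)^{1/m}dV$, we have
\[
 |\operatorname{Tr}(M^*P_x)|^2\,d\nu
 =\frac{|F M^*F^*|^2}{(FF^*)^2}(FF^*)^{1/m}\,dV
 =|FM^*F^*|^2(FF^*)^{1/m-2}\,dV .
\]
This is a ratio of a degree-four polynomial in $F$ to a power of $FF^*$. It is homogeneous of total degree $2/m$ in $F$ and bounded by $\|M\|^2|F|^{2/m}$. Writing it in an arbitrary holomorphic frame, with the matrix $G_y$ inserted, gives
\[
 \mathfrak q_y(M)=\alpha\int_{X_y}\Phi\bigl(F,G_y,M\bigr)\,dV_y,
\]
where $\Phi(v,A,M)$ is homogeneous of degree $\beta=2/m$ in $v$ and is extended by $0$ at $v=0$. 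The key point is that $\Phi$ is $\beta$-Hölder in $v$, uniformly for $(A,M)$ in compact sets. The argument is the same as for the kernel $H_A$ in the proof of Lemma~\ref{lem:holder-continuity}. On the unit sphere $\Phi$ is smooth in $(v,A,M)$, so $|\Phi|\le C|v|^\beta$ and $|D_v\Phi|\le C|v|^{\beta-1}$ hold uniformly. Since $\Delta=0$, no boundary weights appear.

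Next I would prove continuity on all of $Y$. Cover a fiber by relative charts and use cutoffs over a small base neighborhood, exactly as in Lemma~\ref{lem:holder-continuity}. The evaluation rows $F(y,z)$ come from a local holomorphic frame of $E_m$; by the base-change isomorphisms \eqref{eq:complete-base-change} they restrict to bases of the fiber spaces, and they depend smoothly on $y$. Lemma~\ref{lem:holder-continuity} shows that $G_y$ is continuous and locally positive definite, so it stays in a compact positive set. Dominated convergence then gives continuity of $(y,M)\mapsto\mathfrak q_y(M)$, since the integrand is bounded by $C|F|^{\beta}$, which is uniformly bounded. Indeed the integrand is Hölder in $(y,M)$ with an integrable (bounded) dominant, and this is why base-locus jumps are harmless: no lower bound on $FF^*$ is ever needed. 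Transporting $M$ through a continuous $G_y$-orthonormal frame, for example $G_y^{-1/2}$ applied to the holomorphic frame, shows that $\mathfrak q$ is a continuous function on the Hilbert--Schmidt unit sphere bundle of $\operatorname{End}(E_m)$. Because $\varepsilon_y$ is unitarily invariant, it is then the fiberwise minimum over a continuous compact-fiber family, hence continuous on $Y$.

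For positivity at every point, including those outside $U_m^{\mathrm{reg}}$, I would reuse the first paragraph of the proof of Lemma~\ref{lem:strict-cp}. That argument only needs a coordinate ball in $X_y$ avoiding the base locus, where the density is positive, together with connectedness of the smooth fiber $X_y$ and the fact that the rows span $\mathbb C^r$. All of these hold for every $y$ here, since the fibers are smooth and connected and the base-change isomorphisms identify $E_{m,y}$ with the complete space. So $\varepsilon_y>0$ everywhere, and compactness of $Y$ gives $\varepsilon_*>0$ in \eqref{eq:compact-moment-constant}. The final inequality is then just the definition of $\varepsilon_y$ together with homogeneity.

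The main obstacle is making the uniform domination rigorous near fibers where the base ideal changes. There the smooth-amplitude argument of Theorem~\ref{thm:controlled-regularity} is unavailable, and one must rely solely on the homogeneous bound $|\Phi|\le C|F|^{2/m}$ together with continuity of $G$. The other point needing care is that the frame transport $M\mapsto G_y^{-1/2}$-conjugates is continuous, which follows from continuity of $G$ and of the matrix square root.
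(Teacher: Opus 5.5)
Your proposal is correct and follows essentially the same route as the paper. The paper likewise works in the continuous $G$-orthonormal frame obtained from $G^{-1/2}$. It bounds the integrand $\alpha|FM^*F^*|^2|F|^{-4+2/m}$ by $\alpha\|M\|_{\mathrm{HS}}^2|F|^{2/m}$, so that the integrand extended by zero at $F=0$ is jointly continuous, and integrates through the relative charts and cutoffs of Lemma~\ref{lem:holder-continuity}. It then reuses the polarization argument of Lemma~\ref{lem:strict-cp} for pointwise positivity and concludes by compactness of the Hilbert--Schmidt unit sphere bundle. Your extra H\"older estimate for $\Phi$ is not needed, since pointwise continuity together with the bounded dominant already suffices, but it is harmless.
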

\begin{proof}
Lemma~\ref{lem:holder-continuity} gives a continuous positive metric $G$ on $Y$. Apply $G^{-1/2}$ to a holomorphic frame to obtain a continuous $G$-orthonormal frame. In relative coordinates its evaluation row $F$ is continuous. The integrand defining $\mathfrak q_y$ is
\begin{equation}\label{eq:moment-continuity-density}
 \alpha|FM^*F^*|^2|F|^{-4+2/m}dV_n
 \quad(F\ne0).
\end{equation}
Its absolute value is bounded by $\alpha\|M\|_{\mathrm{HS}}^2|F|^{2/m}dV_n$, so extending it by zero at $F=0$ makes it jointly continuous in $y,w,M$. Relative charts and compactly supported cutoffs as in Lemma~\ref{lem:holder-continuity} now give continuous fiber integrals, uniformly for $M$ in a bounded set. The minimum over the matrix unit sphere is therefore continuous in $y$, even when the base ideal changes. Positivity at each point follows from the same polarization argument as above, which needs no base derivatives. The Hilbert--Schmidt unit sphere bundle of $\operatorname{End}E$ over compact $Y$ is compact. The continuous positive function $\mathfrak q$ on it has a positive minimum.
\end{proof}

\subsection{Determinant pinching}
The determinant curvature $\tau=\operatorname{Tr}_E K$ is a semipositive Hermitian form on $T_y^{1,0}Y$. Its real $(1,1)$ form is $i\operatorname{Tr}\Theta^G$, and $c_1(E,G)=i\operatorname{Tr}\Theta^G/(2\pi)$.

\begin{theorem}[Determinant pinching]\label{thm:determinant-pinching}
Choose $0<\varepsilon\leq\varepsilon_y$ and put
\begin{equation}\label{eq:sigma-definition}
 \sigma=\frac{\varepsilon}{1-\alpha+r\varepsilon}.
\end{equation}
Then $0<\sigma\leq\alpha/r$, and at $y\in U$ the curvature tensor satisfies the inequalities in the Nakano sense:
\begin{equation}\label{eq:determinant-pinching}
 \sigma\tau\otimes I\leq K
 \leq\bigl[r-(r^2-1)\sigma\bigr]\tau\otimes I.
\end{equation}
In particular
\begin{equation}\label{eq:analytic-kernel}
 \ker K=(\ker\tau)\otimes E_y,\qquad
 \operatorname{rank}K=r\operatorname{rank}\tau.
\end{equation}
Thus $\tau>0$ is equivalent to strict Nakano positivity of $G$ at $y$. The constants may be chosen locally uniformly on $U$. Under the compact unweighted smooth-family hypotheses of Proposition~\ref{prop:compact-moment}, one may use $\sigma_*={\varepsilon_*}/({1-\alpha+r\varepsilon_*})>0$ at every point of $U$.
\end{theorem}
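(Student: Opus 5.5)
The plan is to reduce the statement to the abstract pinching Lemma~\ref{lem:cp-block-pinching}, applied at the fixed point $y\in U$ with $H=T_y^{1,0}Y$ (dimension $d$), $V=E_y$ (dimension $r$), and the data of Theorem~\ref{thm:curvature-resolvent}. First choose a $G$-Chern-normal holomorphic frame at $y$, so that $G(y)=I$ and the frame is $G_y$-orthonormal. In this frame the linearized operator $L=D_AT_y|_I$ is the operator of \eqref{eq:L-and-R}, and Theorem~\ref{thm:curvature-resolvent} gives
\[
K=(\mathrm{id}_{M_d}\otimes R)Q,\qquad Q\geq0
\]
on $H\otimes V$, where $Q\ge0$ comes from the frozen Nakano semipositivity of Proposition~\ref{prop:frozen-nakano}. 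Lemma~\ref{lem:cp-resolvent} supplies the remaining hypotheses on $L$: complete positivity, $L(I)=\alpha I$ with $0<\alpha<1$ (valid since $m\geq2$), and trace scaling.

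Next, for the minorization, Lemma~\ref{lem:strict-cp} shows that $L-\varepsilon_yD$ is completely positive. For $0<\varepsilon\leq\varepsilon_y$ one has $L-\varepsilon D=(L-\varepsilon_yD)+(\varepsilon_y-\varepsilon)D$, and $D(S)=\operatorname{Tr}(S)I$ is completely positive: on a positive block matrix $[S_{ab}]$ it gives $[\operatorname{Tr}S_{ab}]\otimes I$, and the block-trace matrix is positive. Hence $L-\varepsilon D$ is completely positive. The lemma then yields $0<\sigma\leq\alpha/r$, the two-sided bound \eqref{eq:determinant-pinching} on all of $H\otimes V$ (which is exactly the Nakano sense, by \eqref{eq:nakano-block}), and the kernel and rank identities \eqref{eq:analytic-kernel}. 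Since both coefficients are positive, $K>0$ holds exactly when $\tau>0$. The upper coefficient is positive because $\sigma\le\alpha/r<1/r$ gives $(r^2-1)\sigma<r$.

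For uniformity, Lemma~\ref{lem:strict-cp} says $\varepsilon_y$ is continuous and positive on $U$. On a relatively compact neighbourhood one may therefore take $\varepsilon=\min\varepsilon_y>0$. The function $\varepsilon\mapsto\varepsilon/(1-\alpha+r\varepsilon)$ is increasing, so a smaller $\varepsilon$ is still admissible pointwise, and the same $\sigma$ works throughout that neighbourhood. In the compact unweighted smooth case, Proposition~\ref{prop:compact-moment} gives $\varepsilon_*\le\varepsilon_y$ for all $y$, so $\sigma_*$ works at every point of $U$.

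The only step needing care is the frame independence of the statement. The pinching is proved in a Chern-normal, $G$-orthonormal frame, while $\varepsilon_y$ and the Nakano inequalities are intrinsic. I would note that $\varepsilon_y$ is invariant under unitary change, as remarked after \eqref{eq:full-moment}, and that Hermitian-form inequalities on $T^{1,0}Y\otimes E$ relative to $G$ do not depend on the frame. Beyond this check, the proof is a direct assembly of the earlier results.
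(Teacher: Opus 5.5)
Your proposal is correct and follows the paper's proof essentially step for step: you reduce to Lemma~\ref{lem:cp-block-pinching} using Theorem~\ref{thm:curvature-resolvent} and Lemmas~\ref{lem:cp-resolvent} and~\ref{lem:strict-cp}, split $L-\varepsilon D=(L-\varepsilon_yD)+(\varepsilon_y-\varepsilon)D$, and get the uniform constants from continuity of $\varepsilon_y$ and Proposition~\ref{prop:compact-moment}. The only cosmetic difference is that you check complete positivity of $D$ through positivity of the block-trace (partial trace) matrix, whereas the paper writes $D(S)=\sum_{i,j}E_{ij}SE_{ij}^*$.
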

\begin{proof}
Apply Lemma~\ref{lem:cp-block-pinching} with $H=T_y^{1,0}Y$, $V=E_y$ and $L=D_AT_y|_I$ in a $G$-Chern-normal frame. Lemma~\ref{lem:cp-resolvent} verifies complete positivity and both normalizations in \eqref{eq:operator-normalizations}. The frozen curvature tensor $Q$ is Nakano semipositive by Proposition~\ref{prop:frozen-nakano}, and differentiating the fixed-point equation gives $K=(\mathrm{id}\otimes(I-L)^{-1})Q$ by Theorem~\ref{thm:curvature-resolvent}.

Lemma~\ref{lem:strict-cp} verifies the required minorization at $\varepsilon_y$. The map $D$ is completely positive: in matrix units, $D(S)=\sum_{i,j}E_{ij}SE_{ij}^*$. Hence, for $0<\varepsilon\leq\varepsilon_y$, the decomposition
\[
 L-\varepsilon D=(L-\varepsilon_yD)+(\varepsilon_y-\varepsilon)D
\]
verifies it at $\varepsilon$. The resulting bounds and kernel formula are \eqref{eq:determinant-pinching}--\eqref{eq:analytic-kernel}. Continuity and positivity of $\varepsilon_y$ in Lemma~\ref{lem:strict-cp} give locally uniform choices. Proposition~\ref{prop:compact-moment} supplies $\varepsilon_*$ on the compact unweighted base, giving the stated $\sigma_*$.
\end{proof}

\begin{corollary}\label{cor:directional-nullity}
For a tangent vector $\xi$ at a point of $U$, let $K_\xi$ and $Q_\xi$ be the contractions of the curvature tensors $K$ and $Q$ with $\xi$ in the base factor. Then the following are equivalent:
\[
 \tau(\xi,\xi)=0,\qquad K_\xi=0,\qquad Q_\xi=0.
\]
If $Q_\xi\ne0$, then $K_\xi$ is positive definite.
\end{corollary}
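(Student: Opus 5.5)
The plan is to contract the two-sided pinching \eqref{eq:determinant-pinching} and the resolvent identity \eqref{eq:curvature-resolvent} with the fixed base vector $\xi$, and to read off the three equivalences.

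\textbf{Contractions.} Write $K_\xi$ for the Hermitian form $u\mapsto K(\xi\otimes u,\xi\otimes u)$ on $E_y$, and define $Q_\xi$ in the same way. Since $\tau=\operatorname{Tr}_E K$, we have $\operatorname{Tr}K_\xi=\tau(\xi,\xi)$.

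Evaluating \eqref{eq:determinant-pinching} on the decomposable tensors $\xi\otimes u$ gives
\[
 \sigma\,\tau(\xi,\xi)\,I\leq K_\xi\leq\bigl[r-(r^2-1)\sigma\bigr]\tau(\xi,\xi)\,I .
\]
The resolvent formula is linear in the base factor. So contracting with $\xi$ commutes with $\mathrm{id}\otimes R$, and we obtain $K_\xi=R(Q_\xi)$. Here $Q_\xi\geq0$ by Proposition~\ref{prop:frozen-nakano}.

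\textbf{The equivalences.}
\begin{enumerate}
\item $\tau(\xi,\xi)=0\Rightarrow K_\xi=0$: the upper bound in the display gives $0\leq K_\xi\leq 0$.
\item $K_\xi=0\Rightarrow Q_\xi=0$: this follows from $R-I$ being completely positive (Lemma~\ref{lem:cp-resolvent}), which gives $K_\xi=Q_\xi+(R-I)Q_\xi\geq Q_\xi\geq0$.
\item $Q_\xi=0\Rightarrow\tau(\xi,\xi)=0$: this is immediate from $K_\xi=R(Q_\xi)$, or from the trace identity \eqref{eq:frozen-trace}, which reads $\operatorname{Tr}Q_\xi=\tau(\xi,\xi)/m$.
\end{enumerate}

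\textbf{Final assertion.} Suppose $Q_\xi\neq0$. Then $Q_\xi$ is a nonzero positive semidefinite endomorphism. By Lemma~\ref{lem:strict-cp}, $R$ sends it to a positive definite one, so $K_\xi$ is positive definite.

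Alternatively, $\tau(\xi,\xi)=m\operatorname{Tr}Q_\xi>0$, and the lower pinching bound gives $K_\xi\geq\sigma\,\tau(\xi,\xi)\,I>0$.

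\textbf{Main obstacle.} The only delicate step is bookkeeping. The contraction must be taken in the base factor with the reversed-index convention \eqref{eq:nakano-block}. We must also check that the Nakano inequalities restrict correctly to the slice $\xi\otimes E_y$, and that $\mathrm{id}\otimes R$ commutes with this contraction. Both hold because these operations act on different tensor factors.
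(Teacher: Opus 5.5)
Your proof is correct and follows the paper's: both rest on $K_\xi=R(Q_\xi)$ with $Q_\xi\geq0$, and both obtain the final assertion from the positivity improvement of $R$ in Lemma~\ref{lem:strict-cp}. The only difference is that the paper reads all three equivalences directly off $\operatorname{Tr}K_\xi=\tau(\xi,\xi)=m\operatorname{Tr}Q_\xi$ (via \eqref{eq:frozen-trace}) together with semidefiniteness, so your use of the upper pinching bound and of $R\geq I$ is valid but not needed.
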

\begin{proof}
The matrices $K_\xi,Q_\xi$ are positive semidefinite, and $K_\xi=R(Q_\xi)$. Positivity improvement gives the last assertion. The trace identity \eqref{eq:frozen-trace} gives the equivalences.
\end{proof}

Section~\ref{sec:variation} will identify these curvature null spaces geometrically. If $G$ is $C^2$, continuity extends the pinching inequalities from $U_m^{\mathrm{reg}}$. The threshold criterion of Theorem~\ref{thm:threshold-regularity} gives this optional global differentiability in Proposition~\ref{prop:global-curvature}. 

\section{Strict positivity and geometric variation}\label{sec:variation}

Throughout this section, $f:X\to U$ is an unweighted smooth projective family with smooth connected fibers, and $m\geq2$. The direct image $E=E_m$ is locally free, and the fiberwise base-change maps $E_y\to H^0(X_y,mK_{X_y})$ are isomorphisms. We compute curvature on the fixed regularity locus $U_m^{\mathrm{reg}}$ of \eqref{eq:regularity-open}, and also wherever the relative evaluation map $f^*E\to\omega_{X/U}^{\otimes m}$ is surjective. Retain $\alpha=(m-1)/m$, the fixed-point metric $G$, and the frozen metric $H_y=T_\bullet(I)$ in a $G$-Chern-normal frame at $y$. As in Section~\ref{sec:curvature}, write $K$ for the curvature tensor, $Q$ for the frozen curvature tensor, and $\tau=\operatorname{Tr}_E K$ for the determinant curvature.

The $m$-canonical map $\varphi_m$ is defined by the complete linear system $|mK_{X_y}|$ and takes values in $\mathbb P_q(E_y)$, equivalently in the space of lines in $E_y^*$. Birational variation $\operatorname{Var}(f)$ is the least transcendence degree over $\mathbb C$ of a field of definition of a birational model of the geometric generic fiber, allowing finite extensions. For general-type fibers it equals the dimension of the birational classifying image. The curvature conclusions concern $E_m$ and $G_m$. An auxiliary multiple $N=km$ will be used only to identify their curvature rank in case (G).

\subsection{The globally generated case}
Write $\rho_y:T_yU\to H^1(X_y,T_{X_y})$ for the Kodaira--Spencer map. On the constant-rank locus constructed below, its null directions give holomorphically trivial families along local integral curves, and hence null directions for curvature.

\begin{lemma}[K-S nullity implies determinant vanishing]
\label{lem:ks-nullity-fixed-metric}
There is a nonempty Zariski-open subset $U^{\mathrm{KS}}\subseteq U$ on which $R^1f_*T_{X/U}$ is locally free with fiberwise base-change isomorphisms and the Kodaira--Spencer sheaf map has constant rank. For every $y\in U^{\mathrm{KS}}$ and $\xi\in\ker\rho_y$, there is a holomorphic curve $\gamma:(D,0)\to(U,y)$ with $\gamma'(0)=\xi$ whose pulled-back family is holomorphically trivial. For every level $j\geq2$ with positive-rank direct image $E_j$, the fixed-point metric $G_j$ is constant in the transported holomorphic frame on $D$. In particular, on $U^{\mathrm{KS}}\cap U_j^{\mathrm{reg}}$,
\[
 \ker\rho_y\subseteq\ker\tau_{j,y}.
\]
\end{lemma}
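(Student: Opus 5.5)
The plan is to combine three ingredients: a generic-constancy argument that produces $U^{\mathrm{KS}}$, a deformation-theoretic integration of Kodaira--Spencer null directions along curves, and the naturality and uniqueness of the fixed-point construction from Proposition~\ref{prop:birational-invariance}.

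\emph{Step 1: constructing $U^{\mathrm{KS}}$.} The sheaf $R^1f_*T_{X/U}$ is coherent. Generic flatness and cohomology-and-base-change (Grauert) give a dense Zariski-open set where it is locally free and commutes with base change. The Kodaira--Spencer map is a morphism of coherent sheaves $\rho:T_U\to R^1f_*T_{X/U}$. Its rank is lower semicontinuous, so it is maximal, hence constant, on a Zariski-open set, and the image and kernel are subbundles there. On this locus $\ker\rho$ is a holomorphic subbundle of $T_U$.

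\emph{Step 2: integrating a null direction.} Given $\xi\in\ker\rho_y$, I would extend it to a local holomorphic section $\tilde\xi$ of $\ker\rho$ near $y$ and take $\gamma$ to be an integral curve of $\tilde\xi$, i.e.\ a holomorphic disc $\gamma:(D,0)\to(U,y)$ with $\gamma'(t)=\tilde\xi(\gamma(t))\in\ker\rho_{\gamma(t)}$. This only needs a holomorphic vector field, not involutivity of $\ker\rho$. The pulled-back family $X_\gamma\to D$ then has identically vanishing Kodaira--Spencer class at every point. Because $R^1f_*T_{X/U}$ commutes with base change and has constant rank, the vector field $\partial_t$ lifts locally to holomorphic vector fields on $X_\gamma$. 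The obstruction is a section of $R^1(f_\gamma)_*T$ that vanishes pointwise, and hence vanishes as a section by the base-change isomorphism. Over the Stein disc $D$, these local lifts glue by the vanishing of the $H^1$ obstruction, giving a global holomorphic lift $v$ of $\partial_t$. Its flow, which exists because $f$ is proper, trivializes $X_\gamma\simeq X_y\times D$ holomorphically. I expect this step to be the main obstacle: the gluing must be carried out carefully, and one may first have to shrink $D$. Alternatively, the Fischer--Grauert theorem would apply once constancy of the complex structure is established.

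\emph{Step 3: constancy of $G_j$ and vanishing of $\tau_j$.} The trivialization identifies $(E_j)_{\gamma(t)}=H^0(X_{\gamma(t)},jK)$ with $H^0(X_y,jK)$, and this identification is a transported holomorphic frame of $\gamma^*E_j$. By the isomorphism and base-change covariance of Proposition~\ref{prop:birational-invariance}, the operators $T_{\gamma(t)}$ become independent of $t$ in this frame. Uniqueness of the fixed point in Theorem~\ref{thm:fixed-point-construction} then shows that $G_j$ is constant. A constant metric in a holomorphic frame has zero curvature, so the pullback of the Chern curvature vanishes along $D$. At $y\in U_j^{\mathrm{reg}}$, where $G_j$ is smooth by Theorem~\ref{thm:controlled-regularity}, this gives $\Theta(\xi,\bar\xi)=0$, and therefore $\tau_{j,y}(\xi,\xi)=0$. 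Since $\tau_{j,y}$ is semipositive by Theorem~\ref{thm:curvature-resolvent}, vanishing of $\tau_{j,y}(\xi,\xi)$ means $\xi\in\ker\tau_{j,y}$. Corollary~\ref{cor:directional-nullity} would even upgrade this to $K_\xi=0$.
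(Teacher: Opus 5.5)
Your proposal is correct and follows essentially the same route as the paper. Both arguments construct $U^{\mathrm{KS}}$ by generic base change plus the constant-rank locus, extend $\xi$ to a section of the kernel subbundle, lift via exactness of the direct-image sequence, trivialize by flows, and get constancy of $G_j$ from change of variables and uniqueness of the fixed point. The only variation is in Step~2: you lift $\partial_t$ on the pulled-back family over the disc, using base change for $R^1$ and Theorem~B or a shrinking of $D$, whereas the paper lifts the kernel section $v$ directly to a vector field $\widetilde v$ on $f^{-1}W$ from sheaf exactness of $f_*T_X\to T_U\to R^1f_*T_{X/U}$ and flows $\widetilde v$ and $v$ together, which avoids the extra base-change step on $D$.
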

\begin{proof}
Generic cohomology and base change, followed by restriction to the constant-rank locus of the Kodaira--Spencer map, gives the subset $U^{\mathrm{KS}}$. Since the fibers are connected, $f_*\mathcal O_X=\mathcal O_U$. The direct-image sequence of
\[
 0\longrightarrow T_{X/U}\longrightarrow T_X
 \longrightarrow f^*T_U\longrightarrow0
\]
is therefore exact at the middle term in
\[
 f_*T_X\longrightarrow T_U
 \xrightarrow{\kappa}R^1f_*T_{X/U}.
\]
On $U^{\mathrm{KS}}$, the kernel of $\kappa$ is a holomorphic subbundle with fiber $\ker\rho_y$. Extend $\xi$ to a local section $v$ of this kernel. Sheaf exactness gives, after shrinking an analytic neighborhood $W$ of $y$, a holomorphic vector field $\widetilde v$ on the entire $f^{-1}W$ with $df(\widetilde v)=f^*v$.

Choose nested relatively compact base neighborhoods whose closures lie in $W$. Properness makes their closed inverse images compact. Holomorphic ODE existence on a neighborhood of these compact sets gives the flows $\Phi_t$ of $\widetilde v$ and $\phi_t$ of $v$ for a common small complex time, including negative time. Uniqueness gives $f\circ\Phi_t=\phi_t\circ f$. Shrinking the time disk keeps the relevant fibers in the smaller neighborhoods, so
\[
 \Phi_t:X_y\xrightarrow{\sim}X_{\gamma(t)},
 \qquad \gamma(t)=\phi_t(y),
\]
has inverse $\Phi_{-t}$ on the whole target fiber. Joint holomorphic dependence identifies the pulled-back family with $X_y\times D$. The zero vector may also be treated by the constant curve.

Fix $j\geq2$ with $E_j=f_*\mathcal O_X(jK_{X/U})\ne0$. In this unweighted smooth projective family, invariance of plurigenera and Grauert's theorem give local freeness and fiberwise base-change isomorphisms for $E_j$, by \eqref{eq:complete-base-change} with $m=j$. Transport a basis of $H^0(X_y,jK_{X_y})$ by the biholomorphisms above. It is therefore a holomorphic frame of $\gamma^*E_j$. Change of variables preserves every intrinsic integral defining the Hilbert--Fubini--Study operator, as in Proposition~\ref{prop:birational-invariance}. Thus the operator is constant in this frame, and Theorem~\ref{thm:fixed-point-construction} makes its unique fixed matrix constant. Wherever $G_j$ is smooth near $y$, naturality of Chern curvature gives
\[
 \gamma^*\Theta_{G_j}=0,\qquad
 (\Theta_{G_j})_y(\xi,\bar\xi)=0.
\]
On $U_j^{\mathrm{reg}}$, $\tau_j$ is semipositive, so $\tau_j(\xi,\xi)=0$ places $\xi$ in its kernel.
\end{proof}

\begin{proposition}
\label{prop:generated-nullity}
Suppose that $mK_{X_y}$ is globally generated for every $y\in U$ and that the associated morphisms $\varphi_m$ are generically finite. Then
\[
 Q_\xi=0\quad\Longrightarrow\quad\rho_y(\xi)=0,
\]
where $\rho_y:T_yU\to H^1(X_y,T_{X_y})$ is the Kodaira--Spencer map.
\end{proposition}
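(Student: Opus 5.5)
Under (G) the relative evaluation map $f^*E\to\omega_{X/U}^{\otimes m}$ is surjective, so the frozen metric $H_y=T_\bullet(I)$ is the adjoint $L^2$ metric on $f_*(K_{X/U}+L)$ with $L=(m-1)K_{X/U}$ and the smooth semipositive weight $\psi=\alpha\log(FF^*)$, exactly as in Proposition~\ref{prop:frozen-nakano}. For this smooth setting we use Berndtsson's curvature formula \cite[Theorem~1.2]{Berndtsson2009}. For $u=\sum_a\partial_{y_a}\otimes u_a$ it gives
\[
 Q(u,u)=\int_{X_y}c(\psi)\Bigl|\sum_a u_a\bar\xi_a\Bigr|^2e^{-\psi}
 +\Bigl\|\sum_a\bar\partial_{X_y}\!\bigl(\text{horizontal corrections}\bigr)\Bigr\|^2 ,
\]
where $c(\psi)=\psi_{\xi\bar\xi}-|\bar\partial\psi|^2$ is the geodesic curvature of $\psi$ along a horizontal lift of $\xi$. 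The second term is the $L^2$ norm of the $\bar\partial$-minimal correction, i.e.\ the Kodaira--Spencer contraction paired with the section. Since $Q_\xi=0$ we apply this to $u=\xi\otimes s$ for every $s\in E_y$, and both nonnegative terms vanish.

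First we extract the vanishing of the horizontal curvature. Because $\psi=\alpha\log(FF^*)=\alpha\,\varphi_m^*\log|Z|^2$ is a pullback of the Fubini--Study weight under the $m$-canonical morphism into $\mathbb P^{r-1}$ (in the frame $G(y)=I$), the relative $(1,1)$-form $i\partial\bar\partial\psi$ is semipositive. The correction term $\psi_{\xi\bar\xi}-|\bar\partial\psi|^2_{i\partial\bar\partial_{X_y}\psi}$ is computed on the set where $\varphi_m$ is finite and étale, which is dense since $\varphi_m$ is generically finite; there $i\partial\bar\partial_{X_y}\psi$ is a Kähler form. So $c(\psi)\equiv0$ off the ramification locus. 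Together with the vanishing of $\bar\partial$-corrections, this gives a horizontal lift $V$ of $\xi$ that is holomorphic along the fiber and preserves $\psi$ infinitesimally, i.e.\ $\bar\partial_{X_y}V=0$ on the unramified locus.

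Next we construct a holomorphic lift. The relation $c(\psi)=0$ says that the infinitesimal variation of the map $\varphi_{m,y}$ in direction $\xi$ equals a fiber-holomorphic vector field composed with $\varphi_m$, up to the action of $\mathfrak{gl}(E_y)$; the trace normalization is absorbed by $\operatorname{Tr}Q_\xi=0$. Away from the ramification divisor, $\varphi_m$ is a local biholomorphism onto its image. Pulling back the projective motion therefore gives a vector field $\widetilde V$ lifting $\xi$ on $X_y\setminus\operatorname{Ram}$, with $\bar\partial\widetilde V=0$. This vector field is a smooth bounded section (it is the minimal-$L^2$ horizontal lift), and by Riemann extension it extends holomorphically across the codimension-one ramification locus. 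A holomorphic lift of $\xi$ on the whole fiber means $\rho_y(\xi)=0$, because the Kodaira--Spencer class is the Dolbeault class of $\bar\partial$ of any smooth lift.

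The main obstacle is the gluing across the ramification locus. Boundedness of the lift there is not automatic, since $i\partial\bar\partial_{X_y}\psi$ degenerates. To get $L^2$ control of $\widetilde V$ near $\operatorname{Ram}$ we would use the $L^2$-minimality from Berndtsson's formula together with the vanishing of $\bar\partial\widetilde V$ as a current, which holds because its $L^2$ norm is part of $Q_\xi=0$ on all of $X_y$. We would then apply the Hartogs/Riemann extension for $L^2$ holomorphic sections of $T_{X_y}$. If that fails, we would instead pass to the $\mathfrak{sl}$-equivariant deformation of the image $\varphi_m(X_y)$ and pull back via generic finiteness.
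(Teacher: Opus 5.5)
\textbf{There is a genuine gap, and it sits exactly where you flag the ``main obstacle'': extending the lift across the ramification locus.}

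Away from $\operatorname{Ram}$ your reasoning is sound in substance. The curvature term dominates $\int c(\psi)|u|^2e^{-\psi}$, so $c(\psi)=0$ wherever $d_{X_y}\Phi$ is injective. There the unique lift $h$ with $df(h)=\xi$ and $d\Phi(h)=0$ is holomorphic. In the frozen frame no $\mathfrak{gl}(E_y)$ correction is needed: $c(\psi)=0$ directly produces such an $h$.

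The problem is the claim that $\widetilde V$ is ``a smooth bounded section (it is the minimal-$L^2$ horizontal lift)''. Nothing supports it. On the unramified locus $\widetilde V$ coincides with the $\Omega$-orthogonal lift, and that lift involves the inverse of the vertical form, which degenerates along $\operatorname{Ram}$. A lift characterized only by $d\Phi(h)=0$ can genuinely acquire poles there. For example, with $\Phi(t,z)=z^2+ta(z)$ and $a(0)\neq0$, the condition at $t=0$ forces $v=-a/(2z)$.

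Ruling out such poles is the real content of the proposition, and your fallbacks are not carried out. In particular, the $\bar\partial$-correction term in Berndtsson's identity is attached to a smooth representative of the section $u$, not to your $\widetilde V$. It therefore controls $\widetilde V$ only where the two are linked, i.e.\ off $\operatorname{Ram}$.

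\textbf{The missing idea is to use the normalized representative from Berndtsson's identity rather than the $\psi$-horizontal one.} This is how the paper proceeds.

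Take a smooth $L$-valued $(n,0)$-form $\widehat u$ on the total space with
\[
 \bar\partial\widehat u=\eta_u\wedge dt,\qquad \eta_u\text{ primitive},\qquad \partial^\psi\widehat u=0\text{ along }F.
\]
Its existence needs no fiberwise strict positivity. Then
\[
 \langle Q_\xi u,u\rangle=\|\eta_u\|^2+\int_F c_n\widehat u\wedge\overline{\widehat u}\wedge\Omega e^{-\psi},
\]
so $Q_\xi=0$ gives $\eta_u=0$ and $\widehat u\wedge\Omega=0$ at every point of $F$, ramification included.

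Write $\widehat u=(g\,dz+dt\wedge\beta)\otimes e_L$.
\begin{itemize}
\item The identity $\eta_u=0$ makes $\beta|_F$ holomorphic. Hence the kernel direction $h_u=\partial_t-\sum_j(b_j/g)\partial_{z^j}$ of the decomposable form $\widehat u$ is a holomorphic lift of $\xi$ on all of $\{u\neq0\}$.
\item The identity $\widehat u\wedge\Omega=0$ gives $\Omega(h_u,\overline{h_u})=\alpha|d\Phi(h_u)|^2=0$.
\end{itemize}
By uniqueness, your $\widetilde V$ agrees with $h_u$ on $\{u\neq0\}\setminus\operatorname{Ram}$. So this is precisely what supplies the extension you were after.

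To finish, global generation makes the sets $\{u_j\neq0\}$ cover $F$. Generic finiteness is used only on the overlaps: there $h_{u_i}-h_{u_j}$ is a vertical holomorphic field killed by $d_F\Phi$, so it vanishes. The lifts glue to a holomorphic splitting of $0\to T_F\to T_X|_F\to\mathcal O_F\xi\to0$, and no boundedness or $L^2$-extension argument is needed.
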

\begin{proof}
Fix $y$ and restrict to a disk tangent to $\xi$, with parameter $t$. Write $F=X_y$ and $\Phi=[f_1:\cdots:f_r]$ in the frozen holomorphic frame. The adjoint twist $L=(m-1)K_{X/U}$ has smooth metric weight $\psi=\alpha\log\sum_j|f_j|^2$ and semipositive curvature
\begin{equation}\label{eq:generated-frozen-form}
 \Omega=i\partial\bar\partial\psi=\alpha\Phi^*\omega_{\mathrm{FS}}.
\end{equation}
Here $\omega_{\mathrm{FS}}=i\partial\bar\partial\log|z|^2$ in homogeneous coordinates. Smoothness follows from global generation of $mK_{X_y}$. The vertical form may degenerate along the ramification locus.

We use the normalized-representative form of the smooth direct-image curvature identity \cite[Proposition~4.2, (4.8) and (5.1)]{Berndtsson2009}. For each $u\in H^0(F,K_F+L_F)$, choose an $H_y$-Chern-normal holomorphic extension and a smooth $L$-valued $(n,0)$ representative $\widehat u$ on the total space. Along $F$ it satisfies
\[
 \bar\partial\widehat u=\eta_u\wedge dt,\qquad
 \eta_u\wedge\omega_F=0,\qquad \partial^\psi\widehat u=0.
\]
The formula expresses $\langle Q_\xi u,u\rangle$ as the sum of $\|\eta_u\|^2$ and the fiber integral of the nonnegative curvature density $c_n\widehat u\wedge\overline{\widehat u}\wedge \Omega e^{-\psi}$, interpreted as the coefficient of $i\,dt\wedge d\bar t$. Here $c_n=i^{n^2}/2^n$ is our top-form normalization, also used in the primitive-form norm. If $Q_\xi=0$, both terms vanish for every $u$. Semipositivity of $\Omega$ then gives the pointwise identities
\begin{equation}\label{eq:generated-equality}
 \eta_u=0,\qquad \widehat u\wedge\Omega=0
 \quad\text{along }F.
\end{equation}
The second equality in \eqref{eq:generated-equality} is an equality of ambient differential forms at the points of $F$, not merely an equality after exterior pullback to $F$.

In submersion coordinates, with $dz=dz^1\wedge\cdots\wedge dz^n$ and a holomorphic frame $e_L$, write
\[
 \widehat u=(g\,dz+dt\wedge\beta)\otimes e_L,
 \qquad \beta=\sum_j b_j\,\iota_{\partial_{z^j}}dz.
\]
The first identity in \eqref{eq:generated-equality} makes $\beta|_F$ holomorphic. On $W_u=\{u\ne0\}\subset F$ set
\begin{equation}\label{eq:generated-local-lift}
 h_u=\partial_t-\sum_j\frac{b_j}{g}\partial_{z^j},
 \qquad \theta^j=dz^j+\frac{b_j}{g}dt.
\end{equation}
Then $\widehat u=g\theta^1\wedge\cdots\wedge\theta^n\otimes e_L$. The vector $h_u$ is intrinsically the unique vector annihilated by this decomposable $n$-form with $df(h_u)=\partial_t$. Thus the coordinate expressions in \eqref{eq:generated-local-lift} agree and define a holomorphic lift along $W_u$. In the coframe $(dt,\theta^1,\ldots,\theta^n)$, the second identity in \eqref{eq:generated-equality} kills the horizontal row of $\Omega$. Hermitian symmetry kills its column. Hence
\[
 0=\Omega(h_u,\overline{h_u})
   =\alpha|d\Phi(h_u)|_{\mathrm{FS}}^2,
 \qquad d\Phi(h_u)=0.
\]

For a basis $u_1,\ldots,u_r$, the sets $W_{u_j}$ cover $F$ by global generation. On an overlap, $h_{u_i}-h_{u_j}$ is vertical, holomorphic, and killed by $d_F\Phi$. Generic finiteness in characteristic zero makes $d_F\Phi$ injective on a dense Zariski-open subset of $F$. The difference therefore vanishes on a dense open subset of each overlap, and hence on the whole overlap. The lifts glue across ramification and zeros of individual sections to a holomorphic splitting of
\[
 0\longrightarrow T_F\longrightarrow T_{X_{\mathrm{disk}}}|_F
 \longrightarrow\mathcal O_F\xi\longrightarrow0.
\]
The extension class evaluated on $1$ is $\rho_y(\xi)$, which is zero.
\end{proof}

\begin{lemma}
\label{lem:variation-ks}
If the fibers are of general type and $\operatorname{Var}(f)=\dim U$, then $\rho_y$ is injective on a nonempty Zariski-open subset.
\end{lemma}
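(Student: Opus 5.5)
The plan is to argue by contraposition on the constant-rank locus $U^{\mathrm{KS}}$ of Lemma~\ref{lem:ks-nullity-fixed-metric}. On that locus $\rho_y$ has a constant rank $\dim U-k$. The set where the coherent map $\kappa:T_U\to R^1f_*T_{X/U}$ attains its maximal rank is Zariski-open and nonempty, so it suffices to show $k=0$. I will assume $k\geq1$ and derive $\operatorname{Var}(f)<\dim U$.

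\textbf{Step 1: the kernel of $\kappa$ integrates to a foliation.} On $U^{\mathrm{KS}}$ the kernel $\mathcal K=\ker\kappa$ is a holomorphic subbundle of rank $k$. By the exact sequence in the proof of Lemma~\ref{lem:ks-nullity-fixed-metric}, $\mathcal K$ is the image of $f_*T_X\to T_U$. This image is closed under brackets. Indeed, if $\widetilde v,\widetilde w$ are projectable lifts of local sections $v,w$ of $\mathcal K$, then $[\widetilde v,\widetilde w]$ is again projectable, and $df[\widetilde v,\widetilde w]=f^*[v,w]$. So $[v,w]$ lies in $\mathcal K$. The holomorphic Frobenius theorem then gives, near each point of $U^{\mathrm{KS}}$, a foliation chart $W\simeq B^k\times B^{\dim U-k}$ whose plaques $B^k\times\{c\}$ are tangent to $\mathcal K$.

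\textbf{Step 2: the family is isotrivial along plaques.} Take commuting coordinate fields $v_1,\dots,v_k$ on a plaque and their lifts $\widetilde v_i$. As in the proof of Lemma~\ref{lem:ks-nullity-fixed-metric}, properness gives flows of the $\widetilde v_i$ for small complex times covering the $v_i$-flows. Composing these flows along a path in the connected plaque, every two fibers over the same plaque are biholomorphic. Commutation of the lifts is not needed; only connectivity of the plaque is used.

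\textbf{Step 3: compare with variation.} The paper records that, for general-type fibers, $\operatorname{Var}(f)$ equals the dimension of the image of the birational classifying map. This map sends $y$ to the birational class of $X_y$, for example through a moduli space of canonical models, or through the $N$-canonical images modulo $\mathrm{PGL}$ for $N$ large. Biholomorphic fibers have the same class, so on a dense Zariski-open set where the classifying map is defined, the map is constant along each plaque. Its fibers through general points therefore have dimension at least $k$, and its image has dimension at most $\dim U-k<\dim U=\operatorname{Var}(f)$. This contradicts maximal variation, so $k=0$ and $\rho_y$ is injective on a nonempty Zariski-open subset of $U^{\mathrm{KS}}$.

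The main obstacle is Step 3. The difficulty is making rigorous that pointwise biholomorphism along analytic plaques forces the algebraic classifying map to have positive-dimensional fibers. I would first try a moduli-free route. For a fixed $N$ with $\varphi_N$ birational (uniform birationality of Hacon--McKernan), the $N$-canonical images give a map $U\to\mathrm{Hilb}/\mathrm{PGL}$ on a dense open subset. The set of pairs $\{(y,y')\colon X_y\simeq X_{y'}\}$ is then constructible, by finiteness of the automorphism and isomorphism schemes for general-type varieties. Plaques give this set local dimension at least $\dim U+k$ at diagonal points, which bounds the transcendence degree of the field of definition by $\dim U-k$.
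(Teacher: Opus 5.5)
Your argument is correct, but it takes a different route from the paper's.

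\textbf{The paper's route.} The paper takes a single nonzero kernel vector at a very general point of $U^{\mathrm{KS}}$. It uses the projectable flow of Lemma~\ref{lem:ks-nullity-fixed-metric} to produce one nonconstant arc of isomorphic fibers. It then rules this arc out by citing the external no-arc statement of Popa--Taji--Wu, which comes from Koll\'ar's birational classifying theorem.

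\textbf{Your route.} You turn the flows into a dimension count for an algebraic classifying map $b$. Isomorphic fibers have the same classifier value, so $b$ is constant along flow lines of local sections of $\ker\kappa$. Hence $\ker\rho\subseteq\ker db$ generically, and $\operatorname{Var}(f)=\dim B\leq\operatorname{rank}\rho$. This gives a stronger statement: the generic Kodaira--Spencer rank bounds the variation from above with no maximality hypothesis. It also works at generic points of a Zariski-open set rather than at very general points. The price is that you need a classifier with $\dim B=\operatorname{Var}(f)$ whose value is invariant under isomorphism.

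\textbf{Frobenius is unnecessary.} The one-parameter flow of a single local section of $\ker\kappa$ already makes $db$ annihilate that section on an open set. That is enough for the rank bound, so Steps~1--2 can be collapsed.

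\textbf{Completing Step~3.} You do not need the Isom-constructibility sketch. Its last implication, that $k$-dimensional equivalence classes bound the transcendence degree of a field of definition, is exactly the content of a quotient construction, and the sketch leaves it unproved. The finiteness of Isom for general-type fibers that are not canonically polarized would also need justification.

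Instead, choose a level $N$ at which $\varphi_N$ is birational on the general fiber; Hacon--McKernan makes this available. Then apply Lemma~\ref{lem:quotient-birational-variation} at that level. It provides a classifier $b_N$ on a Zariski-open set with $\dim B=\operatorname{Var}(f)$, whose fibers are exactly the birational classes. Isomorphic fibers have projectively equivalent $N$-canonical images and hence equal $b_N$-values. The first two assertions of that lemma do not use the lemma under discussion, so there is no circularity. This is also the classifier the paper itself uses at level $N$ in Corollary~\ref{cor:unified-variation-rank}.
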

\begin{proof}
Work on $U^{\mathrm{KS}}$ from Lemma~\ref{lem:ks-nullity-fixed-metric}. If the kernel subbundle has positive rank, choose a nonzero vector in its fiber at a very general point $y$. The projectable flow constructed in that lemma gives a nonconstant analytic arc through $y$ of isomorphic fibers. The no-arc consequence of maximal birational variation for general-type families, recorded in \cite[Lemmas~3.11--3.12]{PopaTajiWu2019} from Koll\'ar's birational classifying theorem, excludes such an arc. Thus the generic kernel is zero, and the constant-rank locus has the required injectivity.
\end{proof}

\begin{theorem}[Curvature kernel, trivial variation and strict positivity, Case (G)]
\label{thm:strictness-generated}
Suppose the fibers are of general type and $mK_{X_y}$ is globally generated on the general fiber. On a nonempty Zariski-open subset contained in $U^{\mathrm{KS}}\cap U_m^{\mathrm{reg}}$ and the locus where the relative evaluation map is surjective,
\begin{equation}\label{eq:generated-exact-kernel}
 \ker\tau_m=\ker\rho,\qquad
 \ker K_m=(\ker\rho)\otimes E_m,\qquad
 \operatorname{rank}K_m=r_m\operatorname{rank}\rho,
\end{equation}
where $r_m=\operatorname{rank}E_m$. At every point where the relative evaluation map is surjective and $\rho_y$ is injective, the metric $G$ is strictly Griffiths and strictly Nakano positive. In particular maximal birational variation gives such positivity on a nonempty Zariski-open subset.
\end{theorem}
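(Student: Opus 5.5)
The plan is to combine the two inclusions already available and then feed the resulting kernel identity into the determinant pinching of Theorem~\ref{thm:determinant-pinching}. Take the open set $U'$ to be the intersection of $U^{\mathrm{KS}}$ from Lemma~\ref{lem:ks-nullity-fixed-metric}, the regularity locus $U_m^{\mathrm{reg}}$, and the locus where the relative evaluation map $f^*E_m\to\omega_{X/U}^{\otimes m}$ is surjective. The last set is Zariski open, because the support of the evaluation cokernel is closed and $f$ is proper. It is nonempty by global generation on the general fiber. Shrinking further, I also require that $\varphi_m$ be generically finite on every fiber. This holds on a Zariski-open set because the fibers are of general type and $mK_F$ is globally generated: the morphism $\varphi_m$ contracts no curve on which $mK_F$ is positive, and bigness of $K_F$ forces $\dim\varphi_m(F)=\dim F$. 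Since generic finiteness is an open condition in the family, this shrinking is legitimate.

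On $U'$, Lemma~\ref{lem:ks-nullity-fixed-metric} with $j=m$ gives $\ker\rho_y\subseteq\ker\tau_{m,y}$. For the reverse inclusion, take $\xi\in\ker\tau_{m,y}$. Corollary~\ref{cor:directional-nullity} gives $Q_\xi=0$, and Proposition~\ref{prop:generated-nullity} then yields $\rho_y(\xi)=0$. That proposition needs global generation and generic finiteness near $y$, and both are ensured by the choice of $U'$, after restricting the family to a neighborhood of $y$. Hence $\ker\tau_m=\ker\rho$. Substituting into \eqref{eq:analytic-kernel} gives $\ker K_m=(\ker\rho)\otimes E_m$ and $\operatorname{rank}K_m=r_m\operatorname{rank}\tau_m=r_m\operatorname{rank}\rho$.

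For the strict positivity claim, fix a point $y$ where the relative evaluation map is surjective and $\rho_y$ is injective. By Theorem~\ref{thm:controlled-regularity}, $G$ is smooth near such a point, so the curvature calculation of Section~\ref{sec:curvature} applies there even if $y$ lies outside $U_m^{\mathrm{reg}}$. The same argument as above then gives $\ker\tau_y=\ker\rho_y=0$, so $\tau_y>0$. The lower bound $\sigma\tau\otimes I\leq K$ with $\sigma>0$ gives strict Nakano positivity, and restricting to decomposable tensors gives strict Griffiths positivity. Finally, if $\operatorname{Var}(f)=\dim U$, Lemma~\ref{lem:variation-ks} makes $\rho$ injective on a nonempty Zariski-open set. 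Intersecting that set with $U'$ gives the asserted positivity locus.

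I expect the main obstacle to be justifying the use of Proposition~\ref{prop:generated-nullity} and of the pinching estimates at points that are only assumed to satisfy surjectivity of evaluation. Two things must be checked. First, the resolvent formula, Lemma~\ref{lem:strict-cp}, and the continuity of $\varepsilon_y$ need only smoothness of $T_y$ near $(y,G_y)$ and the nowhere-vanishing evaluation row, both of which are supplied by the surjective case of Theorem~\ref{thm:controlled-regularity}. Second, generic finiteness must hold on the fiber $X_y$ itself. I would obtain it directly from bigness of $K_{X_y}$, since every fiber is of general type, rather than from openness of the condition.
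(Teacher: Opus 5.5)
Your proposal is correct and follows the paper's proof. The paper likewise restricts to the surjective-evaluation locus and gets generic finiteness of $\varphi_m$ from bigness of $mK_{X_y}$. It obtains $\ker\tau_m\subseteq\ker\rho$ from Corollary~\ref{cor:directional-nullity} and Proposition~\ref{prop:generated-nullity}, the reverse inclusion from Lemma~\ref{lem:ks-nullity-fixed-metric}, and the statements about $K_m$ from Theorem~\ref{thm:determinant-pinching}. For strictness it uses smoothness of $G$ on the surjective locus (Theorem~\ref{thm:controlled-regularity}), where the frozen semipositivity comes from Berndtsson's smooth theorem. Only the inclusion $\ker\tau_y\subseteq\ker\rho_y$ is needed there, so $U^{\mathrm{KS}}$ plays no role, and Lemma~\ref{lem:variation-ks} gives the final assertion.
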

\begin{proof}
The relative evaluation map is surjective over a dense Zariski-open subset: its cokernel has closed support, whose proper image misses the general point by the fiberwise base-change isomorphisms. There $mK_{X_y}$ is big and globally generated, so $\varphi_m$ is generically finite. Otherwise, write $mK_{X_y}=\Phi^*\mathcal O_Z(1)$ for the image $Z$ of $\Phi=\varphi_m$. The coherent sheaf $\Phi_*\mathcal O_{X_y}$ would then give $h^0(X_y,kmK_{X_y})=O(k^{\dim Z})$ with $\dim Z<n$, contradicting bigness.

Intersect with $U^{\mathrm{KS}}\cap U_m^{\mathrm{reg}}$. Corollary~\ref{cor:directional-nullity} and Proposition~\ref{prop:generated-nullity} give
\[
 \xi\in\ker\tau_{m,y}\ \Longrightarrow\ Q_{m,\xi}=0
 \ \Longrightarrow\ \rho_y(\xi)=0.
\]
Lemma~\ref{lem:ks-nullity-fixed-metric} gives the reverse inclusion. The kernel and rank of $K_m$ in \eqref{eq:generated-exact-kernel} now follow from Theorem~\ref{thm:determinant-pinching}. This proves the kernel identities without identifying the rank of $\rho$ with birational variation.

For pointwise strictness, Theorem~\ref{thm:controlled-regularity} makes $G$ smooth wherever the relative evaluation map is surjective. The curvature calculation and nondegeneracy of the evaluation quadratic form in Section~\ref{sec:curvature} apply directly there, without principalization. At any injective-KS point there, Proposition~\ref{prop:generated-nullity} gives $Q_\xi\ne0$ for every $\xi\ne0$. Corollary~\ref{cor:directional-nullity} gives $K_\xi>0$, hence $\tau>0$, and Theorem~\ref{thm:determinant-pinching} gives strict Nakano positivity. This argument does not require the constant-rank restriction. Lemma~\ref{lem:variation-ks} proves the final assertion.
\end{proof}

\subsection{Base ideals and infinitesimal deformations of relative pluricanonical images}
Assume now that $\varphi_m$ is birational onto its image on a general fiber. We relate frozen curvature to the infinitesimal deformation of the relative pluricanonical image. Starting inside $U_m^{\mathrm{reg}}$, principalization resolves $\varphi_m$ to a morphism $\Psi:\widetilde X_U\to\mathbb P_q(E)$. After a further shrinking, its scheme image is a flat projective family
\begin{equation}\label{eq:flat-image-family}
 \mathcal Z\subset\mathbb P_q(E|_U)
\end{equation}
with geometrically integral, hence reduced, fibers and fixed Hilbert polynomial. To obtain this family, first form the scheme-theoretic image, before taking its reduction. The scheme-image/base-change stratification \cite[Tag 0H41, Lemma~37.54.5]{StacksProject} permits formation of this image to commute with geometric fibers on a dense Zariski-open subset. Each geometric resolved fiber is integral. On an affine chart its image ideal is the kernel of the map to the ring of functions on a nonempty open subset of that integral fiber, and is prime. Generic flatness then gives \eqref{eq:flat-image-family}. Birationality also spreads: the generic birational map has an inverse on a dense open subset, which meets every integral image fiber densely after shrinking $U$. In an algebraic frame the image family gives a morphism
\[
 a:U\longrightarrow\operatorname{Hilb}_P(\mathbb P^{r-1}).
\]

\begin{lemma}
\label{lem:geodesic-image-tangent}
At $y\in U$, frozen nullity $Q_\xi=0$ makes the Hilbert tangent of \eqref{eq:flat-image-family} zero in a $G$-Chern-normal holomorphic frame. In any other holomorphic frame it is an infinitesimal $\operatorname{PGL}_r$ orbit tangent.
\end{lemma}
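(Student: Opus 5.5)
The plan is to redo, on the principalization, the equality analysis used in Proposition~\ref{prop:generated-nullity}. The output will be a holomorphic lift $h$ of $\xi$ that is killed by the differential of the resolved image map $\Psi$. I then read off the Hilbert tangent from the ideal of the image family \eqref{eq:flat-image-family}.

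\textbf{Step 1: equality case of the frozen curvature.} Fix $y$ and restrict to a disk tangent to $\xi$ with parameter $t$. Write the frozen metric as in Proposition~\ref{prop:frozen-nakano}, as the $L^2$ metric on $p_*\mathcal O_{\widetilde X}(K_{\widetilde X/V}+L^\sharp)$ with weight $\psi^\sharp$. Off the SNC divisor $D$ its curvature is
\[
 \Omega=\alpha\,\Psi^*\omega_{\mathrm{FS}},
\]
because the residual row $U$ has no common zeros and the divisor terms are pluriharmonic there. I use the curvature identity underlying Zou's theorem, that is, the normalized-representative form of Berndtsson's formula; Zou's Theorems~2.7--2.8 supply it in the logarithmic setting. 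This identity writes $\langle Q_\xi u,u\rangle$ as $\|\eta_u\|^2$ plus the fiber integral of $c_n\widehat u\wedge\overline{\widehat u}\wedge\Omega\,e^{-\psi^\sharp}$, where both terms are nonnegative. So $Q_\xi=0$ forces
\[
 \eta_u=0,\qquad \widehat u\wedge\Omega=0
\]
on $\widetilde X_y\setminus D$, for every $u$.

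\textbf{Step 2: the holomorphic lift.} Exactly as in \eqref{eq:generated-local-lift}, on $\{u\ne0\}\setminus D$ I form the vector $h_u$, which is holomorphic because $\eta_u=0$. The second identity then gives
\[
 \Omega(h_u,\overline{h_u})=0,\qquad\text{hence}\qquad d\Psi(h_u)=0,
\]
where $\Psi$ is viewed as a map to $\mathbb P^{r-1}$ through the fixed frozen frame. Birationality of $\varphi_m$ on the fiber, spread over $U$ as in the paragraph before the lemma, makes $d_F\Psi$ injective on a dense Zariski-open $W\subset\widetilde X_y$. Two local lifts differ by a vertical vector killed by $d_F\Psi$, so they agree on $W$. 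This gives one holomorphic lift $h$ on $W$ with $df(h)=\partial_t$ and $d\Psi(h)=0$. I only need $h$ on a dense open set, so no gluing across $D$ is required.

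\textbf{Step 3: Hilbert tangent.} In the frame, $\mathcal Z_t\subset\mathbb P^{r-1}$. Take a homogeneous $P$ in the ideal of $Z_0$ and extend it flatly to $P_t$ in the ideal of $\mathcal Z_t$. The Hilbert tangent is the homomorphism $P\mapsto\partial_tP|_{Z_0}\in\mathcal O_{Z_0}$. Flow a point $x\in W$ along $h$ to first order and differentiate $P_t(\Psi(x_t))=0$ at $t=0$:
\[
 (\partial_tP)(\Psi(x))+dP\bigl(d\Psi(h)\bigr)=0.
\]
The second term vanishes by Step 2, so $\partial_tP$ vanishes on $\Psi(W)$. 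The set $\Psi(W)$ is dense in the integral, hence reduced, fiber $Z_0$, so the homomorphism is zero and the Hilbert tangent vanishes.

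This argument uses the frozen frame, which is the $G$-Chern-normal holomorphic frame. For another holomorphic frame $e'=eg(t)$, the image family is $g(t)^{-1}\cdot\mathcal Z_t$ up to transpose conventions. Its tangent therefore differs from the one computed above by the infinitesimal action of $g(0)^{-1}g'(0)$ modulo scalars, which is an infinitesimal $\operatorname{PGL}_r$ orbit tangent.

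\textbf{Main obstacle.} The hard part is Step 1. One must justify that the curvature formula is an exact sum of nonnegative terms, not merely an inequality, in the presence of logarithmic poles with residues $\{c_E\}$ and a non-smooth weight along $D$. My first attempt would be to apply the identity on compact subsets of $\widetilde X_y\setminus D$ with cutoffs. I would control the boundary terms using integrability of $|w_E|^{-2\{c_E\}}$ and the fact that the divisor currents only add nonnegative mass. Alternatively I would cite the precise equality in Zou's Theorem~2.7 directly.
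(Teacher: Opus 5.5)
Your Step~3 and the frame-change remark are essentially the paper's argument. Step~1, however, is a genuine gap, and it is exactly the point the paper's proof is built to avoid.

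\textbf{Why Step~1 fails as stated.} In case (B) the frozen weight $\psi^\sharp$ keeps logarithmic poles $\{c_E\}\log|w_E|^2$. Its curvature current therefore contains $\sum_E\{c_E\}[E]$, so the density $c_n\widehat u\wedge\overline{\widehat u}\wedge i\partial\bar\partial\psi^\sharp\,e^{-\psi^\sharp}$ in Berndtsson's formula is not even defined along $D$.

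The identity $\langle Q_\xi u,u\rangle=\|\eta_u\|^2+\int(\cdots)$ is proved by integrating by parts on the compact fiber, using a normalized representative produced by a fiberwise $\bar\partial$-problem. You have no control of that representative near $D$. Cutting off on $\widetilde X_y\setminus D$ therefore leaves boundary terms you cannot estimate. Smooth approximation of $\psi^\sharp$ survives in the limit only as an inequality, and the representatives and the terms $\|\eta_u\|^2$ are lost.

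The citation does not close this either: the paper invokes Zou's Theorems~2.7--2.8 only for a base-direction estimate giving Nakano semipositivity of $Q$. You have not shown they contain the two-term equality for a normalized representative. As written, neither $\eta_u=0$ nor $\widehat u\wedge\Omega=0$ is available to you.

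\textbf{The missing idea.} You need neither an identity nor a holomorphic lift. A lower bound by a representative-independent quantity suffices: the geodesic curvature
\[
 c_\xi(\Omega)(x)=\min_{v\in T_xX_y}\Omega(w+v,\overline{w+v}),
\]
for any lift $w$ of $\xi$. The paper applies Cao--Guenancia--P\u{a}un's Theorem~B, with their Remark~5.5, directly on $X$. The twist there is the analytically singular semipositive weight $\psi=\alpha\log\sum_j|f_j|^2$, which has positive vertical mass. This gives
\[
 \langle Q_\xi s,s\rangle\geq\int_{X_y}c_\xi(\Omega)\,|s|^2_{e^{-\psi}}.
\]
If $Q_\xi=0$, summing over a basis forces $c_\xi(\Omega)=0$ wherever $d_{X_y}\Phi$ is injective off the base locus. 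The Schur-complement formula $c_\xi(\Omega)=\alpha|\operatorname{pr}_{\perp}d\Phi(w)|^2_{\mathrm{FS}}$ then yields, at each such point, a vector $h_x$ with $df(h_x)=\xi$ and $d\Phi(h_x)=0$.

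Your Step~3 only differentiates along a tangent vector at each point, so these pointwise vectors are enough. The holomorphy of $h_u$ and the gluing in Step~2, which is where you needed $\eta_u=0$, become unnecessary.
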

\begin{proof}
Restrict to a disk tangent to $\xi$. The frozen twist
\[
 L=(m-1)K_{X/U},\qquad \psi=\alpha\log\sum_j|f_j|^2
\]
has analytic singularities and semipositive curvature. For a local section $s$ of $E_m$, Cauchy--Schwarz gives
\begin{equation}\label{eq:geodesic-no-loss}
 |f_s|^2e^{-\psi}\leq C_s\Bigl(\sum_j|f_j|^2\Bigr)^{1/m}.
\end{equation}
Thus the adjoint multiplier direct image and every fiber multiplier space identify with $E_m$ and the full space $H^0(X_y,mK_{X_y})$, respectively, as proved with the Jacobian and multiplier transformation in Proposition~\ref{prop:frozen-nakano}. Its pairing is precisely the smooth metric $H_y$.

Off the base locus, $\Omega=i\partial\bar\partial\psi =\alpha\Phi^*\omega_{\mathrm{FS}}$. On the dense locus where the vertical differential of $\Phi$ has rank $n$, its vertical restriction is positive definite. The absolutely continuous vertical top mass is positive and finite on every fiber over $U$: on the principalization the residual form is $\alpha\Psi^*\omega_{\mathrm{FS}}$. The smooth projective disk family, analytically singular semipositive twist and positive vertical mass satisfy Theorem~B of \cite{CaoGuenanciaPaun2021}. Its Remark~5.5 applies at our selected point, since $H_y$ is smooth nearby by Theorem~\ref{thm:controlled-regularity} and its fiber is the full multiplier section space $H^0(X_y,mK_{X_y})$. Their directional lower bound is
\begin{equation}\label{eq:geodesic-lower-bound}
 \langle Q_\xi s,s\rangle
 \geq\int_{X_y}c_\xi(\Omega)|s|_{e^{-\psi}}^2\geq0,
\end{equation}
Here $|s|_{e^{-\psi}}^2$ denotes the adjoint top-degree density, and $c_\xi(\Omega)$ is the geodesic-curvature coefficient in the sense of \cite[Definition~2.3]{CaoGuenanciaPaun2021}, including the case of degenerate vertical curvature. On the locus where the vertical restriction of $\Omega$ is positive definite, it is given by the Schur-complement expression below. Nakano semipositivity of the frozen curvature tensor $Q$ was established separately in Section~\ref{sec:curvature}.

At a nonbase point $x$ where $d_{X_y}\Phi$ is injective, take any lift $w$ of $\xi$. The Schur complement of the vertical block gives
\begin{align}\label{eq:geodesic-normal-motion}
 c_\xi(\Omega)(x)
 &=\min_{v\in T_xX_y}\Omega(w+v,\overline{w+v})\notag\\
 &=\alpha\left|\operatorname{pr}_{d_{X_y}\Phi(T_xX_y)^\perp}
                       d\Phi(w)\right|_{\mathrm{FS}}^2.
\end{align}
If $Q_\xi=0$, sum \eqref{eq:geodesic-lower-bound} over a basis. The summed density is strictly positive off the base locus. The function $c_\xi$ is nonnegative and smooth on the full-rank locus, so it vanishes there. At every such $x$ there is consequently a tangent vector $h_x$ with
\begin{equation}\label{eq:pointwise-image-lift}
 df(h_x)=\xi,\qquad d\Phi(h_x)=0.
\end{equation}
These are pointwise vectors, which need not extend across a base divisor.

Let $Z=\mathcal Z_y$. The first-order flat image family in direction $\xi$ has Hilbert tangent $\delta_\xi:\mathcal I_Z\to\mathcal O_Z$. For a local equation $r$ of $Z$, lift it to $r+t\dot r$ modulo $t^2$ and take $\delta_\xi(r)=\dot r\bmod\mathcal I_Z$. Flatness makes this well-defined: if two lifts differ by $t h$, the class of $h$ in the flat quotient is killed by $t$. The kernel of multiplication by $t$ is its image, so the class of $h$ belongs to $t$ times that quotient. Thus $h$ has zero reduction in $\mathcal O_Z$, as required. Pull back the lifted equation to the first-order source and differentiate along \eqref{eq:pointwise-image-lift}. This yields
\[
 \dot r(\Phi(x))+dr_{\Phi(x)}(d\Phi(h_x))=0.
\]
Thus $\delta_\xi(r)$ vanishes on a dense subset of $Z$. Reducedness makes it zero on all of $Z$, for every local equation $r$. The Hilbert tangent is zero in the normal frame. Differentiating a holomorphic frame change adds the orbit tangent of a single infinitesimal projective linear transformation, proving the final assertion.
\end{proof}

\subsection{The birational classifying map and variation}
Let $S$ be the reduced closure of the $\operatorname{PGL}_r$-saturation of $a(U)$ in the Hilbert scheme. It is integral, as the closure of the image of the integral variety $\operatorname{PGL}_r\times U$. Rosenlicht's theorem gives a dense invariant Zariski-open subset $S^0$ and a geometric quotient $q:S^0\to B$ with geometric orbit fibers, also after field extension \cite[Theorem~1.1]{BellGhiocaReichstein2014}. The inverse image $a^{-1}(S^0)$ is nonempty: otherwise the invariant closed complement would contain $a(U)$, its saturation, and its closure $S$. Restrict to that inverse image and replace $B$ by the closure of the image of
\begin{equation}\label{eq:birational-classifier}
 b=q\circ a:U\longrightarrow B.
\end{equation}
Shrink again so that the image is smooth and $b$ is a submersion onto it. Such a shrinking is available by generic smoothness in characteristic zero. Denote the resulting dense Zariski-open subset by
\begin{equation}\label{eq:variation-open}
 U_m^{\mathrm{var}}:=U\subseteq U_m^{\mathrm{reg}}.
\end{equation}
The fixed set $U_m^{\mathrm{reg}}$ is unchanged by these classifying constructions. In particular, hypotheses imposed on its complement refer to that original complement.

\begin{lemma}
\label{lem:quotient-birational-variation}
Two fibers over $U_m^{\mathrm{var}}$ have the same $b$-value exactly when they are birational. Moreover $\dim B=\operatorname{Var}(f)$, and $Q_\xi=0$ implies $db(\xi)=0$.
\end{lemma}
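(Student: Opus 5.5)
The plan splits into three parts: the fiberwise criterion for equal $b$-values, the dimension count for $B$, and the infinitesimal statement $Q_\xi=0\Rightarrow db(\xi)=0$.

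\emph{Equal $b$-values.} Two points $y,y'\in U_m^{\mathrm{var}}$ have $b(y)=b(y')$ exactly when $a(y)$ and $a(y')$ lie in one $\operatorname{PGL}_r$-orbit, because $q$ has geometric orbit fibers. In that case the images $\mathcal Z_y$ and $\mathcal Z_{y'}$ are projectively equivalent. Since $\varphi_m$ is birational onto its image on every fiber over $U$ (we arranged this when shrinking), the fibers $X_y$ and $X_{y'}$ are birational.

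Conversely, suppose $X_y$ and $X_{y'}$ are birational. The canonical identification $H^0(X_y,mK)\simeq H^0(X_{y'},mK)$ from Proposition~\ref{prop:birational-invariance} gives a linear isomorphism $E_y\simeq E_{y'}$. This isomorphism intertwines the two $m$-canonical maps on a dense open set, so it carries $\mathcal Z_y$ to $\mathcal Z_{y'}$. Expressed in the algebraic frames, it is an element of $\operatorname{PGL}_r$ moving $a(y)$ to $a(y')$. Hence $b(y)=b(y')$.

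\emph{Dimension of $B$.} The fibers of $b$ over its image are therefore exactly the birational equivalence classes in $U$. Because $b$ is a submersion onto the smooth image $B$, the generic fiber of $b$ has dimension $\dim U-\dim B$. I would then compare with $\operatorname{Var}(f)$ through Kollár's birational classifying theorem in the form of \cite[Lemmas~3.11--3.12]{PopaTajiWu2019}, as in Lemma~\ref{lem:variation-ks}. For general-type fibers, $\operatorname{Var}(f)$ is the dimension of the image of any map whose fibers are the birational classes. Concretely, a field of definition of the generic fiber is obtained from $\mathbb C(B)$: the family restricted to a multisection-like slice over $B$ pulls back to a birational model of the geometric generic fiber. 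Conversely, any field of definition maps onto $\mathbb C(B)$ up to finite extension, since $b$ is constant on birational classes and the $\operatorname{PGL}_r$-quotient is geometric after field extension \cite[Theorem~1.1]{BellGhiocaReichstein2014}. I expect this matching of transcendence degrees to be the main obstacle. The delicate point is ensuring that finitely many birational identifications per fiber and finite field extensions do not affect the dimension. I would handle it via Matsusaka--Mumford finiteness of birational automorphism classes of general-type varieties, which gives finite stabilizer data.

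\emph{Infinitesimal statement.} If $Q_\xi=0$, Lemma~\ref{lem:geodesic-image-tangent} says that $da(\xi)$ is tangent to the $\operatorname{PGL}_r$-orbit through $a(y)$. The quotient map $q$ is constant on orbits and smooth on $S^0$, so $dq$ kills orbit tangents. Therefore $db(\xi)=dq(da(\xi))=0$. Here I would check that $a(y)$ lies in $S^0$, which holds by the restriction to $a^{-1}(S^0)$.
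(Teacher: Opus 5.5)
Your proposal follows the paper's proof: the first claim via the geometric orbit fibers of $q$ together with birational invariance of the complete pluricanonical spaces, the dimension via two field-of-definition inequalities, and the last claim via Lemma~\ref{lem:geodesic-image-tangent} together with the fact that $dq$ kills orbit tangents. The transcendence-degree matching you flag as the main obstacle needs no Matsusaka--Mumford or stabilizer input, though its converse should be stated correctly: your slice plays the role of the paper's point of the generic quotient fiber over a finite extension $L'/\mathbb C(B)$, which gives $\operatorname{Var}(f)\leq\dim B$; conversely, the $q$-value of the Hilbert point of the pluricanonical image of a model defined over $k$ is a $k$-point lying over the generic point of $B$, so $\mathbb C(B)$ embeds in $k$ (not $k$ ``mapping onto'' $\mathbb C(B)$), which gives $\dim B\leq\operatorname{Var}(f)$.
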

\begin{proof}
Projectively equivalent images are birational to the corresponding source fibers, hence give birational source fibers. Conversely, a birational map between smooth projective fibers identifies their complete pluricanonical spaces as rational forms. Their level-$m$ images therefore differ by a projective linear change of basis. This proves the assertion about quotient fibers, including independence of a nonminimal model.

For the dimension statement put $K=\mathbb C(U)$ and $L=\mathbb C(B)$. A closed point of the generic quotient fiber over $L$ is defined over a finite extension $L'/L$. Embed $L'$ into an algebraic closure of $K$. Its Hilbert point and $a(\eta_U)$ lie in one geometric quotient fiber, hence in one projective orbit. The geometric generic image, and therefore the geometric generic source fiber, has a birational model over $L'$. This gives $\operatorname{Var}(f)\leq\dim B$.

In the reverse direction, let $k\subset\overline K$ be a field of definition of a birational model with transcendence degree $\operatorname{Var}(f)$, allowing a finite extension. Resolve the model over $k$. Its complete pluricanonical space and image are defined over $k$ and commute with extension to $\overline K$. Birational invariance identifies that image projectively with the original one. Its Hilbert point lies in $S^0$ after extension, by invariance of $S^0$, and has the same $q$-value. This generic $q$-value is defined over $k$, so $\mathbb C(B)$ embeds in $k$. Consequently $\dim B\leq\operatorname{Var}(f)$. Finally Lemma~\ref{lem:geodesic-image-tangent} gives a zero Hilbert tangent modulo a projective orbit tangent, and $dq$ kills that orbit tangent. Hence $db(\xi)=0$.
\end{proof}

\begin{theorem}[Curvature kernel, trivial variation and strict positivity, Case (B)]\label{thm:strictness-birational}
On $U_m^{\mathrm{var}}$, the curvature of $G$ satisfies
\begin{equation}\label{eq:birational-exact-kernel}
 \ker\tau=\ker db,\qquad
 \ker K=(\ker db)\otimes E,
 \qquad\operatorname{rank}K=r\operatorname{Var}(f).
\end{equation}
The determinant curvature descends locally to a K\"ahler form on the variation base. If $\operatorname{Var}(f)=\dim U$, then $G$ is strictly Griffiths and strictly Nakano positive on a nonempty Zariski-open subset. No assumption on the base ideal, beyond its being the base ideal of $|mK_{X_y}|$, is imposed.
\end{theorem}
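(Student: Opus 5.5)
The plan mirrors the proof of Theorem~\ref{thm:strictness-generated}. We establish the two inclusions between $\ker\tau$ and $\ker db$ separately, and then let Theorem~\ref{thm:determinant-pinching} turn this kernel identity into the statements about $K$.

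\emph{First inclusion, $\ker\tau\subseteq\ker db$.} Let $\xi\in\ker\tau_y$ with $y\in U_m^{\mathrm{var}}\subseteq U_m^{\mathrm{reg}}$. By Corollary~\ref{cor:directional-nullity}, $\tau(\xi,\xi)=0$ forces $Q_\xi=0$. Lemma~\ref{lem:geodesic-image-tangent} then says that the Hilbert tangent $da(\xi)$ is an infinitesimal $\operatorname{PGL}_r$ orbit tangent in any holomorphic frame. The final assertion of Lemma~\ref{lem:quotient-birational-variation} converts this into $db(\xi)=0$.

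\emph{Reverse inclusion, $\ker db\subseteq\ker\tau$.} This is the step I expect to be the main obstacle, because a vector in $\ker db$ gives no holomorphic flow of isomorphic fibers. Since $b$ is a submersion onto the smooth image, the fibers of $b$ near $y$ are smooth submanifolds. Their tangent spaces are exactly $\ker db$.

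The plan is to show that $G$ restricted to a $b$-fiber $W$ is flat in a suitable holomorphic frame. By Lemma~\ref{lem:quotient-birational-variation}, all fibers $X_{y'}$ with $y'\in W$ are birational to one another. By Proposition~\ref{prop:birational-invariance}, the fixed-point metrics are isometric under the canonical identification of complete pluricanonical spaces. What remains is to check that these identifications vary holomorphically in $y'$, so that they define a holomorphic frame of $E|_W$ in which $G$ is constant.

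For this I would use the image family. Over $W$ the Hilbert points lie in a single $\operatorname{PGL}_r$ orbit, and $q$ is a geometric quotient. After a finite étale shrinking of $W$, one chooses a holomorphic (indeed algebraic) section $g:W\to\operatorname{PGL}_r$ with $a(y')=g(y')\cdot a(y)$; this uses local triviality of the orbit map onto its image. Then $g(y')$ identifies the pluricanonical images, and hence, through birational invariance, the section spaces $E_{y'}\simeq E_y$. The projective ambiguity is a scalar, and the identification induced by an actual birational map is linear. So I would lift $g$ to $\operatorname{GL}_r$ and normalize the scalar by requiring $\det$ of the induced $G$-matrix to be $1$; any remaining ambiguity is a unit scalar, which is harmless. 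Since $G$ is unique and birationally invariant, the lifted frame makes $G$ constant up to a positive scalar function $c(y')$. The constraint $\det=1$ forces $c\equiv1$, with $g$ holomorphic.

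Naturality of Chern curvature then gives $\Theta_G|_W=0$, so $\tau(\xi,\xi)=0$ for every $\xi\in T_yW=\ker db$.

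\emph{Consequences.} Theorem~\ref{thm:determinant-pinching} gives $\ker K=(\ker db)\otimes E$ and $\operatorname{rank}K=r\operatorname{rank}\tau=r(\dim U-\dim\ker db)=r\dim B$, and $\dim B=\operatorname{Var}(f)$ by Lemma~\ref{lem:quotient-birational-variation}.

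For the descent, I would show that $i\operatorname{Tr}\Theta_G$ is basic for the submersion $b$:
\begin{enumerate}
\item it is closed;
\item its contraction with any vertical vector vanishes, because $\ker\tau=\ker db$ and $\tau\geq0$.
\end{enumerate}
Together these make the Lie derivative along vertical fields zero. In local submersion coordinates the form is therefore a pullback $b^*\omega$. Since $\ker\tau=\ker db$ exactly, $\omega$ is positive definite, hence Kähler.

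Finally, if $\operatorname{Var}(f)=\dim U$, then $\ker db=0$ on $U_m^{\mathrm{var}}$. So $\tau>0$ there, and Theorem~\ref{thm:determinant-pinching} gives strict Nakano positivity, which implies strict Griffiths positivity. None of these steps used anything about the base ideal beyond what enters Lemma~\ref{lem:geodesic-image-tangent}, which is why the theorem needs no base-ideal assumption.
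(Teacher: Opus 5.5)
\textbf{There is a genuine gap in the reverse inclusion.} Your first inclusion matches the paper, as do the passage through Theorem~\ref{thm:determinant-pinching}, the descent argument and the strictness statement. The gap sits exactly where you flagged the main obstacle.

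After lifting the orbit section to $\operatorname{GL}_r$, you have a \emph{holomorphic} frame $e(y')$ of $E|_W$ with constant image $Z_0$. Let $\phi_{y'}:X_{y'}\dashrightarrow X_y$ be the birational map through $Z_0$, and write $e_j(y')=c(y')\,\phi_{y'}^*e_j(y)$. Birational invariance then gives the $G$-matrix $|c(y')|^2G_y$ in this frame. Its Chern curvature is $\bar\partial\partial\log|c|^2\otimes I$. So flatness on $W$ is equivalent to $\log|c|$ being pluriharmonic, i.e.\ to $c$ being holomorphic up to a unimodular factor.

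Imposing $\det G=1$ does not supply this. It rescales the frame by a function of modulus $|c|^{-1}(\det G_y)^{-1/(2r)}$. That function is holomorphic only if $\log|c|$ is already pluriharmonic, which is equivalent to $\tau|_W=0$ — the very statement you are proving. Without that, the normalized frame is not holomorphic. Then constancy of $G$ in it says nothing about the Chern curvature, and ``with $g$ holomorphic'' is circular.

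\textbf{The missing idea} is a direct proof that the scalar depends holomorphically on the base point, i.e.\ that the birational identifications vary holomorphically. The paper argues in two steps.

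First, the ratio $c_t$ in $e_j(t)=c_t\,s_j$, with $s_j$ a basis of $V_0=H^0(F_0,mK_{F_0})$, is a constant on each fiber. Multiplication by $c_t$ preserves $V_0$, so it has an eigenvector $v$ with $c_tv=\lambda v$, which forces $c_t=\lambda$. Your phrase ``the projective ambiguity is a scalar'' asserts this; it is true but needs this one-line justification.

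Second, choose a smooth point $z_0\in Z_0$ and a point of $F_0$ above it where the map is defined with invertible vertical differential. Then $(\Phi,f)$ is a local biholomorphism onto a neighborhood of $(z_0,0)$ in $Z_{0,\mathrm{reg}}\times D$, and its inverse is the fiberwise birational inverse. So the pulled-back relative pluricanonical forms are holomorphic in $(z,t)$. Evaluating the ratio of a basis section to $s_j$ at $z_0$ shows that $c(t)$ is holomorphic and nonvanishing.

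Dividing the frame by $c(t)$ gives a holomorphic frame whose elements are the same rational forms on $F_0$. In that frame Proposition~\ref{prop:birational-invariance} makes every fiber operator equal to $T_0$, uniqueness of the fixed point makes $G$ constant, and naturality of Chern curvature then legitimately gives $K_\xi=0$.
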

\begin{proof}
By Corollary~\ref{cor:directional-nullity}, a vector in $\ker\tau$ has $Q_\xi=0$. Lemma~\ref{lem:quotient-birational-variation} gives $\ker\tau\subset\ker db$.

For the reverse inclusion choose $\xi\in\ker db_y$ and a holomorphic disk $D$ in the smooth local fiber of $b$, with tangent $\xi$. Its Hilbert points lie in a single projective orbit. The orbit map from $\operatorname{PGL}_r$ is a holomorphic submersion and has local holomorphic sections. After shrinking the disk, lift such a section to $\operatorname{GL}_r$ and change the frame. The image schemes are now one fixed reduced integral $Z_0\subset\mathbb P^{r-1}$.

It remains to normalize the scalar left by projective constancy. Use the central fiber as a smooth projective model $F_0$ of $Z_0$, and let $s_1,\ldots,s_r$ be the corresponding basis of $H^0(F_0,mK_{F_0})$. The maps give birational identifications of all nearby fibers with $F_0$. Pull the adjusted frame $e_j(t)$ back as rational $m$-canonical forms to $F_0$. Their span is exactly $V_0=H^0(F_0,mK_{F_0})$ by birational invariance. Equality of the projective coordinates gives, for each fixed $t$,
\begin{equation}\label{eq:projective-frame-scalar}
 e_j(t)=c_t(z)s_j\quad\text{for every }j,
 \qquad 0\ne c_t\in\mathbb C(F_0).
\end{equation}
Multiplication by $c_t$ maps $V_0$ onto itself. It has a nonzero eigenvector $v$, with eigenvalue $\lambda\ne0$. The equality of rational forms $c_tv=\lambda v$ forces $c_t=\lambda$ in the function field. Thus the scalar in \eqref{eq:projective-frame-scalar} is independent of the fiber point.

To prove holomorphic dependence on $t$, choose a smooth point $z_0\in Z_0$ and a point of $F_0$ above it where the map is defined with invertible vertical differential, and a basis section nonzero there. The map $(\Phi,f)$ is a local biholomorphism onto a neighborhood of $(z_0,0)$ in $Z_{0,\mathrm{reg}}\times D$. Its inverse is the birational inverse on each nearby fiber. In these coordinates the pulled-back relative pluricanonical forms are holomorphic in $(z,t)$. Evaluating the ratio of a nonzero basis section to $s_j$ at $z_0$ shows that $c_t=c(t)$ is holomorphic and nonvanishing after shrinking $D$. Dividing every $e_j(t)$ by $c(t)$ gives a holomorphic linear frame whose elements are the same rational pluricanonical forms on $F_0$. Proposition~\ref{prop:birational-invariance} now identifies all fiber Hilb--FS operators in this frame with $T_0$. Uniqueness of the fixed point makes $G$ constant. Its curvature on the disk vanishes, so $K_\xi=0$ and $\xi\in\ker\tau$.

This proves the first identity in \eqref{eq:birational-exact-kernel}. The remaining two follow from Theorem~\ref{thm:determinant-pinching} and $\dim B=\operatorname{Var}(f)$. The real form $i\operatorname{Tr}\Theta^G$ is closed and annihilates every vertical tangent of the local submersion $b$. In coordinates where $b$ is a projection, closedness makes its coefficients independent of the vertical coordinates, or equivalently Cartan's formula makes it invariant under vertical flows. It is therefore the pullback of a local closed $(1,1)$ form on $B$. The exact kernel and semipositivity make this form positive definite, hence K\"ahler. When variation is maximal, $db$ is injective on $U_m^{\mathrm{var}}$. Thus $\tau>0$ and Theorem~\ref{thm:determinant-pinching} gives the asserted strictness.
\end{proof}

\begin{corollary}[Curvature rank and variation]
\label{cor:unified-variation-rank}
Suppose the general fiber $F$ is of general type and at least one of the following holds:
\begin{enumerate}
\item[(G)] $mK_F$ is globally generated;
\item[(B)] the $m$-canonical map $\varphi_m$ is birational onto its image.
\end{enumerate}
Put $v=\operatorname{Var}(f)$ and $r_m=\operatorname{rank}E_m$. There is a nonempty Zariski-open subset
\begin{equation}\label{eq:common-variation-open}
 U_m^\circ\subseteq U_m^{\mathrm{reg}}
\end{equation}
on which
\begin{equation}\label{eq:common-variation-rank}
 \operatorname{rank}\tau_m=v,\qquad
 \ker K_m=(\ker\tau_m)\otimes E_m,\qquad
 \operatorname{rank}K_m=r_m v.
\end{equation}
In case (G), $\ker\tau_m=\ker\rho$. In case (B), one may take $U_m^\circ\subseteq U_m^{\mathrm{var}}$ and $\ker\tau_m=\ker db_m$, where $b_m$ is the level-$m$ classifier \eqref{eq:birational-classifier}. In either case the determinant form $\eta_m=i\operatorname{Tr}\Theta_{G_m}$ descends locally to a K\"ahler form on a submersion base of dimension $v$.
\end{corollary}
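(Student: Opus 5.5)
The proof splits into the two cases. Case (B) is nearly immediate. Case (G) needs an auxiliary birational level $N=km$ to connect $\operatorname{rank}\rho$ with $\operatorname{Var}(f)$, as the introduction announces.

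\emph{Case (B).} Take $U_m^\circ=U_m^{\mathrm{var}}$ from \eqref{eq:variation-open}. Theorem~\ref{thm:strictness-birational} then gives $\ker\tau_m=\ker db_m$ and $\ker K_m=(\ker db_m)\otimes E_m$. Since $b_m$ is a submersion onto a smooth base of dimension $\dim B=\operatorname{Var}(f)=v$ (Lemma~\ref{lem:quotient-birational-variation}), we get $\operatorname{rank}\tau_m=v$ and $\operatorname{rank}K_m=r_mv$. The descent of $\eta_m$ to a K\"ahler form on the local image of $b_m$ is also part of Theorem~\ref{thm:strictness-birational}.

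\emph{Case (G).} Theorem~\ref{thm:strictness-generated} gives, on a Zariski-open subset $W\subseteq U^{\mathrm{KS}}\cap U_m^{\mathrm{reg}}$ where the relative evaluation map is surjective, $\ker\tau_m=\ker\rho$ and $\ker K_m=(\ker\rho)\otimes E_m$. It remains to show $\operatorname{rank}\rho=v$ generically; after shrinking, $\rho$ has constant rank on $U^{\mathrm{KS}}$.

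\begin{enumerate}
\item[(i)] Choose $k$ with $N=km\geq2$ such that $\varphi_N$ is birational on the general fiber. This is possible since $mK_F$ is big and globally generated, so a large multiple is very ample; alternatively use \cite[Theorem~1.1]{HaconMcKernan2006}. Case (B) at level $N$ gives $\operatorname{rank}\tau_N=v$ and $\ker\tau_N=\ker db_N$ on $U_N^{\mathrm{var}}$.
\item[(ii)] Lemma~\ref{lem:ks-nullity-fixed-metric} at level $j=N$ gives $\ker\rho\subseteq\ker\tau_N$ on $U^{\mathrm{KS}}\cap U_N^{\mathrm{reg}}$. Hence $\dim\ker\rho\leq\dim U-v$, that is, $\operatorname{rank}\rho\geq v$.
\item[(iii)] For the reverse inequality, take $\xi\in\ker db_N$ at a general point. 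The fibers of $b_N$ are smooth local submanifolds whose points have pairwise birational fibers (Lemma~\ref{lem:quotient-birational-variation}). Birational smooth varieties of general type share their canonical models, and $mK$ globally generated with $\varphi_N$ birational make the relative canonical model vary trivially along the fiber of $b_N$. I expect this to be the main obstacle, because birational triviality must be upgraded to Kodaira--Spencer triviality of the smooth fibers themselves.

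My first approach: restricted to a $b_N$-fiber $S$, the family has all fibers birational to a fixed $F_0$ of general type. Since $\operatorname{Bir}(F_0)$ is finite, an isotrivial-birational family over a simply connected disk should be birationally trivial over a generically finite base change. A birational map between smooth minimal-model-free fibers need not be an isomorphism, though. I would instead count dimensions generically. The fibers of the map $y\mapsto[X_y]$ to the moduli of smooth polarized fibers are the loci where $\rho$ vanishes along them. Only countably many isomorphism classes of smooth projective models lie in a birational class in a bounded family, so on a $b_N$-fiber the isomorphism class is locally constant at a very general point. The family is then isotrivial along $S$, and $\rho$ vanishes on $TS$. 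This gives $\ker db_N\subseteq\ker\rho$ generically, so $\operatorname{rank}\rho\leq v$.
\end{enumerate}

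Combining (ii) and (iii) yields $\operatorname{rank}\rho=v$ on a Zariski-open set (ranks are semicontinuous, so very general extends to Zariski general), and hence $\ker\tau_m=\ker\rho=\ker\tau_N$ there. Let $U_m^\circ$ be the intersection of these opens with $W$. Then $\operatorname{rank}\tau_m=v$, and Theorem~\ref{thm:determinant-pinching} gives $\operatorname{rank}K_m=r_mv$.

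For the descent in case (G), $\ker\tau_m=\ker db_N$ on $U_m^\circ$ and $b_N$ is a submersion. Repeating the closedness/Cartan argument from the proof of Theorem~\ref{thm:strictness-birational} for $\eta_m$ and the submersion $b_N$ shows that $\eta_m$ descends locally to a closed real $(1,1)$ form with trivial kernel. Semipositivity makes this form K\"ahler on a base of dimension $v$.
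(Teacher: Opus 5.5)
Case (B), step (ii), and the descent argument via $b_N$ are fine and match the paper. Step (iii), the one you flagged, has a genuine gap.

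The justification you give is false as stated. A birational class can contain uncountably many isomorphism classes of smooth models in a bounded family. Take $F_0$ of general type and form the blow-ups $\operatorname{Bl}_{p(y)}F_0$ at a moving point. This is a bounded smooth family whose fibers are pairwise birational, so $b_N$ is constant and $\ker db_N$ is the whole tangent space. Yet the fibers are generically pairwise non-isomorphic, and $\rho\neq0$ in the direction moving the center. So $\ker db_N\subseteq\ker\rho$ cannot follow from birational triviality alone. It must use hypothesis (G), which your argument never actually invokes. One could rescue your route with substantial outside machinery: (G) makes the general fiber a smooth minimal model, then one needs finiteness of minimal models of general type plus the Fischer--Grauert theorem. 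None of this is in your sketch.

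The missing observation is that (G) also holds at level $N$. If $s\in H^0(F,mK_F)$ has $s(x)\neq0$, then $s^k(x)\neq0$, so $NK_F$ is globally generated. Hence Theorem~\ref{thm:strictness-generated} applies at level $N$ as well and gives $\ker\tau_N=\ker\rho$ on a Zariski-open set. The inclusion $\ker\tau_N\subseteq\ker\rho$ comes from Corollary~\ref{cor:directional-nullity} and Proposition~\ref{prop:generated-nullity} at level $N$. Combined with $\ker\tau_N=\ker db_N$ from Theorem~\ref{thm:strictness-birational}, this gives
$$\ker\tau_m=\ker\rho=\ker\tau_N=\ker db_N$$
on the intersection of the relevant open sets. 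That is exactly the paper's proof.

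This supplies the upgrade from birational triviality to Kodaira--Spencer triviality that you were looking for. Along $\ker db_N$ the metric $G_N$ is constant by birational invariance, so the frozen curvature $Q_N$ vanishes there. Global generation of $NK_F$ then lets the pointwise holomorphic lifts of Proposition~\ref{prop:generated-nullity} glue across the ramification locus into a holomorphic splitting, which kills $\rho$.

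A minor point on step (i): big and globally generated does not make a multiple of $mK_F$ very ample unless $K_F$ is ample. A large multiple is still birational, however, and Hacon--McKernan suffices in any case.
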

\begin{proof}
Case (B) is Theorem~\ref{thm:strictness-birational}. In case (G), choose $N=km$ sufficiently large that the $N$-canonical map $\varphi_N$ is birational onto its image. Such a divisible level exists directly by \cite[Theorem~1.1]{HaconMcKernan2006}, which gives birationality at every sufficiently large integer level. Also $NK_F$ is globally generated: at any $x\in F$, a section $s\in H^0(F,mK_F)$ with $s(x)\ne0$ has $s^k(x)\ne0$. Thus these powers certify global generation of $NK_F$, without requiring surjectivity of $\operatorname{Sym}^kH^0(F,mK_F)\to H^0(F,NK_F)$.

Apply the kernel identities of Theorem~\ref{thm:strictness-generated} separately at levels $m$ and $N$, and apply Theorem~\ref{thm:strictness-birational} at level $N$. Intersect the level-$m$ and level-$N$ regularity loci, the loci where their relative evaluation maps are surjective, $U^{\mathrm{KS}}$, and the level-$N$ classifying stratum $U_N^{\mathrm{var}}$. These finitely many dense Zariski-open subsets have a nonempty intersection, which we take for $U_m^\circ$. On it,
\begin{equation}\label{eq:auxiliary-level-kernels}
 \ker\tau_m=\ker\rho=\ker\tau_N=\ker db_N.
\end{equation}
The classifier $b_N$ has rank $v$ by Lemma~\ref{lem:quotient-birational-variation}. The equality of kernels on the same base tangent space gives $\operatorname{rank}\tau_m=\operatorname{rank}\rho=v$, and determinant pinching gives the remaining assertions in \eqref{eq:common-variation-rank}. If both (G) and (B) hold, intersect also with $U_m^{\mathrm{var}}$ to obtain both kernel interpretations. This uses only the kernel part of the globally generated case and the independently proved birational theorem.

For local descent use $b_N$ in case (G) and $b_m$ in case (B). The closed semipositive form $\eta_m$ has exactly the vertical kernel of the chosen submersion. The coordinate argument in the proof of Theorem~\ref{thm:strictness-birational} therefore makes it the pullback of a local closed positive definite $(1,1)$ form. This is the asserted local K\"ahler form.
\end{proof}

For families of good minimal models, the generic equality between Kodaira--Spencer rank and birational variation is already contained in the classifying construction recalled by Taji \cite[Set-up~3.15(3.15.4) and Theorem--Definition~3.16]{Taji2023GoodMinimalModels}. In case (G), surjectivity of the relative evaluation map makes $K_{X/U}$ relatively semiample, so this locus falls within that setting. Our equality argument identifies this generic Kodaira--Spencer kernel with the determinant-curvature kernel of the metric $G_m$. Case (B) permits arbitrary base ideals of $|mK_F|$ and retains the classifying kernel $\ker db_m$, including for nonminimal smooth models. The common rank in Corollary~\ref{cor:unified-variation-rank} is the input for the global positivity and vanishing applications below.

\section{Green--Griffiths bigness conjecture}
\label{sec:bigness}

Let $f:X\to Y$ be a surjective projective morphism of smooth connected complex projective varieties, with connected fibers. Put $d=\dim Y>0$, $n=\dim X-d>0$, and
\[
 E_m=f_*\omega_{X/Y}^{\otimes m},\qquad m\geq2,
\]
and suppose that $E_m$ has positive rank $r$. The sheaf $E_m$ is torsion free: any section annihilated by a nonzero function on $Y$ vanishes on a dense Zariski-open subset of the integral variety $X$, hence everywhere. The construction of Sections~\ref{sec:construction}--\ref{sec:curvature} defines its fixed-point metric $G$ on the smooth base-change locus and proves smooth Nakano semipositivity on $U_m^{\mathrm{reg}}$. Choose a smooth projective modification
\begin{equation}\label{eq:actual-modification}
 \pi:\widetilde Y\longrightarrow Y,\qquad
 V_m=\pi^*E_m/\mathrm{tors},
\end{equation}
where $V_m$ is locally free and $\pi$ is an isomorphism over a dense Zariski-open subset of $U_m^{\mathrm{reg}}$. To construct it, choose a surjection $(A^{-N})^{\oplus s}\twoheadrightarrow E_m$ with $A$ ample. Over the locally free locus, the rank-$r$ quotients define a section of the relative Grassmannian of $(A^{-N})^{\oplus s}$. Take the reduced closure of this section and resolve it. The universal quotient is locally free and receives the pullback of $E_m$: its defining relations vanish on the dense section and hence everywhere. The resulting surjection is generically an isomorphism, so its kernel is exactly the torsion. This gives \eqref{eq:actual-modification} and the asserted isomorphism locus. Passing to $V_m$ retains the vanishing conditions in $E_m$ that could be lost upon taking its reflexive hull.

\subsection{Projectivization and the principal component}
For a coherent sheaf $\mathcal F$ of positive generic rank on an integral variety $Y$, our convention is
\begin{equation}\label{eq:principal-projectivization}
 \mathbb P_q(\mathcal F)=\operatorname{Proj}_Y
                  \operatorname{Sym}^{\bullet}\mathcal F,
 \qquad
 \mathbb P_{\mathrm{pr}}(\mathcal F)
 =\overline{\mathbb P_q(\mathcal F|_U)}_{\mathrm{red}}
 \subset\mathbb P_q(\mathcal F),
\end{equation}
where $U$ is any dense Zariski-open subset on which $\mathcal F$ is locally free. Thus points of the bundle locus represent one-dimensional quotients, and there is a universal quotient $p^*\mathcal F\twoheadrightarrow \mathcal O(1)$. The reduced closure in \eqref{eq:principal-projectivization} is integral and independent of $U$. By \emph{tautological bigness of $\mathcal F$} we mean bigness of the restriction of this $\mathcal O(1)$ to $\mathbb P_{\mathrm{pr}}(\mathcal F)$. Other components, supported over proper subsets of $Y$, do not enter this definition. All symmetric powers below are ordinary coherent symmetric powers.

\subsection{Extension of the fixed-point metric}
A singular Griffiths-semipositive metric on a torsion-free sheaf will mean the following: the metric is positive definite and finite almost everywhere on the locally free locus, and the logarithmic dual norm of each local holomorphic dual section is plurisubharmonic, including its extension across the nonfree set. Hacon--Popa--Schnell's \textit{minimal extension property} \cite[Definition~20.1]{HaconPopaSchnell2018} requires a fixed nowhere dense closed analytic set $Z$ containing the nonfree locus such that, for every holomorphic embedding $\iota:B_1\hookrightarrow Y$ with $\iota(0)\notin Z$ and every unit vector $v$ at its center, there is a section $s\in H^0(B_1,\iota^*\mathcal E)$ with $s(0)=v$ and
\[
 \frac{1}{\operatorname{vol}(B_1)}
 \int_{B_1}|s|_{\iota^*h}^2\,d\lambda\leq1.
\]
Here $B_1\subset\mathbb C^d$ is the unit ball.

For comparison arguments, we use the following \emph{locally uniform bounded extension property}. Fix a proper closed analytic set $Z\subset Y$ containing the nonfree locus, and write $Y^\circ=Y\setminus Z$. For each $p\in Y$ and every sufficiently small coordinate ball $W=B(p,4R)$, there is a constant $C_W<\infty$ such that every center $y\in B(p,R/2)\cap Y^\circ$ at which $h_y$ is finite and positive definite, and every $v\in\mathcal E_y$, admit a section $s\in H^0(B(y,3R),\mathcal E)$ with
\begin{equation}\label{eq:bounded-actual-extension}
 s(y)=v,\qquad
 \int_{B(y,3R)}|s|_h^2\,d\lambda
 \leq C_W\operatorname{vol}(B_{3R})|v|_{h,y}^2.
\end{equation}
Such centers have full measure in $Y^\circ$. The constant may depend on the coordinate neighborhood and radius, but is independent of $y$ and $v$, including as $y$ approaches $Z$. The section is required to lie in $\mathcal E$ on the whole ball. Translation and dilation give this property from minimal extension with $C_W=1$. Bounded metric comparison will give $C_W=r^{m-1}$ for $G$. Pointwise metric inequalities below use the representatives determined by the dual psh norms.

\begin{lemma}
\label{lem:extension-flattening}
Let $\mathcal E$ be a torsion-free coherent sheaf on a smooth complex manifold $Y$, with a singular Griffiths-semipositive metric $h$ satisfying the locally uniform bounded extension property \eqref{eq:bounded-actual-extension}. Let $\pi:\widetilde Y\to Y$ be a proper modification, isomorphic over $Y^\circ$ after enlarging $Z$ if necessary, such that
\[
 V=\pi^*\mathcal E/\mathrm{tors}
\]
is locally free and $\widetilde Y$ is smooth. For any smooth reference metric $h_0$ on $V$ and any compact subset $K\subset\widetilde Y$, there is a constant $c>0$ such that $\pi^*h\geq c h_0$ on $K\cap\pi^{-1}(Y^\circ)$. Consequently $\pi^*h$ extends as a singular Griffiths-semipositive metric on the bundle $V$. In particular the conclusion holds under the minimal extension property.
\end{lemma}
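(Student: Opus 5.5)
The plan is to prove the lower bound $\pi^*h\geq c\,h_0$ in its dual form and then extend the resulting plurisubharmonic functions. Equivalently, we bound from above the dual norms $|\lambda|_{h^*}$ of local holomorphic sections $\lambda$ of $V^*$ near $K$. Fix a point $\tilde p\in K$ with image $p=\pi(\tilde p)$. Choose a coordinate ball $W=B(p,4R)$ as in \eqref{eq:bounded-actual-extension}, and a holomorphic frame $\lambda_1,\dots,\lambda_r$ of $V^*$ on a neighborhood $\widetilde W\subset\pi^{-1}(B(p,R/2))$ of $\tilde p$. Since $h_0$ is smooth, it suffices to show that each $\lambda_i$ has $|\lambda_i|_{\pi^*h^*}\leq C$ on $\widetilde W\cap\pi^{-1}(Y^\circ)$; compactness of $K$ then gives a global constant.

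\textbf{Pointwise bound.} Take $\tilde y\in\widetilde W$ over a good center $y\in Y^\circ$, and a vector $v\in\mathcal E_y=V_{\tilde y}$. By \eqref{eq:bounded-actual-extension} there is a section $s\in H^0(B(y,3R),\mathcal E)$ with $s(y)=v$ and $\int|s|_h^2\leq C_W\operatorname{vol}(B_{3R})|v|_{h,y}^2$. The pullback $\pi^*s$ is a section of $V$ on $\pi^{-1}(B(y,3R))$, which contains the fixed compact neighborhood $\pi^{-1}(\overline{B(p,2R)})$ of $\overline{\widetilde W}$. The function $u=\langle\lambda_i,\pi^*s\rangle$ is holomorphic there.

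\textbf{Mean-value estimate.} Since $|u|^2$ is plurisubharmonic, a mean-value inequality bounds $|u(\tilde y)|^2$ by its integral over a ball of fixed radius around $\tilde y$ in $\widetilde Y$. The main obstacle is converting this into the $h$-integral downstairs, because $\pi$ is not an isomorphism near $\tilde p$. I would first try the following route. Because $\log|\lambda_i|_{\pi^*h^*}$ is plurisubharmonic by hypothesis, its extension across the exceptional set is locally bounded above provided it is bounded above on a neighborhood. So we need an $L^2$-to-$L^\infty$ bound on the base, working downstairs. On $B(y,3R)\setminus Z$ we have $|\langle\lambda_i,s\rangle|\leq|\lambda_i|_{h^*}|s|_h$. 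The plurisubharmonic function $\phi=\log|\lambda_i|_{h^*}$, pushed forward to $Y\setminus\pi(\mathrm{Exc})$, is defined on $W$ off a closed analytic set of codimension at least one, and it extends plurisubharmonically there if it is bounded above. This is circular, so instead we proceed in two steps. First, on $\widetilde W$, apply the sub-mean-value property of $\phi\circ\pi$ (which is psh upstairs) together with Jensen's inequality. This reduces the problem to an upper bound for $\int e^{2\phi}$ over a fixed region. That integral is controlled by duality: $e^{2\phi(z)}=\sup_{|w|_{h,z}\leq1}|\lambda_i(w)|^2$, and the extension property applied at each $z$ gives sections realizing $w$ with controlled $L^2$ norm. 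The pairing with the fixed holomorphic $\lambda_i$ then bounds $|\lambda_i(w)|$ via Cauchy estimates, using a fixed reference metric on $(A^{-N})^{\oplus s}$ which surjects onto $\mathcal E$. Concretely, $\pi^*s$ can be written in terms of generators, and Cauchy estimates on $\pi^{-1}(B(p,2R))$ bound the holomorphic coefficient of $u$ by $C\|s\|_{L^2(h)}$. The constant $C$ comes from comparing $h$ on $W\setminus Z$ with the generator metric in the $L^2$ sense; this is the step I expect to need the most care.

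\textbf{Conclusion.} Granting $|u(\tilde y)|\leq C'\,|v|_{h,y}$ with $C'$ independent of $y$ and $v$, we get $|\lambda_i|_{\pi^*h^*}\leq C'$ on $\widetilde W\cap\pi^{-1}(Y^\circ)$, and hence $\pi^*h\geq c\,h_0$. The functions $\log|\lambda|_{\pi^*h^*}$ are then psh and bounded above on the complement of the proper analytic set $\pi^{-1}(Z)$. By the Riemann extension theorem for psh functions, they extend across that set. This gives the singular Griffiths-semipositive extension on the bundle $V$. The minimal extension case is the special case $C_W=1$, obtained by translation and dilation.
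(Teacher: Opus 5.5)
\textbf{Verdict: there is a genuine gap.} The key estimate is granted rather than proved, and the routes you sketch for it are circular.

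Your reduction is correct. It suffices to bound $|\lambda_i|_{(\pi^*h)^*}$ for a local frame $\lambda_i$ of $V^*$ at points over good centers, using $\lambda_i(v)=\langle\lambda_i,\pi^*s\rangle(\tilde y)$ for an extension $s$ of $v$. Your final removable-singularity step is also fine. But the estimate that carries the whole lemma is
\[
 \sup_{\widetilde W}|\langle\lambda_i,\pi^*s\rangle|^2\leq C\int_{B(p,2R)}|s|_h^2\,d\lambda
 \qquad\text{with $C$ independent of $s$,}
\]
and you only grant it. The suggested routes fail for the following reasons.
\begin{itemize}
\item The sub-mean-value property of $\log|\lambda_i|_{(\pi^*h)^*}$ near $\tilde p$ is available only once you know this function is psh across $\pi^{-1}(Z)$, which is the conclusion.
\item The same applies to your ``$\phi\circ\pi$'' variant. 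Moreover, $\lambda_i$ is a frame of $V^*$ upstairs and in general is not the pullback of any section of $\mathcal E^*$, so there is no downstairs psh function to compose with $\pi$. For example, for $\mathcal E=\mathcal I_p$ and the blowup at $p$, the frame $e^{-1}$ of $V^*=\mathcal O(D)$ is not such a pullback.
\item Comparing $h$ with a quotient metric coming from generators of $\mathcal E$ would require a lower bound $h\geq c\,h_{\mathrm{quot}}$ near the nonfree locus. That is not a hypothesis and is essentially what is to be proved.
\end{itemize}

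Two ingredients are missing.

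First, the comparison must go through $\mathcal E^*$, not $\mathcal E$. Take generators $\mu_1,\dots,\mu_N$ of $\mathcal E^*$ on $W$. By the definition of a singular Griffiths-semipositive metric, each $\log|\mu_a|_{h^*}^2$ is psh across the nonfree set, hence bounded above on $\overline{B(p,2R)}$. So $j(s)=(\mu_a(s))_a$ satisfies $|j(s)|^2\leq C_0|s|_h^2$ almost everywhere. The ordinary holomorphic sub-mean-value inequality on balls \emph{downstairs}, where there is no exceptional set, then gives $\sup_K|j(s)|^2\leq C\int_B|s|_h^2$ for a fixed compact $K\Subset B$. Torsion-freeness makes $j$ injective.

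Second, you need a continuity statement converting a sup bound on $j(s)$ over $K$ into a sup bound on $|\pi^*s\bmod\mathrm{tors}|_{h_0}$ over $\pi^{-1}(\overline{B(p,R/2)})$. This is not a Cauchy estimate, because the induced map $V\to\mathcal O_{\widetilde Y}^N$ is injective as a sheaf map but degenerates along the exceptional locus. The paper proves that $j(H^0(B,\mathcal E))$ is closed in $H^0(B,\mathcal O^N)$, using jets and Krull intersection. It then applies the closed graph theorem to $s\mapsto\pi^*s\bmod\mathrm{tors}$. Dividing by a non-identically-vanishing $r\times r$ minor of that map and using the maximum principle on discs would also work.

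With these two steps, and the extension property applied at centers $y\in B(p,R/2)\cap Y^\circ$, your argument closes.
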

\begin{proof}
Shrink the coordinate ball in \eqref{eq:bounded-actual-extension} so that there is a surjection $\mathcal O_W^N\twoheadrightarrow\mathcal E^*$, and consider the nested balls
\[
 W=B(p,4R),\qquad B=B(p,2R),\qquad C=B(p,R/2).
\]
Let $\lambda_1,\ldots,\lambda_N$ be the images of the standard basis under this surjection. Torsion freeness makes the evaluation map
\[
 j:\mathcal E\longrightarrow\mathcal O_W^N,
 \qquad j(s)=(\lambda_1(s),\ldots,\lambda_N(s))
\]
injective. Its kernel vanishes on the locally free locus and would therefore be torsion. The psh functions $\log|\lambda_a|_{h^*}^2$ are bounded above on $\overline B$, so a constant $C_0$ gives
\begin{equation}\label{eq:dual-generator-control}
 |j(s)|^2\leq C_0|s|_h^2\quad\text{almost everywhere on }B.
\end{equation}

The space $H^0(B,\mathcal E)$ is a closed subspace of $H^0(B,\mathcal O^N)$ through $j$. Indeed, suppose $j(s_\nu)$ converges locally uniformly to $t$. At each point $p'$ and each integer $k$, convergence of derivatives shows that the $k$-jet of $t$ lies in the image of $\mathcal E_{p'}$ in the finite-dimensional jet space. That image is a closed linear subspace. The class of $t_{p'}$ in the module of finite type $\mathcal O_{p'}^N/j(\mathcal E_{p'})$ belongs to every power of its maximal ideal, and is zero by Krull intersection. Hence $t$ is a section of $j(\mathcal E)$. This gives $H^0(B,\mathcal E)$ its Fr\'echet topology of locally uniform convergence under $j$.

Consider the linear map
\[
 \Phi:H^0(B,\mathcal E)\longrightarrow H^0(\pi^{-1}B,V),
 \qquad s\longmapsto\pi^*s\bmod\mathrm{tors}.
\]
Its graph is closed. If $s_\nu\to s$ under $j$ and $\Phi(s_\nu)\to t$ locally uniformly upstairs, then over $\pi^{-1}(Y^\circ)\cap\pi^{-1}(B)$ the locally free inclusion $j$ identifies the two limits, so $t=\Phi(s)$ there. Holomorphy gives equality everywhere. Since both spaces are Fr\'echet, the closed graph theorem makes $\Phi$ continuous. Equivalently, this is the coherent-section continuity used in \cite[Propositions~15.2--15.3]{HaconPopaSchnell2018}. Properness makes $\pi^{-1}(\overline C)$ compact. Continuity, followed by the holomorphic submean estimate for the components of $j(s)$, therefore gives a fixed compact $K\Subset B$ and fixed constants with
\begin{equation}\label{eq:uniform-coherent-section-estimate}
 \sup_{\pi^{-1}(\overline C)}|\Phi(s)|_{h_0}^2
 \leq C_1\sup_K|j(s)|^2
 \leq C_2\int_B|s|_h^2\,d\lambda.
\end{equation}
For the second inequality, cover $K$ by finitely many smaller balls with closures in $B$ and combine the submean estimate with \eqref{eq:dual-generator-control}. The neighborhoods and constants in \eqref{eq:uniform-coherent-section-estimate} are independent of the evaluation point and vector.

Now choose $y\in C\cap Y^\circ$ at which $h_y$ is finite and positive definite, and $v\in\mathcal E_y$. The ball $B(y,3R)$ contains $B$ and is contained in $W$. Applying \eqref{eq:bounded-actual-extension} there and restricting to $B$ gives a section $s\in H^0(B,\mathcal E)$ with $s(y)=v$ and
\[
 \int_B|s|_h^2\,d\lambda\leq C_3|v|_{h,y}^2,
\]
where $C_3=C_W\operatorname{vol}(B_{3R})$ is independent of $y,v$. At the unique point $z$ above $y$, $\Phi(s)(z)=v$, whence
\[
 |v|_{h_0,z}^2\leq C_2C_3|v|_{\pi^*h,z}^2.
\]
As $\pi$ is biholomorphic over $Y^\circ$, the metric lower bound holds almost everywhere on $\pi^{-1}(C\cap Y^\circ)$.

For any local holomorphic $\eta\in V^*$, its squared dual norm is psh on $\pi^{-1}(Y^\circ)$, being the exponential of its psh logarithmic norm. The lower bound just obtained gives $|\eta|_{(\pi^*h)^*}^2\leq c^{-1}|\eta|_{h_0^*}^2$ almost everywhere on these neighborhoods. The submean inequality and continuity of the right-hand side extend this inequality to every point, including zeros of $\eta$. Applying it to all constant-coefficient dual sections in a local frame gives the pointwise metric lower bound. Finitely many such open neighborhoods cover any compact subset and give a common positive constant.

The logarithmic dual norm consequently satisfies
\[
 \log|\eta|_{(\pi^*h)^*}^2
 \leq\log\bigl(c^{-1}|\eta|_{h_0^*}^2\bigr).
\]
The right side is locally bounded above. The removable-singularity theorem extends the left side uniquely across the analytic complement. The extended dual norms agree on overlaps by uniqueness and determine the same measurable Hermitian form almost everywhere, with Griffiths semipositivity. Finiteness and positive definiteness almost everywhere hold on $\pi^{-1}(Y^\circ)$. All sections used in the estimates lie in $\mathcal E$, so its vanishing conditions have been retained.
\end{proof}

\begin{proposition}
\label{prop:ns-actual-extension}
The Hermitian metric $G_{\mathrm{NS}}$ of \eqref{eq:ns-hermitian} extends to a singular Griffiths-semipositive metric on the ordinary $E_m$ with the minimal extension property. In particular, it satisfies \eqref{eq:bounded-actual-extension} with $C_W=1$.
\end{proposition}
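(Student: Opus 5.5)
The plan is to reduce the statement to Hacon--Popa--Schnell's theory of the Narasimhan--Simha metric on $f_*\omega_{X/Y}^{\otimes m}$, and then transfer from their extremal (Finsler) metric to the Hermitian pairing \eqref{eq:ns-hermitian}. First, I would identify the objects. On the smooth locus $Y_0$ with fiberwise base change, the gauge $\ell$ of \eqref{eq:ns-gauge} is the fiberwise $m$-canonical Narasimhan--Simha norm. The function $q$ is the norm of evaluation for its extremal singular metric on $\omega_{X/Y}$, i.e.\ $q^{-2}$ is the weight of the Berndtsson--P\u{a}un/P\u{a}un--Takayama relative $m$-Bergman metric. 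The pairing $G_{\mathrm{NS}}$ is then the $L^2$ metric on $f_*(\omega_{X/Y}\otimes L)$ with $L=\omega_{X/Y}^{\otimes(m-1)}$ carrying the induced weight $q^{2/m-2}$, as in \cite[\S28]{HaconPopaSchnell2018}. By \cite{BerndtssonPaun2008,PaunTakayama2018}, this weight has semipositive curvature current on all of $X$, which is what makes the construction global.

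Second, I would invoke Hacon--Popa--Schnell's main theorem for twisted pushforwards $f_*(\omega_{X/Y}\otimes L)$ with a psh-weighted $L$. It says that the $L^2$ metric is a singular Griffiths-semipositive metric on the ordinary torsion-free sheaf, with psh dual norms extending across the nonfree locus, and that it has the minimal extension property with respect to a closed analytic $Z$ containing the nonfree locus and the non-smooth locus. Two compatibility points have to be checked here.
\begin{enumerate}
\item The direct image with multiplier ideal, $f_*(\omega_{X/Y}\otimes L\otimes\mathcal I(h_L))$, equals $E_m$ itself. Over $Y_0$ this follows from the pointwise estimate $|f_s|^2q^{2/m-2}\le\ell(s)^{2-2/m}|f_s|^{2/m}$, which comes from $|f_s|\le\ell(s)q$, together with integrability of $|f_s|^{2/m}$. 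Globally it follows because the extremal weight is constructed so that every section of $E_m$ is $L^2$ on compact subsets, as in \cite[Theorem~28.2 / \S28]{HaconPopaSchnell2018}.
\item Their metric on $Y_0$ agrees with \eqref{eq:ns-hermitian}. This is checked by computing fiber integrals and using the normalization $\int|f_s|^{2/m}$.
\end{enumerate}

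Third, the bounded extension \eqref{eq:bounded-actual-extension} with $C_W=1$ follows from minimal extension by translation and dilation, as remarked before Lemma~\ref{lem:extension-flattening}. For $y\in B(p,R/2)\cap Y^\circ$ I would embed the ball $B(y,3R)\subset W$ by $\iota(z)=y+3Rz$. Minimal extension gives $s$ with $s(y)=v/|v|$ and average $L^2$ norm at most $1$. Rescaling by $|v|_{h,y}$ and multiplying by the volume gives the inequality, since the normalized average is invariant under dilation. Finiteness and positive definiteness almost everywhere come from the comparison \eqref{eq:ns-gauge-comparison} on $Y_0$.

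The main obstacle is the second step: making sure the cited extension theorem applies to the ordinary $E_m$, without saturation or reflexive hull. Two things must hold there: the multiplier ideal causes no loss at singular fibers, and the sections produced by Ohsawa--Takegoshi lie in $f_*\omega_{X/Y}^{\otimes m}$ on the whole ball. I would handle this as follows. The $L^2$ extension produces a section of $\omega_X\otimes L$ on $f^{-1}(B)$, where $L=(m-1)K_{X/Y}$ is an honest line bundle on $X$. Its push-forward is therefore automatically a section of $f_*\omega_{X/Y}^{\otimes m}$ after twisting by $\omega_Y^{-1}$. The only input is semipositivity of the extremal weight across the singular fibers, which is exactly the Berndtsson--P\u{a}un/P\u{a}un--Takayama theorem.
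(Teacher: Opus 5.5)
Your overall route is the paper's: the NS weight $h=h_m^{m-1}$ on $L=\omega_{X/Y}^{\otimes(m-1)}$, the $L^2$ theorem of \cite{HaconPopaSchnell2018} for $\mathcal F=f_*(\omega_{X/Y}\otimes L\otimes\mathcal I(h))$, identification with \eqref{eq:ns-hermitian} on smooth fibers, and dilation to get $C_W=1$. The gap is in compatibility point (1). The equality $\mathcal F=E_m$ is false in general. Semipositivity of the extremal weight says nothing about triviality of $\mathcal I(h)$, and near a non-reduced fiber the weight is negative enough that genuine sections of $E_m$ fail to be locally $L^2$.

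Here is an example. Let $\iota$ be a free involution of $F$ with $F/\iota$ of general type. Set $X=(F\times\Delta_s)/\langle(\iota,-1)\rangle$, $t=s^2$, $m=2$, and $\epsilon=(dz\wedge ds)\otimes(dt)^{-1}$. This can be globalized as $(F\times C')/\mu_2\to C'/\mu_2$. On $X_t$ one has $\epsilon|_{X_t}=dz/(2s)$. Hence in the frame $\epsilon^2$ you get $q=|2s|^2q_F$ and $|\epsilon|_h^2=q^{-1}$. Take a nonzero $\iota$-invariant bicanonical form $\sigma$ on $F$. Then $\sigma\,\epsilon^{2}$ is an invariant holomorphic section of $E_2$ across $t=0$. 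Its $L^2$ density $|\sigma|^2|2s|^{-2}q_F^{-1}$ is not integrable in $s$, because $q_F$ is bounded above. So the multiplier direct image is strictly smaller at the double fiber. Your pointwise estimate, and Theorem~27.1(b) of the reference as the paper uses it, give $\mathcal F\cong E_m$ only over the smooth locus.

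The global equality is also unnecessary. The paper uses only that $\mathcal F\hookrightarrow E_m$ is generically an isomorphism, and transfers the structure through this inclusion (Corollary~21.2 of \cite{HaconPopaSchnell2018}).
\begin{itemize}
\item A local section of $E_m^*$ restricts to a section of $\mathcal F^*$ with the same dual norm off a proper analytic set. Its logarithmic norm is therefore psh and extends across that set.
\item Enlarge $Z$ by the locus where the inclusion is not an isomorphism. Then the extension sections produced for $\mathcal F$ are sections of the ordinary $E_m$ on the whole ball, with the same almost-everywhere norm.
\end{itemize}
Your last paragraph already contains the second point. Replace point (1), and the claim that semipositivity prevents loss at singular fibers, by this transfer argument. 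The rest of your proof then goes through as written, including the translation--dilation step giving \eqref{eq:bounded-actual-extension} with $C_W=1$.
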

\begin{proof}
Let $h_m$ be the relative $m$th Narasimhan--Simha metric on $\omega_{X/Y}$. On a smooth fiber its local squared norm is $q^{-2/m}$, with $q$ as in \eqref{eq:ns-gauge}. Put
\[
 L=\omega_{X/Y}^{\otimes(m-1)},\qquad h=h_m^{m-1},\qquad
 \mathcal F=f_*\bigl(\omega_{X/Y}\otimes L\otimes\mathcal I(h)\bigr).
\]
Theorem~27.1(a) of \cite{HaconPopaSchnell2018} supplies the semipositive metric $h_m$ across the singular fibers. Theorem~21.1 equips this adjoint multiplier direct image $\mathcal F$ with a singular semipositive $L^2$ metric with minimal extension. On each smooth fiber the norm is
\[
 \int_{X_y}|f_s|^2q^{2/m-2}\,dV_n
 =G_{\mathrm{NS},y}(s,s).
\]
The normalization of the canonical integral is the one fixed in Section~\ref{sec:construction}.

The inclusion $\mathcal F\hookrightarrow E_m$ is an isomorphism over the smooth locus by Theorem~27.1(b). At the fiber level this no-loss property also follows from $|f_s|^2q^{2/m-2}\leq\ell(s)^{2-2/m}|f_s|^{2/m}$. Corollary~21.2 of the same reference transfers the metric and minimal extension through this generically isomorphic inclusion: an extended section in $\mathcal F$ is a section of $E_m$, with the same almost-everywhere norm. The extension thus takes place on $E_m$ itself, without passing to a reflexive hull. Definition~20.1 gives the unit-ball average bound. Affine translation and dilation give \eqref{eq:bounded-actual-extension} with $C_W=1$ on the balls used above. The cited numbering is that of the 9 May 2017 author manuscript.
\end{proof}

\begin{theorem}[Extension of the fixed-point metric]
\label{thm:actual-sheaf-extension}
The fixed-point metric $G$ extends to a singular Griffiths-semipositive metric on the ordinary sheaf $E_m$ and satisfies \eqref{eq:bounded-actual-extension} with $C_W=r^{m-1}$. For $\pi$ and $V_m$ in \eqref{eq:actual-modification}, its pullback extends to a singular Griffiths-semipositive metric on $V_m$. Locally, for a smooth reference metric,
\begin{equation}\label{eq:fixed-flattened-lower}
 \pi^*G\geq r^{-(m-1)}\pi^*G_{\mathrm{NS}}\geq c h_0,
 \qquad c>0.
\end{equation}
No global-generation or variation hypothesis is needed for this extension.
\end{theorem}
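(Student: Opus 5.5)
The plan is to transfer everything from the Narasimhan--Simha metric through the two-sided comparison \eqref{eq:ns-comparison} of Proposition~\ref{prop:ns-comparison}. On the smooth base-change locus this comparison gives $r^{-(m-1)}G_{\mathrm{NS}}\leq G\leq G_{\mathrm{NS}}$ fiberwise. By Proposition~\ref{prop:ns-actual-extension}, $G_{\mathrm{NS}}$ extends as a singular Griffiths-semipositive metric on $E_m$ with minimal extension, so bounded extension holds with $C_W=1$. I would proceed in three steps: extend $G$ itself, verify the bounded extension constant, and apply Lemma~\ref{lem:extension-flattening}.

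\emph{Extension of $G$.} On $U_m^{\mathrm{reg}}$ the metric $G$ is smooth and Nakano semipositive by Theorem~\ref{thm:curvature-resolvent}, hence Griffiths semipositive. For a local holomorphic dual section $\eta$, the function $\log|\eta|^2_{G^*}$ is therefore psh there. It is also continuous on the whole smooth base-change locus by Lemma~\ref{lem:holder-continuity}. Dualizing the comparison gives
\[
 |\eta|^2_{G_{\mathrm{NS}}^*}\leq|\eta|^2_{G^*}\leq r^{m-1}|\eta|^2_{G_{\mathrm{NS}}^*}.
\]
The right side is locally bounded above near every point of $Y$, because $\log|\eta|^2_{G_{\mathrm{NS}}^*}$ is psh on all of $Y$. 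So $\log|\eta|^2_{G^*}$ is psh on a Zariski-dense open set and locally bounded above across the analytic complement. The removable singularity theorem then extends it uniquely as a psh function. These extensions define the singular metric on $E_m$. It is finite and positive definite almost everywhere, since $G$ is a continuous positive metric on the base-change locus.

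The point needing care is that the extension of $G$ must be semipositive across the non-regular part of $Y_0$ where $G$ is only continuous. Uniqueness of the psh extension across a proper analytic set settles this. The comparison inequalities persist almost everywhere after extension, because the upper and lower bounds are themselves determined by dual psh representatives.

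\emph{Bounded extension.} Take a center $y$ and a vector $v$, and let $s$ be the section given by minimal extension for $G_{\mathrm{NS}}$. Then
\[
 \int|s|_G^2\leq\int|s|^2_{G_{\mathrm{NS}}}\leq\operatorname{vol}\,|v|^2_{G_{\mathrm{NS}},y}\leq r^{m-1}\operatorname{vol}\,|v|^2_{G,y},
\]
which is \eqref{eq:bounded-actual-extension} with $C_W=r^{m-1}$. The section lies in $E_m$ itself, so the vanishing conditions of the ordinary sheaf are retained.

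\emph{Flattening.} Lemma~\ref{lem:extension-flattening} applies to $G$ directly, and also to $G_{\mathrm{NS}}$. The latter gives $\pi^*G_{\mathrm{NS}}\geq ch_0$ locally. Combined with the pulled-back comparison, this yields \eqref{eq:fixed-flattened-lower} and the Griffiths-semipositive extension of $\pi^*G$ to $V_m$. None of these steps uses global generation or variation, because only Proposition~\ref{prop:ns-comparison} and the Hacon--Popa--Schnell extension enter.
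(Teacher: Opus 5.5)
Your proposal is correct and follows the paper's proof essentially step for step. You extend the dual log-norms of $G$ from $U_m^{\mathrm{reg}}$ by removable singularities using $|\lambda|_{G^*}^2\leq r^{m-1}|\lambda|_{G_{\mathrm{NS}}^*}^2$, get $C_W=r^{m-1}$ by passing the Hacon--Popa--Schnell minimal-extension section of $E_m$ through both comparison inequalities, and then apply Lemma~\ref{lem:extension-flattening} to $G$ and to $G_{\mathrm{NS}}$ (the paper additionally makes explicit that the centers are taken in some $Y^\circ\subset U_m^{\mathrm{reg}}$ avoiding the set excluded by the NS minimal extension property).
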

\begin{proof}
Proposition~\ref{prop:ns-actual-extension} extends $G_{\mathrm{NS}}$ on $E_m$. For each local holomorphic section $\lambda$ of $E_m^*$, comparison \eqref{eq:ns-comparison} gives
\[
 |\lambda|_{G^*}^2\leq r^{m-1}|\lambda|_{G_{\mathrm{NS}}^*}^2.
\]
On $U_m^{\mathrm{reg}}$ the logarithm on the left is psh by Theorem~\ref{thm:curvature-resolvent}. The logarithm on the right is locally bounded above by the Griffiths extension of $G_{\mathrm{NS}}$. Removable singularities therefore extend the logarithmic dual norms across $Y\setminus U_m^{\mathrm{reg}}$. On the locally free locus these norms define the singular Griffiths-semipositive extension of $G$. They also extend plurisubharmonically across the nonfree locus for every local section of $E_m^*$, as required by the sheaf convention above.

The two inequalities in \eqref{eq:ns-comparison} give the bounded extension property for this metric. Choose $Y^\circ\subset U_m^{\mathrm{reg}}$ avoiding the analytic set excluded by the NS minimal extension property. On the ball $B(y,3R)$, the NS minimal extension of $v$ is a section of the original $E_m$ and satisfies
\[
 \int|s|_G^2\leq\int|s|_{G_{\mathrm{NS}}}^2
 \leq\operatorname{vol}(B_{3R})|v|_{G_{\mathrm{NS}}}^2
 \leq r^{m-1}\operatorname{vol}(B_{3R})|v|_G^2.
\]
The constant is independent of the center and vector. Lemma~\ref{lem:extension-flattening} now applies directly to $G$. Applying it also to $G_{\mathrm{NS}}$ and using the lower comparison gives \eqref{eq:fixed-flattened-lower}. The resulting extension constant for $G$ is bounded, rather than necessarily optimal.
\end{proof}

\begin{corollary}[Nakano semipositivity of the extended metric]
\label{cor:singular-nakano-extension}
Under the hypotheses of Theorem~\ref{thm:actual-sheaf-extension}, the extended metric on $V_m$ is Nakano semipositive in the approximation sense of \cite[Definition~2.4]{InayamaMatsumura2024} and in the global $L^2$-estimate sense of \cite[Definition~2.6(1)]{InayamaMatsumura2024}, with lower form $\theta=0$. The same conclusions hold for $G$ on the locally free locus of $E_m$, and hence on all of $Y$ if $E_m$ is locally free.
\end{corollary}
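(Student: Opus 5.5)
We verify the two Inayama--Matsumura definitions for the extended metric on $V_m$ in turn. Both reduce to the smooth strict-positivity-free structure we already have: Nakano semipositivity of $G$ on $U_m^{\mathrm{reg}}$ (Theorem~\ref{thm:curvature-resolvent}) and the lower bound \eqref{eq:fixed-flattened-lower} near the complement, together with an upper bound coming from $G\leq G_{\mathrm{NS}}$ in \eqref{eq:ns-comparison}.

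\textbf{The approximation sense.} Definition~2.4 of \cite{InayamaMatsumura2024} asks, locally on $\widetilde Y$, for smooth Hermitian metrics $h_\nu$ on $V_m$ that increase (or decrease) to $\pi^*G$ almost everywhere. Their Nakano curvatures should be bounded below by $-\varepsilon_\nu\omega\otimes\mathrm{id}$ with $\varepsilon_\nu\to0$, uniformly on compacts; the target is $\theta=0$. We first argue over $\pi^{-1}(U_m^{\mathrm{reg}})$, where $\pi^*G$ is smooth and Nakano semipositive, so $h_\nu=\pi^*G$ works trivially. The real task is near the analytic complement $Z'$.

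There we would regularize by the standard route for Griffiths-semipositive singular metrics bounded below by a smooth metric. Take a local frame of $V_m$ on a Stein polydisc and convolve the matrix of the dual metric $(\pi^*G)^{-1}$, whose entries are psh-controlled and locally bounded by \eqref{eq:fixed-flattened-lower}. This gives dual Griffiths-seminegative approximations. Griffiths alone does not give Nakano, however. What we expect to use instead is the following. On the full-measure set the Nakano form of $G$ is $\geq0$ pointwise, by the resolvent formula. The metric is bounded below by $ch_0$, and $\log|\eta|^2$ is psh across $Z'$. Combined, these show that the curvature current of $(\pi^*G)$, defined as a matrix-valued current since $\pi^*G$ and its inverse are locally bounded (upper bound from $G\leq G_{\mathrm{NS}}$ and the Griffiths extension of $G_{\mathrm{NS}}$), has no mass on $Z'$ in the Nakano pairing. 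Then Nakano semipositivity of the regularized metrics follows, up to $\varepsilon_\nu$, from convexity of the Nakano cone under convolution in the flat local frame.

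\textbf{The $L^2$-estimate sense.} For Definition~2.6(1) we must solve $\bar\partial u=g$ for $V_m$-valued $(d,q)$-forms on Stein opens, with the Hörmander estimate for any auxiliary smooth psh weight. On $\widetilde Y\setminus Z'$, which is complete Kähler locally, semipositive Nakano curvature gives the Hörmander–Demailly estimate directly. Because $\pi^*G$ is locally comparable from below to $h_0$ and $L^2$ sections for it are locally $L^2$ for $h_0$, the Riemann extension of $L^2$ $\bar\partial$-equations across the analytic set $Z'$ extends solutions and equations over all of $\widetilde Y$, as in Demailly's extension lemma. Alternatively, approximation implies the $L^2$ sense by Inayama--Matsumura's own comparison between their definitions, which we would cite if available in the version used.

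\textbf{The remaining statement.} The final sentence needs no separate argument. On the locally free locus of $E_m$ we may take $\pi$ to be an isomorphism there, so $V_m=E_m$ and the same proof applies. The main obstacle is the approximation near $Z'$, namely ensuring that no negative Nakano mass concentrates on $Z'$. Our first attempt there is the boundedness of $\pi^*G^{\pm1}$ combined with the psh dual norms.
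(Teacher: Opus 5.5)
There is a genuine gap in the approximation part, and it comes from misreading \cite[Definition~2.4]{InayamaMatsumura2024}. You ask for metrics that are smooth on all of $V_m$ near $Z'$, obtained by convolution, and under that reading your argument does not close, for two reasons.

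First, $\pi^*G$ is not known to be locally bounded above. The singular Griffiths extension of $G_{\mathrm{NS}}$ bounds the \emph{dual} norms from above. That gives only the lower bound \eqref{eq:fixed-flattened-lower}, and near singular fibers $G$ can genuinely grow (compare the logarithmic bound \eqref{eq:curve-log-bound}). So the ``matrix-valued curvature current'' of $\pi^*G$ you invoke is not available. For a singular Griffiths metric whose dual is bounded but degenerate, the curvature need not even be defined as a current.

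Second, convolving the dual metric matrix in a flat frame preserves only plurisubharmonicity of $|u|^2_{h^*}$, that is, Griffiths positivity. The curvature $\bar\partial(h^{-1}\partial h)$ is nonlinear in $h$. There is no ``convexity of the Nakano cone'' making the regularized metrics Nakano semipositive up to $\varepsilon_\nu$. No such regularization is known in general, which is precisely why Inayama--Matsumura formulate their definition with singular sets.

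The missing observation is that Definition~2.4 allows the approximants $h_{j,s}$ on an exhaustion $M_j$ to be smooth only on $M_j\setminus\Sigma_{j,s}$, with $\Sigma_{j,s}$ a proper closed analytic set and with continuous error functions $\lambda_{j,s}$. So no regularization is needed at all:
\begin{itemize}
\item Take a dense Zariski-open $W\subset U_m^{\mathrm{reg}}$ over which $\pi$ is an isomorphism, and set $\Sigma=\widetilde Y\setminus\pi^{-1}(W)$ (respectively $\Sigma=M\setminus W$ on the locally free locus $M$ of $E_m$).
\item Use the representative of $h$ given by the extended dual psh norms, so that $|\lambda|^2_{h^*}$ is upper semicontinuous.
\item Put $\Sigma_{j,s}=\Sigma\cap M_j$, $h_{j,s}=h|_{M_j\setminus\Sigma}$, $\lambda_{j,s}=\lambda_j=0$ and $\theta=0$.
\end{itemize}
The sequences are constant, and the curvature condition is exactly Theorem~\ref{thm:curvature-resolvent}. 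Definition~2.6(1) then follows from \cite[Theorem~1.2]{InayamaMatsumura2024}, which was your fallback.

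Your direct $L^2$ argument is also sound. For Stein $\Omega$, you use the H\"ormander--Demailly estimate on the complete K\"ahler manifold $\Omega\setminus Z'$. You then extend the equation across $Z'$ by \cite[Lemma~6.9]{Demailly1982L2}, using $\pi^*G\geq ch_0$. This gives the $L^2$ statement independently, but it does not supply the approximation statement, which your proposal leaves unproved.
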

\begin{proof}
Choose a dense Zariski-open subset $W\subset U_m^{\mathrm{reg}}$ over which $\pi$ is an isomorphism. For $M=\widetilde Y$, put $\Sigma=\widetilde Y\setminus\pi^{-1}(W)$. For the locally free locus $M$ of $E_m$, put $\Sigma=M\setminus W$. In both cases $\Sigma$ is a proper closed analytic subset and the extended metric $h$ is smooth and Nakano semipositive on $M\setminus\Sigma$ by Theorem~\ref{thm:curvature-resolvent}, using the biholomorphism over $W$ on $\widetilde Y$. It is measurable, finite and positive definite almost everywhere. Choose its representative given by the extended dual psh norms. Then $|\lambda|_{h^*}^2$ is upper semicontinuous for every local holomorphic dual section $\lambda$. Fix a Hermitian form on $M$ and an exhaustion by open subsets $M_j\Subset M_{j+1}\Subset M$, allowed to stabilize at $M$ when $M$ is compact. In Definition~2.4 take
\[
 \Sigma_{j,s}=\Sigma\cap M_j,\qquad
 h_{j,s}=h|_{M_j\setminus\Sigma},\qquad
 \lambda_{j,s}=\lambda_j=0,\qquad \theta=0.
\]
The approximants are smooth on the required complements, their norms form constant nondecreasing sequences converging to the $h$-norm, and their curvature errors are continuous and identically zero. These data verify Definition~2.4. Theorem~1.2 of the same reference gives Definition~2.6(1).
\end{proof}

These singular positivity statements do not assign a classical curvature tensor or a determinant-pinching inequality at points of $\Sigma$.

\subsection{Tautological pullback and symmetric powers}
\begin{lemma}\label{lem:exact-tautological-pullback}
Let $\mathcal F$ have positive generic rank on a smooth integral projective $Y$, and let $V=\pi^*\mathcal F/\mathrm{tors}$ be locally free on a smooth projective modification. For every integer $\ell\geq1$,
\begin{equation}\label{eq:ordinary-symmetric-pullback}
 \pi^*\operatorname{Sym}^{\ell}\mathcal F/\mathrm{tors}
 \simeq\operatorname{Sym}^{\ell}V.
\end{equation}
There is a projective birational morphism $q_\ell:\mathbb P_q(\operatorname{Sym}^{\ell}V)\to \mathbb P_{\mathrm{pr}}(\operatorname{Sym}^{\ell}\mathcal F)$, and
\begin{equation}\label{eq:exact-tautological-pullback}
 q_\ell^*\mathcal O_{\mathbb P_{\mathrm{pr}}
               (\operatorname{Sym}^{\ell}\mathcal F)}(1)
 =\mathcal O_{\mathbb P_q(\operatorname{Sym}^{\ell}V)}(1).
\end{equation}
\end{lemma}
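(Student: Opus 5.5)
We first prove \eqref{eq:ordinary-symmetric-pullback}, then build $q_\ell$ from the resulting surjection, and finally check that the tautological bundles match.

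\emph{The isomorphism \eqref{eq:ordinary-symmetric-pullback}.} Pullback commutes with symmetric powers, so $\pi^*\operatorname{Sym}^\ell\mathcal F\simeq\operatorname{Sym}^\ell\pi^*\mathcal F$. The surjection $\pi^*\mathcal F\twoheadrightarrow V$ induces a surjection
\[
 \operatorname{Sym}^\ell\pi^*\mathcal F\twoheadrightarrow\operatorname{Sym}^\ell V .
\]
The target is locally free, hence torsion free, so this surjection factors through the torsion-free quotient. The induced map is still surjective. Over the dense open set where $\pi$ is an isomorphism and $\mathcal F$ is locally free, it is an isomorphism. Its kernel is therefore generically zero inside a torsion-free sheaf, hence zero. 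This gives \eqref{eq:ordinary-symmetric-pullback}.

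\emph{Construction of $q_\ell$.} Put $\mathcal G=\operatorname{Sym}^\ell\mathcal F$. The surjection $\pi^*\mathcal G\twoheadrightarrow\operatorname{Sym}^\ell V$ induces a surjection of symmetric algebras. This yields a closed immersion
\[
 \mathbb P_q(\operatorname{Sym}^\ell V)\hookrightarrow\mathbb P_q(\pi^*\mathcal G)=\mathbb P_q(\mathcal G)\times_Y\widetilde Y ,
\]
compatible with $\mathcal O(1)$. Here we use that $\operatorname{Proj}$ of $\operatorname{Sym}^\bullet$ commutes with base change, which is formal. Composing with the projection gives a projective morphism
\[
 \mathbb P_q(\operatorname{Sym}^\ell V)\to\mathbb P_q(\mathcal G)
\]
whose pullback of $\mathcal O(1)$ is $\mathcal O(1)$. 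The source $\mathbb P_q(\operatorname{Sym}^\ell V)$ is a projective bundle over the smooth integral $\widetilde Y$, so it is integral. The morphism is an isomorphism over the dense open locus where $\pi$ is an isomorphism and $\mathcal F$ is locally free. Hence its image is the reduced closure of that bundle locus, namely $\mathbb P_{\mathrm{pr}}(\mathcal G)$. Since the source is reduced, the map factors through this reduced closed subscheme, giving $q_\ell$. It is proper, surjective onto $\mathbb P_{\mathrm{pr}}(\mathcal G)$, and an isomorphism over a dense open set, so it is birational. Identity \eqref{eq:exact-tautological-pullback} follows because the $\mathcal O(1)$ on $\mathbb P_{\mathrm{pr}}$ is by definition the restriction of the one on $\mathbb P_q(\mathcal G)$.

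\emph{Main obstacle.} The delicate point is the tautological compatibility of the closed immersion with $\mathcal O(1)$, together with the fact that no exceptional twist appears. This holds because we pass through the surjection on \emph{all} of $\widetilde Y$ rather than only generically; there the statement reduces to the standard functoriality of $\operatorname{Proj}$ for surjections of graded algebras generated in degree one. A second check is that the image lies in the principal component and not in some other component of $\mathbb P_q(\mathcal G)$. Irreducibility of the source, combined with the generic isomorphism, settles this.
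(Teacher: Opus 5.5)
Your proposal is correct and follows essentially the same route as the paper's proof. The torsion argument for \eqref{eq:ordinary-symmetric-pullback} is the same. The morphism you build, via the closed immersion $\mathbb P_q(\operatorname{Sym}^\ell V)\hookrightarrow\mathbb P_q(\mathcal G)\times_Y\widetilde Y$ followed by projection, is exactly the map the paper obtains from the universal rank-one quotient; from there both arguments use integrality of the source, the generic isomorphism and properness to land on the reduced principal component, with $\mathcal O(1)$ pulling back to $\mathcal O(1)$.
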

\begin{proof}
Pullback commutes with the symmetric algebra. The quotient $\pi^*\mathcal F\twoheadrightarrow V$ gives a surjection $\pi^*\operatorname{Sym}^{\ell}\mathcal F\twoheadrightarrow \operatorname{Sym}^{\ell}V$, an isomorphism at the generic point. Its kernel is torsion, and contains all torsion because the target is locally free over the integral variety $\widetilde Y$. This proves \eqref{eq:ordinary-symmetric-pullback}, including possible torsion in the ordinary symmetric power downstairs. The universal rank-one quotient gives a morphism to the relative Proj in \eqref{eq:principal-projectivization}. The source is integral, and the morphism is an isomorphism over a dense Zariski-open subset of $Y$ where $\mathcal F$ is locally free and $\pi$ is an isomorphism. Properness makes its image closed. It therefore factors through, and surjects onto, the reduced principal component. This is $q_\ell$. The universal quotient line pulls back to the same quotient line upstairs, which proves \eqref{eq:exact-tautological-pullback} as an identity of line bundles. In particular, the identity has no exceptional-divisor correction.
\end{proof}

\begin{lemma}
\label{lem:symmetric-bigness}
If a positive-generic-rank coherent sheaf $\mathcal F$ on a smooth integral projective variety is tautologically big, then so is $\operatorname{Sym}^{\ell}\mathcal F$ for every integer $\ell\geq1$.
\end{lemma}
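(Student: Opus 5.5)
We reduce to the locally free case and then use the Veronese relation between $\mathbb P_q(V)$ and $\mathbb P_q(\operatorname{Sym}^{\ell}V)$. Choose a smooth projective modification $\pi:\widetilde Y\to Y$ with $V=\pi^*\mathcal F/\mathrm{tors}$ locally free, as in \eqref{eq:actual-modification}. By Lemma~\ref{lem:exact-tautological-pullback} applied with $\ell=1$, the birational morphism $q_1:\mathbb P_q(V)\to\mathbb P_{\mathrm{pr}}(\mathcal F)$ pulls $\mathcal O(1)$ back to $\mathcal O_{\mathbb P_q(V)}(1)$. Bigness is invariant under pullback by a projective birational morphism of integral projective varieties: sections of multiples pull back injectively and the volume is preserved. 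Hence tautological bigness of $\mathcal F$ is equivalent to bigness of $\mathcal O_{\mathbb P_q(V)}(1)$. The same lemma for general $\ell$ identifies tautological bigness of $\operatorname{Sym}^{\ell}\mathcal F$ with bigness of $\mathcal O_{\mathbb P_q(\operatorname{Sym}^{\ell}V)}(1)$. So it suffices to treat the vector bundle $V$ on $\widetilde Y$.

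For a vector bundle, consider the relative Veronese embedding
\[
 \nu_\ell:\mathbb P_q(V)\hookrightarrow\mathbb P_q(\operatorname{Sym}^{\ell}V),
\]
which is induced by the surjection $p^*\operatorname{Sym}^{\ell}V\twoheadrightarrow\mathcal O_{\mathbb P_q(V)}(\ell)$. It satisfies $\nu_\ell^*\mathcal O(1)=\mathcal O_{\mathbb P_q(V)}(\ell)$, and this restriction is big. We then need to pass from bigness on a subvariety to bigness on the ambient space, which is not automatic. A cleaner route is to work with section rings. By the projection formula,
\[
 H^0\bigl(\mathbb P_q(\operatorname{Sym}^{\ell}V),\mathcal O(k)\bigr)
 =H^0\bigl(\widetilde Y,\operatorname{Sym}^k\operatorname{Sym}^{\ell}V\bigr),
\]
and in characteristic zero $\operatorname{Sym}^{k\ell}V$ is a direct summand of $\operatorname{Sym}^k\operatorname{Sym}^{\ell}V$ via the multiplication map and its symmetrization splitting.

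This splitting gives the estimate
\[
 h^0(\mathcal O(k))\geq h^0\bigl(\widetilde Y,\operatorname{Sym}^{k\ell}V\bigr)\sim c\,(k\ell)^{\dim\mathbb P_q(V)}.
\]
That growth is only of order $k^{\dim\widetilde Y+r-1}$, whereas $\dim\mathbb P_q(\operatorname{Sym}^{\ell}V)=\dim\widetilde Y+\binom{r+\ell-1}{\ell}-1$ is larger when $r\geq2$. Counting sections this way is therefore insufficient, and this is the main obstacle.

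I would instead use the criterion that a vector bundle $W$ is big precisely when $\mathcal O_{\mathbb P_q(W)}(1)$ is big, equivalently when for an ample $A$ there exist $k$ and a generically surjective map $\bigoplus A\to\operatorname{Sym}^kW$ (Viehweg-type characterization). First I would verify this equivalence for bundles. Bigness of $V$ then gives, for an ample $A$ and some $k$, a map $\bigoplus A\to\operatorname{Sym}^kV$ that is generically surjective. Applying $\operatorname{Sym}^{\ell}$, composing with $\operatorname{Sym}^{\ell}\operatorname{Sym}^kV\to\operatorname{Sym}^{k\ell}V$, and using the plethysm surjection $\operatorname{Sym}^k\operatorname{Sym}^{\ell}V\leftarrow\operatorname{Sym}^{k}V^{\otimes\ell}$-type maps, I would aim to produce generically surjective maps $\bigoplus A^{\ell}\to\operatorname{Sym}^{k}(\operatorname{Sym}^{\ell}V)$. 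If that step resists, I would fall back on the general fact that $\operatorname{Sym}^{\ell}$ of a big vector bundle is big, proved via a generically finite pullback or via the inequality for $\mathcal O(1)$ on the Segre-type product $\mathbb P_q(V)^{\times_{\widetilde Y}\ell}$, which maps dominantly to $\mathbb P_q(\operatorname{Sym}^{\ell}V)$ only when $\ell$ is large; I would check that carefully. The key difficulty is this ambient-dimension mismatch. Finally, the result is transported back to $\operatorname{Sym}^{\ell}\mathcal F$ through $q_\ell$.
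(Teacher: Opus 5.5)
Your reduction to a locally free $V$ on $\widetilde Y$ through Lemma~\ref{lem:exact-tautological-pullback} matches the paper. You are also right that counting $h^0(\mathcal O(k))$ through the split injection $\operatorname{Sym}^{k\ell}V\hookrightarrow\operatorname{Sym}^k(\operatorname{Sym}^{\ell}V)$ gives the wrong growth order. The gap is that the vector-bundle case is never proved, and the criterion you propose to close it is false.

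Tautological bigness of a bundle $W$ (bigness of $\mathcal O_{\mathbb P_q(W)}(1)$) is \emph{not} equivalent to having, for an ample $A$, a generically surjective map $\bigoplus A\to\operatorname{Sym}^kW$. That property is Viehweg bigness, which is strictly stronger. The paper's Remark~\ref{rem:bigness-boundaries} gives a counterexample: $W=\mathcal O_{\mathbb P^1}(1)\oplus\mathcal O_{\mathbb P^1}(-1)$ has big $\mathcal O(1)$. Yet each $\operatorname{Sym}^kW$ has the quotient $\mathcal O(-k)$, which receives no nonzero map from an ample line bundle. So your hypothesis cannot supply the generically surjective maps you want to propagate through $\operatorname{Sym}^{\ell}$.

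The fallback fails too. The map from the $\ell$-fold fibre product of $\mathbb P_q(V)$ to $\mathbb P_q(\operatorname{Sym}^{\ell}V)$ sends a tuple to the product of the linear forms. Its image has relative dimension $\ell(r-1)$, strictly less than $\binom{r+\ell-1}{\ell}-1$ once $r\geq3$ and $\ell\geq2$. So it is never dominant there, rather than dominant for large $\ell$.

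The missing idea is to test bigness against a fixed negative twist instead of by volume growth. Let $p:\mathbb P_q(W)\to\widetilde Y$ and fix an ample $A$. Then $\mathcal O(1)$ is big if and only if
\[
 H^0(\widetilde Y,\operatorname{Sym}^kW\otimes A^{-1})\neq0
\]
for some $k>0$. One direction is Kodaira's lemma. For the other, choose $c$ with $\mathcal O(1)\otimes p^*A^{c}$ ample and write $(ck+1)\mathcal O(1)=(\mathcal O(1)+c\,p^*A)+c\,(k\,\mathcal O(1)-p^*A)$, ample plus effective.

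With this criterion, the split injection you already found finishes the proof. A nonzero $s\in H^0(\operatorname{Sym}^kV\otimes A^{-1})$ has $s^{\ell}\neq0$ in $H^0(\operatorname{Sym}^{k\ell}V\otimes A^{-\ell})$, because the symmetric algebra of the generic fibre is a domain. Its image is then a nonzero section of $\operatorname{Sym}^k(\operatorname{Sym}^{\ell}V)\otimes(A^{\ell})^{-1}$, and the criterion applied with the ample $A^{\ell}$ gives bigness. Only one nonzero section is needed, so the dimension mismatch you identified never arises. This is the paper's argument.
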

\begin{proof}
Suppose first that $V$ is a vector bundle, and write $p:\mathbb P_q(V)\to Y$, and $L=\mathcal O(1)$. For an ample line bundle $A$ on $Y$, bigness of $L$ gives some $q>0$ and a nonzero section
\begin{equation}\label{eq:big-section-criterion}
 s\in H^0(\mathbb P_q(V),L^q\otimes p^*A^{-1})
   =H^0(Y,\operatorname{Sym}^qV\otimes A^{-1}).
\end{equation}
This is the fixed-twist criterion for a big line bundle. Conversely, such a section implies bigness of $L$: choose $c>0$ so that $L\otimes p^*A^c$ is ample, using relative ampleness. In additive notation
\[
 (cq+1)L=(L+c p^*A)+c(qL-p^*A)
\]
is the sum of an ample line bundle and an effective divisor.

The power $s^\ell$ is nonzero in $H^0(Y,\operatorname{Sym}^{q\ell}V\otimes A^{-\ell})$, since the symmetric algebra of the generic fiber is an integral domain. In characteristic zero there is a natural split injection
\[
 \operatorname{Sym}^{q\ell}V
 \lhook\joinrel\longrightarrow
 \operatorname{Sym}^{q}(\operatorname{Sym}^{\ell}V).
\]
Indeed, embed the source in $V^{\otimes q\ell}$ by averaging over all permutations, arrange the factors in $q$ groups of size $\ell$, and take the symmetric quotients within each group and under permutations of the groups. Multiplication back into $\operatorname{Sym}^{q\ell}V$ is the identity, so this construction is injective and commutes with changes of frame. The image of $s^\ell$ is therefore a nonzero section of $\operatorname{Sym}^{q}(\operatorname{Sym}^{\ell}V)\otimes (A^\ell)^{-1}$. Applying \eqref{eq:big-section-criterion} to $\operatorname{Sym}^{\ell}V$ proves the bundle assertion, also recorded in \cite[Proposition~3.3]{FulgerRay2023}.

For general $\mathcal F$, choose a smooth projective modification as in Lemma~\ref{lem:exact-tautological-pullback}. Bigness is invariant under proper birational pullback of a line bundle on integral projective varieties. Thus the identity with $\ell=1$ transfers bigness to $V$. The bundle assertion and \eqref{eq:exact-tautological-pullback} for each $\ell$ transfer it back to the ordinary symmetric power of $\mathcal F$.
\end{proof}

\subsection{Tautological bigness}
\begin{theorem}[Tautological bigness of relative pluricanonical direct images]
\label{thm:prescribed-bigness}
Let $f:X\to Y$ be a surjective projective morphism of smooth connected complex projective varieties, with connected fibers, $d=\dim Y>0$ and $n=\dim X-\dim Y>0$. Fix an integer $m\geq2$, put $E_m=f_*\omega_{X/Y}^{\otimes m}$, and let $V_m=\pi^*E_m/\mathrm{tors}$ on a smooth projective modification $\pi:\widetilde Y\to Y$ for which $V_m$ is locally free. Assume the general fiber $F$ is of general type and $\operatorname{Var}(f)=d$. Assume that, for a general fiber $F$, at least one of the following holds:
\begin{enumerate}
\item the line bundle $mK_F$ is globally generated, or
\item the rational map $\varphi_m$ associated with $|mK_F|$ is birational onto its image.
\end{enumerate}
Then the pullback of $G$ to $V_m$ has the singular extension of Theorem~\ref{thm:actual-sheaf-extension} and is smooth and strictly Nakano positive on a nonempty Zariski-open subset. The torsion-free sheaf $E_m$ is tautologically big, as is each ordinary symmetric power $\operatorname{Sym}^{\ell}E_m$ for $\ell\geq1$, in convention \eqref{eq:principal-projectivization}.
\end{theorem}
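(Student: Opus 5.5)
The plan has three steps: show that $G$ is strictly positive on a Zariski-open set, transfer this to a singular metric on $\mathcal O(1)$ over $\mathbb P_q(V_m)$ whose curvature is strictly positive on an open set, and then pass to bigness and to symmetric powers.

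\emph{Step 1: strict positivity.} Choose a dense Zariski-open $U\subset Y$ over which $f$ is smooth, $E_m$ is locally free with fiberwise base change, and $\pi$ is an isomorphism. By Corollary~\ref{cor:unified-variation-rank}, applied in case (G) or (B), there is a nonempty Zariski-open $U_m^\circ\subset U_m^{\mathrm{reg}}\cap U$ on which $\operatorname{rank}\tau=\operatorname{Var}(f)=d$. So $\tau>0$ there, and Theorem~\ref{thm:determinant-pinching} gives
\[
 K\geq\sigma\,\tau\otimes I>0
\]
in the Nakano sense, with $\sigma$ locally uniform. Hence $G$ is smooth and strictly Nakano (in particular Griffiths) positive on $\pi^{-1}(U_m^\circ)$. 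Theorem~\ref{thm:actual-sheaf-extension} then provides the singular Griffiths-semipositive extension of $\pi^*G$ to all of $V_m$, with the lower bound \eqref{eq:fixed-flattened-lower}. This proves the metric assertions.

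\emph{Step 2: metric on $\mathcal O(1)$.} On $\mathbb P_q(V_m)$, take the quotient metric $h$ on $\mathcal O(1)$ induced from $p^*\pi^*G$ via $p^*V_m\twoheadrightarrow\mathcal O(1)$. Its local weight is $\log|\lambda|^2_{G^*}$, where $\lambda$ is the tautological dual vector. Griffiths semipositivity of the extension says exactly that these dual log-norms are psh. They are also locally bounded above, by the lower bound $\pi^*G\geq ch_0$. So $h$ is a singular metric with $i\Theta_h\geq0$ as a current on all of $\mathbb P_q(V_m)$. Over $\pi^{-1}(U_m^\circ)$ the metric is smooth, and the standard formula gives a curvature form that is the sum of the vertical Fubini--Study term (positive on fibers) and the horizontal term $K(\cdot\otimes\lambda^\sharp,\cdot\otimes\lambda^\sharp)/|\lambda|^2$ (positive by Griffiths strictness). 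Thus $i\Theta_h$ is a strictly positive smooth form on the nonempty open set $p^{-1}\pi^{-1}(U_m^\circ)$.

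\emph{Step 3: bigness.} To get $\operatorname{vol}(\mathcal O(1))>0$ from Step 2, I would use Boucksom's criterion: a pseudoeffective class carrying a closed positive current whose absolutely continuous part is strictly positive on a nonempty open set has positive volume. Indeed, $\operatorname{vol}\geq\int(\Theta_{\mathrm{ac}})^{n}>0$, where $n=\dim\mathbb P_q(V_m)$; equivalently, Demailly's holomorphic Morse inequalities apply after regularization. This is the step I expect to be the main obstacle: one must check that the positivity estimate survives when the current is singular along $p^{-1}(\pi^{-1}(Y\setminus U_m^\circ))$. Boucksom's result is stated for arbitrary closed positive currents, so no boundedness of the weight near the singular locus should be needed, only semipositivity there. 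An alternative route is to produce, via $L^2$ estimates with this metric twisted by an ample class, nonzero sections of $\operatorname{Sym}^qV_m\otimes A^{-1}$ and then apply \eqref{eq:big-section-criterion}.

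Once $\mathcal O_{\mathbb P_q(V_m)}(1)$ is big, Lemma~\ref{lem:exact-tautological-pullback} with $\ell=1$ identifies it with $q_1^*\mathcal O(1)$ on the principal component. Bigness descends under the birational morphism $q_1$, so $E_m$ is tautologically big. Lemma~\ref{lem:symmetric-bigness} then gives tautological bigness of every ordinary $\operatorname{Sym}^\ell E_m$ for $\ell\geq1$.
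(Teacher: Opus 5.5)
Your proposal is correct and follows the paper's proof essentially step for step. The steps are: strict Nakano positivity on $U_m^\circ$ (intersected with the isomorphism locus of $\pi$) from Corollary~\ref{cor:unified-variation-rank} and Theorem~\ref{thm:determinant-pinching}; the quotient metric on $\mathcal O(1)$ over $\mathbb P_q(V_m)$, whose weights are bounded above via \eqref{eq:fixed-flattened-lower}; the horizontal/vertical curvature split; Boucksom's inequality $\operatorname{vol}\geq\int(T_{\mathrm{ac}})^N>0$, which needs no control of the singularities, as you expected; and then Lemmas~\ref{lem:exact-tautological-pullback} and \ref{lem:symmetric-bigness}. The one point to tighten is plurisubharmonicity of the weight on the total space $\mathbb P_q(V_m)$. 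The paper proves it over the smooth locus and extends it across the analytic complement by removable singularities, using exactly your upper bound, rather than reading it off directly from psh of dual norms of sections on the base.
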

\begin{proof}
On a dense Zariski-open subset of $Y$, the morphism $f$ is smooth and the fiberwise base-change maps for $E_m$ are isomorphisms. Under either alternative, Corollary~\ref{cor:unified-variation-rank} gives determinant-curvature rank $\operatorname{Var}(f)=d$ on $U_m^\circ$. Its semipositivity therefore makes it positive definite there, and Theorem~\ref{thm:determinant-pinching} gives strict Nakano positivity of $G$. Intersect $U_m^\circ$ with the isomorphism locus of $\pi$ to obtain the same strictness for its pullback to $V_m$.

Let $Z=\mathbb P_q(V_m)$, $p:Z\to\widetilde Y$ and $L=\mathcal O_Z(1)$. The singular metric from Theorem~\ref{thm:actual-sheaf-extension} induces the quotient metric on $L$. In a bundle frame with matrix $A$ and projective quotient row $\eta$, its local weight is
\begin{equation}\label{eq:fixed-tautological-weight}
 \varphi=\log(\eta A^{-1}\eta^*).
\end{equation}
Over the regularity locus this is psh. Estimate \eqref{eq:fixed-flattened-lower} bounds $A^{-1}$ above in every local frame of $V_m$, so these weights are locally bounded above on projective affine charts and extend across the entire complement. They glue, with the transition law of $L$, to a positive current
\[
 T=\frac{i}{2\pi}\partial\bar\partial\varphi\in c_1(L).
\]
At a point of the strict locus use a $G$-Chern-normal frame. For a base tangent $\xi$ and a projective tangent represented by a row $\zeta$, the Hermitian form underlying its curvature, apart from the common factor $1/(2\pi)$, is
\begin{equation}\label{eq:tautological-curvature-split}
 \frac{\eta K_\xi\eta^*}{|\eta|^2}
 +\frac{|\zeta|^2|\eta|^2-|\zeta\eta^*|^2}{|\eta|^4}.
\end{equation}
The mixed derivatives vanish in this normal frame. The first term is positive for $\xi\ne0$. The second is the vertical Fubini--Study form, positive for a nonzero projective tangent. Hence $T$ is smooth and positive definite over the strict locus, also when $r=1$ and the vertical space is zero.

The smooth projective $Z$ is compact K\"ahler. For $N=d+r-1$, Boucksom's volume inequality \cite[Proposition~3.1 and Theorem~1.2]{Boucksom2002} gives
\begin{equation}\label{eq:current-volume}
 \operatorname{vol}(L)\geq\int_Z(T_{\mathrm{ac}})^N>0.
\end{equation}
Strict positivity already gives positive mass on a relatively compact coordinate ball above the strict locus. Here $(T_{\mathrm{ac}})^N$ is the coefficientwise top power of the measurable density $T_{\mathrm{ac}}$, defined almost everywhere. Thus $L$ is big. The exact identity \eqref{eq:exact-tautological-pullback} for $\ell=1$ proves bigness of $E_m$, and Lemma~\ref{lem:symmetric-bigness} proves it for every positive ordinary symmetric power. 
\end{proof}

\begin{corollary}
\label{cor:all-large-levels}
Under the hypotheses on the family and its variation in Theorem~\ref{thm:prescribed-bigness}, there is an integer $M(n)$, depending only on the fiber dimension, such that for every integer $m\geq M(n)$ and every integer $\ell\geq1$, $\operatorname{Sym}^{\ell}E_m$ is tautologically big. At each such level its fixed-point metric has the extension and interior strictness in that theorem. No divisibility restriction on $m$ is required.
\end{corollary}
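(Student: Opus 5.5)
The plan is to reduce Corollary~\ref{cor:all-large-levels} to alternative (2) of Theorem~\ref{thm:prescribed-bigness}, using uniform pluricanonical birationality.

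First, apply \cite[Theorem~1.1]{HaconMcKernan2006}. It gives an integer $M(n)$, depending only on $n=\dim F$, such that for every smooth projective variety $F$ of general type and dimension $n$ and every integer $m\geq M(n)$, the map $\varphi_m$ defined by $|mK_F|$ is birational onto its image. We enlarge $M(n)$ if necessary so that $M(n)\geq2$. This statement holds for every integer $m\geq M(n)$, not only for multiples of some fixed integer, so no divisibility restriction arises.

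Next, fix $m\geq M(n)$. The general fiber $F$ of $f$ is a smooth projective variety of general type of dimension $n$. Hence hypothesis (B), namely alternative (2) of Theorem~\ref{thm:prescribed-bigness}, holds for the general fiber. We also need $E_m$ to have positive rank. This follows because $h^0(F,mK_F)>0$ whenever $\varphi_m$ is birational, and because, over the dense open set where $f$ is smooth, constancy of plurigenera and base change give $\operatorname{rank}E_m=P_m(F)$. The remaining hypotheses of Theorem~\ref{thm:prescribed-bigness} concern only the family: $f$ is a surjective projective morphism of smooth connected projective varieties with connected fibers, $\dim Y>0$, the fibers are positive-dimensional, and $\operatorname{Var}(f)=\dim Y$. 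None of these depends on $m$, so they carry over unchanged.

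Finally, apply Theorem~\ref{thm:prescribed-bigness} at level $m$. It gives the singular Griffiths-semipositive extension of $G_m$ to $V_m$ from Theorem~\ref{thm:actual-sheaf-extension}. It gives strict Nakano positivity of $G_m$ on a nonempty Zariski-open subset, via Corollary~\ref{cor:unified-variation-rank} in case (B). It also gives tautological bigness of $\operatorname{Sym}^{\ell}E_m$ for all $\ell\geq1$.

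The only step that needs care is the first one. We must check that the Hacon--McKernan bound applies to every integer $m\geq M(n)$ rather than only to multiples, and that it concerns the complete linear system $|mK_F|$ on the smooth model itself. Both points are part of the cited statement. Birational invariance of the plurigenera also makes the choice of smooth model irrelevant, so I expect no genuine obstacle here.
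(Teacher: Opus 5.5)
Your proposal is correct and follows the paper's proof essentially verbatim: take $M(n)=\max\{2,r_n\}$ with $r_n$ from Hacon--McKernan's Theorem~1.1, which applies at every integer level $m\geq r_n$, and invoke the birational alternative of Theorem~\ref{thm:prescribed-bigness} separately at each such level. Your check that $E_m$ has positive rank is a sensible detail that the paper leaves implicit.
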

\begin{proof}
Hacon--McKernan \cite[Theorem~1.1]{HaconMcKernan2006} provides an integer $r_n$ such that the map associated with $|mK_F|$ is birational for every smooth projective general-type $n$-fold and every integer $m\geq r_n$. Set $M(n)=\max\{2,r_n\}$ and apply the second branch of Theorem~\ref{thm:prescribed-bigness} separately at each level. The regularity locus and metric constants may depend on $m$.
\end{proof}

\subsection{Viehweg bigness for smooth families}
For a bundle $V$ on a projective variety, weak positivity over a fixed nonempty Zariski-open subset $U$ means that, for every integer $p>0$ and an ample line bundle $A$, some integer $q>0$ makes $\operatorname{Sym}^{pq}V\otimes A^q$ globally generated over $U$ by its sections on $Y$. The choice of $A$ is immaterial \cite[Definition~2.11 and Lemma~2.14]{Viehweg1995}. We use the following criterion for Viehweg bigness:
\begin{equation}\label{eq:viehweg-big-criterion}
 \operatorname{Sym}^{N}V\otimes A^{-1}
 \text{ is weakly positive over some nonempty Zariski-open subset},\qquad N>0.
\end{equation}
Indeed, weak positivity with $p=2$ and multiplication of symmetric powers give generic global generation of $\operatorname{Sym}^{2qN}V\otimes A^{-q}$ for some $q>0$. This is the criterion in \cite[Theorem~1.1(3)]{FulgerRay2023}. These notions concern evaluation by global sections on the base. By contrast, global generation of $mK_F$ means surjectivity of $H^0(F,mK_F)\otimes\mathcal O_F\to mK_F$. When this holds on the geometric generic fiber, relative evaluation is surjective over a dense Zariski-open subset of the base.

\begin{corollary}[Viehweg bigness for smooth families]
\label{cor:smooth-viehweg-bigness}
Let $f:X\to Y$ be a smooth surjective projective morphism between smooth connected complex projective varieties, with connected positive-dimensional fibers, and assume $\dim Y>0$. Suppose the general fiber is of general type and $\operatorname{Var}(f)=\dim Y$. Fix $m\geq2$ such that either $mK_F$ is globally generated on a general fiber, or $\varphi_m$ is birational onto its image. Then
\[
 M=\det E_m\text{ is big},\qquad
 \operatorname{Sym}^{\ell}E_m\text{ is Viehweg big for every }\ell\geq1.
\]
These conclusions hold for every integer $m\geq M(n)$, where $M(n)$ is the threshold in Corollary~\ref{cor:all-large-levels}.
\end{corollary}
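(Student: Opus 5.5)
For a smooth family, $E_m$ is locally free on all of $Y$ with fiberwise base change by \eqref{eq:complete-base-change}. So we may take $\pi=\mathrm{id}$ and $V_m=E_m$. Lemma~\ref{lem:holder-continuity} then makes $G$ a continuous positive metric on $E_m$ over the whole compact base $Y$. The plan is to derive both conclusions from a single continuous metric: first bigness of $M$, then bigness of $\mathcal O(1)$ on $\mathbb P_q(E_m)$, and finally to convert the latter into the weak-positivity criterion \eqref{eq:viehweg-big-criterion}.

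\textbf{Bigness of $\det E_m$.} Put on $M$ the induced determinant metric $\det G$. Its local weight $-\log\det G$ is continuous on $Y$, and it is psh because the Griffiths extension of Theorem~\ref{thm:actual-sheaf-extension} makes $\log\det G^{-1}$ a limit of psh dual norms. Alternatively, semipositivity of $\tau$ on $U_m^{\mathrm{reg}}$ together with boundedness lets the weight extend across the complement. This gives a positive current $T_M\in c_1(M)$ with continuous potentials. On $U_m^\circ$, Corollary~\ref{cor:unified-variation-rank} with $\operatorname{Var}(f)=d$ shows that $\tau$ is positive definite there. Hence $T_M$ is a smooth Kähler form on a nonempty open set. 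Boucksom's inequality \eqref{eq:current-volume} applied on $Y$ then gives $\operatorname{vol}(M)\geq\int_Y(T_{M,\mathrm{ac}})^d>0$, so $M$ is big. As a backup, $\det E_m$ is the tautological line bundle of $\wedge^rE_m$, and the argument of Theorem~\ref{thm:prescribed-bigness} can be run on the induced metric.

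\textbf{Viehweg bigness.} By Theorem~\ref{thm:prescribed-bigness}, $\mathcal O_{\mathbb P_q(E_m)}(1)$ is big, and Lemma~\ref{lem:symmetric-bigness} extends this to $\operatorname{Sym}^{\ell}E_m$. Bigness of $\mathcal O(1)$ gives a nonzero section of $\operatorname{Sym}^qV\otimes A^{-1}$, as in \eqref{eq:big-section-criterion}. That alone is weaker than Viehweg bigness, and this is the main obstacle. To upgrade it, I would use the smooth-family hypothesis as follows.
\begin{itemize}
\item The metric $G$ is continuous and Griffiths semipositive everywhere, so $E_m$ is nef. Then $\operatorname{Sym}^aE_m\otimes A'$ is ample for every ample $A'$, and in particular is weakly positive over all of $Y$.
\item Write $V=\operatorname{Sym}^{\ell}E_m$. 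Bigness of $\mathcal O_{\mathbb P_q(V)}(1)$ and Kodaira's lemma give a decomposition $k\,\mathcal O(1)=H+D$ with $H$ ample and $D$ effective. Choose $H$ of the form $\mathcal O(k')\otimes p^*A$, using nefness of $\mathcal O(1)$.
\item After enlarging $k$, this yields a nonzero map $p^*A\to\mathcal O(k-k')$ that is an isomorphism off $p(D)$. Since $\mathcal O(1)$ is nef and big, its augmented base locus $\mathbb B_+$ is a proper closed subset. Pick a nonempty Zariski-open $U\subset Y$ with $p^{-1}(U)\cap\mathbb B_+=\emptyset$; this is possible because the strict positivity locus of \eqref{eq:tautological-curvature-split} lies over $U_m^\circ$, and the non-Kähler locus of a nef big class equals $\mathbb B_+$ (Boucksom).
\item On $p^{-1}(U)$, sections of $\mathcal O(N)\otimes p^*A^{-1}$ then generate for large $N$. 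Pushing forward gives generic global generation of $\operatorname{Sym}^NV\otimes A^{-1}$ over $U$, and hence its weak positivity there in the sense of \cite[Definition~2.11]{Viehweg1995}. The remaining requirement, that $\operatorname{Sym}^{pq}$-powers are generated over the same $U$, follows by multiplying symmetric powers.
\item By \eqref{eq:viehweg-big-criterion}, $V$ is Viehweg big. Equivalently, one can quote \cite[Theorem~1.1]{FulgerRay2023}, which characterizes Viehweg bigness of a nef bundle by $\mathbb B_+(\mathcal O(1))$ not dominating $Y$. Here the strict locus guarantees that it does not.
\end{itemize}

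\textbf{Large levels.} The last sentence of the statement follows by applying the birational branch (B) at each level $m\geq M(n)$, exactly as in Corollary~\ref{cor:all-large-levels}.
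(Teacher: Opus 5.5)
Your argument for bigness of $M=\det E_m$, and your treatment of the large levels, are the same as the paper's. For Viehweg bigness you take a different route, through $\mathbb{B}_+(\mathcal O_{\mathbb P_q(V)}(1))$. As written, its decisive step is not justified.

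\textbf{The key step.} Strict positivity of the tautological current $T$ over $U_m^\circ$ does not, via ``non-K\"ahler locus $=\mathbb{B}_+$'', place $p^{-1}(U_m^\circ)$ outside $\mathbb{B}_+$. The non-K\"ahler locus is defined through K\"ahler currents in the class, and $T$ is only semipositive, degenerating over $Y\setminus U_m^\circ$. Mixing $T$ with a K\"ahler current $T'$ gives a K\"ahler current that carries the singularities of $T'$, and these may lie over $U_m^\circ$.

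What does work is Nakamaye's theorem $\mathbb{B}_+(L)=\operatorname{Null}(L)$ for nef and big $L$ (or Collins--Tosatti for nef and big classes). Combine it with the following: every irreducible $W\subset\mathbb P_q(V)$ meeting $p^{-1}(U_m^\circ)$ has $(L^{\dim W}\cdot W)=\int_{\widetilde W}(\mu^*T)^{\dim W}>0$. To see this, pull the continuous psh weights back to a resolution $\mu:\widetilde W\to W$ and use Bedford--Taylor products as in Lemma~\ref{lem:bounded-potential-rank}; $\mu^*T$ is smooth and positive definite over the nonempty set $W_{\mathrm{reg}}\cap p^{-1}(U_m^\circ)$. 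For $V=\operatorname{Sym}^{\ell}E_m$ you also need strict positivity of the induced tautological metric. That comes from \eqref{eq:symmetric-power-two-sided}, as in Corollary~\ref{cor:tautological-numerical-dimension}, not from \eqref{eq:tautological-curvature-split}.

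\textbf{Two steps to drop.} In the Kodaira bullet, ``isomorphism off $p(D)$'' has no content, since $D$ may dominate $Y$. Also, generation of $\mathcal O(N)\otimes p^*A^{-1}$ along $p^{-1}(U)$ does not push forward to generation of $\operatorname{Sym}^NV\otimes A^{-1}$ over $U$. It only says that the image of $H^0(Y,\operatorname{Sym}^NV\otimes A^{-1})$ in $H^0(\mathbb P^{R-1},\mathcal O(N))$ is base-point-free, not that it is all of it (e.g.\ $\langle x^2,y^2\rangle$ on $\mathbb P^1$). Passing from non-dominance of $\mathbb{B}_+(\mathcal O(1))$ to Viehweg bigness is exactly the $\mathbb{B}_+$-characterization of V-bigness (Bauer--Kov\'acs--K\"uronya--Mistretta--Szemberg--Urbinati), which your closing alternative invokes. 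Rely on that instead.

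\textbf{Comparison with the paper.} With these repairs your route is valid and genuinely different. The paper uses neither $\mathbb{B}_+$ nor Theorem~\ref{thm:prescribed-bigness} here. It takes the uniform $\sigma_*$ of Proposition~\ref{prop:compact-moment} and, for $b<a\sigma_*$, considers $W=\operatorname{Sym}^aV\otimes M^{-b\ell}$. This bundle carries a continuous metric with $i\Theta_W\geq\ell(a\sigma_*-b)\eta\otimes I\geq0$ on $U_m^{\mathrm{reg}}$, so it is nef and hence weakly positive. Bigness of $M$ gives $M^{b\ell k}=A\otimes\mathcal O(D)$. Weak positivity then passes to the quotient $\operatorname{Sym}^{ak}V\otimes M^{-b\ell k}$ of $\operatorname{Sym}^kW$, and then to $\operatorname{Sym}^{ak}V\otimes A^{-1}$ off $\operatorname{Supp}D$, which is \eqref{eq:viehweg-big-criterion}.

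That argument stays inside Viehweg's weak-positivity formalism. It is also where the lower pinching bound is really used: it absorbs a fractional negative power of $\det E_m$. Your route needs only global continuity of $G$ and strict positivity over $U_m^\circ$, not the uniform constant, and it locates $\mathbb{B}_+(E_m)$ inside $Y\setminus U_m^\circ$. The price is Nakamaye's theorem and the $\mathbb{B}_+$-characterization of Viehweg bigness.
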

\begin{proof}
Smoothness of $f$ gives local freeness of $E_m$ and isomorphisms of its fiberwise base-change maps over all of $Y$. Lemma~\ref{lem:holder-continuity} makes $G$ continuous and positive definite there. Proposition~\ref{prop:compact-moment} and Theorem~\ref{thm:determinant-pinching} give one constant
\[
 \sigma_*=\frac{\varepsilon_*}{1-\alpha+r\varepsilon_*}>0
\]
such that the directional curvature on $U_m^{\mathrm{reg}}$ satisfies
\begin{equation}\label{eq:smooth-directional-control}
 i\Theta_{E_m,G}(\xi,\bar\xi)
 \geq\sigma_*\eta(\xi,\bar\xi)I,
 \qquad \eta=i\operatorname{Tr}\Theta_{E_m,G}.
\end{equation}
The determinant weights are continuous on $Y$ and psh on $U_m^{\mathrm{reg}}$. Removable singularities extend their positivity across the analytic complement. Under either alternative, Corollary~\ref{cor:unified-variation-rank} gives $\operatorname{rank}\eta=\operatorname{Var}(f)=\dim Y$ on $U_m^\circ$. Thus the positive current $c_1(M,\det G)$ is smooth and strictly positive there. The volume inequality \eqref{eq:current-volume}, applied to $M$ on $Y$, proves that $M$ is big.

Fix $\ell\geq1$, set $V=\operatorname{Sym}^{\ell}E_m$, and choose positive integers $a,b$ with $b<a\sigma_*$. The tensor curvature formula and its parallel symmetric summands give, on $U_m^{\mathrm{reg}}$, Griffiths semipositivity of
\begin{equation}\label{eq:smooth-negative-determinant-twist}
 W=\operatorname{Sym}^{a}V\otimes M^{-b\ell},
 \qquad i\Theta_W\geq_{\mathrm{Gr}}
             \ell(a\sigma_*-b)\eta\otimes I.
\end{equation}
The induced metric is continuous and positive definite on all of $Y$. Its logarithmic dual norms are psh on $U_m^{\mathrm{reg}}$ and extend to their continuous values across the complement. The induced tautological line metric is therefore continuous and semipositive. Its pullback to the normalization of any curve has nonnegative degree, so $W$ is nef. It is consequently weakly positive over $Y$ \cite[Proposition~2.9 and Remark~2.12(2)]{Viehweg1995}.

Since $M$ is big, for a fixed ample $A$ some positive integer $k$ gives $M^{b\ell k}=A\otimes\mathcal O_Y(D)$ with $D\geq0$. The bundle $\operatorname{Sym}^kW$ is weakly positive, and the multiplication map has quotient
\[
 \operatorname{Sym}^{ak}V\otimes M^{-b\ell k}.
\]
Weak positivity passes to this quotient and, after multiplication by the section of $D$, to $\operatorname{Sym}^{ak}V\otimes A^{-1}$ over $Y\setminus\operatorname{Supp}D$ \cite[Lemma~2.16(c) and Corollary~2.20]{Viehweg1995}. Criterion~\eqref{eq:viehweg-big-criterion} proves the assertion for $V$. The final assertion uses uniform pluricanonical birationality as in Corollary~\ref{cor:all-large-levels}. Only global continuity and the uniform lower bound for the evaluation quadratic form enter this argument. No log canonical threshold condition is needed.
\end{proof}

\begin{remark}[The positivity notion and the sheaf]
\label{rem:bigness-boundaries}
For vector bundles, tautological bigness is weaker than Viehweg bigness, which requires weak positivity after a negative ample twist of a positive reflexive symmetric power. It also does not imply generic global generation. For example $\mathcal O_{\mathbb P^1}(1)\oplus\mathcal O_{\mathbb P^1}(-1)$ is tautologically big by \eqref{eq:big-section-criterion}, but every positive symmetric power has a negative quotient line and is neither generically globally generated nor Viehweg big. Theorem~\ref{thm:prescribed-bigness} gives tautological bigness for ordinary symmetric powers of $E_m$. 

For nonfree sheaves, bigness of reflexive symmetric powers and tautological bigness retain different codimension-two information. The sheaf cannot be replaced during the proof by its double dual. For $\mathcal F=\mathcal I_p\otimes\mathcal O_{\mathbb P^2}(1)$, the latter is $\mathcal O(1)$, whereas the principal projectivization is $\operatorname{Bl}_p\mathbb P^2$ and its tautological line is $\mathcal O(H-D)$, with $D$ the exceptional divisor. Its $k$th power has $k+1$ sections and is not big on that surface, although the sheaf is Viehweg big under the reflexive-power convention. In the present proof the estimate for sections of $\mathcal E$ in \eqref{eq:uniform-coherent-section-estimate} and the universal quotient identity \eqref{eq:exact-tautological-pullback} retain precisely this exceptional vanishing information.
\end{remark}
\subsection{A counterexample at level one}\label{sec:counterexample}

At $m=1$ the linearized operator in Section~\ref{sec:curvature} vanishes. Global generation and maximal birational variation then need not imply tautological bigness. The construction below uses the constant-Prym and product-quotient examples of \cite[\S7]{Mumford1974}, \cite[Example~3.5 and Theorem~6.1]{CilibertoPardiniTovena2000}, and \cite[Proposition~2.3]{BauerCatanese2004}.

The canonical sections will come from a constant anti-invariant summand associated with a fixed abelian surface, while the quotient surfaces have nonconstant birational class.

\begin{theorem}\label{thm:level-one-counterexample}
There is a flat surjective projective morphism $f:X\longrightarrow B=\mathbb P^1$ with $X$ smooth and connected, connected fibers, and reduced normal-crossing singular fibers, such that the smooth general fiber $S$ is a surface with ample and globally generated canonical bundle and $\operatorname{Var}(f)=1$, but
\begin{equation}\label{eq:counterexample-direct-image}
 f_*\omega_{X/B}\simeq\mathcal O_B^{\oplus4}.
\end{equation}
All positive ordinary symmetric powers of this bundle fail to be tautologically big. The level-one fixed-point metric is flat on the smooth locus and extends as a smooth flat metric on the bundle in \eqref{eq:counterexample-direct-image}. Local monodromy on the cohomology of the smooth fibers is unipotent.
\end{theorem}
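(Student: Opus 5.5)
The plan is a product--quotient construction in the spirit of \cite[Proposition~2.3]{BauerCatanese2004}. Over it, $f_*\omega_{X/B}$ will contain a trivial rank-four subbundle that visibly equals the whole direct image. Fix a genus-two curve $C$ with its hyperelliptic involution $\sigma$, so that $H^0(K_C)^{\sigma}=0$ and $H^0(K_C)^{-}=H^0(K_C)$ has dimension $2$. Fix an abelian surface $A$ and a symmetric ample line bundle $\Lambda$. Choose a general pencil $\{D_t\}_{t\in\mathbb P^1}$ of $(-1)$-invariant curves in $|\Lambda|$, as in Mumford's constant-Prym examples \cite[\S7]{Mumford1974} and \cite[Example~3.5]{CilibertoPardiniTovena2000}. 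Arrange $\Lambda$ so that the anti-invariant part $H^0(K_{D_t})^{-}$ is exactly the restriction of $H^0(A,\Omega^1_A)$, which is two-dimensional and constant in $t$. Then put
\[
S_t=(C\times D_t)/(\mathbb Z/2),
\qquad
H^0(K_{S_t})=\bigl(H^0(K_C)\otimes H^0(K_{D_t})\bigr)^{\mathbb Z/2}
=H^0(K_C)^-\otimes H^0(K_{D_t})^-\simeq H^0(K_C)\otimes H^0(\Omega^1_A)|_{D_t}.
\]
This space has dimension $4$ and is spanned by the forms $\omega\boxtimes\theta|_{D_t}$ with $\omega,\theta$ fixed. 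Hence $p_g(S_t)=4=P_1(S)$.

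The steps, in order, are as follows.
\begin{enumerate}
\item Blow up the base points of the pencil on $A$ to get $\widetilde A\to\mathbb P^1$. Form $\mathcal X'=(C\times\widetilde A)/(\mathbb Z/2)\to\mathbb P^1$ and resolve it to a smooth $X$. For a general pencil the singular members are irreducible with one node, so the singular fibres are reduced normal-crossing and monodromy is unipotent (Picard--Lefschetz).
\item Check ampleness of $K_S$ and global generation. $K_{C\times D_t}$ is ample, and ampleness descends along the étale quotient. For global generation, $|K_C|$ and $H^0(\Omega^1_A)|_{D_t}$ are base-point free; the latter needs $\Omega^1_A$ trivial and $D_t$ to generate $A$. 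Their tensor product is then base-point free on $C\times D_t$, and it descends.
\item Prove $\operatorname{Var}(f)=1$. A nonisotrivial pencil on a fixed $A$ gives $D_t$ of varying moduli. The minimal realization of a product--quotient surface is unique (Catanese), so $S_t$ determines $D_t$ up to isomorphism.
\item Prove the direct image is trivial. The global sections $\omega\boxtimes p^*\theta$ of $\omega_{X/B}$ give a map $\mathcal O_B^{\oplus4}\to f_*\omega_{X/B}$. This map is injective on every fibre (the forms are nonzero on each component) and is an isomorphism generically by the dimension count. Both sheaves are rank-four vector bundles, and $f_*\omega_{X/B}$ is nef of degree $\geq0$ by Fujita. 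So the map is a saturated inclusion with cokernel of degree zero, which is zero.
\item Deduce the tautological conclusions. Then $\mathbb P_q(\mathcal O^{\oplus4})=\mathbb P^1\times\mathbb P^3$ and $\mathcal O(1)$ is pulled back from $\mathbb P^3$, so it is not big; likewise for $\operatorname{Sym}^\ell$. The level-one metric is the canonical $L^2$ pairing (Remark~\ref{rem:normalization}). The integrals $\int_{S_t}\langle\omega\boxtimes\theta,\omega'\boxtimes\theta'\rangle$ depend on $t$ only through $\int_{D_t}\theta\wedge\bar\theta'$, which is given by the intersection number of $D_t$ with a constant class on $A$. So the Gram matrix is constant, and the metric is smooth and flat across the singular fibres.
\end{enumerate}

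The main obstacle is the freeness of the diagonal action. The hyperelliptic involution has fixed points, and so does $-1$ on $D_t$ (at the $2$-torsion points lying on $D_t$). For the quotient to be smooth with ample $K$, I would replace $-1$ by $x\mapsto -x+a$ with $a$ chosen so that $D_t$ avoids its fixed points for all $t$. Failing that, I would enlarge the group to $(\mathbb Z/2)^2$, acting on $C$ with a fixed-point-free element. In that case the invariant computation must be redone so that only the constant anti-invariant summand from $H^0(\Omega^1_A)$ survives, and the Prym condition of \cite[Theorem~6.1]{CilibertoPardiniTovena2000} is exactly what I would try to exploit there.
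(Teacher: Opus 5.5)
Your construction is the paper's with the letters swapped: your $C\times D_t$ is its $H\times C_t$. There is a genuine gap, however. The one ingredient that makes the example work is left as ``arrange $\Lambda$'', and the obstacle you single out is misdiagnosed, with a remedy that cannot work.

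Every point of $A$ lies on some member of a pencil (a base point lies on all of them). So no choice of $a$ makes every $D_t$ avoid the fixed points of $x\mapsto -x+a$. Moreover, that involution is conjugate to $-1$ by translation by a half of $a$, so nothing changes.

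What you need is a totally symmetric $\Lambda$ of type $(2,2)$, i.e. twice a symmetric principal polarization. The paper takes $L=\mathcal O_{E_a}(2o_a)\boxtimes\mathcal O_{E_b}(2o_b)=q^*\mathcal O_Q(1,1)$ on $A=E_a\times E_b$. Then the linearization acts trivially on $H^0(A,L)$ and $H^1(A,L)=0$. The relative adjunction sequence $0\to\mathcal O_B\to H^0(A,L)\otimes\mathcal O_B(1)\to g_*\omega_{\mathcal C/B}\to H^1(A,\mathcal O_A)\otimes\mathcal O_B\to0$ therefore gives $(g_*\omega_{\mathcal C/B})^-\simeq H^1(A,\mathcal O_A)\otimes\mathcal O_B$. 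This is exactly your requirement $H^0(K_{D_t})^-=H^0(\Omega^1_A)|_{D_t}$, and it yields $p_g=4$.

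The same choice settles freeness. Containing a given point of $A[2]$ is one linear condition, so a general pencil has only sixteen members meeting $A[2]$, each at a single point where it has a node. The general member misses $A[2]$, so the diagonal action on the general fiber is free and $S_t$ is smooth with ample canonical bundle. No new involution or larger group is needed. For a symmetric but not totally symmetric $\Lambda$, some two-torsion points are forced onto every invariant member; that may be the source of your worry.

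What remains is local, at the $96$ fixed points of $\tau$ on the threefold. There $\tau=-I$ in coordinates $(x,y,z)$ with $t-t_0=xy$. Your step 1 says ``resolve'' but justifies normal crossings by asserting that the singular members are irreducible with one node. That is false here: a symmetric member singular at $p\notin A[2]$ is also singular at $-p$, and in the product setting two members consist of four elliptic curves meeting in four nodes. This is harmless for nodality and Picard--Lefschetz. More importantly, the justification ignores the fibers through the quotient points.

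Blow up the fixed points before taking the quotient, as the paper does. The quotient is then smooth, and the fiber becomes $t-t_0=ub_1$ or $t-t_0=ub_1c_1$, with $u$ the square of the exceptional coordinate; this is reduced normal crossing. Invariant top forms descend holomorphically, so your frame $\omega\boxtimes\theta$ still consists of sections of $\omega_{X/B}$.

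With these repairs your remaining steps are legitimate alternatives to the paper's. Catanese's uniqueness of minimal realizations replaces the Kodaira--Spencer and local-moduli argument. However, ``$D_t$ of varying moduli'' must be proved rather than assumed. By rigidity of maps from a curve to $A$, an isotrivial family of members of $|\Lambda|$ would consist of translates of one curve by points of the finite group $K(\Lambda)$, and hence would be constant.

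Your frame argument replaces the anti-invariant adjunction computation. There you only need each nonzero combination to be nonzero somewhere on each fiber, not on each component (on an $E_a$-component, $dz_b$ restricts to zero). A fiberwise injective map between rank-four bundles is then an isomorphism, without any appeal to Fujita.
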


\begin{proof}
\emph{The pencil.} Let $E_a,E_b$ be elliptic curves with origins and set
\[
 A=E_a\times E_b,\qquad
 L=\mathcal O_{E_a}(2o_a)\boxtimes\mathcal O_{E_b}(2o_b),\qquad
 \sigma=(-1,-1).
\]
Each degree-two system is the quotient by inversion. Thus, for $q:A\longrightarrow Q=\mathbb P^1\times\mathbb P^1$,
\[
 L=q^*\mathcal O_Q(1,1),\qquad
 H^0(A,L)=q^*H^0(Q,\mathcal O_Q(1,1)),\qquad
 L^2=8.
\]
In particular, $L$ is ample and globally generated, $h^0(A,L)=4$, and the pullback linearization acts trivially on its section space. K\"unneth and positivity of the two degree-two factors give $H^1(A,L)=0$. Write the branch divisor of $q$ as four vertical and four horizontal rulings, with sixteen crossing points $p_{ij}$.

Choose a pencil in $|\mathcal O_Q(1,1)|=\mathbb P^3$ with two transverse base points outside the branch divisor. Require its parameter line to meet the quadric of reducible divisors transversely in two members with no branch ruling, and to meet the sixteen hyperplanes of divisors through the $p_{ij}$ at distinct points off that quadric. Such pencils form a nonempty Zariski-open subset of the Grassmannian of lines: the transversality and base-point conditions define dense Zariski-open subsets, and the forbidden hyperplane intersections, hyperplane--quadric intersections and reducible divisors containing a branch ruling all have codimension at least two in $\mathbb P^3$.

Choose a basis $s_0,s_1$ of the pulled-back pencil and set
\[
 \mathcal C=\{(a,[u:v])\in A\times B:us_0(a)+vs_1(a)=0\},
 \qquad g:\mathcal C\longrightarrow B.
\]
The eight base points on $A$ are transverse and avoid $A[2]$. Consequently $\mathcal C$ is their blowup and is smooth and projective. The involution $\sigma$ lifts, acts trivially on $B$, and has precisely sixteen fixed points, with tangent action $-I$. The exceptional curves over the base points are exchanged in pairs.

A smooth $(1,1)$ member is the graph of an isomorphism of the two projective lines. Its inverse image is smooth except possibly over a branch crossing: over one branch ruling the local equation is $w=\phi(z^2)$, and away from the branch divisor the covering is \'etale. At a crossing the quotient coordinates are $(z^2,w^2)$, and the pencil parameter satisfies
\[
 t-t_0=z^2A(z^2,w^2)+w^2B(w^2),
\]
where $A,B$ are units because both projections of the graph are isomorphisms. Taking analytic square roots of the units and a linear change of variables gives
\begin{equation}\label{eq:counterexample-fixed-node}
 t-t_0=xy,\qquad \sigma(x,y)=(-x,-y).
\end{equation}
There are sixteen such nodal members, at distinct parameters.

Each of the two reducible members lifts to two copies of $E_a$ and two copies of $E_b$, meeting at four transverse nodes away from $A[2]$. The pencil is a submersion near the blown-up base points. These are therefore all singular fibers on the complete parameter line $B$, and every fiber of $g$ is reduced and nodal.

A general member $C_t$ is a smooth connected curve. For connectedness, use $H^1(A,L^{-1})=H^1(A,L)^*=0$ in the divisor sequence to obtain $H^0(C_t,\mathcal O_{C_t})=\mathbb C$. Adjunction gives $g(C_t)=1+L^2/2=5$. It avoids $A[2]$, so its involution is free.

\emph{The surface family and its boundary.} Fix a smooth genus-two curve $H$ with hyperelliptic involution $\iota$. On $Z=\mathcal C\times H$ let $\tau=\sigma\times\iota$. The fixed locus consists of $16\cdot6=96$ points, and the tangent action at each is $-I$ in dimension three. Blow up precisely these points and take the quotient:
\[
 b:\widetilde Z\longrightarrow Z,\qquad
 p:\widetilde Z\longrightarrow X=\widetilde Z/\langle\tau\rangle,
 \qquad f:X\longrightarrow B.
\]
In each blowup chart the action and quotient map are
\[
 (a,b_1,c_1)\longmapsto(-a,b_1,c_1),\qquad
 (a,b_1,c_1)\longmapsto(a^2,b_1,c_1).
\]
Hence $X$ is smooth. It is projective, since it is a finite quotient of the projective variety $\widetilde Z$.

At a fixed point, use \eqref{eq:counterexample-fixed-node} and a coordinate $z$ on $H$ with $\iota(z)=-z$ to determine the boundary fibers. Before blowing up the local equation is $t-t_0=xy$ and $\tau(x,y,z)=(-x,-y,-z)$. In the $x$-chart of the blowup, write $y=xb_1$, $z=xc_1$. The quotient coordinates $u=x^2,b_1,c_1$ give $t-t_0=ub_1$. The $y$-chart is identical. In the $z$-chart, write $x=zb_1$, $y=zc_1$. The quotient equation is
\begin{equation}\label{eq:counterexample-quotient-fiber}
 t-t_0=ub_1c_1,\qquad u=z^2.
\end{equation}
All components have multiplicity one. Away from these fixed points the quotient is \emph{\'etale}. The nodal product fibers there are already normal crossing. Thus all singular fibers of $f$ are reduced normal crossing, in the local analytic sense.

The general fiber is
\[
 S_t=(C_t\times H)/\langle\sigma\times\iota\rangle,
 \qquad \pi_t:C_t\times H\longrightarrow S_t.
\]
Here $\pi_t$ is an \emph{\'etale} double cover, since $\sigma$ is free on $C_t$. These fibers are connected. Stein factorization, with normal base $B$, then gives connected fibers everywhere. The nonconstant morphism from the smooth integral variety $X$ to the smooth curve $B$ is flat: its local rings are torsion-free over the corresponding base DVRs, hence flat.

Outside small node neighborhoods, the family is a proper submersion over the disk and is differentiably trivial. Local monodromy of $g$ is therefore a product of Dehn twists about the disjoint vanishing cycles $\delta_j$ of its nodes. The annulus $xy=t$ contributes its core circle to a crossing cycle, giving
\[
 (T-I)\gamma=\sum_j\epsilon_j
 \langle\gamma,\delta_j\rangle\delta_j,\qquad (T-I)^2=0
 \quad\text{on }H_1(C_t,\mathbb Q).
\]
Here $\epsilon_j=\pm1$ depends on the orientation convention. The square vanishes because the $\delta_j$ have zero pairwise intersection. Monodromy on $H^0$ and $H^2$ is the identity. Pullback and the degree-two trace identify
\[
 H^k(S_t,\mathbb Q)=H^k(C_t\times H,\mathbb Q)^{\sigma\times\iota}.
\]
K\"unneth and constancy of $H$ therefore give unipotent monodromy on every cohomology group of $S_t$.

\emph{Global generation of the canonical bundle and variation.} The \emph{\'etale} cover gives $\pi_t^*K_{S_t}=K_{C_t}\boxtimes K_H$. Both factors are ample, so $K_{S_t}$ is ample. Let $\alpha_1,\alpha_2$ be a basis of $H^0(A,\Omega_A^1)$ and $\beta_1,\beta_2$ a basis of $H^0(H,K_H)$. The conormal sequence gives a surjection
\[
 \mathcal O_{C_t}^{\oplus2}=\Omega_A^1|_{C_t}
       \longrightarrow K_{C_t}.
\]
Thus the restricted $\alpha_i$ globally generate $K_{C_t}$. Inversion acts as $-1$ on these forms. The $\beta_j$ globally generate $K_H$ and are anti-invariant under the hyperelliptic involution. Therefore the four invariant products
\begin{equation}\label{eq:counterexample-product-sections}
 \alpha_i|_{C_t}\boxtimes\beta_j\qquad (1\leq i,j\leq2)
\end{equation}
descend and globally generate $K_{S_t}$.

For variation, the equivariant normal sequence of a smooth member $C=C_t\subset A$ gives
\[
 0\longrightarrow T_C\longrightarrow T_A|_C
          \longrightarrow N_{C/A}\longrightarrow0.
\]
Since $C$ is connected, $H^0(C,T_A|_C)=\operatorname{Lie}(A)$, and inversion acts as $-1$. Taking invariants, which is exact in characteristic zero, gives an injection
\[
 H^0(C,N_{C/A})^\sigma\hookrightarrow H^1(C,T_C)^\sigma.
\]
A nonzero pencil tangent is a nonzero invariant normal section: in $0\to\mathcal O_A\to L\to L|_C\to0$, the kernel of restriction on sections is precisely the line defining $C$, so an independent pencil member does not restrict to zero. Its image is the Kodaira--Spencer class $\kappa_C$, which is consequently nonzero.

As both curves have genus at least two, they have no vector fields, and K\"unneth gives
\[
 H^1(C_t\times H,T_{C_t\times H})
   =H^1(C_t,T_{C_t})\oplus H^1(H,T_H).
\]
The product family has class $(\kappa_C,0)\ne0$. The \emph{\'etale} quotient identifies this class with the pullback of the Kodaira--Spencer class of $S_t$, so that class is also nonzero. For a canonically polarized manifold, the local moduli space is the finite-automorphism quotient of a local universal deformation base \cite[\S2, Proposition~4]{Schumacher2018Moduli}. If our moduli map were constant, its local lift from a connected disk to that deformation base would have finite image and hence would be constant, forcing its Kodaira--Spencer map to vanish. Thus the moduli map is nonconstant. Finally $K_{S_t}$ is ample, so $S_t$ is its own canonical model and two such surfaces are birational only if they are isomorphic. It follows that the \emph{birational} variation is one.

\emph{The direct image on the complete base.} Let $j:\mathcal C\hookrightarrow A\times B$ be the incidence inclusion. The invariant volume form $\alpha_1\wedge\alpha_2$ trivializes $K_A$. Relative adjunction for this smooth incidence divisor is equivariant and gives
\[
 0\longrightarrow\mathcal O_{A\times B}
 \longrightarrow L\boxtimes\mathcal O_B(1)
 \longrightarrow j_*\omega_{\mathcal C/B}\longrightarrow0.
\]
Projection to $B$, together with $H^1(A,L)=0$, yields the exact sequence
\begin{equation}\label{eq:counterexample-adjunction}
 \begin{split}
 0\longrightarrow\mathcal O_B
  &\longrightarrow H^0(A,L)\otimes\mathcal O_B(1)
   \longrightarrow g_*\omega_{\mathcal C/B}\\
  &\longrightarrow H^1(A,\mathcal O_A)\otimes\mathcal O_B
   \longrightarrow0.
 \end{split}
\end{equation}
The first two terms are invariant and inversion acts by $-1$ on $H^1(A,\mathcal O_A)$. Taking the anti-invariant summands therefore gives an isomorphism on the whole of $B$:
\begin{equation}\label{eq:counterexample-anti-invariant}
 (g_*\omega_{\mathcal C/B})^-
   \simeq H^1(A,\mathcal O_A)\otimes\mathcal O_B
   \simeq\mathcal O_B^{\oplus2}.
\end{equation}

The ambient one-forms define a global frame of this summand. At a critical point the relative dualizing section corresponding to $\alpha$ is $(\alpha\wedge dg)\otimes(dt)^{-1}$, with the adjunction sign fixed once. Thus restriction defines a holomorphic map
\[
 H^0(A,\Omega_A^1)\otimes\mathcal O_B
 \longrightarrow(g_*\omega_{\mathcal C/B})^-.
\]
On a smooth ample member it is injective: a nonzero ambient form vanishing on its tangent would give $0=\int_A\frac i2\alpha\wedge\bar\alpha\wedge c_1(L)>0$. Both bundles are trivial of rank two by \eqref{eq:counterexample-anti-invariant}. The map has constant matrix on $\mathbb P^1$, so it is an isomorphism everywhere.

Hartogs extension of top-form coefficients across the blowup centers gives $b_*\omega_{\widetilde Z}=\omega_Z$. The extended form pulls back to the original one because they agree off the exceptional divisor. In a quotient chart $(a,b_1,c_1)\mapsto(u=a^2,b_1,c_1)$, an invariant top form is exactly
\[
 a h(a^2,b_1,c_1)\,da\wedge db_1\wedge dc_1
   =p^*\!\left(\tfrac12h(u,b_1,c_1)\,
                    du\wedge db_1\wedge dc_1\right).
\]
Together with \emph{\'etale} descent off the branch locus, this proves $(p_*\omega_{\widetilde Z})^\tau=\omega_X$. Tensoring by the pullback of $\omega_B^{-1}$ gives both relative identities. Invariants commute with pushforward, because the averaging idempotent splits the sheaf. Since $H^0(H,K_H)$ is two-dimensional and entirely anti-invariant,
\begin{align*}
 f_*\omega_{X/B}
 &\simeq
 \bigl(g_*\omega_{\mathcal C/B}\otimes H^0(H,K_H)\bigr)^{\sigma\times\iota}\\
 &\simeq (g_*\omega_{\mathcal C/B})^-\otimes H^0(H,K_H)
 \simeq\mathcal O_B^{\oplus4}.
\end{align*}
Thus \eqref{eq:counterexample-direct-image} holds over all of $B$. On the smooth locus, base change gives $p_g(S_t)=4$, and the descended sections \eqref{eq:counterexample-product-sections} form the indicated global frame.

\emph{The metric and symmetric powers.} At $m=1$ the operator is independent of its matrix input, so the fixed-point metric is the canonical $L^2$ metric and the linearized operator in \eqref{eq:moment-operator} vanishes. In the global frame $u_{ij}$ descending from \eqref{eq:counterexample-product-sections}, our conjugate-linear-first convention gives
\begin{equation}\label{eq:counterexample-flat-matrix}
 G_{ij,kl}(t)=\frac12
 \left(\int_{C_t}\frac i2\alpha_k\wedge\overline{\alpha_i}\right)
 \left(\int_H\frac i2\beta_l\wedge\overline{\beta_j}\right).
\end{equation}
The factor $1/2$ is the reciprocal of the degree of the \emph{\'etale} quotient. The second factor is constant, and the first equals
\[
 \int_A\frac i2\alpha_k\wedge\overline{\alpha_i}\wedge c_1(L),
\]
since all members have class $c_1(L)$. Thus the matrix in \eqref{eq:counterexample-flat-matrix} is constant positive definite. It defines the smooth flat extension of $G$ on $f_*\omega_{X/B}$ over $B$.

For every $\ell\geq1$,
\[
 \operatorname{Sym}^{\ell}(f_*\omega_{X/B})
    =\mathcal O_B^{\oplus R_\ell},\qquad R_\ell=\binom{\ell+3}{3}.
\]
Its quotient projectivization is $B\times\mathbb P^{R_\ell-1}$, and its tautological line bundle comes from the second factor. Its Iitaka dimension is $R_\ell-1$, smaller than the dimension $R_\ell$ of the projective bundle. It is not big.
\end{proof}

\section{Nefness and ampleness over curves}\label{sec:curves}

Unweighted direct images are nef over a curve even in the presence of arbitrary singular fibers. For log pairs satisfying the log canonical scheme fiber condition, a logarithmic boundary estimate removes the determinant curvature atoms and yields the ampleness--flatness dichotomy.

\subsection{General nefness result}
We begin with a quotient-degree estimate that applies in both settings.

\begin{lemma}\label{lem:curve-quotient-degree}
Let $E$ be a vector bundle of positive rank on a smooth connected complex projective curve $C$, and let $U\subset C$ have finite complement. Suppose that $h$ is a smooth Griffiths-semipositive Hermitian metric on $E|_U$ and that, locally on $C$, $h\geq c h_0$ on $U$ for a smooth reference metric $h_0$ and a constant $c>0$. For every quotient vector bundle $E\twoheadrightarrow Q$ of positive rank, the determinant of its quotient metric extends to a singular metric on the ordinary line bundle $\det Q$, with positive curvature current $T_Q$ and total mass $\deg Q$.

If a continuous semipositive $(1,1)$ form $\eta$ on $U$ satisfies
\[
 \frac{i}{2\pi}\Theta^h\geq\eta\otimes I_E,
\]
then
\begin{equation}\label{eq:curve-metric-quotient-degree}
 \deg Q\geq\operatorname{rk}(Q)\int_U\eta,
 \qquad \mu_{\min}(E)\geq\int_U\eta\geq0.
\end{equation}
The integral is finite. In particular $E$ is nef, and it is ample if $\int_U\eta>0$.
\end{lemma}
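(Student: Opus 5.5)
We work with quotient metrics and determinant currents on the curve $C$. Let $E\twoheadrightarrow Q$ be a quotient bundle of rank $q>0$. On $U$ we put on $Q$ the quotient metric $h_Q$ induced by $h$. By the standard second fundamental form calculation, quotients of Griffiths semipositive bundles are Griffiths semipositive. Moreover, $\frac{i}{2\pi}\Theta^{h_Q}\geq \eta\otimes I_Q$, because on a curve the quotient curvature equals the restricted curvature plus a semipositive term $\beta^*\beta$. Taking traces gives $\frac{i}{2\pi}\operatorname{Tr}\Theta^{h_Q}\geq q\,\eta$ on $U$. The determinant metric $\det h_Q$ on $\det Q|_U$ therefore has a local weight $\varphi$ that is smooth and subharmonic on $U$.

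The first point is to extend $\varphi$ across the finitely many punctures. Near a puncture $c$, take a holomorphic frame of $Q$ lifted from a local frame of $E$. The lower bound $h\geq c\,h_0$ dualizes to $h^*\leq c^{-1}h_0^*$. The dual metric of $h_Q$ is the restriction of $h^*$ to $Q^*\subset E^*$, so $h_Q^*\leq c^{-1}h_{0,Q}^*$. Hence the weight of $\det h_Q^*$ is bounded above near $c$. Equivalently $-\varphi$, written as $\log\det(h_Q)^{-1}$ up to sign conventions, is bounded above. I will fix the sign so that the psh weight of $\det Q$ is $\log\det h_Q^{*}$-type. Its $\partial\bar\partial$ is the positive current. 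This function is subharmonic on the punctured disc and bounded above, so it extends subharmonically across the puncture. Thus $\det h_Q$ extends to a singular metric on the ordinary line bundle $\det Q$ with positive current $T_Q$. For a singular metric on a line bundle over a compact curve, the total mass of the curvature current equals $\deg Q$ by Stokes, or by comparison with a smooth metric via a global $L^1$ function.

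For the degree estimate, note that $T_Q$ is a positive measure whose restriction to $U$ is the smooth form $\frac{i}{2\pi}\operatorname{Tr}\Theta^{h_Q}$, which is at least $q\eta$. Dropping the possible point masses at $C\setminus U$ gives
\[
\deg Q=\int_C T_Q\geq\int_U T_Q\geq q\int_U\eta .
\]
This also shows $\int_U\eta\leq \deg E/\operatorname{rk}E<\infty$, using $Q=E$. Slopes of all quotients are therefore at least $\int_U\eta\geq0$, which yields $\mu_{\min}(E)\geq\int_U\eta$. Nefness follows from Hartshorne's criterion: every quotient of $E$ has nonnegative degree, and on a curve this is equivalent to $\mathcal O_{\mathbb P(E)}(1)$ being nef. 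Ampleness follows from the same criterion when every quotient has positive degree, which holds if $\int_U\eta>0$.

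I expect the main obstacle to be the extension step across the punctures. There, one must check carefully that the local lower bound $h\geq ch_0$ really controls the dual quotient metric in a holomorphic frame of the quotient bundle $Q$, and not just of $E$. One must also check that the extended current has no negative part, so that its mass is exactly $\deg Q$. I will handle this by working with the dual norms of the holomorphic subbundle $Q^*\subset E^*$ and using the removable singularity theorem for subharmonic functions bounded above.
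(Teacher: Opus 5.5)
Your proposal is correct and follows essentially the same route as the paper: the quotient metric with the second-fundamental-form bound $\frac{i}{2\pi}\operatorname{Tr}\Theta^{h_Q}\geq\operatorname{rk}(Q)\eta$, removable-singularity extension of the subharmonic determinant weight using $h\geq ch_0$, the mass identity by comparison with a smooth metric, and the curve slope criteria. Your dual formulation $h_Q^*=h^*|_{Q^*}\leq c^{-1}h_{0,Q}^*$ is just the dual of the paper's minimum-norm inequality $|v|_{h_Q}^2\geq c|v|_{(h_0)_Q}^2$. The one detail you leave implicit is that the extended weights still obey the $-\log|\det B|^2$ transition law, which follows from uniqueness of subharmonic extension, as the paper notes.
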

\begin{proof}
Write $q:E\twoheadrightarrow Q$ and let $h_Q$ and $(h_0)_Q$ be the quotient metrics. Their minimum-norm description gives, for $v\in Q_y$ and $y\in U$,
\[
 |v|_{h_Q}^2=\inf_{q(w)=v}|w|_h^2
 \geq c\inf_{q(w)=v}|w|_{h_0}^2
 =c|v|_{(h_0)_Q}^2.
\]
The reference quotient metric is smooth and positive definite across $C\setminus U$. Quotient curvature is Griffiths semipositive, so in any local holomorphic frame of $Q$ the weight $\phi_Q=-\log\det h_Q$ is subharmonic on $U$. The lower metric bound makes $\phi_Q$ locally bounded above. It therefore extends subharmonically across the finitely many omitted points. If the frame changes by a holomorphic invertible matrix $B$, its weight changes by $-\log|\det B|^2$, which is harmonic on the whole overlap. Uniqueness of subharmonic extension preserves this transformation law. Thus the extended weights define a metric on $\det Q$ itself, with
\[
 T_Q=\frac{i}{2\pi}\partial\bar\partial\phi_Q\geq0,
 \qquad \int_C T_Q=\deg Q.
\]
The mass identity follows by comparison with a smooth metric on $\det Q$: the weight difference is globally defined and locally integrable, and its $\partial\bar\partial$ integrates to zero on $C$.

On $U$, the quotient-curvature formula and its nonnegative second fundamental form term give
\[
 T_Q|_U\geq\operatorname{rk}(Q)\eta.
\]
Integration over $U$ and nonnegativity of the remaining mass of $T_Q$ prove the degree bound. Taking $Q=E$ shows that $\int_U\eta$ is finite. Taking the last Harder--Narasimhan quotient, which is a vector bundle on the smooth curve, gives the minimum-slope bound. The characteristic-zero curve criteria then give nefness and the asserted ampleness \cite[Proposition~6.4.7 and Theorem~6.4.15]{Lazarsfeld2004II}. The degree bound applies to quotients of every positive rank, including $E$ itself.
\end{proof}

\begin{theorem}[General nefness]\label{thm:curve-unweighted-nef}
Let $f:X\to C$ be a surjective morphism from a smooth connected complex projective variety to a smooth connected projective curve, with connected fibers. Then the ordinary direct image
\[
 E_m=f_*\omega_{X/C}^{\otimes m}
\]
is nef for every integer $m\geq1$, with the zero-rank case understood vacuously. No condition on the singular fibers is imposed.
\end{theorem}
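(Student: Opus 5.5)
The plan is to reduce to Lemma~\ref{lem:curve-quotient-degree} with $\eta=0$. That lemma only needs three inputs: a smooth Griffiths-semipositive metric on $E_m|_U$ for a cofinite $U\subset C$, and a local lower bound $h\geq c h_0$ near the finitely many omitted points. Since $\dim C=1$, torsion-free sheaves are locally free, so $E_m$ is a vector bundle and the modification in \eqref{eq:actual-modification} is unnecessary: we may take $\pi=\mathrm{id}$ and $V_m=E_m$. If $E_m=0$ there is nothing to prove, so we assume positive rank. We then split into the cases $m\geq2$ and $m=1$.

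\emph{Case $m\geq2$.} Use the fixed-point metric $G$. Let $U=U_m^{\mathrm{reg}}$. This is a dense Zariski-open subset of the curve, hence cofinite. By Theorem~\ref{thm:curvature-resolvent}, $G$ is smooth and Nakano (hence Griffiths) semipositive on $U$. The lower bound near the missing points is exactly \eqref{eq:fixed-flattened-lower} of Theorem~\ref{thm:actual-sheaf-extension} with $\pi=\mathrm{id}$. That estimate gives $G\geq r^{-(m-1)}G_{\mathrm{NS}}\geq c\,h_0$ locally on $U$, and it was derived from the Hacon--Popa--Schnell minimal extension of $G_{\mathrm{NS}}$ (Proposition~\ref{prop:ns-actual-extension}) via Lemma~\ref{lem:extension-flattening}. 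No assumption on the singular fibers enters, because those results apply to an arbitrary surjective projective $f$ with connected fibers. Lemma~\ref{lem:curve-quotient-degree} with $\eta=0$ then gives $\deg Q\geq0$ for every quotient bundle, which is nefness.

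\emph{Case $m=1$.} By Remark~\ref{rem:normalization} the operator is constant, so $G$ is the canonical fiberwise $L^2$ metric $\int_{X_y} c_n\, u\wedge\bar u$ on $f_*\omega_{X/C}$ over the smooth locus $U$. Its smoothness on $U$ and Griffiths (indeed Nakano) semipositivity there follow from Berndtsson's theorem~\cite{Berndtsson2009}, applied to the trivially metrized line bundle. For the lower bound near a singular value I would invoke the $L^2$ (Ohsawa--Takegoshi) extension property of the canonical $L^2$ metric, as in \cite[Theorem~21.1]{HaconPopaSchnell2018} with $L$ trivial. This gives the minimal extension property for $f_*\omega_{X/C}$ itself; in fact no multiplier ideal is lost, since $\mathcal I(h)=\mathcal O$. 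Lemma~\ref{lem:extension-flattening} then yields $h\geq c h_0$ locally, and Lemma~\ref{lem:curve-quotient-degree} again gives nefness.

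The main obstacle is to verify that the extension statements cover the ordinary sheaf $E_m$ at arbitrary, possibly nonreduced, singular fibers. For $m\geq2$ this rests on the identification $\mathcal F=E_m$ in Proposition~\ref{prop:ns-actual-extension}. Over a curve, however, $\mathcal F\hookrightarrow E_m$ is a generically isomorphic inclusion of vector bundles, and Corollary~21.2 there transfers the metric and the extension property to $E_m$. If that transfer were doubtful, the fallback is weaker but still sufficient. Since $\mathcal F\subseteq E_m$ with equal rank, $\det\mathcal F\subseteq\det E_m$, and the degree bounds for quotients of $\mathcal F$ pass to quotients of $E_m$: for a quotient $E_m\twoheadrightarrow Q$, the image of $\mathcal F$ in $Q$ is a full-rank subsheaf $Q'$, so $\deg Q\geq\deg Q'$. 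The bundle $Q'$ is in turn a quotient of $\mathcal F$ and has nonnegative degree. Thus nefness of $\mathcal F$ suffices.
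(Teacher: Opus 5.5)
Your proposal is correct and follows the paper's proof essentially step by step. For $m\geq2$ you use the fixed-point metric on $U_m^{\mathrm{reg}}$ with the lower bound \eqref{eq:fixed-flattened-lower} for $\pi=\mathrm{id}$; for $m=1$ you use the canonical $L^2$ metric with the Hacon--Popa--Schnell extension (their Theorem~21.1 with trivial $L$) and Lemma~\ref{lem:extension-flattening}; in both cases you conclude with Lemma~\ref{lem:curve-quotient-degree} for $\eta=0$. The only differences are minor: you cite Berndtsson for interior semipositivity at level one, you add a valid fallback through the full-rank inclusion $\mathcal F\subseteq E_m$, and you skip the trivial case $\dim X=1$ (where $f$ is an isomorphism and $E_m=\mathcal O_C$), which the paper disposes of separately.
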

\begin{proof}
Each $E_m$ is torsion free: a local section annihilated by a nonzero base function vanishes on a dense Zariski-open subset of $X$, hence everywhere. It is therefore locally free on $C$. Rank zero therefore means that $E_m=0$. If $\dim X=1$, the morphism is finite, and its connected general fiber has degree one in characteristic zero. Thus $f$ is an isomorphism and $E_m=\mathcal O_C$. Assume that $\dim X>1$ and $E_m\ne0$.

For $m\geq2$, choose a dense Zariski-open subset $U\subset U_m^{\mathrm{reg}}$ on which $f$ is smooth and the fiberwise base-change maps are isomorphisms. The metric $G$ is smooth and Nakano semipositive there. Since $E_m$ is already locally free, the identity of $C$ is an admissible modification in Theorem~\ref{thm:actual-sheaf-extension}. That theorem extends $G$ on $E_m$ and gives $G\geq c h_0$ locally, without any hypothesis on the singular fibers. Lemma~\ref{lem:curve-quotient-degree}, with $\eta=0$, proves $\mu_{\min}(E_m)\geq0$ and nefness.

For $m=1$, apply the adjoint direct-image theorem \cite[Theorem~21.1]{HaconPopaSchnell2018} with the trivial twisting line bundle $L=\mathcal O_X$ and its constant metric $1$. Its multiplier ideal is $\mathcal O_X$, so the resulting sheaf is precisely $f_*\omega_{X/C}$. The theorem gives its canonical $L^2$ metric a singular Griffiths-semipositive extension with the minimal extension property. On a smooth base-change locus this is the smooth fiberwise canonical $L^2$ pairing. Lemma~\ref{lem:extension-flattening}, with the identity modification, gives its local positive lower bound on $E_1$. Lemma~\ref{lem:curve-quotient-degree} again applies with $\eta=0$. Here the canonical $L^2$ extension theorem supplies the level-one argument.
\end{proof}

\subsection{Interior curvature and boundary mass}
Retain the unweighted setting of Theorem~\ref{thm:curve-unweighted-nef}, with $\dim X>1$, $m\geq2$ and $r=\operatorname{rk}E_m>0$. Put $\alpha=(m-1)/m$, and choose $U$ as above. The extended determinant metric has positive curvature current
\[
 T=c_1(\det E_m,\det G)
   =\frac{i}{2\pi}\partial\bar\partial(-\log\det G).
\]
The coefficient from \eqref{eq:full-moment} and \eqref{eq:sigma-definition} is
\[
 \sigma(y)=\frac{\varepsilon_y}{1-\alpha+r\varepsilon_y},
 \qquad 0<\sigma(y)\leq\alpha/r.
\]
This function is continuous on $U$, and Theorem~\ref{thm:determinant-pinching} gives
\[
 \frac{i}{2\pi}\Theta^G\geq\sigma(y)T\otimes I_{E_m}
 \quad\text{on }U.
\]

\begin{proposition}[Interior slope bound]\label{prop:curve-interior-slope}
In this setting, put $\varepsilon_{\mathrm{int}}=\int_U\sigma(y)T$. Every positive-rank quotient bundle $E_m\twoheadrightarrow Q$ satisfies
\begin{equation}\label{eq:curve-interior-slope}
 \deg Q\geq\operatorname{rk}(Q)\varepsilon_{\mathrm{int}},
 \qquad \mu_{\min}(E_m)\geq\varepsilon_{\mathrm{int}}\geq0,
 \qquad \varepsilon_{\mathrm{int}}\leq\frac{\alpha}{r}\deg E_m.
\end{equation}
Writing $a_p=T(\{p\})$ for the atom mass at $p\in C\setminus U$, one has the measure decomposition
\begin{equation}\label{eq:curve-boundary-mass}
 T=\mathbf1_U T+\sum_{p\in C\setminus U}a_p\delta_p,
 \qquad
 \deg E_m=\int_U T+\sum_{p\in C\setminus U}a_p,
 \qquad a_p\geq0,
\end{equation}
where $\delta_p$ has total mass one. The bundle $E_m$ is ample if $\varepsilon_{\mathrm{int}}>0$. In particular, $\int_U T>0$ implies $\varepsilon_{\mathrm{int}}>0$ and ampleness.
\end{proposition}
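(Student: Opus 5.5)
The plan is to apply Lemma~\ref{lem:curve-quotient-degree} with the continuous semipositive form $\eta=\sigma(y)T|_U$ on $U$. Theorem~\ref{thm:actual-sheaf-extension}, with the identity modification, gives the local lower bound $G\geq c h_0$ on $U$. Theorem~\ref{thm:determinant-pinching} gives $\frac{i}{2\pi}\Theta^G\geq\sigma(y)T\otimes I$ there. On $U$ the current $T$ is the smooth form $\frac{1}{2\pi}\tau$, and $\sigma$ is continuous by Lemma~\ref{lem:strict-cp}, so $\eta$ is a continuous semipositive $(1,1)$ form. The lemma then yields $\deg Q\geq\operatorname{rk}(Q)\int_U\eta$ for every positive-rank quotient, the bound $\mu_{\min}(E_m)\geq\varepsilon_{\mathrm{int}}\geq0$, finiteness of $\varepsilon_{\mathrm{int}}$, and ampleness when $\varepsilon_{\mathrm{int}}>0$.

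For the upper bound on $\varepsilon_{\mathrm{int}}$, I will use $\sigma\leq\alpha/r$ pointwise, which gives $\varepsilon_{\mathrm{int}}\leq(\alpha/r)\int_UT$. It then suffices to show $\int_UT\leq\deg E_m$. This follows from the mass identity $\int_CT=\deg E_m$ of Lemma~\ref{lem:curve-quotient-degree} applied with $Q=E_m$, together with positivity of $T$.

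For the measure decomposition I take the same extended determinant weight, which is subharmonic on $C$. Then $T$ is a positive Radon measure of total mass $\deg E_m$. Since $C\setminus U$ is finite, restricting $T$ to this set gives exactly the sum of point masses $a_p\delta_p$ with $a_p=T(\{p\})\geq0$. Hence $T=\mathbf1_UT+\sum_pa_p\delta_p$, and taking total masses gives the degree identity.

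The remaining assertion is that $\int_UT>0$ forces $\varepsilon_{\mathrm{int}}>0$. Here $T|_U$ is a smooth nonnegative form with positive integral, so it is strictly positive on some nonempty open subset of $U$. On that subset $\sigma>0$ by Lemma~\ref{lem:strict-cp}, so $\int_U\sigma T>0$. The only delicate point is making sure the pinching inequality is applied with the curvature normalized as $\frac{i}{2\pi}\Theta$. Apart from that, everything reduces to earlier results, so I expect no real obstacle.
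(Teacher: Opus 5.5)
Your proposal is correct and follows the paper's proof essentially step for step. You apply Lemma~\ref{lem:curve-quotient-degree} with $\eta=\sigma T|_U$, using the lower bound from Theorem~\ref{thm:actual-sheaf-extension} and the pinching inequality; you read off the atomic decomposition from the finiteness of $C\setminus U$ and the total mass $\deg E_m$; you bound $\varepsilon_{\mathrm{int}}$ via $\sigma\leq\alpha/r$; and you get strict positivity from an open set where $T|_U$ has mass (using pointwise positivity of the continuous integrand there instead of the paper's positive minimum of $\sigma$ on a compact closure, which is equivalent).
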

\begin{proof}
Apply Lemma~\ref{lem:curve-quotient-degree} to $G$ and the continuous form $\eta=\sigma T|_U$. This gives the quotient and minimum-slope bounds by integrating the quotient-curvature inequality on $U$ and retaining nonnegative boundary mass. The determinant current is a finite positive measure of total mass $\deg E_m$. Since $C\setminus U$ is finite, its restriction there is precisely the sum of atom masses in \eqref{eq:curve-boundary-mass}. Hence
\[
 0\leq\varepsilon_{\mathrm{int}}
 \leq\frac{\alpha}{r}\int_U T
 \leq\frac{\alpha}{r}\deg E_m<\infty.
\]
If $T|_U$ has positive mass, some relatively compact open subset of $U$ has positive mass. The continuous positive function $\sigma$ has a positive minimum on its closure, so its weighted integral is positive. The ampleness conclusion follows from the lemma.
\end{proof}

Positive total degree in the unrestricted setting need not provide positive interior mass, since \eqref{eq:curve-boundary-mass} includes boundary atoms. The lc scheme-fiber hypotheses below remove those atoms. In relative dimension zero the metric $G$ on $\mathcal O_C$ is constant, so $T=0$ and the slope and mass statements are immediate.

\subsection{Boundary estimates for lc scheme fibers}
We now allow a horizontal rational divisor $\Delta\geq0$ and assume
\begin{equation}\label{eq:curve-pair}
 (X,\Delta)\text{ is klt},\qquad
 (X,\Delta+X_c)\text{ is log canonical for every }c\in C,
\end{equation}
where $f:X\to C$ has the smooth projective source and base and connected fibers of Theorem~\ref{thm:curve-unweighted-nef}, and $X_c=f^*c$ is the scheme fiber, with its multiplicities. For each positive integer $j$ with $j\Delta$ integral put
\[
 L_j=\mathcal O_X\bigl(j(K_{X/C}+\Delta)\bigr),\qquad E_j=f_*L_j.
\]
These direct images are torsion free, hence locally free on $C$. Their fibers need not equal the complete section spaces of singular fibers. The weighted boundary, slope and ampleness--flatness statements below use \eqref{eq:curve-pair}. Fix $m\geq2$ with $m\Delta$ integral and $E_m\ne0$, and put $n=\dim X-1$, $r=\operatorname{rk}E_m$ and $\alpha=(m-1)/m$. If $n=0$, then $f$ is an isomorphism, $\Delta=0$ and $E_j=\mathcal O_C$, so the assertions are immediate. Assume henceforth that $n\geq1$.

Choose a dense Zariski-open subset $U\subset C$ as in Theorem~\ref{thm:controlled-regularity}, on which the family is smooth, the fiber pairs are klt, and the fiberwise base-change maps are isomorphisms. Its complement is finite, and $G$ is smooth and Nakano semipositive on $U$. The following boundary estimate supplies the extension in this weighted setting.

\begin{proposition}\label{prop:curve-boundary}
In a holomorphic frame of $E_m$ near any $c\in C$, with $t(c)=0$, there are constants $c_0,C_0>0$ such that, for small $t\ne0$ in $U$,
\begin{equation}\label{eq:curve-log-bound}
 c_0 I\leq G(t)\leq C_0\bigl(1+\log|t|^{-1}\bigr)^{nm}I.
\end{equation}
The metric extends as a singular Griffiths-semipositive metric on $E_m$. The determinant curvature current
\begin{equation}\label{eq:curve-determinant-current}
 T=\frac{i}{2\pi}\partial\bar\partial(-\log\det G)\in c_1(E_m)
\end{equation}
is positive, has no atoms, and satisfies
\begin{equation}\label{eq:curve-interior-mass}
 \int_U T=\int_C T=\deg E_m.
\end{equation}
The tautological quotient metric has zero Lelong numbers.
\end{proposition}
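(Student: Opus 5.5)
The plan is to prove the two-sided estimate \eqref{eq:curve-log-bound} first and then read every remaining assertion off the weight $-\log\det G$. I would get the lower bound from the Narasimhan--Simha comparison. The upper bound would come from the scalar gauge together with a toroidal fiber-integral estimate, and that estimate is where the log canonical scheme-fiber hypothesis \eqref{eq:curve-pair} enters.

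\emph{Lower bound.} By \eqref{eq:ns-comparison}, which holds in the weighted construction, $G\geq r^{-(m-1)}G_{\mathrm{NS}}$ on $U$. I would run Proposition~\ref{prop:ns-actual-extension} for the weighted relative Narasimhan--Simha metric on $K_{X/C}+\Delta$, which \cite[Theorem~27.1]{HaconPopaSchnell2018} provides for klt $(X,\Delta)$. This gives minimal extension on $E_m$. The argument of Theorem~\ref{thm:actual-sheaf-extension} and Lemma~\ref{lem:extension-flattening}, with the identity modification since $E_m$ is locally free on $C$, then yields $G\geq c_0 I$ near $c$. It also yields the singular Griffiths-semipositive extension, with psh dual norms extended across the finite set $C\setminus U$.

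\emph{Upper bound.} By \eqref{eq:balanced-comparison}, $G(s,s)\leq\mathcal N_m(s)=\bigl(\int_{X_t}|f_s|^{2/m}d\mu_\Delta\bigr)^m$. So it suffices to show that, for each frame section $s$, the fiber integral of the $(n,n)$-density $|s/dt^{m}|^{2/m}$ grows at most like $(1+\log|t|^{-1})^{n}$. I would pass to a log resolution $\mu:\widetilde X\to X$ on which $X_c$, $\Delta$ and the divisor of $s$ are SNC near $\mu^{-1}(X_c)$. In a chart with $t=\prod_i w_i^{N_i}$, the density $|s|^{2/m}\prod|d_j|^{-2\delta_j}$ divided by $|dt|^2$ becomes
\[
 \prod_i|w_i|^{2(e_i-N_i)}\times(\text{smooth})\times\text{fiber measure},
\]
where $e_i-N_i+1\geq a_i/m+A_{X,\Delta+X_c}(E_i)\geq0$ by the log canonical hypothesis. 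This is the same exponent bookkeeping as in \eqref{eq:weighted-exponent}, now with $X_c$ added to the boundary. The standard computation of $\int_{\{\prod w_i^{N_i}=t\}\cap\text{polydisc}}\prod|w_i|^{2\gamma_i}$ with all $\gamma_i\geq -1+N_i(\cdot)$ gives $O(|t|^{0})\cdot(\log|t|^{-1})^{k-1}$. Here $k$ is the number of components with critical exponent meeting in the chart, and $k\leq n+1$. Summing over finitely many charts gives the bound $C(1+\log|t|^{-1})^{n}$, and raising to the power $m$ gives \eqref{eq:curve-log-bound}. I expect this step to be the main obstacle. The delicate points are checking that the lc condition yields exactly the nonnegative shifted exponents after dividing by $dt$, including the factor $a_i/m$ from the zero divisor of $s$, and counting the logarithmic factors uniformly in $t$.

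\emph{Consequences.} In a local frame, $\phi=-\log\det G$ is subharmonic on $U$ and bounded above by the lower bound. It therefore extends subharmonically, and $T$ is the positive current in $c_1(E_m)$. The upper bound gives $\phi\geq -C-rnm\log(1+\log|t|^{-1})$. Hence the Lelong number of $\phi$ at $c$ vanishes, because $\log\log$ is negligible against $\log|t|$. On a curve the atom mass $T(\{c\})$ equals this Lelong number, so $a_p=0$ in \eqref{eq:curve-boundary-mass}, and \eqref{eq:curve-interior-mass} follows. For the tautological weight $\log(\eta G^{-1}\eta^*)$, the upper bound on $G$ gives $G^{-1}\geq C_0^{-1}(1+\log|t|^{-1})^{-nm}I$. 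Thus the weight is bounded below by $\log|\eta|^2-nm\log(1+\log|t|^{-1})-C$, and its Lelong numbers also vanish.
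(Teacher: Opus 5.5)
Your argument is correct. For the upper bound, the Griffiths extension, the atom-free mass identity and the Lelong numbers, it is essentially the paper's proof. The paper also bounds $G(s,s)\le\bigl(\int_{X_t}|s|_\Delta^{2/m}\bigr)^m$, resolves $\Delta+X_c+D_s$, and uses the valuative lc inequality $1+\ell_i/m\geq b_i$ to cancel $|t|^{-2}$. It then bounds the remaining simplex integral by $(1+\log|t|^{-1})^{k-1}$ with $k\le n+1$, using the crude bound $e^{-2L}$ rather than isolating components with critical exponent.

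The genuine difference is the lower bound. The paper argues directly on the central fiber:
\begin{enumerate}
\item log canonicity of $(X,\Delta+X_c)$ makes $X_c$ reduced;
\item $E_m\otimes\mathbb C(c)$ injects into $H^0(X_c,L_m|_{X_c})$, and finitely many evaluations at smooth points of $X_c$ off $\Delta$ separate this image;
\item continuity and compactness on small submersion charts give $\int_{X_t}|s_v|^{2/m}_\Delta\geq c'|v|^{2/m}$;
\item \eqref{eq:balanced-comparison} turns this into $G\geq c_0I$.
\end{enumerate}
You instead combine $G\geq r^{-(m-1)}G_{\mathrm{NS}}$, minimal extension of the weighted Narasimhan--Simha metric, and Lemma~\ref{lem:extension-flattening} with the identity modification. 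This is the route the paper takes for $\Delta=0$ in Theorem~\ref{thm:curve-unweighted-nef}. Its advantage is that the lower bound needs neither reducedness nor the lc fiber condition.

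The price is that Proposition~\ref{prop:ns-actual-extension} is established in the paper only for $\Delta=0$. You must therefore supply semipositivity of the weighted NS metric across the singular fibers. That is the twisted setting of \cite{PaunTakayama2018}, with twist $m\Delta$, whose relevant multiplier ideal is trivial by klt. You also need the adjoint extension theorem for the twisted weight $h_{m,\Delta}^{m-1}h_\Delta$. Check that the reference you cite actually covers pairs, since the paper invokes it only in the unweighted case. The paper's fiberwise argument avoids this extra input, which is why it says the boundary estimate itself supplies the extension in the weighted setting.

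One small point concerns the tautological weight. The lower bound $-C-nm\log(1+\log|t|^{-1})$ gives zero Lelong numbers at points over $c$ only after you test it in a transversal direction. For example, restrict to a transversal disk with fixed projective coordinate, as the paper does, and use that restriction to a submanifold cannot decrease the Lelong number. This deserves an explicit sentence.
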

\begin{proof}
First, every fiber is reduced. At a prime component of $X_c$, the coefficient in $\Delta+X_c$ is its multiplicity, since $\Delta$ is horizontal. Log canonicity bounds that coefficient by one. The Cartier fiber is therefore generically reduced. As a hypersurface in smooth $X$ it is Cohen--Macaulay, so it has no embedded components and is reduced.

For the lower bound, multiplication by $t$ and left exactness of direct image give, without a base-change assumption at $c$,
\begin{equation}\label{eq:curve-fiber-injection}
 E_m\otimes\mathbb C(c)\lhook\joinrel\longrightarrow
 H^0(X_c,L_m|_{X_c}).
\end{equation}
Indeed the kernel of restriction of a local direct-image section is exactly $tE_m$, by the sequence $0\to L_m\xrightarrow{t}L_m\to L_m|_{X_c}\to0$. A section of a line bundle on a reduced fiber that vanishes on the smooth locus outside $\Delta$ vanishes everywhere. Finitely many evaluations at such points therefore separate the finite-dimensional image of \eqref{eq:curve-fiber-injection}. Around these points choose small submersion charts $(t,z)$, local frames of $L_m$, and fixed smaller fiber polydiscs. These charts avoid $\Delta$ and cover smooth parts of $X_c$. If $s_v=\sum v_i s_i$ in the chosen bundle frame, the sum $J(t,v)$ of its scalar integrals $\int |s_v|^{2/m}_\Delta$ over these smaller polydiscs is continuous in $(t,v)$. On the unit sphere $|v|=1$ it is strictly positive at $t=0$, since one of the separating evaluations is nonzero. Compactness gives $J(t,v)\geq c>0$ for all such $v$ and small $t$. A fixed finite overlap bound gives
\[
 I_{s_v}(t):=\int_{X_t}|s_v|^{2/m}_\Delta\geq c'|v|^{2/m}.
\]
Balanced comparison \eqref{eq:balanced-comparison} now implies $G(s_v,s_v)\geq r^{-(m-1)}I_{s_v}^m\geq c_0|v|^2$.

For the upper bound, consider a frame section $s$. Its divisor $D_s$ as a section of $L_m$ is effective. Resolve $\Delta+X_c+D_s$. In a normal-crossing chart upstairs, absorb a unit into a vertical coordinate and write
\begin{equation}\label{eq:curve-monomial}
 t=z_1^{b_1}\cdots z_k^{b_k},\qquad
 s=u(z)\prod_{i=1}^{n+1}z_i^{\ell_i}
       \frac{(dz_1\wedge\cdots\wedge dz_{n+1})^{\otimes m}}{(dt)^m},
\end{equation}
where $u$ is a unit and the $\ell_i$ may be negative. For every vertical divisor $E_i$,
\begin{equation}\label{eq:curve-discrepancy}
 1+\ell_i/m=A_{X,\Delta}(E_i)
                +\operatorname{ord}_{E_i}(\pi^*D_s)/m\geq b_i.
\end{equation}
The last inequality is exactly the valuation form of $\operatorname{lc}(X,\Delta+X_c)$, including exceptional divisors. For horizontal coordinates, klt and effectiveness give $1+\ell_i/m>0$, so their integrals are finite.

Eliminating $z_k$ in \eqref{eq:curve-monomial} contributes $|z_k/(b_k t)|^2$ to the scalar density. Integrating the angular and horizontal variables and putting $x_i=-\log|z_i|$, $L=\log|t|^{-1}$, bounds the remaining integral by a constant times
\begin{equation}\label{eq:curve-simplex-bound}
 |t|^{-2}\int_{\substack{x_i\geq0\\\sum_{i=1}^k b_i x_i=L}}
       \exp\left(-2\sum_{i=1}^k(1+\ell_i/m)x_i\right)d\sigma
 \leq C(1+L)^{k-1}.
\end{equation}
Rescaling chart radii only translates the lower limits and changes constants. The finitely many choices of a root for $z_k$ also change only constants, uniformly in $\arg t$. The inequality follows from \eqref{eq:curve-discrepancy}: the exponential is at most $e^{-2L}$, which cancels $|t|^{-2}$, and the simplex has volume $O((1+L)^{k-1})$. For $k=1$ its zero-dimensional measure is a fixed constant. A finite cover of the compact resolved central fiber covers a small tube by properness. Since $k-1\leq n$, we obtain $I_s(t)\leq C_s(1+\log|t|^{-1})^n$. Comparison gives $G(s,s)\leq I_s^m$. Applying this to the frame and using $G\leq(\operatorname{Tr}G)I$ proves the upper matrix bound. For the unweighted Narasimhan--Simha estimate, see also \cite[Theorem~1.1 and Corollary~1.3]{Takayama2016}.

For a local holomorphic dual section $\lambda$, the function $\log|\lambda|_{G^*}^2$ is subharmonic on $U$ and locally bounded above by the lower bound in \eqref{eq:curve-log-bound}. It extends uniquely across the omitted points. This gives the singular Griffiths extension on $E_m$. Likewise $\phi=-\log\det G$ is subharmonic on $U$, and
\begin{equation}\label{eq:curve-weight-bounds}
 -C-nmr\log(1+\log|t|^{-1})\leq\phi(t)\leq C.
\end{equation}
Its extension has Lelong number zero: dividing these bounds by $\log|t|^2$ gives zero for its logarithmic singularity coefficient. On a curve that coefficient is the mass of the atom of $(i/2\pi)\partial\bar\partial\phi$. Thus $T$ has no atoms. Its total mass is $\deg E_m$, since the local weights define a metric on the ordinary determinant line. The finite complement of $U$ carries no mass, proving \eqref{eq:curve-interior-mass}.

In a projective quotient chart with row coordinate $\eta$, the weight of $\mathcal O_{\mathbb P_q(E_m)}(1)$ is $\log(\eta G^{-1}\eta^*)$, as in \eqref{eq:fixed-tautological-weight}. It is psh and bounded above locally, and its lower bound on a compact projective chart is $-C-nm\log(1+\log|t|^{-1})$. Along a transversal disk with fixed projective coordinate its Lelong number is zero. The ambient Lelong number is no larger than this restricted one and is nonnegative. Hence it too is zero, at every point over the boundary.
\end{proof}

\subsection{Quotient curvature and slopes}
For the weighted metric $G$ on $U$, use the same continuous positive coefficient
\[
 \sigma(y)=\frac{\varepsilon_y}{1-\alpha+r\varepsilon_y},
 \qquad 0<\sigma(y)\leq\alpha/r,
\]
from \eqref{eq:full-moment} and \eqref{eq:sigma-definition}. The full pinching theorem, in particular its curve restriction, gives
\begin{equation}\label{eq:curve-pinching}
 \frac{i}{2\pi}\Theta^G\geq \sigma(y)T\otimes I_{E_m}.
\end{equation}
The coefficient need not have a positive lower bound near $C\setminus U$.

\begin{proposition}\label{prop:quotient-slope}
Every nonzero quotient bundle $E_m\twoheadrightarrow Q'$ satisfies
\begin{equation}\label{eq:curve-quotient-slope}
 \deg Q'\geq\operatorname{rk}(Q')\varepsilon_G,\qquad
 \varepsilon_G:=\int_U \sigma(y)T,\qquad
 0\leq\varepsilon_G\leq\frac{\alpha}{r}\deg E_m.
\end{equation}
If $\deg E_m>0$, then $\varepsilon_G>0$.
\end{proposition}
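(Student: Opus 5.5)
The plan is to repeat the argument of Proposition~\ref{prop:curve-interior-slope}, now using the weighted boundary input of Proposition~\ref{prop:curve-boundary}. I will apply Lemma~\ref{lem:curve-quotient-degree} to $h=G$ on $E_m|_U$ with $\eta=\sigma(y)T|_U$.

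The hypotheses of that lemma are available. First, $G$ is smooth and Nakano, hence Griffiths, semipositive on $U$ by Theorem~\ref{thm:curvature-resolvent}. Second, the local lower bound $G\geq c\,h_0$ near each point of $C\setminus U$ is the left inequality of \eqref{eq:curve-log-bound}. Third, the curvature inequality $\frac{i}{2\pi}\Theta^G\geq\sigma T\otimes I$ is \eqref{eq:curve-pinching}, with $\eta$ continuous and semipositive on $U$ because $\sigma$ is continuous and $T|_U$ is a smooth semipositive form there. The lemma then yields $\deg Q'\geq\operatorname{rk}(Q')\int_U\eta=\operatorname{rk}(Q')\varepsilon_G$ for every nonzero quotient bundle. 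The same lemma shows the integral is finite.

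For the bounds on $\varepsilon_G$, nonnegativity is immediate. For the upper bound, I will use $\sigma\leq\alpha/r$ together with \eqref{eq:curve-interior-mass}:
\[
 \varepsilon_G\leq\frac{\alpha}{r}\int_U T=\frac{\alpha}{r}\deg E_m .
\]

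The key step is positivity when $\deg E_m>0$. This is where the lc scheme-fiber hypothesis enters. By \eqref{eq:curve-interior-mass}, $T$ has no atoms, so $\int_U T=\deg E_m>0$. Since $T|_U$ is a positive measure on the open set $U$, inner regularity gives a relatively compact open $W\Subset U$ with $\int_W T>0$. By Lemma~\ref{lem:strict-cp}, $\varepsilon_y$ is continuous and positive on $U$, so $\sigma$ is too. Hence $\sigma$ has a positive minimum $\sigma_W$ on $\overline W$, and
\[
 \varepsilon_G\geq\sigma_W\int_W T>0 .
\]

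The only real obstacle is that $\sigma$ may decay near $C\setminus U$, so positive total mass must be localized to a compact subset of $U$. That is exactly what the no-atom statement of Proposition~\ref{prop:curve-boundary} makes possible, whereas in the unrestricted setting of \eqref{eq:curve-boundary-mass} the mass could sit entirely in boundary atoms.
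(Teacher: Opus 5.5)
Your proposal is correct and follows essentially the same route as the paper. Both arguments apply Lemma~\ref{lem:curve-quotient-degree} with $\eta=\sigma T|_U$, using the lower bound in \eqref{eq:curve-log-bound} and the pinching inequality \eqref{eq:curve-pinching}. Both bound $\varepsilon_G$ above via $\sigma\leq\alpha/r$ and \eqref{eq:curve-interior-mass}. Both obtain positivity from the atom-free mass identity, localized to a relatively compact open subset of $U$ on which $\sigma$ has a positive minimum.
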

\begin{proof}
Give $Q'$ the quotient metric. The lower bound of \eqref{eq:curve-log-bound} passes to it by the minimum-norm formula in Lemma~\ref{lem:curve-quotient-degree}. A bounded local right inverse of the quotient map gives the same logarithmic upper bound. Apply that lemma with $\eta=\sigma T|_U$. It extends the determinant metric on the ordinary $\det Q'$ and gives the quotient-degree inequality. The bound $\sigma\leq\alpha/r$ and \eqref{eq:curve-interior-mass} give the upper bound for $\varepsilon_G$. If $\deg E_m>0$, the atom-free identity \eqref{eq:curve-interior-mass} gives positive mass on $U$. Some relatively compact open subset has positive mass, and $\sigma$ has a positive minimum on its closure. Thus $\varepsilon_G>0$. For $\Delta=0$, this is precisely the integral $\varepsilon_{\mathrm{int}}$ of Proposition~\ref{prop:curve-interior-slope}.
\end{proof}

\subsection{Ampleness or flatness}
\begin{theorem}[Ampleness or flatness]\label{thm:curve-dichotomy}
Let $f:X\to C$ be a surjective projective morphism with connected fibers, where $X$ is smooth connected projective and $C$ is a smooth connected projective curve. Let $\Delta\geq0$ be a horizontal rational divisor such that $(X,\Delta)$ is klt and $(X,\Delta+X_c)$ is log canonical for every scheme fiber $X_c=f^*c$, with its multiplicities. Put $E_m=f_*\mathcal O_X(m(K_{X/C}+\Delta))$. Fix an integer $m\geq2$ such that $m\Delta$ is integral and $E_m\ne0$. Then $E_m$ is nef. Its fixed-point metric $G$ extends as a singular Griffiths-semipositive metric on $E_m$, and its quotient metric on $\mathcal O_{\mathbb P_q(E_m)}(1)$ has zero Lelong numbers everywhere. Exactly one of the following holds:
\begin{enumerate}
\item $\deg E_m>0$, and $E_m$ is ample.
\item $\deg E_m=0$, and $G$ extends smoothly as a Hermitian-flat metric on $E_m$.
\end{enumerate}
\end{theorem}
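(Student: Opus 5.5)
Most of the statement is already assembled from Proposition~\ref{prop:curve-boundary} and Proposition~\ref{prop:quotient-slope}, so the plan is to organize those results and add one genuinely new step, the smooth flat extension in the degree-zero case. Proposition~\ref{prop:curve-boundary} gives the singular Griffiths-semipositive extension of $G$ on $E_m$, the zero Lelong numbers of the tautological quotient metric, and the atom-free identity $\int_U T=\deg E_m\geq0$. Proposition~\ref{prop:quotient-slope} gives $\deg Q'\geq\operatorname{rk}(Q')\varepsilon_G\geq0$ for every quotient bundle, hence $\mu_{\min}(E_m)\geq0$ and nefness by the curve criterion used in Lemma~\ref{lem:curve-quotient-degree}. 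Since $\deg E_m=\int_C T\geq0$, the two alternatives are mutually exclusive and exhaust all cases, so it remains to prove the conclusion in each case.

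\emph{Case $\deg E_m>0$.} Proposition~\ref{prop:quotient-slope} gives $\varepsilon_G>0$. Then $\mu_{\min}(E_m)\geq\varepsilon_G>0$, so every quotient bundle has positive degree. By the ampleness criterion on curves quoted in Lemma~\ref{lem:curve-quotient-degree}, $E_m$ is ample.

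\emph{Case $\deg E_m=0$.} Here $T\geq0$ has total mass zero, so $T=0$ as a current on $C$. In particular $\tau=\operatorname{Tr}K=0$ on $U$. The upper bound in the determinant pinching of Theorem~\ref{thm:determinant-pinching} then forces $K=0$, so $G$ is smooth and Hermitian-flat on $U$. Flatness gives a local frame on any simply connected disc in $U$ in which $G$ is constant.

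The main obstacle is extending $G$ smoothly across the finitely many points $p\in C\setminus U$. I would take a small punctured disc $D^*$ around $p$ and consider the flat unitary structure of $(E_m,G)|_{D^*}$, with monodromy $\gamma$. Using a holomorphic frame of $E_m$ on the whole disc $D$, the estimates \eqref{eq:curve-log-bound} give
\[
 c_0I\leq G\leq C_0\bigl(1+\log|t|^{-1}\bigr)^{nm}I .
\]
Since $G$ is flat, its $G$-parallel multivalued frame is holomorphic, so we may write $G=\overline{S}^{\,t}S$ for a multivalued holomorphic matrix $S$ whose monodromy acts unitarily. Because $\gamma$ is unitary, it diagonalizes with eigenvalues $e^{2\pi i a_j}$, $a_j\in[0,1)$. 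The parallel sections are then $t^{-a_j}$ times single-valued holomorphic sections on $D^*$, and expressing them in the holomorphic frame of $E_m$ gives meromorphic-type behavior. The lower bound $G\geq c_0I$ together with the sub-logarithmic upper bound should force every $a_j$ to vanish and the transition matrix to be holomorphic and invertible across $0$: a pole or a nontrivial power of $|t|$ would contradict the at-most-logarithmic growth in one direction or the uniform lower bound in the other.

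If that comparison succeeds, the flat frame extends to a holomorphic frame of $E_m$ over $D$ in which $G$ is constant, which gives the required smooth Hermitian-flat extension. A cleaner alternative for the growth step is to apply the same reasoning to $\log|\lambda|^2_{G^*}$ for holomorphic dual sections $\lambda$. These functions are then harmonic on $D^*$, being pluriharmonic for a flat metric, and bounded above. Bounded-above harmonic functions on $D^*$ with sub-logarithmic behavior extend harmonically with zero logarithmic coefficient, which pins down the coefficients needed in the argument above.
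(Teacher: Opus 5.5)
Your main line is correct and is essentially the paper's proof. The paper also takes the extension, the Lelong numbers, nefness and the positive-degree case directly from Propositions~\ref{prop:curve-boundary} and \ref{prop:quotient-slope}, gets $K=0$ on $U$ from $T=0$ in degree zero, and removes the punctures by Lemma~\ref{lem:flat-removability}. That lemma is your unitary-monodromy argument made precise: write the holomorphic frame as $e=PB$ with $P$ parallel; the bound $\|B\|\leq C(1+\log|t|^{-1})^{nm/2}$ kills the negative Laurent terms of $t^{-\lambda_j}b_j$, and $\|B^{-1}\|\leq c_0^{-1/2}$ forces every $\lambda_j=0$.

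Drop the ``cleaner alternative'', however. For a flat metric of rank at least two, $\log|\lambda|^2_{G^*}$ is only subharmonic, not harmonic; for example, it equals $\log(1+|t|^2)$ for $\lambda=(1,t)$ in a unitary parallel frame. So that route does not work as stated.
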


No general-type, variation, global-generation or birationality hypothesis is required. The zero-degree case uses the following removability lemma.
\begin{lemma}\label{lem:flat-removability}
Let $G$ be a smooth Hermitian-flat metric on a holomorphically trivial bundle on a punctured disk. If in its holomorphic frame
\[
 cI\leq G(t)\leq C(1+\log|t|^{-1})^N I
\]
for fixed $c,C>0$ and $N\geq0$, then $G$ extends smoothly as a flat metric across the origin in the extension defined by that frame.
\end{lemma}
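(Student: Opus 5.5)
The plan is to reduce to a flat connection with unitary monodromy and use the logarithmic growth bound to force the monodromy to be trivial. Since $G$ is Hermitian-flat, its Chern connection $D=\partial+G^{-1}\partial G+\bar\partial$ is flat and preserves $G$. Over the punctured disk $\Delta^*$ we get a flat unitary local system. Let $\gamma$ be its monodromy around the origin. Choose a $G$-orthonormal flat frame $\{e_j\}$ along a sector; after analytic continuation around the loop it returns as $\gamma e$, where $\gamma$ is unitary. Diagonalize $\gamma=\operatorname{diag}(e^{2\pi i\theta_j})$ with $\theta_j\in[0,1)$.

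Next, twist the frame. The sections $\tilde e_j=t^{\theta_j}e_j$ (suitably signed) are single-valued, and flatness makes them holomorphic, because flat sections of a Chern connection are holomorphic. Their norms are $|t|^{\theta_j}$. So $\tilde e$ is a holomorphic frame of the trivial bundle on $\Delta^*$ with $|\tilde e_j|_G=|t|^{\theta_j}$, and $G$ is diagonal in this frame. Write the given holomorphic frame $s$ as $s=\tilde e\,M(t)$ with $M$ holomorphic and invertible on $\Delta^*$. Then
\[
 G_s=M^*\operatorname{diag}(|t|^{2\theta_j})M .
\]
The upper bound gives $|M_{jk}|^2|t|^{2\theta_j}\leq C(1+\log|t|^{-1})^N$. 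Hence $M_{jk}$ grows at most like $|t|^{-\theta_j-\epsilon}$, and Riemann removability (meromorphic with a pole of order $<1$) shows that $M_{jk}$ extends holomorphically.

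Apply the same argument to the inverse. Write $\tilde e=sM^{-1}$. The lower bound $G_s\geq cI$ gives $G_s^{-1}\leq c^{-1}I$, so the dual norms of $s$ are bounded. Since $\tilde e_j^*$ has $G^*$-norm $|t|^{-\theta_j}$, we get $|(M^{-1})_{jk}|$ controlled by $|t|^{\theta_j}$-type bounds. In particular $M^{-1}$ is bounded and extends holomorphically as well. So $M$ is holomorphic and invertible at $0$.

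It remains to show every $\theta_j=0$; this is the main obstacle. Consider $\det G_s=|\det M|^2|t|^{2\sum\theta_j}$. Both bounds make $\log\det G_s=O(\log(1+\log|t|^{-1}))$. Since $\det M(0)\neq0$, this forces $\sum\theta_j=0$, hence all $\theta_j=0$. The monodromy is then trivial, $\tilde e$ is a global flat unitary frame on $\Delta^*$, and $G_s=M^*M$ with $M$ holomorphic invertible across $0$. Therefore $G_s$ extends smoothly, and it is flat because it is the pullback of the identity metric under a holomorphic frame change.

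One check is needed in the twisting step: the exponents $\theta_j$ must be chosen in $[0,1)$ so that the removability estimates apply (a pole order strictly less than $1$). The growth argument on $\det$ then closes the proof.
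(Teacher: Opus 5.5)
Your proof is correct and follows essentially the paper's route. Your matrix $M$, the coefficient matrix of the given frame relative to the twisted parallel unitary frame $t^{\theta_j}e_j$, is exactly the paper's matrix with rows $a_j=t^{-\lambda_j}b_j$, and your removability step (exponents in $[0,1)$ against the logarithmic upper bound) is the paper's Laurent-coefficient argument. The one difference is at the end: the paper rules out $\lambda_j>0$ directly, because $G\geq cI$ bounds every row norm of $B$ below by $\sqrt c$, whereas you first extend $M^{-1}$ and then read off $\sum\theta_j=0$ from $\det G_s$.
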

\begin{proof}
On the universal cover choose a parallel orthonormal holomorphic frame $P$, and express the given single-valued holomorphic frame as $e=PB$. Then $B$ is holomorphic, $G=B^*B$, and
\[
 \|B\|\leq C'(1+\log|t|^{-1})^{N/2},\qquad
 \|B^{-1}\|\leq c^{-1/2}.
\]
The parallel frame has unitary monodromy. After a constant unitary change, its inverse is diagonal with entries $e^{2\pi i\lambda_j}$, $0\leq\lambda_j<1$. The $j$th row $b_j$ of $B$ has that monodromy, so $a_j=t^{-\lambda_j}b_j$ is single-valued holomorphic on the punctured disk. Its Laurent coefficient of order $-k$, $k\geq1$, is bounded, on $|t|=\rho$, by
\[
 C'\rho^{k-\lambda_j}(1+\log\rho^{-1})^{N/2}\longrightarrow0.
\]
Every negative coefficient vanishes and $a_j$ extends holomorphically. If $\lambda_j>0$, then $b_j=t^{\lambda_j}a_j$ tends to zero. But $G\geq cI$ bounds the smallest singular value of $B$ below by $\sqrt c$, and hence bounds every row norm below by $\sqrt c$. This contradiction forces every $\lambda_j=0$. Thus $B$ is single-valued and extends holomorphically, with invertible value at zero by the same singular-value bound. The matrix $B^*B$ is the required smooth flat extension.
\end{proof}

\begin{proof}[Proof of Theorem~\ref{thm:curve-dichotomy}]
Proposition~\ref{prop:curve-boundary} gives the metric extension, zero tautological Lelong numbers and $\deg E_m\geq0$. Apply \eqref{eq:curve-quotient-slope} to the last Harder--Narasimhan quotient of $E_m$ to get
\begin{equation}\label{eq:curve-min-slope}
 \mu_{\min}(E_m)\geq\varepsilon_G.
\end{equation}
Over a complex projective curve the slope criteria give nefness for $\mu_{\min}\geq0$ and ampleness for $\mu_{\min}>0$ \cite[Proposition~6.4.7 and Theorem~6.4.15]{Lazarsfeld2004II}. Since $\varepsilon_G>0$ when $\deg E_m>0$, this proves ampleness in the positive-degree case.

If $\deg E_m=0$, the positive current $T$ is zero. On $U$, the semipositive endomorphism-valued curvature has zero trace and is therefore zero. Apply Lemma~\ref{lem:flat-removability} at each of the finitely many omitted points, using Proposition~\ref{prop:curve-boundary}. Uniqueness on the punctured neighborhoods makes these smooth flat extensions of $G$ agree on overlaps.
\end{proof}
The flatness conclusion is compatible with the degree-zero direct-image theorem of \cite[Theorem~5.2]{CaoPaun2017}. The argument here identifies $G$ directly through its growth and monodromy.

\begin{corollary}\label{cor:curve-multiples}
Under \eqref{eq:curve-pair}, suppose $k\geq1$, $k\Delta$ is integral, and $\deg E_k>0$. Then $E_{qk}$ is ample for every positive integer $q$ such that $qk\geq2$.
\end{corollary}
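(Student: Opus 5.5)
The plan is to reduce everything to Theorem~\ref{thm:curve-dichotomy} at level $N=qk\geq2$. Since $N\Delta$ is integral and the pair hypotheses \eqref{eq:curve-pair} do not depend on the level, that theorem already shows that $E_N$ is nef. It also shows that $E_N$ is either ample with $\deg E_N>0$, or of degree zero and Hermitian-flat. So it suffices to prove $\deg E_N>0$, and I will obtain this from $\deg E_k>0$ through the multiplication of relative pluricanonical forms.

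The first step is to get a positive-slope semistable piece of $E_k$. I do not want to use ampleness or nefness of $E_k$ itself, because the case $k=1$ (possibly with $\Delta\neq0$) is not covered by Theorem~\ref{thm:curve-dichotomy}. Since $\deg E_k>0$, the first Harder--Narasimhan subbundle $E'\subseteq E_k$ has slope $\mu(E')=\mu_{\max}(E_k)\geq\mu(E_k)>0$, and $E'$ is semistable. In characteristic zero, $\operatorname{Sym}^qE'$ is again semistable, with slope $q\mu(E')>0$.

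The second step uses the multiplication map
\[
 \mu_q:\operatorname{Sym}^qE'\longrightarrow\operatorname{Sym}^qE_k\longrightarrow E_N,
\]
induced by $L_k^{\otimes q}=L_N$ on $X$. This map is nonzero: for a nonzero local section $s$ of $E'$, the power $s^q$ is a nonzero section of $L_N$ on the integral variety $X$. Let $F\subseteq E_N$ be the image, which is a nonzero torsion-free quotient of $\operatorname{Sym}^qE'$. Semistability gives $\mu(F)\geq q\mu(E')>0$, so $\deg F>0$. The cokernel $E_N/F$ is a quotient of the nef bundle $E_N$. Its torsion part has nonnegative length, and its torsion-free part is a nef vector bundle, hence has nonnegative degree. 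Therefore
\[
 \deg E_N=\deg F+\deg(E_N/F)\geq\deg F>0.
\]

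With $\deg E_N>0$, alternative (1) of Theorem~\ref{thm:curve-dichotomy} holds, so $E_{qk}$ is ample. The only delicate points are checking that $\mu_q$ is not the zero map, which follows from integrality of $X$, and using semistability of symmetric powers in characteristic zero to control the slope of the image. I expect neither to be a serious obstacle; the analytic input is entirely contained in the earlier theorem.
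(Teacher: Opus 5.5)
Your argument is correct, but it takes a different route from the paper's.

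\textbf{The paper's route.} The paper argues by contradiction inside the dichotomy. If $\deg E_{qk}=0$, then Theorem~\ref{thm:curve-dichotomy} makes $E_{qk}$ unitary flat. The paper then uses the fiber injection \eqref{eq:curve-fiber-injection} and reducedness of the scheme fibers (from the lc hypothesis) to show that the $q$-th power map is nonzero on \emph{every} nonzero vector of every fiber of $E_k$. This produces a genuine line subbundle $\mathcal O(-q)\hookrightarrow p^*E_{qk}$ on $\mathbb P_{\mathrm{lines}}(E_k)$. Since a line subbundle of a flat bundle is seminegative, $\mathcal O_{\mathbb P_q(E_k^*)}(1)$ is nef. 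Hence $E_k^*$ is nef and $\deg E_k\leq0$, a contradiction.

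\textbf{Your route.} You prove $\deg E_{qk}>0$ directly. For this you only need the multiplication map to be generically nonzero, which follows from integrality of $X$. In place of the curvature argument you use Harder--Narasimhan theory, semistability of $\operatorname{Sym}^qE'$ in characteristic zero, and nefness of $E_{qk}$ to control the cokernel.

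\textbf{Comparison.} Your argument is more elementary at this step. It also gives the explicit bound
\[
 \deg E_{qk}\geq q\operatorname{rk}(F)\,\mu_{\max}(E_k).
\]
The paper's argument stays within its metric framework and avoids slope-semistability of symmetric powers. It also yields the stronger intermediate statement that flatness of $E_{qk}$ would force $E_k^*$ to be nef.

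\textbf{A small ordering point.} Theorem~\ref{thm:curve-dichotomy} assumes $E_{qk}\neq0$. So establish that $\mu_q$ is nonzero, which already gives $E_{qk}\neq0$, before you invoke nefness of $E_{qk}$.
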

\begin{proof}
Pure powers of a nonzero generic section show $E_{qk}\ne0$. If it had degree zero, Theorem~\ref{thm:curve-dichotomy} would make it unitary flat. For every $c$, the argument of \eqref{eq:curve-fiber-injection} gives $E_k\otimes\mathbb C(c)\hookrightarrow H^0(X_c,L_k|_{X_c})$. A nonzero section of an invertible sheaf on a reduced scheme has a nonzero $q$th power: a nonzero local coefficient is not nilpotent. The power map into $E_{qk}\otimes\mathbb C(c)$ is therefore nonzero on every nonzero vector, since its subsequent restriction is that nonzero power on $X_c$.

On $p:\mathbb P_q(E_k^*)=\mathbb P_{\mathrm{lines}}(E_k)\to C$, the multiplication morphism consequently gives a nowhere-vanishing line-subbundle map
\[
 \mathcal O(-q)\lhook\joinrel\longrightarrow p^*E_{qk}.
\]
A line subbundle of a unitary-flat bundle has seminegative curvature. Its dual $\mathcal O(q)$ is semipositive. Thus $\mathcal O_{\mathbb P_q(E_k^*)}(1)$ is nef. The curve criterion makes $E_k^*$ nef, forcing $\deg E_k\leq0$, a contradiction. The dichotomy now makes $E_{qk}$ ample. This argument uses only the fiber injection and reducedness, not surjectivity of multiplication or isomorphisms of the base-change maps at singular fibers.
\end{proof}

Returning to the unweighted setting, we obtain both clauses of \cite[Theorem~4.10]{FujinoSemipositivity2015}.
\begin{corollary}[Viehweg's ampleness theorem]\label{cor:curve-fujino}
Let $f:X\to C$ be a surjective morphism from a smooth connected complex projective variety to a smooth connected projective curve, with connected fibers, and put $E_j=f_*\omega_{X/C}^{\otimes j}$.
\begin{enumerate}
\item For every $j\geq1$, $E_j$ is nef, with the zero case understood vacuously.
\item Suppose in addition that $f$ is semistable, so that every scheme fiber is a reduced normal-crossing divisor. If $\deg\det E_k>0$ for some integer $k\geq1$, then $E_{k'}$ is ample for every multiple $k'$ of $k$ with $k'\geq2$.
\end{enumerate}
\end{corollary}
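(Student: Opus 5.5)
Part (1) is exactly Theorem~\ref{thm:curve-unweighted-nef}, so the plan is to cite it: $E_j$ is torsion free, hence locally free on $C$. For $j\geq2$ the fixed-point metric extends with a local lower bound by Theorem~\ref{thm:actual-sheaf-extension}. For $j=1$ the canonical $L^2$ metric of Hacon--Popa--Schnell extends, with the lower bound supplied by Lemma~\ref{lem:extension-flattening}. Lemma~\ref{lem:curve-quotient-degree} with $\eta=0$ then gives $\mu_{\min}\geq0$ and nefness, with no hypothesis on the singular fibers.

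For part (2) I will apply Corollary~\ref{cor:curve-multiples} with $\Delta=0$. The only thing to verify is the pair hypothesis \eqref{eq:curve-pair}. The first condition, that $(X,0)$ is klt, is automatic since $X$ is smooth. For the second, semistability makes each scheme fiber $X_c=f^*c$ a reduced divisor with (simple) normal crossings on the smooth $X$. A reduced normal-crossing divisor on a smooth variety gives a log canonical pair: in local coordinates $X_c=\{z_1\cdots z_k=0\}$, and its log discrepancies along every divisor over $X$ are $\geq0$. So $(X,X_c)$ is lc for every $c$.

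If the crossings are only analytically normal crossing rather than simple, I will check log canonicity analytically locally. The discrepancy computation is local, and after a local analytic change of coordinates one is in the SNC model, so nothing changes. This is the only point needing a moment of care.

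Now let $k'=qk\geq2$. The hypothesis $\deg\det E_k=\deg E_k>0$ puts us exactly in Corollary~\ref{cor:curve-multiples}, so $E_{qk}$ is ample. This covers $k=1$ with $q\geq2$, and every $k\geq2$ with $q\geq1$. For $k=1$ that corollary still applies, since only $qk\geq2$ is required and the level-one input is just the fiber injection \eqref{eq:curve-fiber-injection} together with reducedness.

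I expect no real obstacle. The substantive work is in Proposition~\ref{prop:curve-boundary} and Theorem~\ref{thm:curve-dichotomy}, and the step needing most attention is confirming that ``semistable'' in Fujino's sense matches the lc-fiber condition \eqref{eq:curve-pair}, including the reducedness used in Corollary~\ref{cor:curve-multiples}.
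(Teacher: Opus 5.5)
Your proposal is correct and follows the paper's proof: part (1) is cited from Theorem~\ref{thm:curve-unweighted-nef}, and for part (2) the paper likewise checks \eqref{eq:curve-pair} with $\Delta=0$ from the local model $t=z_1\cdots z_\ell$ of a reduced normal-crossing fiber. It explicitly allows components that are not globally smooth, exactly as in your analytic-local remark, and then applies Corollary~\ref{cor:curve-multiples}, including the case $k=1$.
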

\begin{proof}
The first assertion follows from Theorem~\ref{thm:curve-unweighted-nef}. In the semistable case, the local model is $t=z_1\cdots z_\ell$ in smooth coordinates. The convention is reduced normal crossings, without requiring the irreducible components to be globally smooth. Thus $(X,X_c)$ is log canonical for every $c$, and \eqref{eq:curve-pair} holds with $\Delta=0$. Corollary~\ref{cor:curve-multiples} applies, including when $k=1$.
\end{proof}
The semistability convention is also stated before Theorem~0.3 of \cite{ViehwegZuo2001}. Corollary~\ref{cor:curve-fujino} gives an analytic proof of the theorem requested by Fujino in \cite[Problem~4.11]{FujinoSemipositivity2015}, using the analytic extension inputs in Theorem~\ref{thm:actual-sheaf-extension} and, at level one, \cite[Theorem~21.1]{HaconPopaSchnell2018}. Same-level unweighted ampleness also follows from \cite[Proposition~2.7]{ViehwegZuo2001}. Here $G$ supplies the quotient-slope bound, and the lc-fiber growth estimate removes the boundary atoms.

\subsection{Global generation and interpolation}\label{sec:consequences}

Retain the hypotheses of Theorem~\ref{thm:curve-dichotomy} and suppose that $E=E_m$ has positive degree. Put $M=\det E$, $e=\deg E$, $r=\operatorname{rk}E$, and $g=g(C)$. Proposition~\ref{prop:quotient-slope} gives $\mu_{\min}(E)\geq\varepsilon_G>0$. The last Harder--Narasimhan quotient has positive integral degree and rank at most $r$, so $\mu_{\min}(E)\geq1/r$. Thus
\begin{equation}\label{eq:interpolation-rho}
 \rho=\max\{\varepsilon_G,1/r\}\leq\mu_{\min}(E).
\end{equation}
\begin{corollary}\label{cor:curve-interpolation}
Let $a\geq1$ and $b\geq0$ be integers, let $A$ be a line bundle on $C$, and let $D$ be an effective divisor of degree $s\geq0$, possibly nonreduced. Put
\[
 B_{a,b,A}=\operatorname{Sym}^aE\otimes M^{-b}\otimes A.
\]
If
\begin{equation}\label{eq:interpolation-inequality}
 a\rho-be+\deg A>2g-2+s,
\end{equation}
then $H^1(C,B_{a,b,A}(-D))=0$, and the restriction map
\begin{equation}\label{eq:interpolation-map}
 H^0(C,B_{a,b,A})\longrightarrow H^0(D,B_{a,b,A}|_D)
\end{equation}
is surjective.
\end{corollary}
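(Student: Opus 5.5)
The approach is the standard curve argument: bound the minimal slope of $W=B_{a,b,A}(-D)$ from below, use Serre duality to get $H^1(C,W)=0$, and read off surjectivity from the long exact sequence of the restriction to $D$.

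\emph{Step 1 (slope of the twist).} Work in characteristic zero on the curve $C$. Tensor products of semistable bundles are semistable, so $\mu_{\min}(\mathcal E\otimes\mathcal F)=\mu_{\min}(\mathcal E)+\mu_{\min}(\mathcal F)$. Symmetric powers are direct summands of tensor powers, hence
\[
 \mu_{\min}(\operatorname{Sym}^aE)\geq a\,\mu_{\min}(E)\geq a\rho,
\]
using \eqref{eq:interpolation-rho}. Twisting by the line bundle $M^{-b}\otimes A\otimes\mathcal O_C(-D)$ shifts every slope by its degree $-be+\deg A-s$. Therefore
\[
 \mu_{\min}\bigl(B_{a,b,A}(-D)\bigr)\geq a\rho-be+\deg A-s>2g-2
\]
by \eqref{eq:interpolation-inequality}.

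One could also avoid Harder--Narasimhan theory entirely. The metric argument of Proposition~\ref{prop:quotient-slope}, applied to the induced metric on $\operatorname{Sym}^aE$ via Lemma~\ref{lem:curve-quotient-degree}, gives the quotient-degree bound directly, at least with $\varepsilon_G$ in place of $\rho$. The $1/r$ part of $\rho$, however, is purely algebraic and needs the slope statement. So I would cite the semistability of tensor products, e.g.\ \cite{Lazarsfeld2004II}, which is the cleanest route.

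\emph{Step 2 (vanishing).} By Serre duality,
\[
 H^1(C,W)\simeq H^0(C,W^*\otimes K_C)^*,\qquad W=B_{a,b,A}(-D).
\]
A nonzero section of $W^*\otimes K_C$ would give a nonzero map $\mathcal O_C\to W^*\otimes K_C$, equivalently a nonzero map $W\to K_C$. Its image is a rank-one quotient sheaf of $W$ whose saturation has degree at most $\deg K_C=2g-2$. Since $C$ is a smooth curve, this image, being a subsheaf of $K_C$, is torsion-free and hence a line bundle of degree $\leq 2g-2$. It is a quotient of $W$, so its degree is $\geq\mu_{\min}(W)>2g-2$, a contradiction. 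Hence $H^1(C,W)=0$.

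\emph{Step 3 (surjectivity).} Tensor the exact sequence $0\to\mathcal O_C(-D)\to\mathcal O_C\to\mathcal O_D\to0$ with the locally free sheaf $B_{a,b,A}$. The associated long exact sequence in cohomology shows that the cokernel of \eqref{eq:interpolation-map} injects into $H^1(C,B_{a,b,A}(-D))$, which vanishes by Step 2. A nonreduced $D$ causes no trouble, since only the Cartier divisor sequence is used.

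The only step that carries real content is Step 1: the additivity of $\mu_{\min}$ under tensor and symmetric powers in characteristic zero. The rest is formal.
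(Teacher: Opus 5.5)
Your proof is correct and follows essentially the paper's argument: you bound $\mu_{\min}$ of the twist from below using semistability of tensor and symmetric powers in characteristic zero, then rule out a nonzero map to $K_C$ via Serre duality, and conclude surjectivity from the restriction sequence. The one difference is cosmetic. You bound $\mu_{\min}(\operatorname{Sym}^aE)\geq a\mu_{\min}(E)$ by passing through $E^{\otimes a}$ and a direct-summand argument, whereas the paper uses the filtration on $\operatorname{Sym}^aE$ induced by the Harder--Narasimhan filtration and obtains equality; only the inequality is needed.
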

\begin{proof}
Tensor and symmetric powers of semistable bundles on a complex curve are semistable \cite[Corollary~6.4.14]{Lazarsfeld2004II}. The filtration induced by the Harder--Narasimhan filtration of $E$ on $\operatorname{Sym}^aE$ has graded factors $\bigotimes_j\operatorname{Sym}^{a_j}Q_j$, where $\sum_j a_j=a$. Their slopes are at least $a\mu_{\min}(E)$. The symmetric power of the last quotient attains this slope. Consequently
\[
 \mu_{\min}(\operatorname{Sym}^aE)=a\mu_{\min}(E),\qquad
 \mu_{\min}(B_{a,b,A}(-D))
 =a\mu_{\min}(E)-be+\deg A-s>2g-2.
\]
A nonzero map from $B_{a,b,A}(-D)$ to $K_C$ would have quotient-line image $K_C(-Z)$ for an effective divisor $Z$, contrary to this slope bound. Serre duality gives the vanishing, and the restriction sequence gives surjectivity.
\end{proof}

For divisors of length one, the criterion gives global generation of $B_{a,b,A}$ on $C$.

\section{Numerical dimension and vanishing theorems}\label{sec:numerical-vanishing}

Throughout this section let $f:X\to Y$ be a smooth surjective projective morphism between smooth connected complex projective varieties, with connected positive-dimensional fibers of general type. Write
\[
 d=\dim Y\geq1,\qquad E=E_m=f_*\omega_{X/Y}^{\otimes m},\qquad
 M=\det E,\qquad \alpha=\frac{m-1}{m},
\]
where $m\geq2$ and $r=\operatorname{rank}E>0$. The sheaf $E$ is locally free and its fiberwise base-change maps are isomorphisms on all of $Y$ by \eqref{eq:complete-base-change}. Put $v=\operatorname{Var}(f)$ and assume one of the following conditions on a general fiber $F$:
\begin{enumerate}
\item[(G)] $mK_F$ is globally generated;
\item[(B)] the $m$-canonical map $\varphi_m$ is birational onto its image.
\end{enumerate}
Keep $U_m^{\mathrm{reg}}$ fixed as in \eqref{eq:regularity-open}, and choose the Zariski-open subset $U_m^\circ$ from Corollary~\ref{cor:unified-variation-rank}. On $U_m^\circ$, the determinant form $\eta=i\operatorname{Tr}\Theta_{E,G}$ has rank $v$ and $\ker K=(\ker\eta)\otimes E$. The kernel of $\eta$ is $\ker\rho$ in case (G) and $\ker db_m$ in case (B). In the latter case take $U_m^\circ\subseteq U_m^{\mathrm{var}}$, the domain of the level-$m$ classifier in \eqref{eq:variation-open}. These rank and kernel identities lead to numerical-dimension formulas through bounded-potential intersections, and independently to vanishing through approximate $L^2$ solutions. Neither argument requires a log canonical threshold condition.

\subsection{Continuous metrics and numerical dimension}
Proposition~\ref{prop:compact-moment} proves continuity of the evaluation quadratic form $\mathfrak q_y$ on the complex vector spaces $\operatorname{End}(E_y)$ over all of $Y$ and gives
\begin{equation}\label{eq:global-full-moment}
 \varepsilon_*=\min_{\substack{y\in Y\\
                 \|B\|_{\mathrm{HS},G_y}=1}}\mathfrak q_y(B)>0,
 \qquad
 \sigma_*=\frac{\varepsilon_*}{1-\alpha+r\varepsilon_*}.
\end{equation}
This constant will be used throughout the section. By Theorem~\ref{thm:determinant-pinching}, $0<\sigma_*\leq\alpha/r$.

\begin{proposition}\label{prop:continuous-determinant-current}
The metric $G$ is continuous and positive definite on $Y$, and smooth on $U_m^{\mathrm{reg}}$. On $U_m^{\mathrm{reg}}$ put $\eta=i\operatorname{Tr}\Theta_{E,G}$. Then
\begin{equation}\label{eq:regular-locus-pinching}
 \eta\geq0,\qquad
 \sigma_*\eta\otimes I_E\leq_{\mathrm{Nak}}i\Theta_{E,G}
 \leq_{\mathrm{Nak}}\bigl[r-(r^2-1)\sigma_*\bigr]\eta\otimes I_E.
\end{equation}
On $U_m^\circ$ the form $\eta$ has rank $v$. The determinant metric defines a positive current $T=c_1(M,\det G)$ on $Y$ with continuous local potentials, and $T=\eta/(2\pi)$ on $U_m^{\mathrm{reg}}$.
\end{proposition}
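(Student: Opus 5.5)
The proposition collects earlier results; the only new content is the global current $T$. I would argue in four steps.

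\emph{Step 1: regularity of $G$.} Continuity and positive definiteness on $Y$ come from Lemma~\ref{lem:holder-continuity}. That lemma applies because $f$ is smooth, so \eqref{eq:complete-base-change} gives local freeness and base-change isomorphisms everywhere. Smoothness on $U_m^{\mathrm{reg}}$ is Theorem~\ref{thm:controlled-regularity}.

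\emph{Step 2: pinching and rank.} On $U_m^{\mathrm{reg}}$, Theorem~\ref{thm:curvature-resolvent} gives $K\geq0$, hence $\eta\geq0$. For the pinching \eqref{eq:regular-locus-pinching}, apply Theorem~\ref{thm:determinant-pinching} pointwise with $\varepsilon=\varepsilon_*$. This is admissible because Proposition~\ref{prop:compact-moment} gives $\varepsilon_*\leq\varepsilon_y$ at every $y$. One point to check is that the Nakano form $i\Theta_{E,G}$ paired with $\xi\otimes u$ matches the tensor $K$ of \eqref{eq:nakano-block}. Under that identification, $\tau\otimes I$ corresponds to $\eta\otimes I_E$. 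The rank statement on $U_m^\circ$ is Corollary~\ref{cor:unified-variation-rank}.

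\emph{Step 3: the current $T$.} In a local holomorphic frame, set $\phi=-\log\det G$. This function is continuous on $Y$, since $\det G$ is continuous and positive, and it is smooth on $U_m^{\mathrm{reg}}$. There $i\partial\bar\partial\phi=i\operatorname{Tr}\Theta=\eta\geq0$, using convention \eqref{eq:chern-convention}, under which $A=e^{-\varphi}$ has curvature $\varphi_{a\bar b}$. So $\phi$ is psh on $U_m^{\mathrm{reg}}$.

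To extend across the proper analytic set $Y\setminus U_m^{\mathrm{reg}}$, I would use that $\phi$ is continuous, hence locally bounded. The Grauert--Remmert removable singularity theorem then produces a unique psh extension of $\phi|_{U_m^{\mathrm{reg}}}$. This extension is the upper semicontinuous regularization $\limsup$ over the complement. Because $\phi$ is continuous and the complement is nowhere dense, that $\limsup$ equals $\phi$ itself. Hence $\phi$ is psh on $Y$.

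A frame change $B$ alters $\phi$ by $-\log|\det B|^2$, which is pluriharmonic. So the local potentials glue to a positive current $T=\frac{i}{2\pi}\partial\bar\partial\phi$ on $Y$. This current represents $c_1(M)$, since it is the curvature of a continuous metric on $M$, and it equals $\eta/(2\pi)$ on $U_m^{\mathrm{reg}}$.

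\emph{Main obstacle.} The step needing care is the extension argument in Step 3. Plurisubharmonicity is known only on the open set, and one must verify that the continuous function itself, not merely some extension, is psh across the complement. The continuity-plus-$\limsup$ argument handles this, but only because $G$ is continuous globally. That is exactly why the global Hölder continuity of Lemma~\ref{lem:holder-continuity} is required here, rather than only regularity on $U_m^{\mathrm{reg}}$.
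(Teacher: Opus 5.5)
Your proposal is correct and takes essentially the same route as the paper. Regularity comes from Lemma~\ref{lem:holder-continuity} and Theorem~\ref{thm:controlled-regularity}, the pinching from Theorem~\ref{thm:determinant-pinching} with the uniform constant $\varepsilon_*\leq\varepsilon_y$ of Proposition~\ref{prop:compact-moment}, and the rank from Corollary~\ref{cor:unified-variation-rank}; the weight $-\log\det G$ is extended across $Y\setminus U_m^{\mathrm{reg}}$ by the removable-singularity theorem for psh functions bounded above, and continuity identifies the extension with the original weight, a step you justify more explicitly through the $\limsup$ description.
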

\begin{proof}
Continuity and smoothness follow from Lemma~\ref{lem:holder-continuity} and Theorem~\ref{thm:controlled-regularity}. The uniform lower bound for $\mathfrak q_y$ and Theorem~\ref{thm:determinant-pinching} give \eqref{eq:regular-locus-pinching}. Corollary~\ref{cor:unified-variation-rank} gives rank $v$ on $U_m^\circ$. In a local frame of $M$, the weight $-\log\det G$ is continuous and psh off the proper analytic set $Y\setminus U_m^{\mathrm{reg}}$. The removable-singularity theorem for locally bounded-above psh functions extends it across that set. Continuity identifies the extension with the original weight. These weights define $T$, and its restriction to $U_m^{\mathrm{reg}}$ is $\eta/(2\pi)$.
\end{proof}

The following lemma identifies numerical dimension from curvature rank and continuous potentials.
\begin{lemma}\label{lem:bounded-potential-rank}
Let $W$ be a smooth connected projective variety of dimension $N$, and let $L$ carry a continuous Hermitian metric $h$ with psh local weights. Put $S=c_1(L,h)$. If $S$ is smooth of constant rank $s$ on the complement of a proper closed analytic subset $A$, then $L$ is nef and $\nu(L)=s$.
\end{lemma}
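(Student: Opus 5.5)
The plan is to argue with Bedford--Taylor intersection theory for the locally bounded (indeed continuous) psh weights of $h$. First, nefness. For an irreducible curve $\Gamma\subset W$ with normalization $\nu:\widetilde\Gamma\to W$, the pulled-back weights remain continuous and subharmonic, unless $\nu(\widetilde\Gamma)$ lies in the polar set. That cannot happen, since the weights are finite everywhere. So $\nu^*h$ is a continuous metric whose curvature current $\nu^*S$ is positive. Its total mass equals $\deg\nu^*L$, by comparison with a smooth metric on a compact curve, exactly as in Lemma~\ref{lem:curve-quotient-degree}. Hence $L\cdot\Gamma\geq0$ and $L$ is nef.

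Next, the key identity. Fix a Kähler form $\omega$ in an ample class $H$. For every $0\leq k\leq N$, the Bedford--Taylor product $S^k$ is a well-defined closed positive current with continuous potentials, and I claim
\[
 L^k\cdot H^{N-k}=\int_W S^k\wedge\omega^{N-k}.
\]
Cohomological invariance of the class of $S^k$ follows by writing $S=\theta+dd^c u$ with $\theta$ smooth and $u$ continuous, then using the standard integration by parts for bounded potentials. Continuity of $u$ is what makes Bedford--Taylor continuity under decreasing or uniform approximation available.

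The second point is that $S^k$ puts no mass on the proper analytic set $A$. I would cite the fact that Monge--Ampère products of locally bounded psh functions do not charge pluripolar sets. This is a standard result of Bedford--Taylor; equivalently, the nonpluripolar product coincides with the full product for bounded potentials. It follows that
\[
 \int_W S^k\wedge\omega^{N-k}=\int_{W\setminus A}S^k\wedge\omega^{N-k},
\]
and on $W\setminus A$ this is the pointwise wedge of the smooth form $S$.

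Pointwise on $W\setminus A$, the smooth semipositive $(1,1)$ form $S$ has rank exactly $s$, so $S^k\equiv0$ for $k>s$. For $k=s$, the form $S^s\wedge\omega^{N-s}$ is strictly positive at every point of $W\setminus A$. Since $A$ is proper and $W\setminus A$ is a nonempty open set, the integral is positive. Therefore $L^{s+1}\cdot H^{N-s-1}=0$ while $L^s\cdot H^{N-s}>0$. For a nef line bundle, the numerical dimension is the largest $k$ with $L^k\cdot H^{N-k}\neq0$, and the vanishing at $k=s+1$ forces vanishing for all larger $k$ by the Khovanskii--Teissier/Hodge index log-concavity of the sequence $L^k\cdot H^{N-k}$ for nef classes. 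Thus $\nu(L)=s$.

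The main obstacle is justifying the no-mass statement together with the identification of the full Bedford--Taylor product with the pointwise smooth product on $W\setminus A$. I would handle it locally: on $W\setminus A$ the product is classical, because Bedford--Taylor products of smooth potentials agree with the smooth wedge. For the set $A$, I would use the Chern--Levine--Nirenberg estimates together with the fact that $\mathbf 1_A\,(dd^cu)^k\wedge\omega^{N-k}=0$ for bounded psh $u$ and pluripolar $A$. The cleanest route is to cite this vanishing, with a cutoff-function argument near $A$ as the fallback.
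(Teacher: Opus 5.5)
Your proposal is correct and follows essentially the same route as the paper: nefness from degrees on normalized curves with continuous subharmonic weights, the identity $c_1(L)^k\cdot[\omega]^{N-k}=\int_W S^k\wedge\omega^{N-k}$ via Bedford--Taylor products of continuous potentials, no mass on the pluripolar set $A$, and a pointwise rank count on $W\setminus A$. The only difference is that your appeal to Khovanskii--Teissier log-concavity is unnecessary. You already have $S^k\equiv0$ on $W\setminus A$ for every $k>s$, and together with the no-mass statement this gives $L^k\cdot H^{N-k}=0$ for all $k>s$ directly, which is how the paper concludes.
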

\begin{proof}
For every integral curve $C\subset W$, including a curve contained in $A$, the metric pulls back to finite continuous subharmonic weights on the normalization of $C$. Its curvature is a positive measure whose mass is $\deg(L|_C)$: compare with a smooth metric and integrate the distributional $\partial\bar\partial$ of their global continuous weight difference. By Stokes' theorem, this distributional derivative has total mass zero. Thus every curve has nonnegative degree, so $L$ is nef.

Here and below products of currents with locally bounded psh potentials are Bedford--Taylor products, as recalled in \cite[\S1.2]{BoucksomEyssidieuxGuedjZeriahi2010}. To verify their cohomology classes, choose a smooth reference metric with curvature $\theta\in c_1(L)$ and write
\[
 S=\theta+\frac{i}{2\pi}\partial\bar\partial\psi,
 \qquad \psi\in C^0(W).
\]
The local Bedford--Taylor definition and closedness give, for $j\geq1$,
\[
 S^j-\theta\wedge S^{j-1}
 =\frac{i}{2\pi}\partial\bar\partial(\psi S^{j-1}).
\]
Since $S^{j-1}$ has measure coefficients, multiplication by the continuous function $\psi$ is defined. All terms are global, so induction gives $[S^j]=c_1(L)^j$. Hence, for any K\"ahler form $\Omega$ on $W$,
\begin{equation}\label{eq:bounded-potential-intersections}
 c_1(L)^k\cdot[\Omega]^{N-k}
 =\int_W S^k\wedge\Omega^{N-k},\qquad 0\leq k\leq N.
\end{equation}

Locally the measure on the right is a mixed product of bounded psh potentials, including smooth local potentials for $\Omega$. Such Bedford--Taylor products agree with the non-pluripolar products and put no mass on pluripolar sets \cite[\S1.2]{BoucksomEyssidieuxGuedjZeriahi2010}. A proper analytic set is locally pluripolar, so the measure gives no mass to $A$. For $k=0$ this follows from smoothness of the volume form. These facts use bounded potentials and impose no bigness hypothesis on $L$.

We may therefore compute \eqref{eq:bounded-potential-intersections} on $W\setminus A$. The integrand vanishes for $k>s$ and is strictly positive there for $k=s>0$. A relatively compact coordinate ball in $W\setminus A$ therefore contributes positive mass when $k=s>0$. For $s=0$ all positive-degree intersections vanish and the zeroth intersection is $\int_W\Omega^N>0$. The intersection characterization of numerical dimension for a nef line bundle proves the result.
\end{proof}

\begin{corollary}[Numerical dimension equals variation dimension]\label{cor:numerical-dimension}
Under the common hypotheses of this section, $M$ is nef and
\[
 \nu(M)=v.
\]
\end{corollary}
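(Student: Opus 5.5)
The plan is to apply Lemma~\ref{lem:bounded-potential-rank} with $W=Y$, $L=M$, and the determinant metric $h=\det G$. Proposition~\ref{prop:continuous-determinant-current} already supplies what the lemma needs for the metric. It says $G$ is continuous and positive definite on all of $Y$, so $\det G$ is a continuous Hermitian metric on $M$. It also says the local weights $-\log\det G$ are psh on all of $Y$, after removable-singularity extension across $Y\setminus U_m^{\mathrm{reg}}$. Finally, $S=c_1(M,\det G)=T$ agrees with $\eta/(2\pi)$ on $U_m^{\mathrm{reg}}$, where $\eta=i\operatorname{Tr}\Theta_{E,G}$ is smooth.

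What remains is the constant-rank hypothesis, and this is the one point needing care. Proposition~\ref{prop:continuous-determinant-current} gives rank $v$ only on $U_m^\circ$, and the rank of $\eta$ could drop on $U_m^{\mathrm{reg}}\setminus U_m^\circ$. So take $A=Y\setminus U_m^\circ$ instead. By Corollary~\ref{cor:unified-variation-rank}, $U_m^\circ$ is a nonempty Zariski-open subset of $U_m^{\mathrm{reg}}$, so $A$ is a proper closed analytic subset of $Y$. On $Y\setminus A=U_m^\circ$ the current $S=\eta/(2\pi)$ is smooth of constant rank $v$.

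The lemma allows an arbitrary proper analytic set $A$, because its proof uses only Bedford--Taylor measures with bounded potentials, which put no mass on pluripolar sets. Nothing more is needed on the larger exceptional set. Applying the lemma with $s=v$ then gives that $M$ is nef and $\nu(M)=v$.

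For completeness, the sign $\eta\geq0$ and the vanishing of $\eta^k$ for $k>v$ on $U_m^\circ$ are pointwise consequences of semipositivity and rank, as the lemma uses them. The case $v=0$ is covered by the lemma's $s=0$ branch. Since the lemma was designed for exactly this situation, I expect no genuine obstacle beyond the choice of $A$ above.
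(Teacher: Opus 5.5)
Your proposal is correct and follows the paper's proof: the paper applies Lemma~\ref{lem:bounded-potential-rank} to the determinant current $T=c_1(M,\det G)$ of Proposition~\ref{prop:continuous-determinant-current}, taking $A=Y\setminus U_m^\circ$ as you do. It then records the resulting identity $c_1(M)^k\cdot[\omega]^{d-k}=(2\pi)^{-k}\int_{U_m^\circ}\eta^k\wedge\omega^{d-k}$, which is the mixed-intersection computation inside that lemma.
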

\begin{proof}
Apply Lemma~\ref{lem:bounded-potential-rank} to the determinant current of Proposition~\ref{prop:continuous-determinant-current}, with $A=Y\setminus U_m^\circ$. Explicitly, for a K\"ahler form $\omega$ on $Y$ and $0\leq k\leq d$,
\begin{equation}\label{eq:numerical-mixed-intersections}
 c_1(M)^k\cdot[\omega]^{d-k}
 =\int_Y T^k\wedge\omega^{d-k}
 =\frac{1}{(2\pi)^k}\int_{U_m^\circ}\eta^k\wedge\omega^{d-k}.
\end{equation}
The positive-degree intersections vanish when $v=0$, while the zeroth one remains positive.
\end{proof}

\begin{corollary}\label{cor:tautological-numerical-dimension}
Under the common hypotheses, the following tautological line bundles are nef, and
\begin{align}
 \nu\bigl(\mathcal O_{\mathbb P_q(E)}(1)\bigr)&=r-1+v,
 \label{eq:tautological-numerical-dimension}\\
 \nu\bigl(\mathcal O_{\mathbb P_q(\operatorname{Sym}^{\ell}E)}(1)\bigr)
 &=R_\ell-1+v\qquad(\ell\geq1),
 \label{eq:symmetric-tautological-numerical-dimension}
\end{align}
where $R_\ell=\operatorname{rank}(\operatorname{Sym}^{\ell}E)$ and projectivization has the quotient convention \eqref{eq:principal-projectivization}.
\end{corollary}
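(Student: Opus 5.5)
The plan is to apply Lemma~\ref{lem:bounded-potential-rank} on $W=\mathbb P_q(\operatorname{Sym}^{\ell}E)$, where $N=d+R_\ell-1$. The case $\ell=1$ is \eqref{eq:tautological-numerical-dimension}. We equip $L=\mathcal O(1)$ with the quotient metric induced by the metric $\operatorname{Sym}^{\ell}G$ on $\operatorname{Sym}^{\ell}E$. In a local frame with matrix $A$ and projective row $\zeta$, its weight is
\[
\log(\zeta A^{-1}\zeta^*),
\]
as in \eqref{eq:fixed-tautological-weight}.

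\textbf{Step 1: continuity and plurisubharmonicity of the weight.} Since $G$ is continuous and positive definite on all of $Y$ (Lemma~\ref{lem:holder-continuity}), so is $\operatorname{Sym}^{\ell}G$. The weight is therefore continuous on $W$. Over $U_m^{\mathrm{reg}}$, the metric $G$ is smooth and Nakano semipositive (Theorem~\ref{thm:curvature-resolvent}), and symmetric powers inherit Griffiths semipositivity. Hence the logarithmic dual norms are psh there, and the weight is psh over the preimage of $U_m^{\mathrm{reg}}$. The removable-singularity theorem for locally bounded psh functions, together with continuity, extends plurisubharmonicity across the preimage of $Y\setminus U_m^{\mathrm{reg}}$. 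This is the same argument as in Proposition~\ref{prop:continuous-determinant-current}.

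\textbf{Step 2: the curvature rank on the good open set.} Take $A=p^{-1}(Y\setminus U_m^\circ)$. At a point over $U_m^\circ$, use a $G$-Chern-normal frame, which induces a normal frame of $\operatorname{Sym}^{\ell}E$. The curvature splits as in \eqref{eq:tautological-curvature-split}: a horizontal term $\eta K^{(\ell)}_\xi\eta^*/|\eta|^2$ plus the vertical Fubini--Study form, which has rank $R_\ell-1$ and no mixed terms.

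For the horizontal term, we need its kernel. By pinching \eqref{eq:regular-locus-pinching}, $K$ is bounded between positive multiples of $\tau\otimes I$ in the Griffiths sense. The induced curvature on $\operatorname{Sym}^{\ell}E$ acts as a derivation, so in the Griffiths sense
\[
\ell\sigma_*\,\tau\otimes I\leq K^{(\ell)}\leq \ell\bigl[r-(r^2-1)\sigma_*\bigr]\,\tau\otimes I.
\]
Consequently, for every nonzero $\eta$, the horizontal form $\xi\mapsto\eta K^{(\ell)}_\xi\eta^*/|\eta|^2$ has kernel exactly $\ker\tau$. By Corollary~\ref{cor:unified-variation-rank}, this kernel has rank $v$ on $U_m^\circ$. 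The total curvature form therefore has constant rank $R_\ell-1+v$ on $W\setminus A$.

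\textbf{Step 3: conclusion.} Lemma~\ref{lem:bounded-potential-rank} then gives nefness and
\[
\nu(L)=R_\ell-1+v.
\]

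\textbf{Expected obstacle.} The delicate point is Step 2's claim that the rank is exactly constant. Constancy of the rank requires the pinching to hold for the induced tensor-power curvature uniformly in $\eta$. I would verify this by diagonalizing the Hermitian form in the base variable: on $\ker\tau$ the whole tensor $K$ vanishes by \eqref{eq:analytic-kernel}, and on a complement the Griffiths lower bound gives strict positivity. The other point to check is that the horizontal and vertical parts add ranks with no cancellation. This holds because the frame is normal, so the mixed derivatives vanish and the form is block diagonal.
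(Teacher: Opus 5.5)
Your proposal is correct and follows the same route as the paper. You put the induced metric on $\operatorname{Sym}^{\ell}E$ with $\ell$-fold pinching bounds, extend the continuous quotient weight plurisubharmonically, and use the block-diagonal splitting in a Chern-normal frame to get curvature rank $R_\ell-1+v$ over $U_m^\circ$, then apply Lemma~\ref{lem:bounded-potential-rank} with exceptional set $p^{-1}(Y\setminus U_m^\circ)$. The only difference is that you state the symmetric-power pinching in the Griffiths sense rather than the Nakano sense, which is harmless because the horizontal block evaluates the curvature on the single vector given by the quotient row and so only needs Griffiths bounds.
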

\begin{proof}
Put $V_\ell=\operatorname{Sym}^{\ell}E$ and equip it with the metric $G_\ell$ induced by $G$. On $E^{\otimes\ell}$ the curvature is the sum of the $\ell$ factor curvatures. Each summand is the curvature tensor of one factor tensored with the identity on the remaining factors, so it satisfies the same Nakano bounds. The symmetrizing projection is parallel and orthogonal, so both bounds in \eqref{eq:regular-locus-pinching} restrict to $V_\ell$. With $C_*=r-(r^2-1)\sigma_*$ this gives
\begin{equation}\label{eq:symmetric-power-two-sided}
 \ell\sigma_*\eta\otimes I_{V_\ell}
 \leq_{\mathrm{Nak}}i\Theta_{V_\ell,G_\ell}
 \leq_{\mathrm{Nak}}\ell C_*\eta\otimes I_{V_\ell}
 \quad\text{on }U_m^{\mathrm{reg}}.
\end{equation}

On $p_\ell:Z_\ell=\mathbb P_q(V_\ell)\to Y$, the quotient metric on $L_\ell=\mathcal O_{Z_\ell}(1)$ has weight $\log(\lambda G_\ell^{-1}\lambda^*)$ in a projective quotient chart with row coordinate $\lambda$. The weight is continuous and finite. Griffiths semipositivity makes it psh over $U_m^{\mathrm{reg}}$, and removable singularities extend this property to all of $Z_\ell$.

At $y\in U_m^\circ$, choose a Chern-normal frame for $G_\ell$. As in \eqref{eq:tautological-curvature-split}, the unnormalized quotient curvature has no mixed horizontal--vertical terms. Its vertical block is Fubini--Study of rank $R_\ell-1$. For every quotient row $\lambda\ne0$, its horizontal block $H_\lambda$ satisfies
\[
 \ell\sigma_*\eta\leq H_\lambda\leq\ell C_*\eta.
\]
Consequently $H_\lambda(\xi,\bar\xi)=0$ exactly when $\eta(\xi,\bar\xi)=0$. For a semipositive Hermitian form the zero quadratic vectors are its kernel, so $\ker H_\lambda=\ker\eta$ and $\operatorname{rank}H_\lambda=v$ at every quotient point. The total curvature rank is therefore $R_\ell-1+v$. Lemma~\ref{lem:bounded-potential-rank} on $Z_\ell$, with analytic exceptional set $p_\ell^{-1}(Y\setminus U_m^\circ)$, proves nefness and \eqref{eq:symmetric-tautological-numerical-dimension}. Taking $\ell=1$ gives \eqref{eq:tautological-numerical-dimension}.

When $r=1$ the vertical block is absent and $L_\ell$ identifies with $E^{\otimes\ell}$ on $Y$, giving the same formula. When $v=0$ only the vertical block contributes. The argument uses the induced symmetric-power metric throughout and makes no identification with $E_{\ell m}$.
\end{proof}

\subsection{Vanishing for a continuous metric}
For vanishing, global continuity provides norm equivalence with a smooth metric, while the curvature estimate is used only on the smooth locus.
\begin{lemma}\label{lem:bounded-metric-vanishing}
Let $(Y,\omega)$ be a compact connected K\"ahler manifold of dimension $d$, $F$ a holomorphic vector bundle, and $Z\subset Y$ a proper closed analytic subset. Let $h$ be a continuous positive definite Hermitian metric on $F$, smooth and Nakano semipositive on $U=Y\setminus Z$. Fix $1\leq q\leq d$. If
\[
 B_q=[i\Theta_{F,h},\Lambda_\omega]
\]
on $F$-valued $(d,q)$-forms is positive definite almost everywhere on $U$, then $H^q(Y,K_Y\otimes F)=0$.
\end{lemma}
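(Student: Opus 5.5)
The plan is to show that every class in $H^q(Y,K_Y\otimes F)$ has a harmonic (hence smooth) representative $u$ that is an $L^2$-limit of $\bar\partial$-exact forms. Closedness of the range of $\bar\partial$ on the compact manifold $Y$ will then force $u$ to be exact. Since $h$ is continuous and positive definite on compact $Y$, it is uniformly equivalent to any smooth reference metric $h_0$ on $F$. Consequently the $L^2$ spaces of $F$-valued forms defined with $h$ and with $h_0$ (and with $\omega$) coincide, with equivalent norms. Fix such a $u$, smooth and $\bar\partial$-closed of type $(d,q)$ with $q\geq1$.

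\emph{Step 1 (approximate solutions on $U$).} The complement $U=Y\setminus Z$ of a proper analytic subset of a compact Kähler manifold carries a complete Kähler metric $\widehat\omega$. Put $\omega_\varepsilon=\omega+\varepsilon\widehat\omega$. For $(d,q)$-forms the pointwise quantity $|u|^2_{\omega_\varepsilon}dV_{\omega_\varepsilon}$ is nonincreasing in the metric. Likewise $\langle B_{q,\omega_\varepsilon}^{-1}u,u\rangle dV_{\omega_\varepsilon}\leq\langle B_{q,\omega}^{-1}u,u\rangle dV_\omega$, by Demailly's monotonicity lemma; this is where the semipositivity of $h$ on $U$ enters. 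Because $B_q$ is positive only almost everywhere, and not uniformly, I will not solve $\bar\partial v=u$ directly. Instead I will use the twisted a priori inequality $\|\bar\partial^*g\|^2+\|\bar\partial g\|^2\geq\int\langle B_qg,g\rangle$ on the complete manifold $(U,\omega_\varepsilon)$. Combined with Riesz representation, as in Demailly's approximate Hörmander estimate, it yields, for each $\delta>0$, forms $v_{\varepsilon,\delta}$ and $w_{\varepsilon,\delta}$ on $U$ with
\[
 \bar\partial v_{\varepsilon,\delta}+w_{\varepsilon,\delta}=u,\qquad
 \|v_{\varepsilon,\delta}\|^2+\delta^{-1}\|w_{\varepsilon,\delta}\|^2
 \leq\int_U\langle(B_q+\delta)^{-1}u,u\rangle\,dV_\omega .
\]

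\emph{Step 2 (limits).} Let $\varepsilon\to0$ with $\delta$ fixed. Weak $L^2$ compactness, applied on compact subsets of $U$ with norms that dominate the $\omega$-norms, gives limits $v_\delta$ and $w_\delta$ with the same bound for $\omega$. The right-hand side is at most $\delta^{-1}\int|u|^2$. Moreover
\[
 \delta\langle(B_q+\delta)^{-1}u,u\rangle\leq|u|^2,
\]
and this quantity tends to $0$ almost everywhere on $U$ as $\delta\to0$, since $B_q>0$ a.e. Dominated convergence, together with the fact that $Z$ has measure zero, gives $\|w_\delta\|^2\to0$. The bound $\|v_\delta\|^2\leq\int\langle(B_q+\delta)^{-1}u,u\rangle$ need not stay bounded, and this is exactly why only approximate solutions are available. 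What we do get is $u\in\overline{\bar\partial(\mathrm{Dom})}+\{\text{arbitrarily small }w\}$, i.e.\ $u$ lies in the $L^2$ closure of the range.

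\emph{Step 3 (extension and closed range).} The equation $\bar\partial v_\delta=u-w_\delta$ holds on $U$ in the sense of currents, with all terms in $L^2(Y)$. An $L^2$ form $v$ whose $\bar\partial$ on $Y\setminus Z$ is in $L^2$ satisfies the same equation across the analytic set $Z$. This is the standard $L^2$ extension lemma, proved with cutoffs $\chi_k$ equal to $1$ away from $Z$ and with $\|\bar\partial\chi_k\|_{L^2}\to0$ in the relevant sense; the continuity of $h$ is what lets the $h_0$-norms stay controlled. Thus $u-w_\delta\in\operatorname{Range}(\bar\partial)$ in $L^2(Y,h_0)$. On the compact manifold $Y$ with smooth data, Hodge theory makes this range closed, and it is orthogonal to the harmonic space. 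Letting $\delta\to0$ shows that the harmonic form $u$ is orthogonal to itself, so $u=0$ and the class vanishes.

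I expect the main obstacle to be Step 3, specifically the removability of $Z$ for the $\bar\partial$-equation with only $L^2$ control: the $L^2$ norms involved are those of the complete metrics $\omega_\varepsilon$ and of the singular-looking but continuous metric $h$. The first thing to try is the cutoff argument for $(d,q-1)$-forms. Since $Z$ has real codimension at least $2$, cutoffs with small $\|\bar\partial\chi_k\|_{L^2}$ exist, and this should make the error terms $\bar\partial\chi_k\wedge v_\delta$ vanish in the limit. The paper's ``closed range theorem'' is then applied on $Y$ rather than on $U$, which sidesteps the failure of uniform positivity of $B_q$.
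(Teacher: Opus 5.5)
Your proposal is correct and follows essentially the paper's route. You construct approximate $L^2$ solutions $\bar\partial v_\delta+w_\delta=u$ on $U$ with $\|w_\delta\|\to0$ by dominated convergence, remove $Z$ using only $L^2$ control with respect to $(\omega,h)\sim(\omega,h_0)$ (your cutoff argument reproves the Demailly Lemma~6.9 that the paper cites), and finish with Hodge theory on compact $Y$. The differences are minor: you rederive the noncomplete-metric approximate-solution lemma that the paper cites from Guan--Mi--Yuan via $\omega+\varepsilon\widehat\omega$ and weak limits, and you conclude by orthogonality of the range to $h_0$-harmonic forms rather than by closed range. In that rederivation, the monotonicity you actually need is for $(B_q+\delta)^{-1}$, not $B_q^{-1}$; it follows by writing $\delta=[\tfrac{\delta}{q}\omega_\varepsilon,\Lambda_{\omega_\varepsilon}]$ on $(d,q)$-forms.
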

\begin{proof}
We use the maximal distributional $\bar\partial$ operator in each degree, with domain the $L^2$ forms whose distributional $\bar\partial$ is also $L^2$. The kernel in each degree is closed. All norms and orthogonal projections on $U$ below use the fixed pair $(\omega|_U,h)$. The manifold $U$ admits a complete K\"ahler metric by \cite[Theorem~0.2]{Demailly1982L2}. The restricted $\omega$ need not be complete.

Let $u$ be a smooth global $\bar\partial$-closed $F$-valued $(d,q)$ representative. Nakano semipositivity gives $B_q\geq0$ on $U$. Apply the approximate-solution theorem \cite[Lemma~9.10]{GuanMiYuan2022}, as recorded in \cite[Lemma~2.9]{InayamaMatsumura2024}, with auxiliary functions $\eta=g=1$ and $\lambda=\delta>0$ constant. Their boundedness requirements are then satisfied, $B_q+\delta I>0$, and its inverse integral is at most $\delta^{-1}\|u\|_h^2<\infty$. The theorem permits the fixed, possibly noncomplete metric $\omega$. It gives $v_\delta\in L^2_{d,q-1}(U,F)$ and $\tau_\delta\in L^2_{d,q}(U,F)$ such that
\begin{equation}\label{eq:bounded-metric-approximate-solution}
 \bar\partial v_\delta+P_h(\sqrt\delta\,\tau_\delta)=u,
 \qquad
 \frac12\|v_\delta\|_h^2+\|\tau_\delta\|_h^2
 \leq\int_U\langle(B_q+\delta I)^{-1}u,u\rangle_h\,dV_\omega,
\end{equation}
where $P_h$ is the orthogonal projection onto the maximal $\bar\partial$ kernel in degree $(d,q)$. The coefficient is exactly $(1+1^{-1})^{-1}=1/2$.

Put $r_\delta=P_h(\sqrt\delta\,\tau_\delta)$. Projection decreases the norm, so
\begin{equation}\label{eq:bounded-metric-error}
 \|r_\delta\|_h^2
 \leq\int_U\langle\delta(B_q+\delta I)^{-1}u,u\rangle_h\,dV_\omega
 \longrightarrow0.
\end{equation}
Indeed $0\leq\delta(B_q+\delta I)^{-1}\leq I$, and this endomorphism tends to zero wherever $B_q$ is positive definite. Dominated convergence applies since $u$ is $L^2$.

Continuity and positive definiteness on compact $Y$ make $h$ uniformly equivalent to a smooth reference metric $h_0$. Thus $v_\delta$ and $r_\delta$ have ordinary locally $L^2$ coefficients across $Z$. Extend them measurably over this measure-zero set. In holomorphic frames, \cite[Lemma~6.9]{Demailly1982L2} applies coefficientwise: the unknown is locally $L^2$ and the right side $u-r_\delta$ is locally $L^2$, hence locally $L^1$. It follows that
\[
 \bar\partial v_\delta=u-r_\delta
\]
on all of $Y$ in distributions. In particular the extended $v_\delta$ belongs to the global maximal domain.

Compact Hodge theory gives closed range for the maximal $\bar\partial$ operator on $Y$ with the smooth reference metric $h_0$. Uniform equivalence leaves its domains and ranges unchanged as sets and preserves closedness. Since $u-r_\delta$ lies in this range and converges to $u$, so does $u$. This uses closed range only on compact $Y$ and requires no bound for $v_\delta$ uniform in $\delta$.

Let $H_0$ and $\mathcal G_{q,0}$ denote the harmonic projection and Green operator for $h_0$ and the fixed K\"ahler metric $\omega$. Since $u$ lies in the maximal $\bar\partial$ range, $H_0u=0$. The smooth form $u$ is $\bar\partial$-closed, so smooth Hodge theory gives
\[
 u=\bar\partial\bigl(\bar\partial_{h_0}^{*}\mathcal G_{q,0}u\bigr).
\]
Thus $u$ has a smooth primitive and represents zero in $H^q(Y,K_Y\otimes F)$.
\end{proof}

\subsection{Symmetric-power vanishing theorems}
\begin{theorem}[Variation-dependent vanishing theorem]
\label{thm:variation-vanishing}
Retain the common hypotheses of this section and the constant $\sigma_*$ in \eqref{eq:global-full-moment}. For integers $a\geq1$ and $b\geq0$ satisfying $b<a\sigma_*$, and every $P\in\operatorname{Pic}^0(Y)$,
\begin{equation}\label{eq:variation-vanishing}
 H^q\!\left(Y,K_Y\otimes\operatorname{Sym}^{a}E
                  \otimes M^{-b}\otimes P\right)=0
 \qquad(q>d-v).
\end{equation}
No additional condition \eqref{eq:global-threshold} is required.
\end{theorem}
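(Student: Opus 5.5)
Apply Lemma~\ref{lem:bounded-metric-vanishing} with the bundle $F=\operatorname{Sym}^aE\otimes M^{-b}\otimes P$, the exceptional set $Z=Y\setminus U_m^\circ$, and a K\"ahler form $\omega$ on $Y$. The metric on $F$ is built from three pieces.
\begin{itemize}
\item On $\operatorname{Sym}^aE$, take the metric $G_a$ induced by $G$.
\item On $M^{-b}$, take $(\det G)^{-b}$.
\item On $P\in\operatorname{Pic}^0(Y)$, take the flat unitary metric.
\end{itemize}
By Proposition~\ref{prop:continuous-determinant-current}, each piece is continuous and positive definite on all of $Y$, and smooth on $U_m^{\mathrm{reg}}$. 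So $F$ carries a continuous metric $h$ that is smooth on $U_m^{\mathrm{reg}}$, which is what the lemma requires.

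Next, the curvature on $U_m^{\mathrm{reg}}$. Restricting the Nakano bounds \eqref{eq:symmetric-power-two-sided} with $\ell=a$ gives $i\Theta_{\operatorname{Sym}^aE}\geq_{\mathrm{Nak}}a\sigma_*\eta\otimes I$. The factor $M^{-b}$ contributes $-b\eta\otimes I$, and $P$ contributes zero. Hence
\[
 i\Theta_{F,h}\geq_{\mathrm{Nak}}(a\sigma_*-b)\,\eta\otimes I_F,
\]
with $c:=a\sigma_*-b>0$. In particular $h$ is Nakano semipositive on $U=Y\setminus Z$. The lemma only needs this on $U$: its Nakano semipositivity hypothesis and the almost-everywhere positivity of $B_q$ both refer to $U$, and a larger $Z$ is harmless.

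Now the pointwise estimate for $B_q$ on $U_m^\circ$. Nakano positivity $i\Theta\geq\gamma\otimes I$, with $\gamma=c\eta$ a semipositive $(1,1)$-form, gives the standard inequality on $(d,q)$-forms:
\[
 \langle[i\Theta,\Lambda_\omega]u,u\rangle\geq\langle[\gamma,\Lambda_\omega]u,u\rangle.
\]
Here $\Theta-\gamma\otimes I$ is Nakano semipositive, so its commutator is $\geq0$ on $(d,q)$-forms. Diagonalize $\gamma$ relative to $\omega$ at a point, with eigenvalues $\gamma_1\leq\dots\leq\gamma_d$. Since $\eta$ has rank $v$ on $U_m^\circ$, exactly $d-v$ of these vanish. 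On $(d,q)$-forms, $[\gamma,\Lambda_\omega]$ acts on $d\bar z_J$ with eigenvalue $\sum_{j\in J}\gamma_j$. Each such sum is at least the sum of the $q$ smallest eigenvalues, and that sum is positive as soon as $q>d-v$. So $B_q$ is positive definite everywhere on $U_m^\circ$, whose complement is a proper analytic set and has measure zero.

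Lemma~\ref{lem:bounded-metric-vanishing} then gives $H^q(Y,K_Y\otimes F)=0$ for $q>d-v$.

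I expect the only delicate point to be the symmetric-power and twisted Nakano bound. One must check that the Nakano lower bound $\sigma_*\eta\otimes I$ passes to $\operatorname{Sym}^aE$ with coefficient $a\sigma_*$. This holds because the tensor-power curvature is a sum of factor curvatures tensored with identities, and the symmetrizing projection is parallel. One must also check that subtracting $b\eta\otimes I$ is exact rather than a Griffiths-only statement, which holds because a scalar twist shifts the Nakano form by $-b\eta\otimes I$. These points are already recorded in \eqref{eq:symmetric-power-two-sided} and \eqref{eq:smooth-negative-determinant-twist}.

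The case $v=0$ is vacuous, since then $q>d$. The uniform constant $\sigma_*$ from \eqref{eq:global-full-moment} is what lets the single condition $b<a\sigma_*$ work at every point of $U_m^\circ$.
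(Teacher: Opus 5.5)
Your proposal is correct and follows essentially the same route as the paper. You use the induced continuous metric on $\operatorname{Sym}^aE\otimes M^{-b}\otimes P$, the Nakano bound $(a\sigma_*-b)\eta\otimes I$ on the regular locus, and the eigenvalue count for $[c\eta,\Lambda_\omega]$ on $(d,q)$-forms, and then conclude with Lemma~\ref{lem:bounded-metric-vanishing}. The only difference is that you take $Z=Y\setminus U_m^\circ$, so $B_q>0$ everywhere on $U$, whereas the paper takes $Z=Y\setminus U_m^{\mathrm{reg}}$ and has positivity only almost everywhere; this is harmless because $U_m^\circ\subseteq U_m^{\mathrm{reg}}$ is Zariski-open.
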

\begin{proof}
Equip $P$ with a unitary flat metric and write
\[
 V=\operatorname{Sym}^{a}E\otimes M^{-b}\otimes P,
 \qquad c=a\sigma_*-b>0.
\]
The induced metric $h$ is continuous and positive definite on $Y$, and smooth on $U_m^{\mathrm{reg}}$. The tensor curvature sum and the parallel symmetric summand, as in \eqref{eq:symmetric-power-two-sided}, give the lower bound $a\sigma_*\eta\otimes I$ on $\operatorname{Sym}^{a}E$. Twisting by $M^{-b}$ subtracts $b\eta\otimes I$, and the flat twist contributes zero. Therefore
\begin{equation}\label{eq:symmetric-power-nakano}
 i\Theta_{V,h}\geq_{\mathrm{Nak}}c\eta\otimes I_V
 \quad\text{on }U_m^{\mathrm{reg}}.
\end{equation}
In particular $h$ is Nakano semipositive there.

Fix a smooth K\"ahler form $\omega$ on $Y$. At a point of $U_m^{\mathrm{reg}}$, diagonalize $\eta$ relative to $\omega$, with eigenvalues $0\leq\lambda_1\leq\cdots\leq\lambda_d$. On $V$-valued $(d,q)$-forms, \eqref{eq:symmetric-power-nakano} gives
\[
 \langle[i\Theta_{V,h},\Lambda_\omega]u,u\rangle_h
 \geq c\sum_{|J|=q}\left(\sum_{j\in J}\lambda_j\right)|u_J|_h^2.
\]
At every point of $U_m^\circ$, exactly $d-v$ eigenvalues vanish. Thus every displayed $q$-index sum is positive for $d-v<q\leq d$. The complement of $U_m^\circ$ is a proper analytic set of measure zero. Lemma~\ref{lem:bounded-metric-vanishing}, with $Z=Y\setminus U_m^{\mathrm{reg}}$, proves the assertion in these degrees. Degrees $q>d$ vanish by dimension. If $v=0$, only these latter degrees occur.
\end{proof}

For $b=0$ the theorem applies to every $a\geq1$. Maximal variation gives every positive cohomological degree, whereas $v=0$ gives no nontrivial range. The strict bound $b<a\sigma_*$ is determined by the evaluation quadratic form. No universal positive lower bound for $\sigma_*$ in terms of $m$ and $r$ alone is asserted. Over curves, the case $b=0$ already follows from ampleness and Serre duality as in Section~\ref{sec:curves}. The bound $b<a\sigma_*$ is a sufficient range determined by $G$.

Related positivity and vanishing results for induced symmetric-power metrics are established by Liu--Sun--Yang \cite[Theorem~1.1]{LiuSunYang2013}. In their criterion, ampleness of $\operatorname{Sym}^{a+r}E\otimes M^{-(b+1)}\otimes P$ gives Nakano positivity of the bundle in \eqref{eq:symmetric-power-nakano}. Here the lower form may have rank $v<d$, and the evaluation quadratic form gives the stated determinant-twist range. For canonically polarized families, Schumacher's K\"ahler--Einstein method also computes Nakano curvature of the bicanonical direct image \cite[\S5, Theorem~5 and Corollary~4]{Schumacher2018Moduli}. 

\appendix
\section{Refined regularity of the fixed-point metric}
\label{app:refined-regularity}

We use the unweighted smooth-family hypotheses and notation of Theorem~\ref{thm:threshold-regularity}. In particular, $\alpha=(m-1)/m$, $\beta=2/m$, and $c_{y_0}$ is defined by \eqref{eq:central-threshold}. The following kernel estimates are uniform on compact subsets of the positive Hermitian cone.

\begin{lemma}\label{lem:homogeneous-kernel}
Let $\mathcal A$ be a compact subset of the positive Hermitian cone. For $A\in\mathcal A$, a row $v\in\mathbb C^r$ and $\epsilon>0$, set
\[
 H_{A,\epsilon}(v)=v^*v(vA^{-1}v^*+\epsilon)^{-\alpha}.
\]
The unregularized kernel $H_{A,0}$ is defined by zero at $v=0$. For nonnegative integers $j,a$, uniformly for $0\leq\epsilon\leq1$ and $v\ne0$,
\begin{equation}\label{eq:kernel-higher-derivatives}
 \|D_v^jD_A^aH_{A,\epsilon}(v)\|
       \leq C_{j,a}|v|^{\beta-j}.
\end{equation}
All derivatives are real derivatives. For $0<\gamma<1$ and nonzero $v,w$, the corresponding difference is bounded by
\begin{equation}\label{eq:kernel-higher-difference}
 C_{j,a,\gamma}|v-w|^\gamma
 \bigl(|v|^{\beta-j-\gamma}+|w|^{\beta-j-\gamma}\bigr).
\end{equation}
The kernels themselves are uniformly $\beta$-H\"older on bounded sets, including at zero.
\end{lemma}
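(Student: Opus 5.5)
The plan is to prove the derivative bound \eqref{eq:kernel-higher-derivatives} by a scaling argument, and then obtain the H\"older difference bound \eqref{eq:kernel-higher-difference} from it by the same near/far splitting already used in Lemma~\ref{lem:holder-continuity}.

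\emph{Step 1: the unit-sphere bound.} Write $Q=vA^{-1}v^*$. On the set $\mathcal A\times\{\tfrac12\leq|v|\leq2\}$ we have $Q\geq c>0$ uniformly in $A\in\mathcal A$. Hence $(Q+\epsilon)^{-\alpha}$ is a smooth function of $(v,A,\epsilon)$ on the compact set $\mathcal A\times\{\tfrac12\leq|v|\leq2\}\times[0,1]$, and its real derivatives in $v$ and $A$ are bounded there. This gives \eqref{eq:kernel-higher-derivatives} for $\tfrac12\leq|v|\leq2$.

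\emph{Step 2: rescaling to all $v\ne0$.} For $t>0$ the kernel is quasi-homogeneous:
\[
 H_{A,\epsilon}(tv)=t^{\beta}H_{A,\epsilon/t^2}(v).
\]
Indeed $t^2v^*v\,(t^2Q+\epsilon)^{-\alpha}=t^{2-2\alpha}v^*v\,(Q+\epsilon t^{-2})^{-\alpha}$, and $2-2\alpha=\beta$. The rescaled parameter $\epsilon'=\epsilon/t^2$ may exceed $1$, so Step~1 has to cover all $\epsilon'\geq0$. On $|v|\sim1$, write
\[
 (Q+\epsilon')^{-\alpha}=(1+\epsilon')^{-\alpha}\,g\bigl(Q/(1+\epsilon'),\,1/(1+\epsilon')\bigr),
\]
where $g(x,s)=(x+1-s)^{-\alpha}$. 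The argument $x+1-s$ equals $(Q+\epsilon')/(1+\epsilon')$, which lies in a compact subset of $(0,\infty)$. Moreover, derivatives of $Q/(1+\epsilon')$ in $v$ and $A$ are bounded uniformly in $\epsilon'$. So the derivatives on $|v|\sim1$ are bounded uniformly for $\epsilon'\in[0,\infty)$; the factor $(1+\epsilon')^{-\alpha}\leq1$ only helps.

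Now take $t=|v|$ and apply the chain rule. Each $v$-derivative costs a factor $t^{-1}$, while $A$-derivatives commute with the scaling. This yields $C_{j,a}|v|^{\beta-j}$ uniformly in $\epsilon\in[0,1]$. The case $\epsilon=0$ is the same computation with $\epsilon'=0$.

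\emph{Step 3: the H\"older difference bound.} Put $\delta=|v-w|$ and let $\Phi=D_v^jD_A^aH_{A,\epsilon}$.
\begin{itemize}
\item Near case, $\min(|v|,|w|)\leq2\delta$. Then both norms are at most $3\delta$. Bound $\|\Phi(v)\|+\|\Phi(w)\|$ by $C(|v|^{\beta-j}+|w|^{\beta-j})$ and write $|v|^{\beta-j}=|v|^{\beta-j-\gamma}|v|^{\gamma}\leq3^\gamma\delta^\gamma|v|^{\beta-j-\gamma}$, and likewise for $w$.
\item Far case, $\min(|v|,|w|)>2\delta$. The segment $[v,w]$ stays in the region where all norms are comparable to $|v|$ (and to $|w|$), and in particular avoids $0$. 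The mean value theorem with the $(j+1)$-st derivative bound gives
\[
 C\delta|v|^{\beta-j-1}\leq C\delta^\gamma|v|^{\beta-j-\gamma},
\]
since $\delta/|v|<1$ and $1-\gamma>0$.
\end{itemize}

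\emph{Step 4: H\"older continuity including zero.} Take $j=a=0$. Because $\beta-\gamma$ may be positive, choose $\gamma=\beta$ when $\beta<1$. Then $|v|^{\beta-j-\gamma}=|v|^0=1$, and \eqref{eq:kernel-higher-difference} reads $C|v-w|^\beta$ on bounded sets. The case $v=0$ or $w=0$ follows from $\|H\|\leq C|v|^\beta$, and the extension by zero is consistent because this bound tends to $0$. When $\beta=1$, i.e.\ $m=2$, the kernel is Lipschitz, by the $j=1$ bound and the same splitting.

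The main obstacle I expect is the uniformity in $\epsilon$ after rescaling, namely the unbounded $\epsilon'$ in Step~2. The normalization by $(1+\epsilon')$ above is the device I would use to handle it.
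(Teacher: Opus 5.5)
Your proof is correct. Steps~3--4 are the same near/far splitting the paper uses. The paper gets the $\beta$-H\"older bound directly from the $j=0$ bound in the near case and the $j=1$ bound in the far case, which also covers $m=2$ without a separate Lipschitz case, but this difference is cosmetic.

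The genuine difference is in how you prove \eqref{eq:kernel-higher-derivatives}. The paper splits into two regimes. When $|v|\geq\sqrt\epsilon$ it rescales by $|v|$, so the rescaled parameter $\epsilon/|v|^2$ stays in $[0,1]$ and compactness applies directly. When $|v|\leq\sqrt\epsilon$ it rescales by $\sqrt\epsilon$ to $H_{A,1}$. That second scaling only yields $\epsilon^{(\beta-j)/2}$, which is at most $|v|^{\beta-j}$ for $j\geq1$ but goes the wrong way for $j=0$. So the paper also needs a separate direct estimate $|v|^2(|v|^2+\epsilon)^{-\alpha}\leq|v|^\beta$ in that regime.

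Your single rescaling by $|v|$, with $\epsilon'=\epsilon/|v|^2$ ranging over all of $[0,\infty)$, removes both the second regime and that special case. The uniformity in unbounded $\epsilon'$, which you flag as the main obstacle, is in fact automatic. On $\{\tfrac12\leq|u|\leq2\}$ one has $Q\geq c>0$, and the derivatives of $Q$ and of $u^*u$ do not involve $\epsilon'$. Moreover $|\partial_Q^k(Q+\epsilon')^{-\alpha}|=c_k(Q+\epsilon')^{-\alpha-k}\leq c_k\,c^{-\alpha-k}$. Your normalization through $g$ is therefore valid but unnecessary. Note also that your bound is uniform for all $\epsilon\geq0$, not only $\epsilon\leq1$.

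One point of phrasing. In the chain-rule step, freeze $t=|v_0|$ in the identity $H_{A,\epsilon}(v)=t^\beta H_{A,\epsilon/t^2}(v/t)$, differentiate in $v$, and only then evaluate at $v=v_0$. If $t=|v|$ varied with $v$, derivatives of $|v|$ would appear. Working on the annulus rather than the unit sphere in Step~1 is exactly what this frozen-$t$ argument requires, and you already have it.
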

\begin{proof}
For $\epsilon=0$, homogeneity of degree $\beta$, unchanged by matrix differentiation, reduces \eqref{eq:kernel-higher-derivatives} to bounded derivatives on the unit sphere. These bounds are uniform on $\mathcal A$. For $\epsilon>0$ and $|v|\geq\sqrt\epsilon$, rescale by $|v|$. The rescaled denominator is bounded below and its extra parameter $\epsilon/|v|^2$ lies in $[0,1]$. If $|v|\leq\sqrt\epsilon$, use
\[
 H_{A,\epsilon}(v)=\epsilon^{\beta/2}
                       H_{A,1}(v/\sqrt\epsilon).
\]
For $j\geq1$, bounded derivatives of $H_{A,1}$ give $C\epsilon^{(\beta-j)/2}\leq C|v|^{\beta-j}$ because $\beta-j\leq0$. For $j=0$, including matrix derivatives, direct differentiation gives $C|v|^2(|v|^2+\epsilon)^{-\alpha}\leq C|v|^\beta$. This proves \eqref{eq:kernel-higher-derivatives} in all cases.

Put $\delta=|v-w|$. If $\delta\leq\frac12\min(|v|,|w|)$, the segment stays away from zero and its points have comparable norms. The derivative estimate of order $j+1$ and $\delta^{1-\gamma}\leq C\min(|v|,|w|)^{1-\gamma}$ prove \eqref{eq:kernel-higher-difference}. Otherwise both $|v|$ and $|w|$ are at most $3\delta$, and the two separate order-$j$ estimates prove the same inequality. Taking $j=0$ and using the first-derivative bound in the former case gives the uniform $\beta$-H\"older estimate.
\end{proof}

\begin{proof}[Proof of Theorem~\ref{thm:threshold-regularity}]
Choose $\gamma<\gamma'<1$ such that $k+\gamma'<\beta+2c_{y_0}$. Use finitely many nested relative charts and a smooth partition of unity as in the proof of Lemma~\ref{lem:holder-continuity}. Properness allows one common base neighborhood. Each local integral is over a fixed bounded polydisc $W$ in the fiber coordinates, with compactly supported smooth amplitude $\chi(y,w)$ and holomorphic evaluation row $F_y(w)$. All derivatives of $F$ and $\chi$ needed below are bounded on a slightly larger chart.

For the inverse-power estimate, the functions $\log|F_y|$ converge to $\log|F_{y_0}|$ in $L^1_{\mathrm{loc}}(W)$. Indeed, they are locally uniformly bounded above, converge away from the zero set of the nonzero central tuple, and the compactness theorem for plurisubharmonic functions identifies every $L^1$ subsequential limit with that central logarithm. The alternative of convergence to $-\infty$ is excluded on a small ball where the central tuple is nonzero. Demailly--Koll\'ar's effective semicontinuity theorem \cite[Main Theorem~0.2(2), p.~526]{DemaillyKollar2001}, applied on each compact chart support $K\Subset W$, gives
\begin{equation}\label{eq:threshold-integrability}
 \sup_{y\in V_c}\int_K|F_y(w)|^{-2c}\,dV_w<\infty
 \qquad(0<c<c_{y_0})
\end{equation}
after shrinking $V_c$ about $y_0$. 

In each localized integral replace $H_{A,0}(F_y)$ by $H_{A,\epsilon}(F_y)$ and call the resulting finite sum $T_\epsilon$. These are smooth functions of $(y,A)$ for $\epsilon>0$. Although defined in the fixed charts, this regularization converges to the intrinsic operator $T$. The chain rule and Lemma~\ref{lem:homogeneous-kernel} bound every joint derivative of total order $j\leq k$ of a localized integrand by
\begin{equation}\label{eq:regularized-derivative-majorant}
 C_j\bigl(1+|F_y|^{\min(\beta-j,0)}\bigr).
\end{equation}
To compare derivatives at $(y,A)$ and $(z,B)$, split each chain-rule term into a difference of kernel derivatives and differences of its smooth coefficients. Matrix differentiation does not lower the degree in $F$, so the mean value theorem in $A$, together with \eqref{eq:kernel-higher-difference} in $F$, bounds the order-$k$ difference by
\begin{equation}\label{eq:regularized-holder-majorant}
 \begin{split}
 C\bigl(|y-z|+\|A-B\|\bigr)^{\gamma'}
 \bigl(1+|F_y|^{\min(\beta-k-\gamma',0)}
          +|F_z|^{\min(\beta-k-\gamma',0)}\bigr).
 \end{split}
\end{equation}
Here the parameters lie in a fixed smaller bounded neighborhood. The estimates hold away from the zero sets of the indicated tuples. Those analytic sets have measure zero in each fiber, since the fiberwise base-change isomorphisms and positive rank make each tuple nonzero.

If $k+\gamma'>\beta$, choose $c<c_{y_0}$ with $k+\gamma'-\beta<2c$ and apply \eqref{eq:threshold-integrability} to integrate both majorants. If $k+\gamma'\leq\beta$, boundedness of the rows suffices. Consequently $T_\epsilon$ has uniform $C^{k,\gamma'}$ bounds on smaller parameter neighborhoods. Dominated convergence gives its pointwise limit $T$, since $\|H_{A,\epsilon}(F_y)\|\leq C|F_y|^\beta$. The compact embedding of bounded $C^{k,\gamma'}$ sets into $C^{k,\gamma}$ on smaller closed neighborhoods identifies the limit and its derivatives. This proves joint $C^{k,\gamma}$ regularity.

For $k\geq1$, matrix differentiation under the integral is justified independently by the bound $C|F_y|^\beta$. Formula \eqref{eq:matrix-derivative} therefore still holds at $y_0$. Positivity and homogeneity give $D_AT_{y_0}(G_{y_0})[G_{y_0}]=\alpha G_{y_0}$, so \eqref{eq:implicit-inverse} supplies the inverse with order norm at most $m$. The finite-regularity implicit-function theorem applied to $A-T_y(A)$ gives a $C^{k,\gamma}$ solution. One can see the H\"older part directly from
\[
 D_yG=(I-D_AT_y(G))^{-1}D_yT_y(G):
\]
the ordinary $C^1$ theorem makes $G$ locally Lipschitz, and this identity first gives $C^{1,\gamma}$ and then the higher orders by induction. Uniqueness identifies this solution with $G$.

For $k=0$, Lemma~\ref{lem:holder-continuity} already places the fixed matrices in a compact positive cone near $y_0$. Contraction gives
\[
 (1-\alpha)d_{\mathrm{Th}}(G_y,G_z)
 \leq d_{\mathrm{Th}}(T_y(G_z),T_z(G_z))
 \leq C|y-z|^\gamma.
\]
The last bound uses joint $C^{0,\gamma}$ regularity and local equivalence of matrix norm and Thompson distance. This proves the assertion for $k=0$. Finally inversion and the logarithm preserve the proved regularity wherever $FG^{-1}F^*>0$.
\end{proof}

The threshold criterion gives the following optional global differentiability statement. With $c_y$ as in \eqref{eq:central-threshold}, and $c_y=+\infty$ when $mK_{X_y}$ is globally generated, consider the additional condition
\begin{equation}\label{eq:global-threshold}
 c_y>\alpha\qquad\text{for every }y\in Y\setminus U_m^{\mathrm{reg}}.
\end{equation}
\begin{proposition}\label{prop:global-curvature}
If \eqref{eq:global-threshold} holds in addition to the common hypotheses, $G$ is $C^{2,\gamma}$ on $Y$ for some $0<\gamma<1$. Its globally defined continuous curvature satisfies
\begin{equation}\label{eq:global-nakano-pinching}
 \eta:=i\operatorname{Tr}\Theta_{E,G}\geq0,\qquad
 i\Theta_{E,G}\geq_{\mathrm{Nak}}\sigma_*\eta\otimes I_E.
\end{equation}
The form $\eta$ has rank $v$ on $U_m^\circ$ and rank at most $v$ everywhere.
\end{proposition}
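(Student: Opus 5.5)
The plan is to obtain $C^{2,\gamma}$ regularity pointwise, extend the pinching inequality by continuity, and then deal with rank.

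\emph{Regularity.} At points of $U_m^{\mathrm{reg}}$ the metric $G$ is already smooth by Theorem~\ref{thm:controlled-regularity}. At points $y_0\in Y\setminus U_m^{\mathrm{reg}}$, hypothesis \eqref{eq:global-threshold} gives $c_{y_0}>\alpha=1-1/m$. Corollary~\ref{cor:threshold-C2} then makes $G$ of class $C^{2,\gamma_{y_0}}$ on a neighborhood of $y_0$. The complement $Y\setminus U_m^{\mathrm{reg}}$ is a closed subset of compact $Y$, so finitely many of these neighborhoods cover it. Taking $\gamma$ to be the minimum of the finitely many exponents gives $G\in C^{2,\gamma}$ on all of $Y$. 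Consequently the Chern curvature \eqref{eq:chern-convention} is a continuous $\operatorname{End}(E)$-valued $(1,1)$ form on $Y$, and $\eta$ is continuous.

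\emph{Pinching.} On the dense open set $U_m^{\mathrm{reg}}$, Proposition~\ref{prop:continuous-determinant-current} gives $\eta\geq0$ and $i\Theta_{E,G}-\sigma_*\eta\otimes I_E\geq_{\mathrm{Nak}}0$, with the uniform constant $\sigma_*$ from \eqref{eq:global-full-moment}. Fix a continuous $G$-orthonormal frame and a continuous local frame of $T^{1,0}Y$. Each Nakano inequality then says that a continuous Hermitian matrix-valued function on $T^{1,0}Y\otimes E$ is positive semidefinite on a dense set. Closedness of the positive semidefinite cone extends the inequality to every point of $Y$, which proves \eqref{eq:global-nakano-pinching}.

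\emph{Rank.} On $U_m^\circ$ the rank of $\eta$ equals $v$ by Corollary~\ref{cor:unified-variation-rank}. For the bound everywhere, lower semicontinuity of rank goes the wrong way, so I would argue by intersection theory instead. Suppose $\operatorname{rank}\eta_{y_1}\geq v+1$ at some $y_1$. By continuity this strict inequality holds on a ball $W$ around $y_1$, so $\eta^{v+1}\wedge\omega^{d-v-1}>0$ on $W$. On the other hand, Corollary~\ref{cor:numerical-dimension} gives $\nu(M)=v$, so $c_1(M)^{v+1}\cdot[\omega]^{d-v-1}=0$. By \eqref{eq:bounded-potential-intersections}, now applied with the continuous current $\eta/(2\pi)$ (here $\det G$ is $C^2$, so its potentials are bounded), this intersection number equals $(2\pi)^{-(v+1)}\int_Y\eta^{v+1}\wedge\omega^{d-v-1}$. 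The integrand is a nonnegative form because $\eta\geq0$, and it is strictly positive on $W$. This contradicts the vanishing of the integral. The case $v=d$ is trivial.

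I expect the main obstacle to be this last step. One must make sure that the Bedford--Taylor product coincides with the pointwise wedge of the continuous form $\eta$, which holds because the potentials are $C^2$. One must also check that nonnegativity of $\eta^{v+1}\wedge\omega^{d-v-1}$ follows from semipositivity of $\eta$, which is the standard fact for wedge powers of semipositive $(1,1)$ forms against a Kähler form.
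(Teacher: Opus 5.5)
Your proposal is correct. The regularity and pinching steps are exactly the paper's. Corollary~\ref{cor:threshold-C2} is the paper's application of Theorem~\ref{thm:threshold-regularity} with $k=2$ and $\gamma_y<\min\{1,2(c_y-\alpha)\}$. This is followed by a finite subcover of compact $Y$, and then $\eta\geq0$ and the Nakano inequality pass from the dense set $U_m^{\mathrm{reg}}$ to $Y$ by continuity.

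The rank bound is where you diverge, and your stated reason for diverging is mistaken: lower semicontinuity of rank goes the \emph{right} way here. It says that $\{y:\operatorname{rank}\eta_y\geq v+1\}$ is open, since some $(v+1)$-minor is nonzero there. This open set is disjoint from $U_m^\circ$, which is dense because it is a nonempty Zariski-open subset of the irreducible variety $Y$. Hence the set is empty. This is the paper's one-line argument: every $(v+1)$-minor of $\eta$ vanishes on $U_m^\circ$, hence everywhere by continuity.

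Your own first step already produces a ball $W$ on which $\operatorname{rank}\eta\geq v+1$. That ball meets $U_m^\circ$, where the rank is $v$, so the contradiction is immediate. The appeal to $\nu(M)=v$ from Corollary~\ref{cor:numerical-dimension} and to the Bedford--Taylor identity \eqref{eq:bounded-potential-intersections} is therefore superfluous.

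Your intersection-theoretic route is nevertheless valid. It is not circular, because Corollary~\ref{cor:numerical-dimension} does not depend on this proposition. With $C^2$ potentials the Bedford--Taylor powers are the pointwise wedge powers. Moreover, $\eta^{v+1}\wedge\omega^{d-v-1}$ is nonnegative, and positive exactly where $\operatorname{rank}\eta\geq v+1$. The cost is that it imports the nefness and numerical-dimension machinery to prove something that needs only continuity and density.
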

\begin{proof}
For $y\notin U_m^{\mathrm{reg}}$, choose $0<\gamma_y<\min\{1,2(c_y-\alpha)\}$. Then $2+\gamma_y<2/m+2c_y$, and Theorem~\ref{thm:threshold-regularity} gives $C^{2,\gamma_y}$ regularity near $y$. On $U_m^{\mathrm{reg}}$ use Theorem~\ref{thm:controlled-regularity}. A finite subcover of compact $Y$ and an exponent below the finitely many local exponents give the asserted global regularity. Curvature continuity extends \eqref{eq:regular-locus-pinching} from the dense subset $U_m^{\mathrm{reg}}$, proving \eqref{eq:global-nakano-pinching}. Every $(v+1)$-minor of $\eta$ vanishes on $U_m^\circ$, hence everywhere by continuity.
\end{proof}

\bibliographystyle{amsplain}
\bibliography{paper_marked}

\end{document}